\newtheorem{theorem}{Theorem}[section]
\newtheorem{lemma}[theorem]{Lemma}
\newtheorem{proposition}[theorem]{Proposition}
\newtheorem{corollary}[theorem]{Corollary}
\newtheorem{definition}[theorem]{Definition}
\newtheorem{remark}[theorem]{Remark}
\definecolor{A}{rgb}{.75,1,.75}
\numberwithin{equation}{section}
\newcommand{\ad}{\operatorname{ad}}
\newcommand{\C}{ \mathbb C }
\newcommand{\Daij}{D_{a;i,j}^{(r)}}
\newcommand{\Dpaij}{D_{a;i,j}^{\prime(r)}}
\newcommand{\Ebhk}{E_{b;h,k}^{(r)}}
\newcommand{\Fbkh}{F_{b;k,h}^{(r)}}
\newcommand{\del}{\delta}
\newcommand{\Del}{\Delta}
\newcommand{\End}{\operatorname{End}}
\newcommand{\W}{\mathcal{W}}
\newcommand{\ev}{\operatorname{ev}}
\newcommand{\str}{\operatorname{str}}
\newcommand{\bo}{\Upsilon}
\newcommand{\gl}{\mathfrak{gl}}
\newcommand{\g}{\mathfrak{g}}
\newcommand{\gr}{\operatorname{gr}}
\newcommand{\glMN}{\mathfrak{gl}_{M|N}}
\newcommand{\id}{\operatorname{id}}
\newcommand{\pa}[1]{|{#1}|}
\newcommand{\ovl}[1]{\overline{#1}}
\newcommand{\tp}{\operatorname{pa}}
\newcommand{\pr}{\operatorname{pr}}
\newcommand{\col}{\text{col}}
\newcommand{\row}{\text{row}}
\newcommand{\Z}{ \mathbb Z }
\begin{document}
\title[Finite $W$-superalgebras via super Yangians] {Finite $W$-superalgebras via super Yangians}

\author[Yung-Ning Peng]{Yung-Ning Peng}
\address{Department of Mathematics, National Central University, Chung-Li, Taiwan, 32054} \email{ynp@math.ncu.edu.tw}

\begin{abstract}
Let $e$ be an arbitrary even nilpotent element in the general linear Lie superalgebra $\glMN$ and let $\W_e$ be the associated finite $W$-superalgebra. 
Let $Y_{m|n}$ be the super Yangian associated to the Lie superalgebra $\gl_{m|n}$. A subalgebra of $Y_{m|n}$, called the shifted super Yangian and denoted by $Y_{m|n}(\sigma)$, is defined and studied.
Moreover, an explicit isomorphism between $\W_e$ and a quotient of $Y_{m|n}(\sigma)$ is established. 
\end{abstract}

\maketitle

\setcounter{tocdepth}{1}
\tableofcontents

\section{Introduction}\label{introd}
A finite $W$-algebra is an associative algebra determined by a pair $(\g,e)$, where $\g$ is a finite dimensional semisimple or reductive Lie algebra and $e$ is a nilpotent element in $\g$. In the extreme case when $e=0$, the corresponding finite $W$-algebra is the universal enveloping algebra $U(\g)$. In the other extreme case when $e$ is the {\em principal} (also called {\em regular}) nilpotent element, Kostant \cite{Ko} proved that the associated finite $W$-algebra is isomorphic to the center of the universal enveloping algebra. 

The study of finite $W$-algebra for a general $e$ was firstly developed systematically by Premet \cite{Pr1}, in which the modern terminologies were given and a proof of the long-standing Kac-Weisfeiler conjecture \cite{WK} was established.
Moreover, finite $W$-algebras can be understood as quantizations of Slodowy slices \cite{GG, Pr2}.
Since then, finite $W$-algebras have appeared in many branches of mathematics 
so that their behavior and properties can be explained from different viewpoints.
In recent years, the finite $W$-algebras have been intensively studied by various approaches; see the survey articles \cite{Ar, Lo, Wa} for details.

On the other hand, Yangians are certain non-commutative Hopf algebras that are important examples of quantum groups. 
They first appeared in physics in the work of Faddeev and his school around 80's concerning the quantum inverse scattering method. The term Yangian was given by Drinfeld \cite{Dr1} in honor of C.N. Yang and had been commonly used since then.
They were used to provide rational solutions of the Yang-Baxter equation; see the book \cite{Mo} for related topics and further applications of Yangians.

The connection between Yangians and finite $W$-algebras was firstly noticed by Ragoucy and Sorba \cite{RS} for type A Lie algebras. 
Suppose that the nilpotent element $e$ is {\em rectangular}, which means that all the Jordan blocks of $e$ are of the same size, say $\ell$. They showed that the associated finite $W$-algebra is isomorphic to the {\em Yangian of level } $\ell$, which is a certain quotient of the Yangian, considered by Cherednik \cite{C1, C2}.

This observation is further generalized by Brundan and Kleshchev \cite{BK2} to an arbitrary nilpotent $e\in\gl_N$ . The main result \cite[Theorem 10.1]{BK2} can be shortly described as follows: the finite $W$-algebra associated to a nilpotent $e\in\gl_{N}$ is isomorphic to a quotient of some subalgebra of the Yangian (called the {\em shifted Yangian}) associated to $\gl_n$, where $n$ is the number of Jordan blocks of $e$.
Moreover, an explicit realization of type A finite $W$-algebra by generators and relations is obtained. 
This provides a powerful tool for the study of finite $W$-algebras, their representations and further applications \cite{BGK, BK3, BK4}. 
It is also observed recently that the shifted Yangian can also be defined by different approaches together with generalizations and applications; see \cite{BFN, FKPRW, FPT, KWWY}.

The finite $W$-superalgebras are defined in a very similar way as the Lie algebra case except that the nilpotent element $e\in\mathfrak{g}$ is assumed to be {\em even} (with respect to the $\mathbb{Z}_2$-grading of the Lie superalgebra) with other modifications. 
In recent years, finite $W$-superalgebras and their representations have been 
extensively studied \cite{BBG, BGK, WZ1, WZ2, ZS1, ZS2, Zh} 
with different emphases.

The super Yangian associated to $\gl_{m|n}$, denoted by $Y_{m|n}$, was defined by Nazarov \cite{Na1} in terms of the {\em RTT presentation}.  
It is natural to seek for connections between finite $W$-superalgebras and super Yangians.
The very first result is obtained by Briot and Ragoucy \cite{BR}, saying that if the nilpotent element $e\in\gl_{M|N}$ is rectangular, then the associated finite $W$-superalgebra is isomorphic to a certain quotient of $Y_{m|n}$ called the {\em truncated super Yangian}, where $m$ and $n$ are the numbers of Jordan blocks of $e$ restricted to the even and odd spaces, respectively.
In recent years, there have been some results \cite{BBG, Pe2, Pe3} generalizing the above observation when the nilpotent element $e$ satisfies some assumptions, but for a general $e$ the problem remains to be open.

The goal of this article is to give a solution to this open problem, generally establishing the connection between the finite $W$-superalgebras and super Yangians for type A. 
That is, we explicitly 
give a superalgebra isomorphism between the finite $W$-superalgebra associated to an {\em arbitrary} even nilpotent element $e\in\gl_{M|N}$ and a quotient of a certain subalgebra of $Y_{m|n}$, obtaining a super analogue of the main result of \cite{BK2} for type A Lie superalgebras in full generality.

We shortly explain our approach, which is basically generalizing the arguments in \cite{BK2} to the general linear Lie superalgebras with suitable modifications and try to overcome all of the difficulties along the way. 
Although there are similarities between $\gl_N$ and $\gl_{M|N}$ and similarities between the associated super Yangians, some of the earlier approaches are no longer available in the case of Lie superalgebras. 
Moreover, there are other technical or conceptual obstacles that did not appear in the Lie algebra case.

Our first step is to define a subalgebra of $Y_{m|n}$ which we call the {\em shifted super Yangian} and denote by $Y_{m|n}(\sigma)$. 
To obtain this subalgebra, we need to use certain presentations of $Y_{m|n}$ called the {\em parabolic presentations}. 
Similar to the Lie algebra case \cite{BK1, Dr2}, the RTT presentation and the Drinfeld's presentation can be treated as special cases of the parabolic presentations.
There have been some results \cite{Go, Pe1} giving suitable presentations of $Y_{m|n}$, where the results \cite{BBG, Pe3} are in fact based on them. However, as noticed in \cite{BBG, Pe3}, they are no longer suitable presentations for the general case.
What we need is a kind of ``more" generalized parabolic presentation which works for {\em any} 01-sequence \cite{CW, FSS}, which is a parametrizing set controlling the parities of elements in $Y_{m|n}$. Such a presentation was recently obtained by the author in \cite{Pe4}. As a consequence, the shifted super Yangian $Y_{m|n}(\sigma)$ can be defined as a subalgebra of $Y_{m|n}$ generated by a certain subset of the generating set for the whole $Y_{m|n}$.

However, to establish the desired connection, we need not only the subalgebra but also its {\em presentation}.
By suitably modifying the defining relations for $Y_{m|n}$ found in \cite{Pe4}, we obtain a set of defining relations and hence a presentation of the shifted super Yangian $Y_{m|n}(\sigma)$. It should be emphasized that there are a few extra series of defining relations for $Y_{m|n}$ that did not appear in \cite{BK2}. Although we are able to guess the suitable modifications, it is highly non-trivial to check that our proposed relations actually hold in $Y_{m|n}(\sigma)$.
With some effort, one can eventually overcome this difficulty and a presentation of $Y_{m|n}(\sigma)$ is obtained, which allows one to define some homomorphisms called {\em baby comultiplications}, see \textsection \ref{babyco}, that will play important roles in the desired connection.

We further define the {\em shifted super Yangian of level} $\ell$, denoted by $Y_{m|n}^\ell(\sigma)$, as a quotient of $Y_{m|n}(\sigma)$ over some 2-sided ideal. Roughly speaking, $\sigma$ is a matrix recording the generating set for $Y_{m|n}(\sigma)$, while $\ell$ is an integer recording the size of the ideal in the quotient. 
It turns out that the data $\sigma$ and $\ell$ can be recorded by a diagram called {\em pyramid} \cite{EK, Ho}, which we denote by $\pi$, and it makes sense to set the notation $Y_\pi:=Y_{m|n}^\ell(\sigma)$. On the other hand, the diagram $\pi$ also determines a finite $W$-superalgebra which we denote by $\W_\pi$.

In \textsection \ref{Inv}, we introduce the notion of {\em super column height} so that one may explicitly write down some distinguished elements in $\W_\pi$ according to the diagram $\pi$ by modifying the description in \cite[\textsection 9]{BK2}. Our main result, Theorem \ref{main}, shows that the map sending the generators of $Y_\pi$ into these distinguished elements in $\W_\pi$ is an isomorphism of (filtered) superalgebras, obtaining a presentation of the finite $W$-superalgebra $\W_\pi$.

It is an interesting question to generalize the results in this article to other types of Lie superalgebras. 
In particular, there have been some results in the case of {\em queer Lie superalgebras} and their associated Yangians \cite{Na2} when the even nilpotent element is regular \cite{PS1} or rectangular \cite{PS2}, but it is still open in general.
We expect that the approaches in this article can be suitably modified to deal with the queer Lie superalgebra case for a general nilpotent element.

%%%%%%%%%%%%%%%%%%%  structure  %%%%%%%%%%%%%%%%%%%%%%%%%

This article is organized as follows. In \textsection \ref{preW}, we set up our notations and  
recall some necessary background knowledge about finite $W$-superalgebras. 
In particular, the notion of pyramid with respect to a 01-sequence is recalled. 
In \textsection \ref{preY}, we recall some well-known facts about $Y_{m|n}$.

The shifted super Yangian $Y_{m|n}(\sigma)$ is defined in \textsection \ref{shiftY1}
by generators and relations, with the use of Drinfeld's presentation for $Y_{m|n}$, 
where some computations are relatively easier in this setting.
Then we show that $Y_{m|n}(\sigma)$ can be identified as a subalgebra of $Y_{m|n}$. Some basic properties of $Y_{m|n}(\sigma)$ are also provided. 

In \textsection \ref{shiftY2} we provide a more general approach, using the parabolic presentations for $Y_{m|n}$, to define $Y_{m|n}(\sigma)$ and establish the corresponding properties obtained in \textsection \ref{shiftY1} to parabolic case. In particular, the results in \textsection \ref{shiftY1} serve as initial steps of some induction arguments in the parabolic case.

\textsection \ref{babyco} is devoted to define the baby comultiplications that will help us establish the main result later. We explicitly write down their formulas and show that they are injective whenever they are defined. 

In \textsection \ref{canfil}, we introduce the canonical filtration of $Y_{m|n}(\sigma)$, which eventually corresponds to the Kazhdan filtration of finite $W$-superalgebras. 
The shifted super Yangian of level $\ell$ is defined in \textsection \ref{truncation} as a quotient of $Y_{m|n}(\sigma)$.

In \textsection \ref{Inv}, we explicitly define some distinguished elements in the universal enveloping algebra $U(\mathfrak{gl}_{M|N})$ that will eventually be identified as generators of our finite $W$-superalgebra. Our main result is stated and proved in \textsection \ref{mainsec}.

In this article, our field is the field of complex numbers $\mathbb{C}$, which can be replaced by any algebraically closed field of characteristic zero.
The term {\em subalgebra} always means a {\em sub-superalgebra}.
For homogeneous elements $x$ and $y$ in an associated superalgebra $L$, the {\em supercommutator} of $x$ and $y$ is defined by 
\[
\big[ x,y \big] = xy-(-1)^{\pa{x}\pa{y}}yx,
\]
where $\pa{x}$ is the $\Z_2$-grading of $x$ in $L$, called the {\em parity} of $x$. By convention, a homogeneous element $x$ is called $even$ (resp. $odd$) if $\pa{x}=\overline{0}$ (resp. $\overline{1}$). $L_{\overline{0}}$ and $L_{\overline{1}}$ denote the set of even and odd elements in $L$, respectively.

%%%%%%%%%%%%%%%%%%%%%%%%%%%%%%%%%%%%%%%%%%%%%%%%%%%%

\subsection*{Acknowledgements}
The author is grateful to Shun-Jen Cheng and Weiqiang Wang for countless discussions and encouragement. 
A part of this article was finished during the author's visit to RIMS (Kyoto, Japan) in 2016. 
The author would like to thank the RIMS for providing an excellent working environment, 
and also thank Naoki Genra, Ryosuke Kodera and Hiraku Nakajima for stimulating discussions during the visit.
The visit is supported by the NCTS (Taipei, Taiwan), which is greatly acknowledged.
The author would also like to thank Lucy Gow, Alexander Molev and Alexander Tsymbaliuk for communication.
This work is partially supported by MOST grant 105-2628-M-008-004-MY4.

%%%%%%%%%%%%%%%%%%%%%%%%%%%%%%%%%%%%%%%%%%%%%%%%%%%%

\section{Finite $W$-superalgebras and pyramids}\label{preW}

In this section, we recall the definition of a finite $W$-superalgebra, which is determined by an even nilpotent element $e$ and a semisimple element $h$ of $\glMN$. Also, a combinatorial object called {\em pyramid} is introduced so that we may encode $e$ and $h$ simultaneously by a diagram $\pi$.

Throughout this section, $\mathfrak{g}=\glMN$ is identified with the set of $(M+N)\times(M+N)$ matrices with the standard $\mathbb{Z}_2$-grading $\g=\g_{\ovl 0}\oplus \g_{\ovl 1}$ and $(\, \cdot \, ,\,\cdot\,)$ means the non-degenerate even supersymmetric $\g$-invariant bilinear form on $\g$ defined by 
$$(x,y):=\str(x y)$$
for all $x,y \in \g$, where $x y$ stands for the usual matrix product and $\str$ means the supertrace. Every elements of $\g$ appearing in any equations are considered homogeneous with respect to the $\mathbb{Z}_2$-grading unless specifically mentioned.

\subsection{Finite $W$-superalgebras of $\glMN$}

Let $e$ be an even nilpotent element in $\g$. It is well-known \cite{Ho,Wa} that there exists (not uniquely in general) a semisimple element $h\in\g$ such that $\ad h:\g\rightarrow\g$ gives a {\em good $\mathbb{Z}$-grading of $\g$ for $e$}, which means the following conditions are satisfied:
\begin{enumerate}
\item[(1)] $\ad h(e)=2e$,
\item[(2)] $\g=\bigoplus_{j\in\mathbb{Z}} \g(j)$, where $\g(j):=\{x\in\g|\ad h(x)=jx\}$,
\item[(3)] the center of $\g$ is contained in $\g(0)$,
\item[(4)] $\ad e:\g(j)\rightarrow\g(j+2)$ is injective for all $j\leq -1$,
\item[(5)] $\ad e:\g(j)\rightarrow\g(j+2)$ is surjective for all $j\geq -1$.
\end{enumerate}

In order to simplify the definition of finite $W$-superalgebras, throughout this article, we assume in addition that the $\mathbb{Z}$-grading is {\em even}; that is, $\g(i)=0$ for all $i\notin 2\mathbb{Z}$. We say $\langle e,h\rangle$ is a {\em good pair} if $\ad h$ gives an even good $\mathbb{Z}$-grading of $\g$ for $e$.

\begin{remark}
In general, a good pair may fail to exist in other types of classical Lie superalgebras \cite{Ho}. But for any even nilpotent $e\in\glMN$ we can always find some $h$ such that $\langle e,h\rangle$ is a good pair; see Theorem {\em \ref{Hoyt2}}. 
\end{remark}

Fix a good pair $\langle e,h\rangle$ in $\g$. Define the following subalgebras of $\g$ by
\begin{equation}\label{mpdef}
\mathfrak{p}:=\bigoplus_{j\geq 0}\g(j), \quad \mathfrak{m}:=\bigoplus_{j<0}\g(j).
\end{equation}
Define $\chi\in\g^*$ by 
$$\chi(y):=(y,e)\qquad   \forall y\in\g.$$ 
The restriction of $\chi$ on $\mathfrak{m}$ extends to a one dimensional $U(\mathfrak{m})$-module. Let $I_\chi$ be the left ideal of $U(\g)$ generated by 
$$\{a-\chi(a)  \,  |  \,  a\in\mathfrak{m}\}.$$ 

As a consequence of the PBW theorem for $U(\g)$, we have $U(\g)=I_\chi\oplus U(\mathfrak{p})$ together with the following identification 
$$U(\g)/I_\chi \cong U(\mathfrak{p})$$ 
by the natural projection $\pr_\chi:U(\g)\rightarrow U(\mathfrak{p})$. One defines the following $\chi$-twisted action of $\mathfrak{m}$ on $U(\mathfrak{p})$ by  
$$a\cdot y := \pr_\chi([a,y]),$$ 
for all $a\in\mathfrak{m}, y\in U(\mathfrak{p})$. 

The {\em finite W-superalgebra}, which we will usually omit the prefix ``finite" from now on, is defined to be the space of $\mathfrak{m}$-invariants in $U(\mathfrak{p})$ under the $\chi$-twisted action; to be explicit,
\begin{align*}
\mathcal{W}_{e,h}:=U(\mathfrak{p})^\mathfrak{m}=&\{y\in U(\mathfrak{p})\,\,|\,\, \pr_\chi ([a,y])=0, \forall a\in\mathfrak{m}\}\\
=&\{y\in U(\mathfrak{p})\,\,|\,\, \big(a-\chi(a)\big)y\in I_\chi, \forall a\in\mathfrak{m}\}.
\end{align*}
For example, if $e=0$, then $\chi=0$, $\g=\g(0)=\mathfrak{p}$ and $\mathfrak{m}=0$. Thus the associated $W$-superalgebra is exactly $U(\g)$.

At this point, it seems that the definition of a $W$-superalgebra depends on both of $e$ and $h$ in the good pair. In fact, the definition is independent of the choices of $h$ up to isomorphisms; see Remark \ref{912}.

\subsection{Pyramids and $W$-superalgebras}
We recall the notion of {\em pyramid} \cite{EK, Ho} as a convenient tool to present a good pair $\langle e,h\rangle$. We will identify a partition $\lambda=(\lambda_1,\lambda_2,\ldots)$ with its corresponding Young diagram in French style, which means that the diagrams are left-justified and the longest row is located in the bottom.

\begin{definition}
Let $\lambda$ be a Young diagram. A $pyramid$ is a diagram obtained by horizontally shifting the rows of $\lambda$ such that 
for each box not in the bottom row, there is exactly one box below it.
\end{definition}
For example, only the left-most diagram is a pyramid obtained from $\lambda=(3,2,1)$:
$$
\begin{picture}(150,70)
\put(-70,10){\line(1,0){60}}
\put(-70,30){\line(1,0){60}}
\put(-50,50){\line(1,0){40}}
\put(-50,70){\line(1,0){20}}
\put(-10,10){\line(0,1){40}}
\put(-70,10){\line(0,1){20}}
\put(-50,10){\line(0,1){40}}
\put(-30,10){\line(0,1){60}}
\put(-50,50){\line(0,1){20}}

\put(40,10){\line(1,0){60}}
\put(40,30){\line(1,0){60}}
\put(40,50){\line(1,0){40}}
\put(50,70){\line(1,0){20}}
\put(50,50){\line(0,1){20}}
\put(70,50){\line(0,1){20}}

\put(100,10){\line(0,1){20}}
\put(40,10){\line(0,1){40}}
\put(60,10){\line(0,1){40}}
\put(80,10){\line(0,1){40}}

\put(140,10){\line(1,0){60}}
\put(140,30){\line(1,0){80}}
\put(180,70){\line(1,0){20}}
\put(180,50){\line(1,0){40}}

\put(200,10){\line(0,1){60}}
\put(140,10){\line(0,1){20}}
\put(160,10){\line(0,1){20}}
\put(180,10){\line(0,1){60}}
\put(220,30){\line(0,1){20}}
\end{picture}
$$

Let $V=V_{\ovl{0}}\oplus V_{\ovl{1}}$ be a $\Z_2$-graded vector space with $\dim V_{\ovl{0}}=M$ and $\dim V_{\ovl{1}}=N$. 
We identify $\g=\glMN$ with $\End V$ and one has the following identification for $\g_{\ovl 0}$
$$\g_{\ovl{0}}\cong \End (V_{\ovl{0}})\oplus \End (V_{\ovl{1}}).$$
As a result, an even nilpotent element $e\in\glMN$ can be thought as a sum of two nilpotent element
$e=e_{\ovl{0}}+e_{\ovl{1}}$, where $e_{i}\in \End V_{i}$ for $i\in\{\ovl{0},\ovl{1}\}$. Thus we may describe $e$ by two Young diagrams $\mu$ and $\nu$ corresponding to the Jordan types of $e_{\ovl{0}}$ and $e_{\ovl{1}}$, respectively. 

For example, the diagram
$$\Yvcentermath1 \young(++,+++)\oplus \young(--,----)$$
represents an even nilpotent element in $\gl_{5|6}$, which is a sum of a nilpotent element in $\End \C^5$ with Jordan type $\mu=(3,2)$ and a nilpotent element in $\End \C^6$ with Jordan type $\nu=(4,2)$. We put $+$ and $-$ in the boxes because we now stack the two diagrams together to obtain a new Young diagram, and we need to track from which diagram the boxes originally are.

For example, there are two possibilities if we stack the above two Young diagrams together to obtain one Young diagram:
\begin{equation}\label{2.2}
\Yvcentermath1 \young(++,--,+++,----)\qquad\qquad \young(--,++,+++,----)
\end{equation}

\begin{remark}\label{2.4}
The pyramids in this article correspond to certain even nilpotent elements in $\gl_{M|N}$, hence the following condition always holds:
\begin{center}
every boxes in a row have the same $+$ or $-$ labeling.
\end{center}
\end{remark}

As one may expect, we shift the rows of the stacked Young diagram to obtain a pyramid. For example, we take the right diagram in (\ref{2.2}) and list all possibilities below:
\[
{\begin{picture}(90, 60)%
\put(-115,-10){\line(1,0){60}}
\put(-115,5){\line(1,0){60}}
\put(-100,20){\line(1,0){45}}
\put(-100,35){\line(1,0){30}}
\put(-100,50){\line(1,0){30}}
\put(-115,-10){\line(0,1){15}}
\put(-100,-10){\line(0,1){60}}
\put(-85,-10){\line(0,1){60}}
\put(-70,-10){\line(0,1){60}}
\put(-55,-10){\line(0,1){30}}
\put(-112,-5){$-$}\put(-97,-5){$-$}\put(-82,-5){$-$}\put(-67,-5){$-$}
\put(-97,10){$+$}\put(-82,10){$+$}\put(-67,10){$+$}
\put(-97,25){$+$}\put(-82,25){$+$}
\put(-97,40){$-$}\put(-82,40){$-$}

\put(-15,-10){\line(1,0){60}}
\put(-15,5){\line(1,0){60}}
\put(0,20){\line(1,0){45}}
\put(15,35){\line(1,0){30}}
\put(15,50){\line(1,0){30}}
\put(-15,-10){\line(0,1){15}}
\put(0,-10){\line(0,1){30}}
\put(15,-10){\line(0,1){60}}
\put(30,-10){\line(0,1){60}}
\put(45,-10){\line(0,1){60}}
\put(-12,-5){$-$}\put(3,-5){$-$}\put(18,-5){$-$}\put(33,-5){$-$}
\put(3,10){$+$}\put(18,10){$+$}\put(33,10){$+$}
\put(18,25){$+$}\put(33,25){$+$}
\put(18,40){$-$}\put(33,40){$-$}

\put(85,-10){\line(1,0){60}}
\put(85,5){\line(1,0){60}}
\put(85,20){\line(1,0){45}}
\put(85,35){\line(1,0){30}}
\put(85,50){\line(1,0){30}}
\put(85,-10){\line(0,1){60}}
\put(100,-10){\line(0,1){60}}
\put(115,-10){\line(0,1){60}}
\put(130,-10){\line(0,1){30}}
\put(145,-10){\line(0,1){15}}
\put(88,-5){$-$}\put(103,-5){$-$}\put(118,-5){$-$}\put(133,-5){$-$}
\put(88,10){$+$}\put(103,10){$+$}\put(118,10){$+$}
\put(88,25){$+$}\put(103,25){$+$}
\put(88,40){$-$}\put(103,40){$-$}

\put(185,-10){\line(1,0){60}}
\put(185,5){\line(1,0){60}}
\put(185,20){\line(1,0){45}}
\put(200,35){\line(1,0){30}}
\put(200,50){\line(1,0){30}}
\put(185,-10){\line(0,1){30}}
\put(200,-10){\line(0,1){60}}
\put(215,-10){\line(0,1){60}}
\put(230,-10){\line(0,1){60}}
\put(245,-10){\line(0,1){15}}
\put(188,-5){$-$}\put(203,-5){$-$}\put(218,-5){$-$}\put(233,-5){$-$}
\put(188,10){$+$}\put(203,10){$+$}\put(218,10){$+$}
\put(203,25){$+$}\put(218,25){$+$}
\put(203,40){$-$}\put(218,40){$-$}

\end{picture}}\\[4mm]
\]

Soon we will see (Theorem \ref{Hoyt2}) that each of these pyramids represents a good pair $\langle e,h\rangle$ in $\gl_{5|6}$. Moreover, these are {\em all} good pairs we could have for that given $e\in\gl_{5|6}$.

Now we do the other way around: obtaining a good pair $\langle e,h\rangle $ from a given pyramid $\pi$ satisfying the condition described in Remark \ref{2.4}.
Assume that we have $M$ (resp. $N$) boxes labeled with $+$ (resp. $-$) in $\pi$, where they came from the Young diagram of $e_{\ovl{0}}\in\gl_{M|0}$ (resp. $e_{\ovl{1}}\in\gl_{0|N}$).
We enumerate those $``+"$ boxes by $1,2,\ldots, M$ down columns from left to right, and enumerate those $``-"$ boxes by $\ovl{1},\ovl{2},\ldots,\ovl{N}$ by the same rule.

Next we imagine that each box of $\pi$ is of size $2\times 2$ and our pyramid is built on the $x$-axis, where the center of $\pi$ is exactly located above the origin. For instance:

\begin{equation}\label{exp}
\pi={\begin{picture}(90, 80)%
\put(-80,-35){$x$-coordinates:}
\put(15,-10){\line(1,0){80}}
\put(15,10){\line(1,0){80}}
\put(35,30){\line(1,0){60}}
\put(35,50){\line(1,0){40}}
\put(35,70){\line(1,0){40}}
\put(15,-10){\line(0,1){20}}
\put(35,-10){\line(0,1){80}}
\put(55,-10){\line(0,1){80}}
\put(75,-10){\line(0,1){80}}
\put(95,-10){\line(0,1){40}}
\put(23,-3){$\ovl{1}$}\put(43,-3){$\ovl{3}$}\put(63,-3){$\ovl{5}$}\put(83,-3){$\ovl{6}$}
\put(43,16){$2$}\put(63,16){$4$}\put(83,16){$5$}
\put(43,35){$1$}\put(63,35){$3$}
\put(43,55){$\ovl{2}$}\put(63,55){$\ovl{4}$}
\put(0,-20){\line(1,0){110}}
\put(53,-23){$\bullet$}
\put(63,-35){$1$}\put(83,-35){$3$}
\put(35,-35){$-1$}\put(15,-35){$-3$}
\end{picture}}\\[15mm]
\end{equation}

Let $I=\{1<\ldots<M<\ovl{1}<\ldots<\ovl{N}\}$ be an ordered index set and let $\{v_i|i\in I\}$ be the standard basis of $\mathbb{C}^{M|N}$ with respect to the following order 
\[v_i<v_j \text{ if } i<j \text{ in } I.\] 
Let $\{e_{i,j}\,|\, i,j\in I\}$ denote the elementary matrices in $\gl_{M|N}$.
Define the element 
\begin{equation}\label{edef}
e_\pi:=\sum_{\substack{  {\tiny \young(ij)} \in \pi}}e_{i,j}\in \g_{\ovl{0}},
\end{equation}
where the sum is taken over all adjacent pairs $\young(ij)$ appeared in $\pi.$ %Clearly such an element $e_\pi$ is nilpotent.

Let $\text{col}_x(i)$ denote the $x$-coordinate of the center of the box numbered with $i\in I$, which must be an integer by our construction. Define the following diagonal matrix
\begin{equation}\label{hdef}
h_\pi:=-\text{diag}\big(\text{col}_x(1), \ldots, \text{col}_x(M), \text{col}_x(\ovl{1}),\ldots, \text{col}_x(\ovl{N})\big)
\end{equation}
For example, the elements $e_\pi$ and $h_\pi$ associated to the pyramid $\pi$ in (\ref{exp}) are
\begin{align*}
e_\pi&=e_{13}+e_{24}+e_{45}+e_{\ovl{2} \,\ovl{4}}+e_{\ovl{1}  \, \ovl{3}}+e_{\ovl{3} \, \ovl{5}}+e_{ \ovl{5} \, \ovl{6}},\\
h_\pi&=\text{diag}(1,1,-1,-1,-3,3,1,1,-1,-1,-3).
\end{align*}
It is easy to check that $\langle e_\pi,h_\pi\rangle$ forms a good pair.

Note that if we horizontally shift the rows of $\pi$ to obtain another pyramid $\vec\pi$, then $e_\pi=e_{\vec\pi}$ but $h_\pi\neq h_{\vec\pi}$. The following theorem implies that every even good $\mathbb{Z}$-gradings for $e_\pi$ can be obtained by shifting the rows of $\pi$.

\begin{theorem}\cite[Theorem 7.2]{Ho}\label{Hoyt2}
Let $\pi$ be a pyramid. Let $e=e_\pi$ and $h=h_\pi$ be the elements in $\glMN$ defined by {\em (\ref{edef})} and {\em (\ref{hdef})}, respectively. Then $\langle e,h\rangle$ forms a good pair for $e$. Moreover, any good pair for $e$ is of the form $\langle e,h_{\vec\pi}\rangle$ where $\vec\pi$ is some pyramid obtained by shifting rows of $\pi$ horizontally.
\end{theorem}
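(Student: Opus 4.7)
The plan is to verify the five good-grading axioms together with the evenness condition by direct inspection of the pyramid combinatorics, and then to classify all semisimple $h$ for which $\langle e_\pi, h\rangle$ is a good pair.

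Axioms (1), (2), (3), and evenness should fall out immediately from the identity $[h_\pi, e_{i,j}] = (\col_x(j) - \col_x(i))\,e_{i,j}$: since adjacent boxes of $\pi$ differ by $2$ in the $x$-direction, each summand of $e_\pi$ lies in $\g(2)$, giving (1); since all column centers of $\pi$ are integers of a common parity, every eigenvalue of $\ad h_\pi$ is an even integer, yielding (2) and evenness simultaneously; and the center of $\glMN$ consists of diagonal matrices, hence sits in $\g(0)$, giving (3).

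The substantive content is in (4) and (5). I would first treat the left-justified pyramid $\pi_0$ (rows flush to the left), for which $(e_{\pi_0}, h_{\pi_0})$ extends canonically to an $\mathfrak{sl}_2$-triple $(e, h, f)$ inside $\g_{\ovl 0}$ and the grading from $\ad h_{\pi_0}$ coincides with the Dynkin grading. Decomposing $\g$ as a module over this $\mathfrak{sl}_2$ then reduces (4) and (5) to the standard fact that on each irreducible summand $V(n)$ the map $\ad e$ is an isomorphism between consecutive weight spaces for $-n \leq j \leq n-2$, so that $\ad e_{\pi_0}\colon\g(j)\to\g(j+2)$ is surjective for $j\geq -1$ and injective for $j\leq -1$. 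For a general pyramid $\pi$ I would write $h_\pi = h_{\pi_0} + z_\pi$, where $z_\pi$ is diagonal with eigenvalue constant along each row of $\pi$ (both pyramids share the same row structure, so $z_\pi$ records the horizontal row shifts). Because $e_\pi$ moves basis vectors only horizontally within rows, one has $[z_\pi, e_\pi]=0$, so $\ad h_{\pi_0}$ and $\ad z_\pi$ define commuting gradings on $\g$ and $\ad e_\pi$ has $z_\pi$-weight zero; surjectivity and injectivity then transfer from the Dynkin grading to the twisted grading $\ad h_\pi$ by summing over $z_\pi$-eigenspaces.

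For the classification statement, any good $h$ for $e = e_\pi$ satisfies $h - h_{\pi_0} \in \g^{e_\pi}$ and is semisimple. A direct description of $\g^{e_\pi}$ (spanned by chains of matrix units running along the rows of $\pi$), combined with the requirement that $\ad(h - h_{\pi_0})$ preserve the even $\mathbb Z$-grading from $\pi_0$ and commute with $\ad h_{\pi_0}$, should force $h - h_{\pi_0}$ to be diagonal in the Jordan basis with eigenvalues constant along each row; these row-constants then record exactly a horizontal shift, producing $h = h_{\vec\pi}$ for some row-shifted pyramid $\vec\pi$. The main obstacle I anticipate is this last classification step, particularly in carefully ruling out semisimple elements of $\g^{e_\pi}$ outside the diagonal row-translation subalgebra and in handling the superalgebra-specific subtleties (such as the additional central element when $M=N$ and the parity constraint on the grading), since these are precisely the points where the pure Lie algebra arguments of Elashvili--Kac require non-trivial modification.
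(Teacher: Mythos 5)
This statement is not proved in the paper at all: it is quoted verbatim from Hoyt \cite[Theorem 7.2]{Ho} (the non-super antecedent being Elashvili--Kac \cite{EK}), so your proposal can only be compared with the cited source. Your verification of conditions (1)--(3) and of evenness from $[h_\pi,e_{i,j}]=(\col_x(j)-\col_x(i))e_{i,j}$ is fine. The problem is the step you treat as routine, namely the transfer of (4) and (5) from a reference grading to $\ad h_\pi$ via the commuting diagonal correction $z_\pi$. Writing $\g_0(j)$ for the $\ad h_{\pi_0}$-eigenspaces and $\g_z(k)$ for the $\ad z_\pi$-eigenspaces, the new graded piece is $\g(i)=\bigoplus_{j+k=i}\bigl(\g_0(j)\cap\g_z(k)\bigr)$, and $\ad e$ preserves the $z_\pi$-weight. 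Injectivity of $\ad e$ on $\g_0(j)$ for $j\le -1$ tells you nothing about its restriction to a summand $\g_0(j)\cap\g_z(k)$ with $j\ge 0$ but $j+k\le -1$, and dually for surjectivity; so the properties do not ``transfer by summing over $z_\pi$-eigenspaces.'' Indeed your argument never invokes the defining pyramid axiom (each box not in the bottom row sits over exactly one box, equivalently the column ranges of any two rows are nested), and without it the conclusion is false: for Jordan type $(2,1)$ in $\gl_3$, translating the one-box row far to the right produces a $z_\pi$ satisfying all your hypotheses but a grading that is not good. The genuine content of (4)--(5) is a row-pair computation: decompose $\g$ into the blocks $\operatorname{Hom}(R_a,R_b)$ for pairs of rows, where $\ad e_\pi$ acts by the difference of two shift operators, and check injectivity/surjectivity degree by degree using the nesting of column intervals. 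This is the step your proposal skips.

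Two further points. First, the grading of the left-justified pyramid is generally \emph{not} the Dynkin grading: the Dynkin grading corresponds to centering every row at the origin, which typically fails to be even and need not even be a pyramid in the combinatorial sense used here, so your base case is misidentified (though one could instead verify the left-justified case directly by the row-pair analysis above). Second, in the classification step, $h-h_{\pi_0}\in\g^{e}$ is automatic, but there is no a priori reason for $h-h_{\pi_0}$ to commute with $h_{\pi_0}$ or to be diagonal in the Jordan basis; one must first conjugate $h$ by an element of the centralizer of $e$ (the classification is up to such conjugation), and then analyze the semisimple part of $\g^e\cap\g(0)$. You correctly flag this as the hard point, but as written it remains a gap rather than a proof.
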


In other words, Theorem~\ref{Hoyt2} {\em classifies} all of the even good $\mathbb{Z}$-gradings of $\gl_{M|N}$ for {\em any} even nilpotent $e$. 
(In fact, \cite[Theorem 7.2]{Ho}\label{Hoyt} classifies {\em all} good $\mathbb{Z}$-gradings, not just those even good $\mathbb{Z}$-gradings considered in this article.)
As a consequence, for a given pyramid $\pi$, it makes sense to denote the $W$-superalgebra associated to the good pair $\langle e_\pi$, $h_\pi\rangle$ simply by $\W_\pi:=\W_{e_\pi,h_\pi}$. 

\begin{remark}\label{swap}
If we permute the rows with the same length of $\pi$ to obtain a new pyramid $\pi^\prime$, then we have $e_\pi=e_{\pi^{\prime}}$ and $h_\pi=h_{\pi^{\prime}}$. For example, the two Young diagrams in {\em (\ref{2.2})} give us exactly the same list of good pairs by shifting their rows.
\end{remark}

We label the columns of $\pi$ from left to right by $1,\ldots, \ell$. For any $i\in I$, let col($i$) denote the column where $i$ appear. The {\em Kazhdan filtration} of $U(\g)$
\[
\cdots\subseteq F_dU(\g) \subseteq F_{d+1}U(\g)\subseteq \cdots
\] 
is defined by setting
\begin{equation}\label{degdef}
\deg (e_{i,j}):= \text{col}(j)-\text{col}(i)+1
\end{equation}
for each $i,j \in I$, where $F_dU(\g)$ denotes the span of all supermonomials $e_{i_1,j_1}\cdots e_{i_s,j_s}$ for $s\geq 0$ with $\sum_{k=1}^s$ deg $(e_{i_k,j_k})\leq d$. Let $\gr U(\g)$ denote the graded superalgebra associated to the Kazhdan filtration. A natural grading on $\W_\pi$ is induced from the  projection $\g\twoheadrightarrow\mathfrak{p}$ and we denote by $\gr \W_\pi$ the associated graded superalgebra. 

Let $\g^e$ denote the centralizer of $e$ in $\g$ and let $S(\g^e)$ denote the associated supersymmetric superalgebra. 
The same setting (\ref{degdef}) defines the Kazhdan filtration on $S(\g^e)$. 
The following result still holds in our case since our pyramid $\pi$ satisfies the condition in Remark \ref{2.4}.

\begin{proposition}\cite[Remark 3.11]{Zh}\label{dimprop}
$S(\g^e)$ and $\gr \W_\pi$ are isomorphic as graded superalgebras.
\end{proposition}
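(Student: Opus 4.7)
The plan is to carry the classical Kazhdan-filtration argument of Premet (extended to the super setting in \cite{Zh}) over to the present pyramid situation, using the hypothesis of Remark~\ref{2.4} at the one place where parity compatibility is required.

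First, I would observe that the Kazhdan filtration \eqref{degdef} restricts to $U(\mathfrak{p}) \subseteq U(\g)$, and the PBW theorem for universal enveloping superalgebras gives $\gr U(\mathfrak{p}) \cong S(\mathfrak{p})$ as graded superalgebras, each $e_{ij}$ carrying Kazhdan degree $\col(j)-\col(i)+1$. Restricting the filtration to $\W_\pi$ yields an injection $\gr \W_\pi \hookrightarrow S(\mathfrak{p})$; moreover, condition~(4) of the good pair forces $\g^e \subseteq \mathfrak{p}$, so that $S(\g^e) \hookrightarrow S(\mathfrak{p})$. The task is to show that these two graded subalgebras of $S(\mathfrak{p})$ coincide.

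For the containment $\gr \W_\pi \subseteq S(\g^e)$, I would take $y \in F_d \W_\pi$ with symbol $\bar{y}$ and pass the defining relation $\pr_\chi([a,y])=0$ (for every homogeneous $a \in \mathfrak{m}$) to the symbol level; one obtains a Whittaker-type invariance condition on $\bar{y}$. Using the supertrace form $(\cdot,\cdot) = \str(\cdot\,\cdot)$ to identify $S(\mathfrak{p}) \cong \mathcal{O}(\chi + \mathfrak{m}^\perp)$, this invariance cuts out precisely the algebra of $M$-invariant regular functions on the Slodowy-type slice $\chi + \mathfrak{m}^\perp$, which by transversality is isomorphic to $S(\g^e)$ via the form. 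For the opposite containment $S(\g^e) \subseteq \gr \W_\pi$, I would construct, for each $\Z_2$-homogeneous weight basis element $x \in \g^e$, a Kazhdan-homogeneous lift $\tilde{x} \in \W_\pi$ with $\gr \tilde{x} = x$, by the standard iterative procedure: start from $x \in \mathfrak{p}$ and successively correct by elements of strictly lower Kazhdan degree to kill the obstructions $\pr_\chi([a, \tilde{x}])$, using condition~(5) of the good pair (surjectivity of $\ad e : \g(j) \to \g(j+2)$ for $j \geq -1$) to solve the cohomological equations at each step. Extending multiplicatively produces a graded embedding $S(\g^e) \hookrightarrow \gr \W_\pi$, and combining the two containments yields the claimed isomorphism.

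The main obstacle is the super-sign bookkeeping, both in the identification of the slice ring with $S(\g^e)$ and in the iterative construction of lifts: the Jacobi identity acquires extra signs in the super setting, and without care the constructed lifts can fail to be $\Z_2$-homogeneous or the termination of the iteration can break down. This is exactly where Remark~\ref{2.4} enters: its row-uniformity condition forces every graded piece $\g(j)$ to be $\Z_2$-homogeneous, so that $\mathfrak{m}$, $\mathfrak{p}$, $\g^e$, and the Slodowy slice all split cleanly along the $\Z_2$-grading, and the correction procedure preserves parity. From here the argument runs in parallel to \cite[Theorem~4.1]{Pr1}, confirming \cite[Remark~3.11]{Zh} in our generality.
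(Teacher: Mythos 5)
Your outline is correct, and it is essentially the standard Premet/Gan--Ginzburg filtration argument (in its super incarnation) that underlies the reference the paper leans on: the paper itself offers no proof of this proposition beyond citing \cite[Remark 3.11]{Zh} and observing that the hypotheses are met. One small correction of emphasis: the point at which the super setting is genuinely benign is not Remark~\ref{2.4} in the way you describe — each $\g(j)$ is automatically a $\Z_2$-graded subspace because $h_\pi$ is even — but rather the standing assumption that the good grading is \emph{even}, so that $\g(-1)=0$ and one never has to choose a Lagrangian subspace of $\g(-1)$ (whose odd part carries a symmetric rather than symplectic form), which is the step that can obstruct or modify the PBW isomorphism for general good gradings of Lie superalgebras.
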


\subsection{Shift matrix}
We give an alternative way to describe a pyramid.
An $(m+n)\times(m+n)$ matrix $\sigma=(s_{i,j})_{1\leq i,j\leq m+n}$ is called a {\em shift matrix} if its entries are non-negative integers satisfying the following condition
\begin{equation}\label{sijk}
s_{i,j} + s_{j,k} = s_{i,k},
\end{equation}
whenever $|i-j|$+$|j-k|$=$|i-k|$. 
For example, the following matrix is a shift matrix:
\begin{equation}\label{sigex}
\sigma = \left(\begin{array}{llllll}
0&1&2&2&3&3\\
0&0&1&1&2&2\\
1&1&0&0&1&1\\
1&1&0&0&1&1\\
3&3&2&2&0&0\\
4&4&3&3&1&0
\end{array}\right)
\end{equation}

\begin{lemma}\label{shiftdet}
The follow facts hold for a shift matrix $\sigma=(s_{i,j})_{1\leq i,j\leq m+n}$.
\begin{enumerate}
\item If the entries in the last column $\{ s_{i,m+n} \,|\, 1\leq i\leq m+n\}$ are known, then the whole upper-triangular part of $\sigma$ is determined.
\item If the entries in the upper-diagonal $\{ s_{i,i+1} \,|\, 1\leq i <m+n\}$ are known, then the whole upper-triangular part of $\sigma$ is determined.
\item If the entries in the last row $\{ s_{m+n,i} \,|\, 1\leq i\leq m+n\}$ are known, then the whole lower-triangular part of $\sigma$ is determined.
\item If the entries in the lower-diagonal $\{ s_{i+1,i} \,|\, 1\leq i <m+n \}$ are known, then the whole lower-triangular part of $\sigma$ is determined.
\end{enumerate}
\end{lemma}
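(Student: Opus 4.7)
The plan is to exploit the cocycle-like relation $s_{i,j}+s_{j,k}=s_{i,k}$ from \eqref{sijk}, which is available whenever $j$ lies weakly between $i$ and $k$ (that is, whenever $|i-j|+|j-k|=|i-k|$). Setting $i=j=k$ first gives $s_{i,i}=0$. All four statements then reduce to choosing the intermediate index $k$ (or iterating such a choice) so that the betweenness hypothesis is automatic.

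For part (1), I fix $i<j$ and apply the relation with $k=m+n$: since $i\leq j\leq m+n$ the hypothesis holds, so
\[
s_{i,j}=s_{i,m+n}-s_{j,m+n},
\]
which reconstructs every upper-triangular entry from the last column. Part (2) I handle by induction on $j-i$: the base case $j=i+1$ is given, and the inductive step uses $s_{i,j}=s_{i,i+1}+s_{i+1,j}$, which is valid because $i<i+1\leq j$ and which combines a known upper-diagonal entry with a previously-determined entry.

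Parts (3) and (4) are the symmetric statements for the lower-triangular part and follow the same pattern. For (3), with $i>j$, the relation $s_{m+n,j}=s_{m+n,i}+s_{i,j}$ holds since $j\leq i\leq m+n$, giving $s_{i,j}=s_{m+n,j}-s_{m+n,i}$; note that specializing $i=m+n$ or $i=j$ recovers, respectively, the given last-row entries and the diagonal zeros consistently. For (4), I induct on $i-j$ and use $s_{i,j}=s_{i,i-1}+s_{i-1,j}$ (valid because $j\leq i-1<i$). The only point to double-check throughout is that the betweenness hypothesis $|i-j|+|j-k|=|i-k|$ is indeed satisfied in each invocation of \eqref{sijk}; but this is automatic once the three indices involved are arranged in weakly monotone order, so no real obstacle arises. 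The lemma is essentially a direct unwinding of the defining condition of a shift matrix.
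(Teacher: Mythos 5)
Your proof is correct and takes the same route as the paper, whose entire proof of this lemma is simply ``By (\ref{sijk})''; you have just unwound the defining cocycle relation explicitly, and each invocation of the betweenness hypothesis checks out.
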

\begin{proof}
By $(\ref{sijk})$.
\end{proof}

In our superalgebra setting, we need to record the $\pm$-labeling of each row in our pyramid, so we introduce the following terminology.
Let $m,n\in\mathbb{Z}_{\geq 0}$. A {\em $0^m1^n$-sequence}, or {\em $01$-sequence} for short, is an ordered sequence $\bo$ consisting of $m$ $0$'s and $n$ $1$'s. For $1\leq i\leq m+n$, the i-th digit of $\bo$ is denoted by $|i|$.

Suppose that $\sigma\in M_{m+n}(\mathbb{Z}_{\geq 0})$ is a shift matrix. Let $\ell$ be an integer such that $\ell>s_{1,m+n}+s_{m+n,1}$ and let $\bo$ be a fixed $0^m1^n$-sequence. Then one can obtain a pyramid $\pi$, with $m$ (resp. $n$) rows labeled by ``+" (resp. ``$-$") and the bottom row consisting of $\ell$ boxes, from the triple $(\sigma,\ell,\bo)$ by the following fashion.

Start with a rectangular Young diagram consisting of $m+n$ rows and $\ell$ columns, which we denote by $\Xi$. 
We number the rows of $\Xi$ from top to bottom by $1,2,\ldots, m+n$. For each $1\leq i\leq m+n$, we label every boxes in the $i$-th row of $\Xi$ by $``+"$ if $\pa{i}=0$, and by $``-"$ if $\pa{i}=1$.

Next we obtain our pyramid from this rectangle. Consider the entries in the last row and the last column of $\sigma$: $\{ s_{m+n,i} \,|\, 1\leq i\leq m+n \}$ and $\{ s_{i,m+n} \,|\, 1\leq i\leq m+n \}$.
For each $1\leq j\leq m+n$, we erase the leftmost $s_{m+n,j}$ boxes and the rightmost $s_{j,m+n}$ boxes in the $j$-th row of $\Xi$. By (\ref{sijk}), the resulted diagram is a pyramid which has $\ell$ boxes in the bottom row and $\ell-s_{m+n,1}-s_{1,m+n}$ boxes in the top row.
For example, take $\ell=8$ and let $\sigma$ be the one given in (\ref{sigex}) with $\bo=100010$, the resulted pyramid $\pi$ is
$$
\begin{picture}(200, 125)%
\put(20,0){\line(1,0){160}}
\put(20,20){\line(1,0){160}}
\put(40,40){\line(1,0){120}}
\put(80,60){\line(1,0){80}}
\put(80,80){\line(1,0){80}}
\put(100,100){\line(1,0){40}}
\put(100,120){\line(1,0){20}}
\put(20,0){\line(0,1){20}}
\put(40,0){\line(0,1){40}}
\put(60,0){\line(0,1){40}}
\put(80,0){\line(0,1){80}}
\put(100,0){\line(0,1){120}}
\put(120,0){\line(0,1){120}}
\put(140,0){\line(0,1){100}}
\put(160,0){\line(0,1){80}}
\put(180,0){\line(0,1){20}}
\put(106,106){$-$}

\put(106,86){$+$}\put(126,86){$+$} 

\put(106,66){$+$}\put(126,66){$+$} 
\put(86,66){$+$}\put(146,66){$+$} 

\put(106,46){$+$}\put(126,46){$+$} 
\put(86,46){$+$}\put(146,46){$+$} 

\put(106,26){$-$}\put(126,26){$-$} 
\put(86,26){$-$}\put(146,26){$-$} 
\put(46,26){$-$}\put(66,26){$-$} 

\put(106,6){$+$}\put(126,6){$+$} 
\put(86,6){$+$}\put(146,6){$+$} 
\put(46,6){$+$}\put(66,6){$+$} 
\put(26,6){$+$}\put(166,6){$+$} 

\end{picture}
$$

Conversely, given a pyramid $\pi$ which represents a good pair. Let $\ell$ be the number of boxes in the bottom of $\pi$ and let $m$ and $n$ be the numbers of rows of $\pi$ labeled by $+$ and $-$, respectively. We number the rows of $\pi$ from top to bottom by $1,2,\ldots, m+n$ as before. 
Since $\pi$ satisfies the condition in Remark \ref{2.4}, we may obtain a $0^m1^n$-sequence $\bo$ by assigning the $i$-th digit of $\bo$ to be 0 (resp. 1) if the boxes in the $i$-th row are labeled by $``+"$ (resp. $``-"$).

For each $1\leq i\leq m+n$, define the number $s_{m+n,i}$ (resp. $s_{i,m+n}$) to be the number of missing boxes on the left-hand side (resp. right-hand side) of the $i$-th row of $\pi$ in a rectangular diagram $\Xi$ of size $(m+n)\times \ell$. This gives us the entries of the last row and the last column of $\sigma$ and hence we are able to recover the whole $\sigma$ by Lemma \ref{shiftdet}. The discussion above is summarized in the following proposition.

\begin{proposition}\label{SPbij}
Let $S$ be the set of triples $(\sigma, \ell, \bo)$ where $\sigma$ is a shift matrix of size $m+n$, $\ell>s_{m+n,1}+s_{1,m+n}$ is an integer and $\bo$ is a $0^m1^n$-sequence. Let $P$ be the set of all pyramids $\pi$ such that $\pi$ has $m$ (resp. $n$) rows labeled by $+$ (resp. $-$) and $\ell$ columns. Then there exists a bijection between $S$ and $P$.
\end{proposition}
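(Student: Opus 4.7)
The plan is to construct mutually inverse maps $\Phi\colon S\to P$ and $\Psi\colon P\to S$ by making precise the two procedures already sketched in the paragraphs immediately preceding the proposition, and then verifying that the shift matrix axiom \eqref{sijk} is exactly what is needed to make each construction well-defined.

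First I would define $\Phi(\sigma,\ell,\Upsilon)$ as follows. Start with the rectangular diagram $\Xi$ of shape $(m+n)\times \ell$, number rows $1,\dots,m+n$ from top to bottom, and label every box in row $i$ by $+$ if $|i|=0$ and by $-$ if $|i|=1$. Then delete the leftmost $s_{m+n,i}$ and the rightmost $s_{i,m+n}$ boxes from row $i$. The condition $\ell>s_{m+n,1}+s_{1,m+n}$ together with the shift matrix relation \eqref{sijk} applied to the triples $(m+n,1,i)$ and $(i,m+n,1)$ guarantees that $s_{m+n,i}+s_{i,m+n}\le s_{m+n,1}+s_{1,m+n}<\ell$, so row $i$ is nonempty. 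To verify that the resulting diagram is a genuine pyramid I need to show that for every row $i<m+n$, the boxes of row $i$ sit directly above some boxes of row $i+1$. The left endpoint of row $i$ lies at horizontal position $s_{m+n,i}+1$ and the right endpoint at position $\ell-s_{i,m+n}$; for row $i+1$ the analogous positions are $s_{m+n,i+1}+1$ and $\ell-s_{i+1,m+n}$. Applying \eqref{sijk} to $(m+n,i+1,i)$ and $(i,i+1,m+n)$, I obtain $s_{m+n,i+1}\le s_{m+n,i}$ and $s_{i+1,m+n}\le s_{i,m+n}$, so row $i+1$ extends at least as far to the left and to the right as row $i$, which is exactly the pyramid condition. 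The row labels read top to bottom recover $\Upsilon$ by construction, and the bottom row has $\ell$ boxes by taking $i=m+n$ in the preceding inequalities.

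Next I would define $\Psi(\pi)$ as follows. Let $\ell$ be the length of the bottom row of $\pi$, read $\Upsilon$ from the $\pm$-labels of the rows from top to bottom (well-defined by Remark~\ref{2.4}), embed $\pi$ inside the $(m+n)\times \ell$ rectangle $\Xi$ by anchoring the bottom rows, and for each row $i$ set $s_{m+n,i}$ (resp.\ $s_{i,m+n}$) to be the number of missing boxes on the left (resp.\ right) in row $i$. Note $s_{m+n,m+n}=s_{m+n,m+n}=0$ automatically. These numbers are non-negative integers, and the inequality $\ell>s_{m+n,1}+s_{1,m+n}$ holds because row $1$ is nonempty. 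By Lemma~\ref{shiftdet}(1) and (3), knowledge of the last column and last row uniquely determines the upper and lower triangular entries via the formulas forced by \eqref{sijk}, namely $s_{i,j}=s_{i,m+n}-s_{j,m+n}$ for $i<j$ and $s_{i,j}=s_{m+n,j}-s_{m+n,i}$ for $i>j$. The main verification here is that the resulting matrix actually satisfies \eqref{sijk} in all triples, and that all entries are non-negative integers; non-negativity of the upper entries follows from the pyramid condition applied repeatedly (higher rows are contained horizontally in lower rows, so $s_{i,m+n}\ge s_{j,m+n}$ for $i<j$), and similarly for the lower entries, while the identity \eqref{sijk} is a routine telescoping check.

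The hard part, which is really the only substantive point, is checking that the two procedures are inverse: that the missing-box counts of $\Phi(\sigma,\ell,\Upsilon)$ reproduce $\sigma$'s last row and last column (hence all of $\sigma$ by Lemma~\ref{shiftdet}), and that reconstructing a pyramid from $\Psi(\pi)$ by the rule of $\Phi$ reproduces $\pi$. Both are immediate from the explicit formulas, since $\Phi$ was defined by erasing exactly $s_{m+n,i}$ left boxes and $s_{i,m+n}$ right boxes, and $\Psi$ was defined to read these same counts off. The parameters $m$, $n$, $\ell$ and the sequence $\Upsilon$ are obviously preserved in both directions. This completes the bijection.
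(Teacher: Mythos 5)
Your proof is correct and follows essentially the same route as the paper, which simply records the two constructions (erasing $s_{m+n,i}$ left and $s_{i,m+n}$ right boxes from row $i$ of the rectangle $\Xi$, and reading those counts back off a pyramid) and invokes Lemma~\ref{shiftdet}; you have merely filled in the routine verifications (nonemptiness of rows, the pyramid nesting condition, non-negativity, the telescoping check of \eqref{sijk}, and mutual inverseness). One cosmetic point: the instances of \eqref{sijk} you need are those for the ordered triples $(m+n,i,1)$ and $(1,i,m+n)$, with $i$ in the middle position, rather than $(m+n,1,i)$ and $(i,m+n,1)$ as written, but the inequalities you extract from them are the correct ones.
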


Roughly speaking, $\sigma$ determines the {\em shape and height}, $\ell$ determines the {\em width} and $\bo$ determines the {\em $\pm$-labeling} of $\pi$ and vise versa.

The following proposition is a super analogue of a well-known result about $\g^e$. 
Since our pyramid $\pi$ satisfies the condition described in Remark \ref{2.4}, it is similar to the Lie algebra case as remarked in \cite{BBG}.
\begin{proposition}\label{counting2}
Let $\pi$ be a pyramid with row lengths $\lbrace p_i \, | \, 1\leq i\leq m+n\rbrace$, where the rows are labeled from top to bottom.
Let $\sigma=(s_{i,j})_{1\leq i,j\leq m+n}$ be the associated shift matrix of $\pi$ in the triple $(\sigma,\ell,\bo)$. Let $e=e_\pi$ be the nilpotent element defined by {\em (\ref{edef})}. 
Let $M$ (resp. $N$) be the number of boxes of $\pi$ labeled in $+$ (resp. $-$).
For all $1\leq i,j\leq m+n$ and $r>0$, define
\[
c_{i,j}^{(r)}:=
\sum_{\substack{h,k\in I \\ row(h)=i, \,\, row(k)=j\\ \col(k)-\col(h)=r-1}}e_{h,k}\in \g=\glMN.
\]
Then $\lbrace c_{i,j}^{(r)} \, | \,  1\leq i,j\leq m+n, \, s_{i,j}<r\leq s_{i,j}+p_{min (i,j)}\rbrace$ forms a linear basis for $\g^e$.
\end{proposition}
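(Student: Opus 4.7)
The plan is to follow the classical $\gl_N$ argument (cf.\ the analogous statement in \cite{BK2}): (i) verify that each $c_{i,j}^{(r)}$ with $s_{i,j} < r \leq s_{i,j}+p_{\min(i,j)}$ lies in $\g^e$, (ii) observe linear independence of the proposed family, and (iii) match cardinality with $\dim\g^e$. The uniform-parity condition on rows (Remark \ref{2.4}) will ensure the computation is formally identical to the even Lie-algebra case.

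The main step is (i). Writing $e = \sum e_{a,b}$ over horizontally adjacent pairs $(a,b)$ in $\pi$ and using that $e_{a,b}$ is even, one has $[e_{a,b}, e_{h,k}] = \delta_{b,h}e_{a,k} - \delta_{a,k}e_{h,b}$, hence $[e, e_{h,k}] = e_{h^-,k} - e_{h,k^+}$, where $h^-$ (resp.\ $k^+$) denotes the box immediately left of $h$ (resp.\ right of $k$) in $\pi$, omitted if no such neighbour exists. Summing over the pairs $(h,k)$ defining $c_{i,j}^{(r)}$ and reindexing $h'=h^-$, $k'=k^+$, both resulting sums are supported on pairs of column-difference $r$; they cancel on the interior and the obstruction to full cancellation is a residual contribution from pairs with $h$ rightmost in row $i$ or $k$ leftmost in row $j$. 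Writing $a_i$ and $b_i$ for the smallest and largest column indices occupied by row $i$ of $\pi$, a direct case analysis shows that both boundary terms vanish exactly when $r > \max(a_j - a_i, b_j - b_i)$. Since $a_j = s_{m+n,j}+1$, $b_j = \ell - s_{j,m+n}$, and the pyramid condition forces rows to nest ($a_{i+1} \leq a_i$ and $b_{i+1} \geq b_i$), a short check using (\ref{sijk}) gives $\max(a_j - a_i, b_j - b_i) = s_{i,j}$, yielding the desired lower bound $r > s_{i,j}$.

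Step (ii) is immediate: from any elementary matrix $e_{h,k}$ appearing in some $c_{i,j}^{(r)}$, the triple $(i,j,r) = (\row(h), \row(k), \col(k)-\col(h)+1)$ is uniquely reconstructed, so distinct $c_{i,j}^{(r)}$'s are sums over disjoint sets of elementary matrices and are automatically linearly independent. For (iii), there are $p_{\min(i,j)}$ admissible $r$'s for each $(i,j)$, and since row lengths in a pyramid are non-decreasing from top to bottom, $p_{\min(i,j)} = \min(p_i, p_j)$, giving a total of $\sum_{i,j}\min(p_i, p_j)$. On the other hand, since $e$ is even it respects the $\Z_2$-decomposition of $V$; decomposing $V = \bigoplus_i V^{(i)}$ along Jordan blocks of $e$ (with $V^{(i)}$ of dimension $p_i$ and parity $\pa{i}$ coming from row $i$), we obtain $\g^e = \bigoplus_{i,j} \operatorname{Hom}(V^{(j)}, V^{(i)})^e$, and each summand has dimension $\min(p_i, p_j)$ by the standard $\gl_N$ centralizer formula. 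Hence $\dim \g^e = \sum_{i,j}\min(p_i, p_j)$, matching the count; combining this with (i) and (ii) proves that the $c_{i,j}^{(r)}$'s form a basis of $\g^e$.

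The main obstacle is the cancellation argument in (i): the two partial sums $\sum e_{h^-, k}$ and $\sum e_{h, k^+}$ do not match term-by-term, and one must carefully identify the residual boundary contribution with the shift data $s_{i,j}$ using (\ref{sijk}) and the geometry of $\pi$. Happily, because every elementary matrix contributing to a fixed $c_{i,j}^{(r)}$ is homogeneous of parity $\pa{i}+\pa{j}$, no supercommutator signs intervene and the argument proceeds formally as in the classical case.
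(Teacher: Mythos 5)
Your proof is correct, and it is exactly the standard argument the paper alludes to (the paper omits the proof, citing that it is the same as in the Lie algebra case of \cite{BK2}): one checks $[e,c_{i,j}^{(r)}]$ reduces to boundary terms that vanish for $r>s_{i,j}$, notes the elements have pairwise disjoint supports, and matches the count $\sum_{i,j}\min(p_i,p_j)$ with $\dim\g^e$ via the block decomposition along Jordan blocks, which goes through verbatim because $e$ is even and each row of $\pi$ has constant parity. All the key identifications ($s_{i,j}=\max(a_j-a_i,\,b_j-b_i)$ via (\ref{sijk}), and $p_{\min(i,j)}=\min(p_i,p_j)$ from the nesting of rows) are verified correctly.
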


\section{The super Yangian $Y_{m|n}$}\label{preY}
In this section, we recall some well-known facts about the super Yangian associated to the general linear Lie superalgebra.
\subsection{RTT presentations of $Y_{m|n}$}
\begin{definition}\cite{Na1}
For a given 01-sequence $\bo$, the Yangian associated to the general linear Lie superalgebra $\gl_{m|n}$, denoted by $Y_{m|n}$, is the associative $\mathbb{Z}_2$-graded algebra with unity generated over $\mathbb{C}$ by the RTT generators 
\begin{equation}\label{RTTgen}
\left\lbrace t_{i,j}^{(r)}\,| \; 1\le i,j \le m+n; r\ge 1\right\rbrace,
\end{equation}
subject to following RTT relations:
\begin{equation}\label{RTT}
\big[ t_{i,j}^{(r)}, t_{h,k}^{(s)} \big] = (-1)^{\pa{i}\,\pa{j} + \pa{i}\,\pa{h} + \pa{j}\,\pa{h}}
\sum_{g=0}^{\mathrm{min}(r,s) -1} \Big( t_{h,j}^{(g)}\, t_{i,k}^{(r+s-1-g)} -   t_{h,j}^{(r+s-1-g)}\, t_{i,k}^{(g)} \Big),
\end{equation}
where the parity of $t_{i,j}^{(r)}$ is defined by $\pa{i}+\pa{j}$ {\em(mod 2)}. By convention, we set $t_{i,j}^{(0)}:=\del_{ij}$.
\end{definition}

The original definition in \cite{Na1} corresponds to the case when $\bo$ is the {\em standard} 01-sequence, which is defined as
\[
\bo^{st}:=\stackrel{m}{\overbrace{0\ldots0}}\,\stackrel{n}{\overbrace{1\ldots1}}.
\]
As observed in \cite{Pe2, Ts}, up to isomorphism, the definition of $Y_{m|n}$ is independent of the choices of $\bo$ so we often omit it in our notation when appropriate.

For each $1\leq i,j\leq m+n$, define the formal series 
\[
t_{i,j}(u):= \sum_{r\geq 0} t_{i,j}^{(r)}u^{-r} \in Y_{m|n}[[u^{-1}]].
\]
It is well-known \cite{Na1} that $Y_{m|n}$ is a Hopf-superalgebra. In particular, the comultiplication 
$\Del:Y_{m|n}\rightarrow Y_{m|n}\otimes Y_{m|n}$ can be nicely described as 
\begin{equation}\label{Del}
\Del(t_{i,j}^{(r)})=\sum_{s=0}^r \sum_{k=1}^{m+n} t_{i,k}^{(r-s)}\otimes t_{k,j}^{(s)}.
\end{equation}
Moreover, there exists a surjective homomorphism $$\ev:Y_{m|n}\rightarrow U(\gl_{m|n})$$ called the {\em evaluation homomorphism}, defined by
\begin{equation}\label{ev}
\ev\big(t_{i,j}(u)\big):= \del_{ij} + (-1)^{|i|} e_{ij}u^{-1},
\end{equation}
where $e_{ij}\in\gl_{m|n}$ means the elementary matrix. 

The following proposition gives a PBW basis for $Y_{m|n}$ in terms of the RTT generators, where the proof in \cite{Go} works perfectly for any fixed $\bo$.
\begin{proposition}\cite[Theorem 1]{Go}\label{PBWSY}
The set of supermonomials in the following elements 
\[
\left\lbrace t_{i,j}^{(r)}\, |\, 1\leq i,j\leq m+n,  r\geq 1 \right\rbrace
\] taken in some fixed order
forms a linear basis for $Y_{m|n}$.
\end{proposition}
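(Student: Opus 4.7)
The plan is to carry out a loop-filtration argument parallel to the classical case of $Y(\gl_n)$ treated in \cite{Mo}. Equip $Y_{m|n}$ with the increasing filtration defined by $\deg t_{i,j}^{(r)}:=r-1$, extended multiplicatively. Inspecting the RTT relation (\ref{RTT}), the left-hand supercommutator has filtration degree at most $r+s-2$; on the right-hand side, only the $g=0$ summand reaches that top degree (since $t_{i,j}^{(0)}=\delta_{ij}$ is a scalar), while every $g\geq 1$ summand lies in filtration degree $r+s-3$. Consequently, in the associated graded superalgebra $\gr Y_{m|n}$ one obtains
\[
\big[\overline{t}_{i,j}^{\,(r)},\overline{t}_{h,k}^{\,(s)}\big]=(-1)^{|i||j|+|i||h|+|j||h|}\big(\delta_{hj}\,\overline{t}_{i,k}^{\,(r+s-1)}-\delta_{ik}\,\overline{t}_{h,j}^{\,(r+s-1)}\big),
\]
where $\overline{t}_{i,j}^{\,(r)}$ denotes the principal symbol of $t_{i,j}^{(r)}$. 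A direct sign check then shows that the assignment $e_{i,j}\otimes x^{r-1}\mapsto(-1)^{|i|}\overline{t}_{i,j}^{\,(r)}$ extends to a surjective superalgebra homomorphism
\[
\Phi\colon U\big(\gl_{m|n}[x]\big)\twoheadrightarrow\gr Y_{m|n},
\]
where $\gl_{m|n}[x]:=\gl_{m|n}\otimes\mathbb{C}[x]$ is the loop Lie superalgebra.

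Applying the super PBW theorem to $U(\gl_{m|n}[x])$ yields a linear basis consisting of ordered supermonomials in $\{e_{i,j}\otimes x^{r-1}\}$; pulling this back through $\Phi$ shows that the corresponding ordered supermonomials in $\{t_{i,j}^{(r)}\}$ at least span $Y_{m|n}$. It then remains to establish injectivity of $\Phi$, equivalently linear independence of this spanning set.

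The injectivity of $\Phi$ is the main obstacle. My preferred approach exploits the Hopf-algebra structure: for each $c\in\mathbb{C}$, composing the shift automorphism $t_{i,j}(u)\mapsto t_{i,j}(u-c)$ with the evaluation homomorphism (\ref{ev}) gives a homomorphism $\ev_c\colon Y_{m|n}\to U(\gl_{m|n})$, and then
\[
(\ev_{c_1}\otimes\cdots\otimes\ev_{c_k})\circ\Delta^{(k)}\colon Y_{m|n}\longrightarrow U(\gl_{m|n})^{\otimes k}
\]
depends polynomially on $c_1,\dots,c_k$. Given a putative nontrivial element of $\ker\Phi$ realized as a linear relation among PBW supermonomials of top loop-degree $d$, applying this composition with $k=d+1$ and comparing coefficients of the leading monomial in $c_1,\dots,c_{d+1}$ reduces the problem to linear independence in $U(\gl_{m|n})^{\otimes(d+1)}$, which follows from the classical super PBW theorem for $U(\gl_{m|n})$. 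An alternative route, which is the one actually followed in \cite{Go}, is to establish a Gauss decomposition $T(u)=F(u)D(u)E(u)$, derive Drinfeld-type commutation relations among the Gauss entries, prove the PBW property separately for the three subalgebras they generate, and then reassemble via a triangular decomposition of $Y_{m|n}$; that path has the virtue of being uniform in the $01$-sequence $\bo$, which is exactly the strength of the statement quoted here.
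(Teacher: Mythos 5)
The paper offers no proof of this proposition---it simply cites \cite{Go} and observes that the argument there is insensitive to the choice of $\bo$---and your argument is essentially that proof: the loop filtration yields the surjection $U(\gl_{m|n}[x])\twoheadrightarrow \gr^L Y_{m|n}$ (hence spanning by the super PBW theorem upstairs), and linear independence follows from the Molev-style composition of iterated comultiplication with shifted evaluation homomorphisms, whose polynomial dependence on the shift parameters lets one extract leading coefficients and reduce to the super PBW theorem for $U(\gl_{m|n})^{\otimes k}$. One small correction of attribution: the Gauss-decomposition route in \cite{Go} is what proves the PBW theorem for the Drinfeld-type (quasideterminant) generators later in that paper, whereas its Theorem 1 on the RTT generators is established precisely by your ``preferred'' evaluation argument, so the two routes you list are not interchangeable descriptions of the same source.
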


Define the $loop$ $filtration$ on $Y_{m|n}$
\begin{equation}\label{filt2}\notag
L_0 Y_{m|n} \subseteq L_1 Y_{m|n} \subseteq L_2 Y_{m|n} \subseteq \cdots
\end{equation}
by setting $\deg t_{ij}^{(r)}=r-1$ for each $r\geq 1$ and letting $L_kY_{m|n}$ be the span of all supermonomials of the form 
$$t_{i_1j_1}^{(r_1)}t_{i_2j_2}^{(r_2)}\cdots t_{i_sj_s}^{(r_s)}$$
with total degree not greater than $k$. We denote by $\gr^LY_{m|n}$ the associated graded superalgebra.

Let $\gl_{m|n}[x]$ denote the {\em loop superalgebra} $\gl_{m|n}\otimes \mathbb{C}[x]$, where a basis is given by 
$$\lbrace e_{ij}x^r \,|\, 1\leq i,j\leq m+n, r\geq 0\rbrace.$$ 
Let $U(\gl_{m|n}[x])$ denote its universal enveloping algebra with the natural filtration and grading given by 
$$\deg e_{ij}x^r:=r.$$
The following corollary is a consequence of Proposition~\ref{PBWSY}.
\begin{corollary}\cite[Corollary 1]{Go}\label{Yloop}
The function $Y_{m|n}\rightarrow U(\gl_{m|n}[x])$ given by 
$$t_{ij}^{(r)}\mapsto (-1)^{\pa{i}}e_{ij}x^{r-1}$$ 
induces an isomorphism $\gr^LY_{m|n}\cong U(\gl_{m|n}[x])$ of graded superalgebras.

\end{corollary}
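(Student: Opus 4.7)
The plan is to construct a map in the opposite direction, $\varphi: U(\gl_{m|n}[x]) \to \gr^L Y_{m|n}$, and prove it is a graded superalgebra isomorphism; this is equivalent to the stated claim. Throughout, write $\overline{t_{ij}^{(r)}}$ for the symbol of $t_{ij}^{(r)}$ in the degree-$(r-1)$ component of $\gr^L Y_{m|n}$.

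First, I would define a linear map $\varphi_0: \gl_{m|n}[x] \to \gr^L Y_{m|n}$ by $e_{ij} x^{r-1} \mapsto (-1)^{\pa{i}}\,\overline{t_{ij}^{(r)}}$ and verify it is a Lie-superalgebra homomorphism. The key input is the leading-term analysis of the RTT relation \eqref{RTT}: among the summands indexed by $g$, only $g=0$ contributes in filtration degree $r+s-2$ (using $t_{ij}^{(0)} = \delta_{ij}$), while every $g \geq 1$ summand has total degree at most $r+s-3$. Passing to $\gr^L Y_{m|n}$ therefore collapses the RTT relation to
$$[\overline{t_{ij}^{(r)}}, \overline{t_{hk}^{(s)}}] = (-1)^{\pa{i}\pa{j}+\pa{i}\pa{h}+\pa{j}\pa{h}}\bigl(\delta_{hj}\,\overline{t_{ik}^{(r+s-1)}} - \delta_{ik}\,\overline{t_{hj}^{(r+s-1)}}\bigr).$$
Comparing with the loop-algebra bracket $[e_{ij}x^{r-1}, e_{hk}x^{s-1}] = \bigl(\delta_{hj} e_{ik} - (-1)^{(\pa{i}+\pa{j})(\pa{h}+\pa{k})}\delta_{ik} e_{hj}\bigr)x^{r+s-2}$, a short parity check (using $\delta_{hj}$ to force $\pa{h} = \pa{j}$ in the first term, $\delta_{ik}$ to force $\pa{i} = \pa{k}$ in the second, and $\pa{j}^2 = \pa{j}$ in $\mathbb{F}_2$) shows $\varphi_0$ preserves the super-bracket. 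By the universal property of $U(\gl_{m|n}[x])$, $\varphi_0$ extends uniquely to a graded superalgebra homomorphism $\varphi$.

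Surjectivity of $\varphi$ is immediate, since the symbols $\overline{t_{ij}^{(r)}}$ generate $\gr^L Y_{m|n}$ and lie in its image up to the scalars $(-1)^{\pa{i}}$. For injectivity, I would compare PBW bases: by the classical PBW theorem, ordered supermonomials in $\{e_{ij} x^{r-1}\}$ form a homogeneous basis of $U(\gl_{m|n}[x])$, while by Proposition~\ref{PBWSY} the correspondingly-ordered supermonomials in $\{t_{ij}^{(r)}\}$ form a basis of $Y_{m|n}$, whose symbols form a basis of $\gr^L Y_{m|n}$ respecting the grading. The map $\varphi$ sends the first basis bijectively to $\pm 1$ multiples of the second in each graded piece, so $\varphi$ is a degree-preserving linear bijection and hence an isomorphism of graded superalgebras.

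The main technical step is the leading-term analysis together with the sign bookkeeping in the construction of $\varphi_0$: once the super-bracket relations match in the associated graded, the rest is a routine PBW comparison.
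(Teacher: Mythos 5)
Your proposal is correct and follows the standard argument that the paper delegates to \cite[Corollary 1]{Go}: the leading-term ($g=0$) analysis of the RTT relation \eqref{RTT} shows the symbols satisfy the loop-superalgebra bracket, and Proposition~\ref{PBWSY} supplies the PBW comparison that upgrades the resulting surjection to an isomorphism. Building the map in the direction $U(\gl_{m|n}[x])\to\gr^LY_{m|n}$ via the universal property, as you do, is a harmless reversal of the stated direction, and your sign verification is accurate.
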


\subsection{Parabolic generators of $Y_{m|n}$}
In this subsection, we give another generating set for $Y_{m|n}$. 
Eventually it will allow us to define a certain subalgebra of $Y_{m|n}$ which can not be observed by the earlier RTT-presentation except for some special cases.

Firstly we introduce a convenient shorthand notation. Let $\mu=(\mu_1,\ldots,\mu_z)$ be a given composition of $m+n$ with length $z$ and let $\bo$ be a fixed $0^m1^n$-sequence. We break $\bo$ into $z$ subsequences according to $\mu$; that is, 
\[\bo=\bo_1\bo_2\ldots\bo_z,\] 
where $\bo_1$ is the subsequence consisting of the first $\mu_1$ digits of $\bo$, $\bo_2$ is the subsequence consisting of the next $\mu_2$ digits of $\bo$, and so on.
For example, if we have $\bo=011100011$ and $\mu=(2,4,3)$, then
\[
\bo=\overbrace{01}^{\bo_1} \, \overbrace{1100}^{\bo_2} \, \overbrace{011}^{\bo_3}.
\]

For each $1\leq a\leq z$, let $p_a$ and $q_a$ denote the number of $0$'s and $1$'s in $\bo_a$, respectively.
For a fixed $1\leq a\leq z$ and each value of $i=1,2,\ldots,\mu_a$, we define the {\em restricted parity} $\pa{i}_a$ by
\begin{center} $\pa{i}_a$:= the $i$-th digits of $\bo_a$, \end{center}
or equivalently
\begin{equation}\label{respa}
\pa{i}_a=\pa{\sum_{j=1}^{a-1}\mu_j+i}.
\end{equation}

Define the $(m+n)\times (m+n)$ matrix with entries in $Y_{m|n}[[u^{-1}]]$ by
\[
T(u):=\Big( t_{i,j}(u) \Big)_{1\leq i,j\leq m+n}
\]
Note that the leading minors of the matrix $T(u)$ are always invertible and hence the matrix $T(u)$ possesses a Gauss decomposition with respect to $\mu$; that is,
\begin{equation}\label{T=FDE}
T(u) = F(u) D(u) E(u)
\end{equation}
for unique {\em block matrices} $D(u)$, $E(u)$ and $F(u)$ of the form
$$
D(u) = \left(
\begin{array}{cccc}
D_{1}(u) & 0&\cdots&0\\
0 & D_{2}(u) &\cdots&0\\
\vdots&\vdots&\ddots&\vdots\\
0&0 &\cdots&D_{z}(u)
\end{array}
\right),
$$

$$
E(u) =
\left(
\begin{array}{cccc}
I_{\mu_1} & E_{1,2}(u) &\cdots&E_{1,z}(u)\\
0 & I_{\mu_2} &\cdots&E_{2,z}(u)\\
\vdots&\vdots&\ddots&\vdots\\
0&0 &\cdots&I_{\mu_{z}}
\end{array}
\right),\:
$$

$$
F(u) = \left(
\begin{array}{cccc}
I_{\mu_1} & 0 &\cdots&0\\
F_{2,1}(u) & I_{\mu_2} &\cdots&0\\
\vdots&\vdots&\ddots&\vdots\\
F_{z,1}(u)&F_{z,2}(u) &\cdots&I_{\mu_{z}}
\end{array}
\right),
$$
where
\begin{align}
\label{3.7} D_a(u) &=\big(D_{a;i,j}(u)\big)_{1 \leq i,j \leq \mu_a},\\
\label{3.8} E_{a,b}(u)&=\big(E_{a,b;h,k}(u)\big)_{1 \leq h \leq \mu_a, 1 \leq k \leq \mu_b},\\
\label{3.9} F_{b,a}(u)&=\big(F_{b,a;k,h}(u)\big)_{1 \leq k \leq \mu_b, 1 \leq h \leq \mu_a},
\end{align}
are $\mu_a \times \mu_a$,
$\mu_a \times \mu_b$
and  $\mu_b \times\mu_a$ matrices, respectively, for all $1\le a\le z$ in (\ref{3.7})
and all $1\le a<b\le z$ in (\ref{3.8}) and (\ref{3.9}).
In fact, these matrices can be explicitly obtained by {\em quasideterminants} (cf. \cite{GR}).
 
Since all of the submatrices $D_{a}(u)$'s are invertible, it allows one to define the $\mu_a\times\mu_a$ matrix
$D_a^{\prime}(u)=\big(D_{a;i,j}^{\prime}(u)\big)_{1\leq i,j\leq \mu_a}$ by
\begin{equation*}
D_a^{\prime}(u):=\big(D_a(u)\big)^{-1}.
\end{equation*}
The entries of these matrices give us some formal series with coefficients in $Y_{m|n}$:
\begin{eqnarray}
\label{goodd}&D_{a;i,j}(u) = \sum_{r \geq 0} D_{a;i,j}^{(r)} u^{-r}, \qquad &D_{a;i,j}^{\prime}(u) = \sum_{r \geq 0}D^{\prime(r)}_{a;i,j} u^{-r},\\
\label{badef}&E_{a,b;h,k}(u) = \sum_{r \geq 1} E_{a,b;h,k}^{(r)} u^{-r}, \qquad &F_{b,a;k,h}(u) = \sum_{r \geq 1} F_{b,a;k,h}^{(r)} u^{-r}.
\end{eqnarray}
Actually we only need the diagonal, upper-diagonal and lower-diagonal blocks.
Hence we set
\begin{equation}\label{paraef}
E_{b;i,j}(u) := E_{b,b+1;h,k}(u)=\sum_{r \geq 1}  E_{b;h,k}^{(r)} u^{-r},\qquad
F_{b;i,j}(u) := F_{b+1,b;k,h}(u)=\sum_{r \geq 1} F_{b;k,h}^{(r)} u^{-r},
\end{equation}
for $1\leq b\leq z-1$.
As proved in \cite{Pe4}, these coefficients
\begin{align*}
&\lbrace D_{a;i,j}^{(r)}, D_{a;i,j}^{\prime(r)} \,|\, 1\leq a\leq z, 1\leq i,j\leq \mu_a, r\geq 0\rbrace\\
&\lbrace E_{b;h,k}^{(r)} \,|\, 1\leq b< z, 1\leq h\leq \mu_b, 1\leq k\leq\mu_{b+1}, r\geq 1\rbrace\\
&\lbrace F_{b;k,h}^{(r)} \,|\, 1\leq b< z, 1\leq k\leq\mu_{b+1}, 1\leq h\leq \mu_b, r\geq 1\rbrace
\end{align*}
form a generating set for $Y_{m|n}$, called the {\em parabolic generators of $Y_{m|n}$}, which will be denoted by $\mathcal{P}_\mu$. 
Moreover, by \cite[Lemma~4.2]{Pe4}, their parities can be explicitly determined by the following rule:
\begin{eqnarray}
    \label{pad}\text{ parity of } D_{a;i,j}^{(r)}&=&\pa{i}_a+\pa{j}_a \,\,\text{(mod 2)},\\
    \label{pae}\text{ parity of } E_{b;h,k}^{(r)}&=&\pa{h}_{b}+\pa{k}_{b+1} \,\,\text{(mod 2)},\\
    \label{paf}\text{ parity of } F_{b;k,h}^{(r)}&=&\pa{k}_{b+1}+\pa{h}_{b} \,\,\text{(mod 2)}.
\end{eqnarray}

In the special case when $\mu=(1^{m+n}):=(\, \overbrace{1,\ldots,1}^{m+n} \,)$, the generating set, which will be denoted by $\mathcal{P}_{D}$, appeared in an analogue of the Drinfeld presentation for $Y_{m|n}$ \cite{BK1,Dr2,Go,Pe4, St, Ts}.
We list $\mathcal{P}_{D}$ explicitly here since it will be used right away:
\begin{align}
\label{drd}&\lbrace D_{a}^{(r)}, D_{a}^{\prime(r)} \,|\, 1\leq a\leq m+n, r\geq 0\rbrace,\\
\label{dre}&\lbrace E_{b}^{(r)} \,|\, 1\leq b< m+n, r\geq 1\rbrace,\\
\label{drf}&\lbrace F_{b}^{(r)} \,|\, 1\leq b< m+n, r\geq 1\rbrace,
\end{align}
and their parities are given by
\begin{equation}
\label{drpa}\pa{D_{a}^{(r)}}=\pa{D_{a}^{\prime(r)}}=0, \qquad \pa{E_{b}^{(r)}}=\pa{F_{b}^{(r)}}=\pa{b}+\pa{b+1}  \,\,\text{(mod 2)}.
\end{equation}

\section{Shifted super Yangian: Drinfeld's presentation}\label{shiftY1}
Recall from \textsection \ref{preW} that a pyramid $\pi$ can be uniquely recorded by a triple $(\sigma,\ell,\bo)$ where $\sigma$ is a shift matrix of size $m+n$, $\ell$ is a positive integer and $\bo$ is a $01$-sequence. Following \cite[\textsection 2]{BK2}, we use $\sigma$ and $\bo$ to define the following structure, which is one of the main objects studied in this article. 
\begin{definition}\label{drshift}
Let $m,n\in\mathbb{Z}_{\geq 0}$, $\sigma=(s_{i,j})$ be a shift matrix of size $m+n$ with a fixed $0^m1^n$-sequence $\bo$.
The shifted super Yangian of $\gl_{m|n}$ associated to $\sigma$, denoted by $Y_{m|n}(\sigma)$, is the superalgebra over $\mathbb{C}$ generated by following symbols
\begin{align*}
&\big\lbrace D_{a}^{(r)}, D_{a}^{\prime(r)} \,|\, {1\leq a\leq m+n,\;  r\geq 0}\big\rbrace,\\
&\big\lbrace E_{b}^{(r)} \,|\, {1\leq b< m+n,\; r> s_{b,b+1}}\big\rbrace,\\
&\big\lbrace F_{b}^{(r)} \,|\, {1\leq b< m+n,\; r> s_{b+1,b}}\big\rbrace,
\end{align*}
where their parities are defined by {\em (\ref{drpa})},
subject to the following relations:
\begin{eqnarray}
\label{d401} D_{a}^{(0)}=D_{a}^{\prime(0)}&=&1\,,\\
\label{d402} \sum_{t=0}^{r}D_{a}^{(t)}D_{a}^{\prime (r-t)}&=&\delta_{r0},\\
\label{d403} \big[D_{a}^{(r)},D_{b}^{(s)}\big]&=&0,
\end{eqnarray}
\begin{eqnarray}
\label{d404}  [D_{a}^{(r)}, E_{b}^{(s)}]
        &=&(-1)^{\pa{a}}\big( \delta_{a,b}-\delta_{a,b+1} \big) \sum_{t=0}^{r-1} D_{a}^{(t)} E_{b}^{(r+s-1-t)}, \\
\label{d405}  [D_{a}^{(r)}, F_{b}^{(s)}]
        &=&(-1)^{\pa{a}}\big( \delta_{a,b+1} -\delta_{a,b} \big) \sum_{t=0}^{r-1} F_{b}^{(r+s-1-t)}D_{a}^{(t)}, \\
\label{d406}  [E_{a}^{(r)} , F_{b}^{(s)}]
          &=&\delta_{a,b}(-1)^{\pa{a+1}+1}
          \sum_{t=0}^{r+s-1} D_{a}^{\prime (r+s-1-t)} D_{a+1}^{(t)},       
\end{eqnarray}          
\begin{multline}\label{d407}
 [E_{a}^{(r)} , E_{a}^{(s)}]
          =(-1)^{\pa{a+1}}
          \big( \sum_{t=s_{a,a+1}+1}^{s-1} E_{a}^{(r+s-1-t)} E_{a}^{(t)} 
          -\sum_{t=s_{a,a+1}+1}^{r-1} E_{a}^{(r+s-1-t)} E_{a}^{(t)}  \big),       
\end{multline} 
\begin{multline}\label{d408}
 [F_{a}^{(r)} , F_{a}^{(s)}]
          =(-1)^{\pa{a}}
          \big( \sum_{t=s_{a+1,a}+1}^{r-1} F_{a}^{(r+s-1-t)} F_{a}^{(t)} 
          -\sum_{t=s_{a+1,a}+1}^{s-1} F_{a}^{(r+s-1-t)} F_{a}^{(t)}  \big),         
 \end{multline}                          
{\allowdisplaybreaks
\begin{eqnarray}
\label{d409}[E_{a}^{(r+1)}, E_{a+1}^{(s)}]-[E_{a}^{(r)}, E_{a+1}^{(s+1)}]
&=&(-1)^{\pa{a+1}} E_{a}^{(r)}E_{a+1}^{(s)}\,,\\[3mm]
\label{d410}[F_{a}^{(r+1)}, F_{a+1}^{(s)}]-[F_{a}^{(r)}, F_{a+1}^{(s+1)}]&=&
(-1)^{1+\pa{a}\pa{a+1}+\pa{a+1}\pa{a+2}+\pa{a}\pa{a+2}} F_{a}^{(s)}F_{a}^{(r)}\,,
\end{eqnarray}
\begin{align}
\label{d411}&[E_{a}^{(r)}, E_{b}^{(s)}] = 0
\qquad\qquad\text{\;\;if\;\;} |b-a|>1,\\[3mm]
\label{d412}&[F_{a}^{(r)}, F_{b}^{(s)}] = 0
\qquad\qquad\text{\;\;if\;\;} |b-a|>1,\\[3mm]
\label{d413}&\big[E_{a}^{(r)},[E_{a}^{(s)},E_{b}^{(t)}]\big]+
\big[E_{a}^{(s)},[E_{a}^{(r)},E_{b}^{(t)}]\big]=0 \quad \text{if}\,\,\, |a-b|=1,\\[3mm]
\label{d414}&\big[F_{a}^{(r)},[F_{a}^{(s)},F_{b}^{(t)}]\big]+
\big[F_{a}^{(s)},[F_{a}^{(r)},F_{b}^{(t)}]\big]=0 \quad \text{if}\,\,\, |a-b|=1,\\[3mm]
\label{d415}&\big[\,[E_{a-1}^{(r)},E_{a}^{( s_{a,a+1}+1)}]\,,\,[E_{a}^{(s_{a,a+1}+1)},E_{a+1}^{(s)}]\,\big]=0 \;\;\text{when\;\;}  m+n\geq 4\, \text{and\;\;} \pa{a}+\pa{a+1}=1,\\[3mm]
\label{d416}&\big[\,[F_{a-1}^{(r)},F_{a}^{(s_{a+1,a}+1)}]\,,\,[F_{a}^{(s_{a+1,a}+1)},F_{a+1}^{(s)}]\,\big]=0 \;\;\text{when\;\;} m+n\geq 4\, \text{and\;\;} \pa{a}+\pa{a+1}=1,
\end{align}
}
for all admissible indices $a,b,r,s,t$. For example, {\em (\ref{d404})} is meant to hold for all $r\geq 0$, 
$s> s_{b,b+1}$, $1\leq a\leq m+n$ and $1\leq b<m+n$.
\end{definition}

Note that when $\sigma$ is the zero matrix, the presentation above coincides with the presentation of $Y_{m|n}$ given in \cite{Pe4} by taking $\mu=(1^{m+n})$ therein (this special case is also obtained in \cite{Ts}).
As a result, we may identify $Y_{m|n}(0)=Y_{m|n}.$

In the remaining part of this section, we will show that $Y_{m|n}(\sigma)$ can be identified as a subalgebra of $Y_{m|n}$ in general (Crorllary \ref{drsbpr}). Let $\mathcal{P}_{D,\sigma}$ be the generating set of $Y_{m|n}(\sigma)$ in Definition \ref{drshift}.
Let $\Gamma:Y_{m|n}(\sigma)\rightarrow Y_{m|n}$ be the map sending $\mathcal{P}_{D,\sigma}$ to the elements with the same name (\ref{drd})--(\ref{drf}) in $Y_{m|n}$ obtained by Gauss decomposition.

\begin{proposition}\label{drinj}
The canonical map $\Gamma:Y_{m|n}(\sigma)\rightarrow Y_{m|n}$ is a homomorphism.
\end{proposition}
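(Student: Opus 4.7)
The plan is to verify that each defining relation (\ref{d401})--(\ref{d416}) of $Y_{m|n}(\sigma)$ is satisfied by its image in $Y_{m|n}$. The key point to exploit is that the $\sigma=0$ specialisation of Definition~\ref{drshift} literally reproduces the Drinfeld-type presentation of $Y_{m|n}$ established in \cite{Pe4} (the $\mu = (1^{m+n})$ case of the parabolic presentation), and that the parity assignments (\ref{drpa}) match those of the Gauss-decomposition generators (\ref{drd})--(\ref{drf}) on the nose.

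The relations (\ref{d401})--(\ref{d406}), (\ref{d409})--(\ref{d414}) contain no explicit dependence on $\sigma$, so their statements inside the admissible range $r > s_{b,b+1}$ (respectively $r > s_{b+1,b}$) are direct consequences of the corresponding identities that hold in $Y_{m|n}$ in the full range $r \ge 1$. For the quadratic relations (\ref{d407}) and (\ref{d408}), I would observe that the truncated sum on the right-hand side is independent of the lower bound $c$ as long as $c \le \min(r,s)$: the difference $\sum_{t=c}^{s-1}\phi(t) - \sum_{t=c}^{r-1}\phi(t)$ telescopes to $\sum_{t=\min(r,s)}^{\max(r,s)-1}\phi(t)$ up to a sign. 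Since admissibility forces $s_{a,a+1} < \min(r,s)$, the shifted right-hand side equals the unshifted one, which equals $[E_a^{(r)},E_a^{(s)}]$ in $Y_{m|n}$ by \cite{Pe4}.

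The only genuinely new content is the isotropic Serre relations (\ref{d415}) and (\ref{d416}), since the ``lowest'' shifted generator $E_a^{(s_{a,a+1}+1)}$ at an isotropic index $a$ has no direct counterpart in the unshifted presentation. My plan is to prove the stronger identity
\[
\bigl[\,[E_{a-1}^{(r)},\, E_a^{(k)}]\,,\,[E_a^{(k)},\, E_{a+1}^{(s)}]\,\bigr] = 0 \qquad \text{for all } k \ge 1
\]
in $Y_{m|n}$ whenever $\pa{a}+\pa{a+1}=1$, which specialises to (\ref{d415}) at $k = s_{a,a+1}+1$. The base case $k=1$ is the known isotropic Serre relation of \cite{Pe4}. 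For the inductive step, I would use the quasi-Serre relation (\ref{d409}) to rewrite $E_a^{(k+1)}$ in terms of $E_a^{(k)}$ plus lower-order terms, and then apply the super Jacobi identity together with (\ref{d411}) and (\ref{d413}) to reduce the $(k+1)$-case to the $k$-case; the $F$-analogue (\ref{d416}) is handled symmetrically.

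Carrying out this bookkeeping cleanly is the main obstacle, reflecting the author's earlier remark that \emph{proposing} the shifted Serre relations is relatively straightforward but \emph{verifying} them is highly non-trivial. Once these identities are established, $\Gamma$ sends every relator of $Y_{m|n}(\sigma)$ to $0$ in $Y_{m|n}$ and is therefore a well-defined superalgebra homomorphism.
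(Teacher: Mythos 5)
Your handling of (\ref{d401})--(\ref{d414}) is fine and agrees with the paper: for $\sigma=0$ these are the defining relations of $Y_{m|n}$ from \cite{Pe4} (equivalently \cite{Ts}), and your telescoping remark is exactly why the shifted lower bound $s_{a,a+1}+1$ in (\ref{d407})--(\ref{d408}) is harmless once $r,s>s_{a,a+1}$. The problem is your treatment of (\ref{d415})--(\ref{d416}), which is the only non-routine content of the proposition. The paper settles these by quoting the \emph{generalized} quartic Serre relations of \cite[(2.14),(2.15)]{Ts}, namely
\[
\bigl[[E_{a-1}^{(r)},E_a^{(k)}],[E_a^{(l)},E_{a+1}^{(s)}]\bigr]+\bigl[[E_{a-1}^{(r)},E_a^{(l)}],[E_a^{(k)},E_{a+1}^{(s)}]\bigr]=0\qquad\text{for all }k,l\ge 1,
\]
and specializing to $k=l$. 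You propose instead to obtain only the diagonal case for all $k$ by induction from $k=1$, using (\ref{d409}) to lower the superscript; this inductive step does not close as described.

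Concretely, substituting $[E_{a-1}^{(r)},E_a^{(k+1)}]=[E_{a-1}^{(r+1)},E_a^{(k)}]-(-1)^{\pa{a}}E_{a-1}^{(r)}E_a^{(k)}$ and $[E_a^{(k+1)},E_{a+1}^{(s)}]=[E_a^{(k)},E_{a+1}^{(s+1)}]+(-1)^{\pa{a+1}}E_a^{(k)}E_{a+1}^{(s)}$ into the level-$(k+1)$ bracket, the diagonal term $\bigl[[E_{a-1}^{(r+1)},E_a^{(k)}],[E_a^{(k)},E_{a+1}^{(s+1)}]\bigr]$ is indeed killed by the induction hypothesis, but three cross terms remain. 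After the graded Leibniz rule, (\ref{d411}) and the equal-superscript Serre identity $[E_a^{(k)},[E_a^{(k)},E_{a\pm1}^{(t)}]]=0$ have been applied, one is still left with nonzero cubic expressions such as $E_a^{(k)}\,[E_{a-1}^{(r+1)},[E_a^{(k)},E_{a+1}^{(s)}]]$, $[E_{a-1}^{(r)},[E_a^{(k)},E_{a+1}^{(s+1)}]]\,E_a^{(k)}$ and $[E_{a-1}^{(r)},E_a^{(k)}]\,E_{a+1}^{(s)}E_a^{(k)}$; these do not vanish individually, and their mutual cancellation is not a consequence of (\ref{d409}), (\ref{d411}), (\ref{d413}) and the super Jacobi identity alone. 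Forcing the cancellation leads you precisely to the off-diagonal ($k\neq l$) quartic relations, i.e.\ the statement you were trying to avoid importing. So the crux of the proposition is asserted but not proved. You should either cite \cite[(2.14),(2.15)]{Ts} as the paper does, or verify the identity for all $k$ by a genuinely independent computation (e.g.\ directly in the RTT realization); the sketched induction is not sufficient.
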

\begin{proof}
By setting $\mu=(1^{m+n})$ in \cite[Proposition 7.1]{Pe4}, or simply by \cite[(2.2)--(2.10)]{Ts}, the relations (\ref{d401})--(\ref{d414}) are preserved by $\Gamma$. 
Setting $k=l$ in the {\em generalized quartic Serre relations} in \cite[(2.14), (2.15)]{Ts}, we see that (\ref{d415}) and (\ref{d416}) are preserved by $\Gamma$ as well.
\end{proof}

It remains to show that $\Gamma$ is injective.
We introduce the $loop$ $filtration$ on $Y_{m|n}(\sigma)$
\begin{equation}\notag
L_0 Y_{m|n}(\sigma) \subseteq L_1 Y_{m|n}(\sigma) \subseteq L_2 Y_{m|n}(\sigma) \subseteq \cdots
\end{equation}
by setting the degrees of the generators $D_{a}^{(r)}$, $E_{b}^{(r)}$, and $F_{b}^{(r)}$ to be $(r-1)$ and setting $L_k Y_{m|n}(\sigma)$ to be the span of all supermonomials in the generators of total degree not greater than $k$. 
Let $\gr^LY_{m|n}(\sigma)$ denote the associated graded superalgebra. 

For $1\leq a<b\leq m+n$, $r> s_{a,b}$ and $t>s_{b,a}$, define the following higher root elements $E_{a,b}^{(r)}, F_{b,a}^{(t)}\in Y_{m|n}(\sigma)$ recursively by
\begin{align}
\label{edrg}
E_{a,a+1}^{(r)}:= E_{a}^{(r)}, \qquad E_{a,b}^{(r)}:=(-1)^{\pa{b-1}}[E_{a,b-1}^{(r-s_{b-1,b})}, E_{b-1}^{(s_{b-1,b}+1)}],\\
\label{fdrg}
F_{a+1,a}^{(t)}:= F_{a}^{(t)}, \qquad F_{b,a}^{(t)}:=(-1)^{\pa{b-1}}[F_{b-1}^{(s_{b,b-1}+1)}, F_{b-1,a}^{(t-s_{b,b-1})}].
\end{align}
By definition, we have $E_{a,b}^{(r)}\in L_{r-1} Y_{m|n}(\sigma)$ and $F_{b,a}^{(t)}\in L_{t-1} Y_{m|n}(\sigma)$.

Define the elements $\{ e_{a,b}^{(r)} \,|\, 1\leq a,b\leq m+n,  r\geq s_{a,b} \} \subseteq \gr^LY_{m|n}(\sigma)$ by
\begin{equation}\label{drloop}
e_{a,b}^{(r)} := \left\{
\begin{array}{ll}
\gr^L_{r} D_{a}^{(r+1)} &\hbox{if $a = b$,}\\[3mm]
\gr^L_{r} E_{a,b}^{(r+1)} &\hbox{if $a < b$,}\\[3mm]
\gr^L_{r} F_{a,b}^{(r+1)} &\hbox{if $a > b$.}
\end{array}
\right.
\end{equation}
Using the same argument in \cite[Lemma~7.5]{Pe4}, except that one uses the defining relations of $Y_{m|n}(\sigma)$ listed in Definition~\ref{drshift}, we deduce the following result.
\begin{proposition}\label{drtroub}\cite[(2.21)]{BK2}\cite[(51)]{Go}
For all $1\leq a,b,c,d\leq m+n$, $r\geq s_{a,b}$, $t\geq s_{c,d}$,
the following identity holds in $\gr^LY_{m|n}(\sigma)$:
\begin{equation}\label{drs-inj}
[e_{a,b}^{(r)},e_{c,d}^{(t)}]=(-1)^{\pa{b}}\delta_{b,c} e_{a,d}^{(r+t)}
-(-1)^{\pa{a}\pa{b}+\pa{a}\pa{c}+\pa{b}\pa{c}}\delta_{a,d}e_{c,b}^{(r+t)}
\end{equation}
\end{proposition}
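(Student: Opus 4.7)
The goal is to verify the bracket identity~(\ref{drs-inj}) in $\gr^L Y_{m|n}(\sigma)$, and the strategy is to take principal symbols of the defining relations~(\ref{d401})--(\ref{d416}) of Definition~\ref{drshift} and extend the resulting identities to the higher root elements via the inductive definitions~(\ref{edrg}),~(\ref{fdrg}). Under the loop filtration, each generator $D_{a}^{(r)}$, $E_{b}^{(r)}$ or $F_{b}^{(r)}$ lies in filtration degree $r-1$, so its principal symbol is by definition $e_{a,a}^{(r-1)}$, $e_{b,b+1}^{(r-1)}$ or $e_{b+1,b}^{(r-1)}$. From~(\ref{d402}) one reads off $\gr^L_{r-1} D_{a}^{\prime(r)} = -\,e_{a,a}^{(r-1)}$. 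Moreover, the recursive formulas are compatible with the filtration: for $a<b$,
\[
e_{a,b}^{(r-1)} = (-1)^{\pa{b-1}}\bigl[e_{a,b-1}^{(r - s_{b-1,b} - 1)},\, e_{b-1,b}^{(s_{b-1,b})}\bigr]
\]
holds at the graded level, and analogously for the $F$-side.

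First I would dispose of the base cases in which both arguments are either Cartan generators or simple root vectors. The Cartan--Cartan case is immediate from~(\ref{d403}). For a Cartan paired with a simple positive (resp.\ negative) root, the principal symbol of~(\ref{d404}) (resp.\ (\ref{d405})) collapses to the $t=0$ summand, since any factor $D_{a}^{(t)}$ with $t \geq 1$ strictly lowers the total degree; this reproduces the required identity. For a simple $E$--$F$ bracket, the principal symbol of~(\ref{d406}) isolates precisely the two endpoint summands ($t=0$ and $t=r+s-1$) and, using $\gr^L_{r-1} D_{a}^{\prime(r)} = -\,e_{a,a}^{(r-1)}$, yields the difference $(-1)^{\pa{a+1}}\bigl(e_{a,a}^{(r+s-2)} - e_{a+1,a+1}^{(r+s-2)}\bigr)$, which is exactly what~(\ref{drs-inj}) predicts. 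Relations~(\ref{d407}),(\ref{d408}),(\ref{d411}),(\ref{d412}) all have vanishing principal symbol, consistent with the fact that both $\delta$-factors on the right of~(\ref{drs-inj}) vanish there. Relations~(\ref{d409}),(\ref{d410}) produce a shift-invariance of the bracket $[e_{a,a+1}^{(r)}, e_{a+1,a+2}^{(s)}]$ in $r$ and $s$ individually, which combined with the recursion above confirms the case of length-two positive and negative roots.

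For general $a,b,c,d$, I would then proceed by induction on $|b-a|+|d-c|$, expanding one of the two higher root elements via the recursion displayed above and applying the super-Jacobi identity to reduce the bracket to configurations already handled at strictly smaller total length. Condition~(\ref{sijk}) on the shift matrix ensures that the degree bookkeeping is internally consistent at each step, so that the shift parameters $s_{i,j}$ do not interfere with the identification of leading terms in the graded algebra. The main obstacle I anticipate is the systematic bookkeeping of super-signs: each use of super-Jacobi, together with the explicit sign factors in~(\ref{edrg}),(\ref{fdrg}), introduces parity-dependent coefficients, and one must verify that they collapse exactly to $(-1)^{\pa b}$ on the first term of~(\ref{drs-inj}) and to $(-1)^{\pa a \pa b + \pa a \pa c + \pa b \pa c}$ on the second. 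The corresponding sign verification in the unshifted, parabolic setting is carried out in the proof of \cite[Lemma~7.5]{Pe4}; what is needed here is to check that the argument transfers verbatim, with the shift data $s_{i,j}$ affecting only the degree indices but not the leading contributions to $\gr^L Y_{m|n}(\sigma)$.
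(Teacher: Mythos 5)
Your proposal is correct and takes essentially the same route as the paper, whose proof simply invokes the argument of \cite[Lemma~7.5]{Pe4} run with the shifted defining relations of Definition~\ref{drshift}: pass to principal symbols for the loop filtration, read off the base cases from (\ref{d401})--(\ref{d416}), and extend to the higher root elements by induction on root length via the recursions (\ref{edrg}), (\ref{fdrg}) and the super Jacobi identity. Your degree bookkeeping for (\ref{d402}), (\ref{d404})--(\ref{d406}) and (\ref{d409}) is accurate, so nothing further is needed.
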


Let $\gl_{m|n}[x](\sigma)$ be the subalgebra of the loop superalgebra $\gl_{m|n}[x]$ generated by the following elements
$$\lbrace e_{ij}x^r \,|\, 1\leq i,j\leq m+n, r\geq s_{i,j}\rbrace.$$ 
By (\ref{sijk}), $\gl_{m|n}[x](\sigma)$ is indeed a subalgebra of $\gl_{m|n}[x]$.
Let the universal enveloping algebra $U\big(\gl_{m|n}[x](\sigma)\big)$ be equipped with the natural grading induced by the grading on $\gl_{m|n}[x]$.

\begin{theorem}\cite[Theorem 2.1]{BK2}\label{drgrisoLS}
The map 
$$\gamma:U\big(\gl_{m|n}[x](\sigma)\big)\longrightarrow \gr^LY_{m|n}(\sigma)$$ defined by
$$\gamma(e_{a,b}x^r)=(-1)^{\pa{a}}e_{a,b}^{(r)},$$ for all $1\leq a,b\leq m+n$, $r\geq s_{a,b}$,
is an isomorphism of graded superalgebras.
\end{theorem}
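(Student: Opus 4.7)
My plan is to adapt the proof of \cite[Theorem~2.1]{BK2} to the super setting, proceeding in three steps: well-definedness, surjectivity, and injectivity, with injectivity being the nontrivial step.

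First I would verify that $\gamma$ is a well-defined homomorphism of graded superalgebras. The universal enveloping algebra $U(\gl_{m|n}[x](\sigma))$ is presented by the super-commutation relations on the basis $\{e_{a,b}x^r\mid r\geq s_{a,b}\}$, namely
\[
[e_{a,b}x^r,e_{c,d}x^t]=\delta_{b,c}\,e_{a,d}x^{r+t}-(-1)^{(\pa{a}+\pa{b})(\pa{c}+\pa{d})}\delta_{a,d}\,e_{c,b}x^{r+t}.
\]
A direct sign calculation—using the trivial observations that $\pa{b}=\pa{c}$ whenever $b=c$ and $\pa{a}=\pa{d}$ whenever $a=d$—shows that multiplying both sides by the prefactor $(-1)^{\pa{a}+\pa{c}}$ converts this identity into exactly \eqref{drs-inj} of Proposition~\ref{drtroub}. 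So the assignment $e_{a,b}x^r\mapsto(-1)^{\pa{a}}e_{a,b}^{(r)}$ extends uniquely to a homomorphism $\gamma$; degree-preservation is immediate. Surjectivity then follows from the fact that the recursions \eqref{edrg}--\eqref{fdrg} together with \eqref{drloop} place every symbol $e_{a,b}^{(r)}$ into the image of $\gamma$, while a PBW-style reordering using \eqref{d401}--\eqref{d416} writes any element of $\gr^L Y_{m|n}(\sigma)$ modulo lower filtration as a supercommutator polynomial in the parabolic generators, hence in the $e_{a,b}^{(r)}$'s.

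For injectivity, which is the main step, I would compose with the canonical map $\Gamma$ of Proposition~\ref{drinj} and the isomorphism of Corollary~\ref{Yloop} to form
\[
\Phi\,:\,U(\gl_{m|n}[x](\sigma))\xrightarrow{\gamma}\gr^L Y_{m|n}(\sigma)\xrightarrow{\gr^L\Gamma}\gr^L Y_{m|n}\xrightarrow{\sim}U(\gl_{m|n}[x]).
\]
The target isomorphism sends $t_{i,j}^{(r)}$ to $(-1)^{\pa{i}}e_{i,j}x^{r-1}$. The Gauss decomposition $T(u)=F(u)D(u)E(u)$ immediately gives $\Gamma(D_a^{(r)})\equiv t_{a,a}^{(r)}\pmod{L_{r-2}Y_{m|n}}$; an induction on $b-a$ using \eqref{edrg}--\eqref{fdrg} should yield analogously $\Gamma(E_{a,b}^{(r)})\equiv t_{a,b}^{(r)}$ and $\Gamma(F_{b,a}^{(r)})\equiv t_{b,a}^{(r)}$ modulo $L_{r-2}Y_{m|n}$. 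Then the sign $(-1)^{\pa{a}}$ introduced by $\gamma$ cancels the sign coming from Corollary~\ref{Yloop}, so $\Phi$ is the natural inclusion $e_{a,b}x^r\mapsto e_{a,b}x^r$, which is injective by the PBW theorem for $U(\gl_{m|n}[x](\sigma))$, forcing $\gamma$ to be injective.

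The principal obstacle is the inductive identification $\Gamma(E_{a,b}^{(r)})\equiv t_{a,b}^{(r)}$ and $\Gamma(F_{b,a}^{(r)})\equiv t_{b,a}^{(r)}$ modulo lower loop filtration. Each step of the recursions \eqref{edrg}--\eqref{fdrg} introduces a sign $(-1)^{\pa{b-1}}$ and a shift parameter $s_{b-1,b}$, and one must confirm that these precisely match the extra signs and index shifts produced by the quasideterminantal Gauss decomposition in the super setting, while all cross terms arising from the decomposition land in strictly lower loop filtration. In the purely even case of \cite{BK2} this bookkeeping is essentially automatic; here the parities \eqref{pad}--\eqref{paf} arising from \cite{Pe4} must be threaded carefully through each induction step so that no spurious signs accumulate.
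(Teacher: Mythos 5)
Your proposal is correct and follows essentially the same route as the paper: $\gamma$ is a homomorphism by Proposition~\ref{drtroub}, surjective because its image contains the generators, and injective by passing through the canonical map $\Gamma$ into $\gr^L Y_{m|n}$ and invoking the PBW theorem for $U\big(\gl_{m|n}[x](\sigma)\big)$. The only difference is at the crux: the ``principal obstacle'' you flag (identifying $\Gamma(E_{a,b}^{(r)})$ and $\Gamma(F_{b,a}^{(r)})$ with $t_{a,b}^{(r)}$ and $t_{b,a}^{(r)}$ modulo lower loop filtration, with all signs matching) is precisely the content of the already-available linear-independence statement \cite[Proposition~7.9]{Pe4}, which the paper simply cites for the $\sigma=0$ case and then transports to general $\sigma$ along $\gr^L\Gamma$, so that induction need not be carried out from scratch.
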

\begin{proof}
$\gamma$ is a homomorphism by (\ref{drs-inj}).
Since the image of $\gamma$ contains the image of $\mathcal{P}_{D,\sigma}$ in $\gr^LY_{m|n}(\sigma)$, $\gamma$ is surjective.

It remains to show the injectivity. Consider firstly the special case when $\sigma=0$, where we can identify $Y_{m|n}(0)=Y_{m|n}$. By \cite[Proposition 7.9]{Pe4}, the ordered supermonomials in the elements $\{e_{a,b}^{(r)} \, | \, 1\leq a,b\leq m+n, \, r\geq 0\}$ are linearly independent in $\gr^L Y_{m|n}$. It follows that $\gamma$ is injective.

For the general case,
observe that the canonical map $\Gamma: Y_{m|n}(\sigma)\rightarrow Y_{m|n}$ is a homomorphism of filtered superalgebras.
It induces a map $\gr^L Y_{m|n}(\sigma)\rightarrow \gr^L Y_{m|n}$, sending $e_{a,b}^{(r)}\in\gr^L Y_{m|n}(\sigma)$ to $e_{a,b}^{(r)}\in \gr^L Y_{m|n}$. 
By the previous paragraph, the ordered supermonomials in the elements $\{e_{a,b}^{(r)} \, | \, 1\leq a,b\leq m+n, \, r\geq s_{a,b}\}$ are linearly independent in $\gr^L Y_{m|n}(\sigma)$ as well, which implies that $\gamma$ is injective by the PBW theorem for $U\big(\gl_{m|n}[x](\sigma)\big)$.
\end{proof}

\begin{corollary}\label{drsbpr}
The canonical map $\Gamma:Y_{m|n}(\sigma)\rightarrow Y_{m|n}$ is injective.
As a consequence, the structure $Y_{m|n}(\sigma)$ defined in Definition~{\em \ref{drshift}} can be identified as a subalgebra of $Y_{m|n}$. 
\end{corollary}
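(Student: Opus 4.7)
The plan is to deduce injectivity of $\Gamma$ from a standard filtered-to-graded argument, with Theorem~\ref{drgrisoLS} supplying the essential input. The definition of $\Gamma$ sends each generator $D_a^{(r)}, E_b^{(r)}, F_b^{(r)}$ of $Y_{m|n}(\sigma)$ to the element of $Y_{m|n}$ with the same name coming from the Gauss decomposition, each of which (viewed in $Y_{m|n}$) lies in $L_{r-1}Y_{m|n}$. Hence $\Gamma$ is automatically a homomorphism of filtered superalgebras with respect to the loop filtrations on both sides, and so it induces a homomorphism $\gr^L \Gamma: \gr^L Y_{m|n}(\sigma) \longrightarrow \gr^L Y_{m|n}$ of graded superalgebras.

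The first step is to recall the standard fact: if $\gr^L \Gamma$ is injective, then $\Gamma$ itself is injective. Indeed, should $0 \neq x \in \ker \Gamma$ exist, pick the minimal $d$ with $x \in L_d Y_{m|n}(\sigma)$; then the image $\bar x \in \gr^L_d Y_{m|n}(\sigma)$ is nonzero but is sent to $0$ by $\gr^L \Gamma$, contradicting injectivity. So the task reduces to proving $\gr^L \Gamma$ injective.

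The second step is to identify $\gr^L \Gamma$ under the isomorphisms already established. By Theorem~\ref{drgrisoLS}, the map $\gamma: U(\gl_{m|n}[x](\sigma)) \to \gr^L Y_{m|n}(\sigma)$ sending $e_{a,b}x^r \mapsto (-1)^{\pa{a}} e_{a,b}^{(r)}$ is an isomorphism, and by Corollary~\ref{Yloop} the analogous map in the unshifted case gives $U(\gl_{m|n}[x]) \cong \gr^L Y_{m|n}$. Under these identifications the composite $\gr^L \Gamma \circ \gamma$ sends $e_{a,b}x^r$ to $(-1)^{\pa{a}} e_{a,b}^{(r)}$ in $\gr^L Y_{m|n}$, which is precisely the universal enveloping algebra map induced by the inclusion of Lie superalgebras $\gl_{m|n}[x](\sigma) \hookrightarrow \gl_{m|n}[x]$ (here one uses the definitions (\ref{edrg})--(\ref{fdrg}) of the higher root vectors and the fact that, modulo lower loop filtration, the nested supercommutators telescope to the expected matrix units). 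This last point is the only item that requires a small verification; it is implicit in the inductive construction of the $E_{a,b}^{(r)}, F_{b,a}^{(t)}$ and in the proof of Proposition~\ref{drtroub}.

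The final step is then immediate: the inclusion $\gl_{m|n}[x](\sigma) \hookrightarrow \gl_{m|n}[x]$ of Lie superalgebras induces an injection of universal enveloping superalgebras by the PBW theorem. Therefore $\gr^L \Gamma$ is injective, and by the first paragraph $\Gamma$ itself is injective, yielding the desired identification of $Y_{m|n}(\sigma)$ with a subalgebra of $Y_{m|n}$. The only conceivable technical obstacle is checking the compatibility of the higher-root-vector identifications described in the second step, but this is precisely the computation already packaged into Theorem~\ref{drgrisoLS}, so no new work is required.
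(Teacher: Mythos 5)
Your proposal is correct and follows essentially the same route the paper intends: the corollary is deduced from Theorem~\ref{drgrisoLS} by observing that $\Gamma$ is filtered for the loop filtrations, that $\gr^L\Gamma\circ\gamma$ is identified with the PBW-injective map $U(\gl_{m|n}[x](\sigma))\hookrightarrow U(\gl_{m|n}[x])\cong\gr^LY_{m|n}$, and that injectivity of the associated graded map forces injectivity of $\Gamma$. The compatibility of the higher root elements that you flag is exactly the point the paper also relies on (the induced map sends $e_{a,b}^{(r)}$ to $e_{a,b}^{(r)}$, via relation (\ref{drs-inj})), so no new work is needed.
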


\section{Shifted super Yangian: Parabolic presentations}\label{shiftY2}
In this section, we provide a more sophisticated definition for $Y_{m|n}(\sigma)$ together with corresponding results mentioned in \textsection 4. For the sake of the purpose, we introduce some terminologies and notations.

Let $\sigma=(s_{i,j})$ be a shift matrix of size $m+n$. We say a composition $\mu=(\mu_1,\ldots,\mu_z)$ of $m+n$ of length $z$ is {\em admissible to $\sigma$} if 
$$s_{\mu_1+\mu_2+\cdots+\mu_{a-1}+i,\mu_1+\mu_2+\cdots+\mu_{a-1}+j}=0$$
for all $1\leq a\leq z$, $1\leq i,j\leq \mu_{a}$. In addition, $\mu$ is called {\em minimal admissible} if it is admissible to $\sigma$ and its length is minimal among all compositions admissible to $\sigma$.
Clearly, for a shift matrix $\sigma$, its minimal admissible shape uniquely exists. 
Moreover, $(1^{m+n})$ is admissible for any $\sigma$ of size $m+n$.

\begin{remark}
The notion of admissibility can be intuitively explained in terms of pyramid. Note that one can decompose a pyramid horizontally into a number of rectangles. An admissible shape $\mu$ records the heights of these rectangles from top to bottom, while the minimal admissible shape records such a decomposition with the least number of rectangles.
\end{remark}

When $\mu=(\mu_1,\mu_2,\ldots,\mu_z)$ is admissible to $\sigma$, we will use a shorthand notation
\begin{equation}\label{sabmu}
s_{a,b}^{\mu}:=s_{\mu_1+\ldots+\mu_a,\mu_1+\ldots+\mu_b}, \quad\forall  \,\,1\leq a,b\leq z.
\end{equation}
Note that one can recover the original matrix $\sigma$ if an admissible shape $\mu$ and the numbers $\{s_{a,b}^{\mu}|1\leq a,b\leq z\}$ are known. Moreover, the admissible condition (\ref{sijk}) implies that for any $1\leq a,b\leq z$, we have
\begin{equation}\label{admcon}
s_{\mu_1+\cdots+\mu_{a-1}+i,\mu_1+\cdots+\mu_{b-1}+j}=s_{a,b}^\mu, \qquad \forall 1\leq i\leq \mu_a, 1\leq j\leq\mu_b.
\end{equation}
Let $\bo$ be a fixed $0^m1^n$-sequence. We decompose $\bo$ into $z$ subsequences 
according to $\mu$
\[
\bo=\bo_1\bo_2\cdots\bo_z,
\]
and define the restricted parity $\pa{i}_a$ as in (\ref{respa}). 
Now we give the following presentation for $Y_{m|n}(\sigma)$, a super analogue of shifted Yangian given in \cite[\textsection 3]{BK2}.

\begin{definition}\label{parashift}
Let $\sigma=(s_{i,j})$ be a shift matrix of size $m+n$ with a fixed $0^m1^n$-sequence $\bo$. 
Let $\mu=(\mu_1,\ldots,\mu_z)$ be an admissible shape to $\sigma$.
The shifted super Yangian of $\gl_{m|n}$ associated to $\sigma$ and $\mu$, denoted by $Y_{\mu}(\sigma)$, is the superalgebra over $\mathbb{C}$ generated by the following symbols
\begin{align*}
&\big\lbrace D_{a;i,j}^{(r)}, D_{a;i,j}^{\prime(r)} \,|\, {1\leq a\leq z,\; 1\leq i,j\leq \mu_a,\; r\geq 0}\big\rbrace,\\
&\big\lbrace E_{b;h,k}^{(r)} \,|\, {1\leq b< z,\; 1\leq h\leq \mu_b, 1\leq k\leq\mu_{b+1},\; r> s_{b,b+1}^\mu}\big\rbrace,\\
&\big\lbrace F_{b;k,h}^{(r)} \,|\, {1\leq b< z,\; 1\leq h\leq \mu_b, 1\leq k\leq\mu_{b+1}, \; r> s_{b+1,b}^\mu}\big\rbrace,
\end{align*}
where their parities are defined by {\em(\ref{pad})--(\ref{paf})},
subject to the following relations:
\begin{eqnarray}
\label{p701}D_{a;i,j}^{(0)}=D_{a;i,j}^{\prime(0)}&=&\delta_{ij}\,,\\
\label{p702}\sum_{p=1}^{\mu_a}\sum_{t=0}^{r}D_{a;i,p}^{(t)}D_{a;p,j}^{\prime (r-t)}&=&\delta_{r0}\delta_{ij}\,,\\
\label{p703}\big[D_{a;i,j}^{(r)},D_{b;h,k}^{(s)}\big]&=&
    \delta_{ab}(-1)^{\pa{i}_a\pa{j}_a+\pa{i}_a\pa{h}_a+\pa{j}_a\pa{h}_a}\times \notag\\
    &&\sum_{t=0}^{min(r,s)-1}\big(D_{a;h,j}^{(t)}D_{a;i,k}^{(r+s-1-t)}-D_{a;h,j}^{(r+s-1-t)}D_{a;i,k}^{(t)}\big),
\end{eqnarray}
{\allowdisplaybreaks
\begin{multline}\label{p704}
 [D_{a;i,j}^{(r)}, E_{b;h,k}^{(s)}]
        =\delta_{a,b}\delta_{hj}(-1)^{\pa{h}_a\pa{j}_a}\sum_{p=1}^{\mu_a}\sum_{t=0}^{r-1} D_{a;i,p}^{(t)} E_{b;p,k}^{(r+s-1-t)}\\
        -\delta_{a,b+1}(-1)^{\pa{h}_b\pa{k}_a+\pa{h}_b\pa{j}_a+\pa{j}_a\pa{k}_a} \sum_{t=0}^{r-1} D_{a;i,k}^{(t)} E_{b;h,j}^{(r+s-1-t)},
        \end{multline}
\begin{multline}\label{p705}
 [D_{a;i,j}^{(r)}, F_{b;h,k}^{(s)}]
        =-\delta_{a,b}(-1)^{\pa{i}_a\pa{j}_a+\pa{h}_{a+1}\pa{i}_a+\pa{h}_{a+1}\pa{j}_a}\sum_{p=1}^{\mu_a}\sum_{t=0}^{r-1} F_{b;h,p}^{(r+s-1-t)}D_{a;p,j}^{(t)}\\
        +\delta_{a,b+1}(-1)^{\pa{h}_a\pa{k}_b+\pa{h}_a\pa{j}_a+\pa{j}_a\pa{k}_b} \sum_{t=0}^{r-1} F_{b;i,k}^{(r+s-1-t)}D_{a;h,j}^{(t)},
        \end{multline}        
\begin{multline}\label{p706}
 [E_{a;i,j}^{(r)} , F_{b;h,k}^{(s)}]
          =\delta_{a,b}(-1)^{\pa{h}_{a+1}\pa{k}_a+\pa{j}_{a+1}\pa{k}_a+\pa{h}_{a+1}\pa{j}_{a+1}+1}
          \sum_{t=0}^{r+s-1} D_{a;i,k}^{\prime (r+s-1-t)} D_{a+1;h,j}^{(t)},       
          \end{multline}            
\begin{multline}\label{p707}
 [E_{a;i,j}^{(r)} , E_{a;h,k}^{(s)}]
          =(-1)^{\pa{h}_{a}\pa{j}_{a+1}+\pa{j}_{a+1}\pa{k}_{a+1}+\pa{h}_{a}\pa{k}_{a+1}}\times\\
          \big( \sum_{t=s_{a,a+1}^\mu+1}^{s-1} E_{a;i,k}^{(r+s-1-t)} E_{a;h,j}^{(t)} 
          -\sum_{t=s_{a,a+1}^\mu+1}^{r-1} E_{a;i,k}^{(r+s-1-t)} E_{a;h,j}^{(t)}  \big),       
\end{multline} 
\begin{multline}\label{p708}
 [F_{a;i,j}^{(r)} , F_{a;h,k}^{(s)}]
          =(-1)^{\pa{h}_{a+1}\pa{j}_{a}+\pa{j}_{a}\pa{k}_{a}+\pa{h}_{a+1}\pa{k}_{a}}\times\\
          \big( \sum_{t=s_{a+1,a}^\mu+1}^{r-1} F_{a;i,k}^{(r+s-1-t)} F_{a;h,j}^{(t)} 
          -\sum_{t=s_{a+1,a}^\mu+1}^{s-1} F_{a;i,k}^{(r+s-1-t)} F_{a;h,j}^{(t)}  \big),         
 \end{multline}

\begin{equation}
\label{p709}[E_{a;i,j}^{(r+1)}, E_{a+1;h,k}^{(s)}]-[E_{a;i,j}^{(r)}, E_{a+1;h,k}^{(s+1)}]
=(-1)^{\pa{j}_{a+1}\pa{h}_{a+1}}\delta_{h,j}\sum_{q=1}^{\mu_{a+1}}E_{a;i,q}^{(r)}E_{a+1;q,k}^{(s)}\,,
\end{equation}

\begin{multline}
\label{p710}[F_{a;i,j}^{(r+1)}, F_{a+1;h,k}^{(s)}]-[F_{a;i,j}^{(r)}, F_{a+1;h,k}^{(s+1)}]=\\
(-1)^{\pa{i}_{a+1}(\pa{j}_{a}+\pa{h}_{a+2})+\pa{j}_a\pa{h}_{a+2}+1}\delta_{i,k}\sum_{q=1}^{\mu_{a+1}}F_{a+1;h,q}^{(s)}F_{a;q,j}^{(r)}\,,
\end{multline}

\begin{align}
\label{p711}&[E_{a;i,j}^{(r)}, E_{b;h,k}^{(s)}] = 0
\qquad\qquad\text{\;\;if\;\; $|b-a|>1$ \;\;or\;\; \;if\;\;$b=a+1$ and $h \neq j$},\\[3mm]
\label{p712}&[F_{a;i,j}^{(r)}, F_{b;h,k}^{(s)}] = 0
\qquad\qquad\text{\;\;if\;\; $|b-a|>1$ \;\;or\;\; \;if\;\;$b=a+1$ and $i \neq k$},\\[3mm]
\label{p713}&\big[E_{a;i,j}^{(r)},[E_{a;h,k}^{(s)},E_{b;f,g}^{(t)}]\big]+
\big[E_{a;i,j}^{(s)},[E_{a;h,k}^{(r)},E_{b;f,g}^{(t)}]\big]=0 \quad \text{if}\,\,\, |a-b|\geq 1,\\[3mm]
\label{p714}&\big[F_{a;i,j}^{(r)},[F_{a;h,k}^{(s)},F_{b;f,g}^{(t)}]\big]+
\big[F_{a;i,j}^{(s)},[F_{a;h,k}^{(r)},F_{b;f,g}^{(t)}]\big]=0 \quad \text{if}\,\,\, |a-b|\geq 1,\\[3mm]
\label{p715}&\big[\,[E_{a-1;i,f_1}^{(r)},E_{a;f_2,j}^{(s_{a,a+1}^{\mu}+1)}]\,,\,[E_{a;h,g_1}^{(s_{a,a+1}^{\mu}+1)},E_{a+1;g_2,k}^{(s)}]\,\big]=0 \;\;\text{if\;\;}  z\geq 4\, \text{and\;\;} \pa{h}_{a}+\pa{j}_{a+1}=1,\\[3mm]
\label{p716}&\big[\,[F_{a-1;i,f_1}^{(r)},F_{a;f_2,j}^{(s_{a+1,a}^{\mu}+1)}]\,,\,[F_{a;h,g_1}^{(s_{a+1,a}^{\mu}+1)},F_{a+1;g_2,k}^{(s)}]\,\big]=0 \;\;\text{if\;\;} z\geq 4\, \text{and\;\;} \pa{j}_{a}+\pa{h}_{a+1}=1, 
\end{align}}
for all indices $a, b, f, f_1, f_2, g, g_1, g_2, h, i, j, k, r, s, t$ that make sense.
For example, {\em (\ref{p709})} is supposed to hold for all $1\leq a\leq z-1$, $1\leq i\leq \mu_a$, $1\leq h,j\leq \mu_{a+1}$, $1\leq k\leq \mu_{a+2}$, $r\geq s_{a,a+1}^{\mu}+1$, $s\geq s_{a+1,a+2}^{\mu}+1$.
\end{definition}

In the special case where $\sigma$ is the zero matrix, the above relations are
precisely the defining relations of $Y_{m|n}$ with respect to the parabolic generators $\mathcal{P}_\mu$ given in \textsection 3.
We shall write $Y_\mu$ instead of $Y_{m|n}$ to emphasize that we are using the parabolic presentation in \cite{Pe4} to define $Y_{m|n}$.
The generators of $Y_{\mu}(\sigma)$, denoted by $\mathcal{P}_{\mu,\sigma}$, will be called the {\em parabolic generators} of $Y_\mu(\sigma)$. Later we will identify $\mathcal{P}_{\mu,\sigma}$ as as subset of $\mathcal{P}_\mu$.

\begin{remark}
As noticed in \cite{Pe2, Ts}, the definition of $Y_\mu$ is independent from the choice of the 01-sequence $\bo$ since the RTT presentation of $Y_{m|n}$ is. For $Y_{\mu}(\sigma)$, we have a similar but slightly weaker phenomenon. 
Write $Y_\mu(\sigma, \bo)$ for the shifted super Yangian to emphasize the choice of $\bo$.
Let $S_{m+n}$ be the symmetric group on $m+n$ objects, which acts on $\bo$ by permutation, and let $S_\mu$ denote its Young subgroup associated to $\mu$. Then we have 
$$Y_{\mu}(\sigma, \bo)\cong Y_{\mu}(\sigma, \rho\cdot\bo)  \,\, \,\,  \forall \rho\in S_\mu.$$
\end{remark}

Fix an admissible shape $\mu$. 
Similar to \textsection 3, we will show that $Y_{\mu}(\sigma)$ can be identified as a subalgebra of $Y_{\mu}$.
Let $\Gamma:Y_{\mu}(\sigma)\rightarrow Y_{\mu}$ be the map sending elements in $\mathcal{P}_{\mu,\sigma}$ to elements  (\ref{goodd}) and (\ref{paraef}) with the same name in $Y_{\mu}$ obtained by Gauss decomposition with respect to $\mu$.
\begin{proposition}\label{parahom}
The canonical map $\Gamma:Y_{\mu}(\sigma)\rightarrow Y_{\mu}$ is a homomorphism.
\end{proposition}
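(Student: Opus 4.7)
The plan is to verify, one family at a time, that each defining relation of $Y_{\mu}(\sigma)$ is preserved by $\Gamma$, i.e.\ that the corresponding identity actually holds among the Gauss-decomposition elements in $Y_{\mu}$. Because the parity and index conventions on both sides match exactly, being a homomorphism reduces to checking relations.

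First I would handle the ``block-Yangian'' part, namely relations (\ref{p701})--(\ref{p714}). These are literally specializations of the parabolic defining relations for the unshifted super Yangian $Y_\mu$ established in \cite[Thm.~7.2]{Pe4} (see also the parabolic presentation recalled in \textsection\ref{preY}): the only difference is that the series $E_{b;h,k}^{(r)}$ and $F_{b;k,h}^{(r)}$ are now restricted to $r>s^\mu_{b,b+1}$ and $r>s^\mu_{b+1,b}$ respectively, and the $D$'s carry no restriction. Since the relations in $Y_\mu$ hold for \emph{all} admissible $r$, they hold a fortiori on the restricted index set. So this portion is essentially automatic.

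The genuinely nontrivial part is the pair of quartic Serre-type relations (\ref{p715}) and (\ref{p716}). They are new compared to \cite{Pe4} and, as the introduction emphasizes, must be verified by hand. The strategy I would use is reduction to the Drinfeld presentation already handled in Proposition~\ref{drinj}: by taking the finest composition $(1^{m+n})$, the coarser parabolic generators $E_{b;h,k}^{(r)}$ and $F_{b;k,h}^{(r)}$ can be written explicitly as iterated supercommutators of the Drinfeld-type generators $E_a^{(\cdot)}$ and $F_a^{(\cdot)}$, using formulas of the same shape as the higher root elements $E_{a,b}^{(r)}$ and $F_{b,a}^{(t)}$ in (\ref{edrg})--(\ref{fdrg}) (this is in fact a standard consequence of the Gauss decomposition and is how the parabolic generators are constructed in \cite{Pe4}). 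Substituting those expressions turns the left-hand sides of (\ref{p715}) and (\ref{p716}) into nested supercommutators in the Drinfeld generators whose vanishing follows, after an application of the graded Jacobi identity, from the already-established Drinfeld-case Serre relations (\ref{d415})--(\ref{d416}) together with the commutation rules (\ref{d411})--(\ref{d414}). Here the critical numerical point is that the parameter $s^\mu_{a,a+1}+1$ occurring in (\ref{p715}) matches the minimal admissible index $s_{i,j}+1$ in the fine shift matrix via (\ref{admcon}); this is exactly what makes the reduction go through and is what forces the precise shift conditions in the statement.

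The hard part is this last reduction: one must simultaneously keep track of (i) the sign conventions coming from the parities (\ref{pad})--(\ref{paf}), (ii) the combinatorial indexing transition between $(i,j)$-labels in the parabolic setting and single labels in the Drinfeld setting, and (iii) the fact that the shift exponents must stay above the required thresholds throughout all intermediate commutators. The rest of the relations (the commutators of $D$-series with $E$ and $F$, (\ref{p704})--(\ref{p705}), and the ``near-diagonal'' identities (\ref{p709})--(\ref{p710})) follow from the corresponding unshifted statements in \cite{Pe4} by the same observation used for (\ref{p701})--(\ref{p714}): the extra shift on $r$ is compatible because the summation ranges on the right-hand side never reach below the required thresholds, so no new identities are needed.
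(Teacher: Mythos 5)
Your treatment of the relations (\ref{p701})--(\ref{p714}) is exactly the paper's: they hold in $Y_\mu$ by \cite{Pe4} for all admissible indices, hence a fortiori on the shifted index set. The overall strategy for (\ref{p715})--(\ref{p716}) -- reduce to the Drinfeld case $(1^{m+n})$, where they become (\ref{d415})--(\ref{d416}) and are covered by Proposition~\ref{drinj} -- is also the paper's strategy (carried out in Proposition~\ref{missing} by induction on refining the composition one part at a time).

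However, there is a genuine gap in how you propose to carry out that reduction. You assert that the coarse parabolic generators $E_{b;h,k}^{(r)}$, $F_{b;k,h}^{(r)}$ ``can be written explicitly as iterated supercommutators of the Drinfeld-type generators, using formulas of the same shape as the higher root elements (\ref{edrg})--(\ref{fdrg}).'' This is not true. The correct relation between Gauss-decomposition entries for a composition $\mu$ and a one-step refinement $\nu$ is Lemma~\ref{split} / equation~(\ref{redu}), and in the case $a=p$, $i\leq x$ it contains a correction term $-\sum_{q}{}^{\nu}E_{a,a+1;i,q}(u)\,{}^{\nu}E_{a+1,b+1;q,j}(u)$ which is a \emph{product}, not a commutator; moreover the bracket-defined higher root elements differ from the actual Gauss entries of (\ref{badef}) by further product terms (this is exactly the warning issued before (\ref{541}), and why the identity $E_{3,5;g,k}^{(s)}={}^{\nu}E_{3,5;g,k}^{(s)}+\sum\cdots$ is needed there). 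Showing that all these product corrections do not contribute is the technical core of the argument: it requires (\ref{p709}), (\ref{p711}), (\ref{p713}), the auxiliary identity \cite[(6.31)]{Pe4}, and a case-by-case analysis (twelve cases in the paper) organized as an induction on one-step refinements rather than a single jump to $(1^{m+n})$. Your proposal names signs, indexing, and shift thresholds as the difficulties, but the actual obstruction -- the non-commutator correction terms in the change-of-composition formulas -- is absent, so the substitution-plus-Jacobi argument as described does not close.
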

\begin{proof}
By \cite{Pe4}, the relations (\ref{p701})--(\ref{p714}) hold in $Y_{\mu}$ whenever the indices make sense. It remains to show that (\ref{p715}) and (\ref{p716}) also hold in $Y_{\mu}$. These relations are crucial differences from the non-super case in \cite{BK2} and checking them turns out to be very technical and involved. 
As a result, we postpone this part to the end of this section; see Proposition~\ref{missing}.
\end{proof}

For $1\leq a<b\leq z$, $1\leq i\leq \mu_a$, $1\leq j\leq \mu_b$, $r>s_{a,b}^{\mu}$ and a fixed $1\leq k\leq \mu_{b-1}$, we define the higher root elements $E_{a,b;i,j}^{(r)}\in Y_{\mu}(\sigma)$ recursively by
\begin{equation}\label{eparag}
E_{a,a+1;i,j}^{(r)}:= E_{a;i,j}^{(r)}, \qquad E_{a,b;i,j}^{(r)}:=(-1)^{\pa{k}_{b-1}}[E_{a,b-1;i,k}^{(r-s_{b-1,b}^{\mu})}, E_{b-1;k,j}^{(s_{b-1,b}^{\mu}+1)}].
\end{equation}
Similarly, using the same indices except that for $r>s_{b,a}^{\mu}$, we define $F_{b,a;j,i}^{(r)}\in Y_{\mu}(\sigma)$ by
\begin{equation}\label{fparag}
F_{a+1,a;j,i}^{(r)}:= F_{a;j,i}^{(r)}, \qquad F_{b,a;j,i}^{(r)}:=(-1)^{\pa{k}_{b-1}}[F_{b-1;j,k}^{(s_{b,b-1}^{\mu}+1)}, F_{b-1,a;k,i}^{(r-s_{b,b-1}^{\mu})}].
\end{equation}
It turns out that the above definitions are independent of the choice of $k$; see Remark~\ref{4.8}.

We introduce the $loop$ $filtration$ on $Y_{\mu}(\sigma)$
\begin{equation}\notag
L_0 Y_{\mu}(\sigma) \subseteq L_1 Y_{\mu}(\sigma) \subseteq L_2 Y_{\mu}(\sigma) \subseteq \cdots
\end{equation}
by setting the degrees of the generators $D_{a;i,j}^{(r)}$, $E_{a;i,j}^{(r)}$, and $F_{a;i,j}^{(r)}$ to be $r-1$ and setting $L_k Y_{\mu}(\sigma)$ to be the span of all supermonomials in the generators of total degree not greater than $k$. We let $\gr^LY_{\mu}(\sigma)$ denote the associated graded superalgebra and 
define the elements $\{ e_{a,b;i,j}^{(r)} \,|\, 1\leq a,b\leq z, 1\leq i\leq \mu_a, 1\leq j\leq \mu_b, r\geq s_{a,b}^\mu \}\subseteq \gr^LY_{\mu}(\sigma)$ by
\begin{equation}\notag
e_{a,b;i,j}^{(r)} := \left\{
\begin{array}{ll}
\gr^L_{r} D_{a;i,j}^{(r+1)} &\hbox{if $a = b$,}\\[3mm]
\gr^L_{r} E_{a,b;i,j}^{(r+1)} &\hbox{if $a < b$,}\\[3mm]
\gr^L_{r} F_{a,b;i,j}^{(r+1)} &\hbox{if $a > b$.}
\end{array}
\right.
\end{equation}
The following is a parabolic version of Proposition \ref{drtroub}, which can be derived by the same argument in \cite[Lemma 7.5]{Pe4} with the defining relations in Definition~\ref{parashift}.
\begin{proposition}\cite[Lemma 6.7]{BK1}\cite[Lemma 7.5]{Pe4}
For all $1\leq a,b,c,d\leq z$, $1\leq i\leq \mu_a$, $1\leq j\leq \mu_b$, $r\geq s_{a,b}^\mu$, $t\geq s_{c,d}^\mu$,
the following identity holds in $\gr^LY_{\mu}(\sigma)$:
\begin{equation}\label{s-inj}
[e_{a,b;i,j}^{(r)},e_{c,d;h,k}^{(t)}]=(-1)^{\pa{j}_b\pa{h}_c}\delta_{b,c}\delta_{h,j} e_{a,d;i,k}^{(r+t)}
-(-1)^{\pa{i}_a\pa{j}_b+\pa{i}_a\pa{h}_c+\pa{j}_b\pa{h}_c}\delta_{a,d}\delta_{i,k}e_{c,b;h,j}^{(r+t)}.
\end{equation}
\end{proposition}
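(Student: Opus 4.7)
The plan is to mimic the argument of \cite[Lemma 7.5]{Pe4} with the shifted relations of Definition~\ref{parashift} in place of the unshifted parabolic relations. The key observation is that the loop filtration is designed precisely so that each generator $D_{a;i,j}^{(r+1)}$, $E_{a;i,j}^{(r+1)}$, $F_{a;i,j}^{(r+1)}$ has degree $r$, and each recursively defined higher root element $E_{a,b;i,j}^{(r+1)}$, $F_{a,b;j,i}^{(r+1)}$ also has degree $r$ thanks to (\ref{eparag})--(\ref{fparag}). Hence in $\gr^LY_\mu(\sigma)$ the defining relations (\ref{p703})--(\ref{p710}) take a dramatically simpler form: each sum on the right-hand side has one term of top degree and all others of strictly lower degree, which vanish upon passing to the associated graded. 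This is exactly the phenomenon that lets one read off a super loop-algebra structure on $\gr^L$.

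First I would verify (\ref{s-inj}) in the \emph{simple-root cases}, i.e.\ when both $(a,b)$ and $(c,d)$ satisfy $|a-b|\leq 1$ and $|c-d|\leq 1$. These are immediate from the top-degree parts of the commutator relations (\ref{p703})--(\ref{p710}): the $DD$ bracket (\ref{p703}) collapses entirely to a single leading term, the $DE$ and $DF$ brackets (\ref{p704})--(\ref{p705}) each contribute precisely one surviving summand in the associated graded, the $EF$ bracket (\ref{p706}) similarly reduces to the top-degree pair $D_{a;i,k}^{\prime(s_{a,b}^\mu)}D_{a+1;h,j}^{(r+t-s_{a,b}^\mu)}$, and the $EE$/$FF$ brackets (\ref{p707})--(\ref{p708}) lose their shifted cutoff since only the top term survives. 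The delta symbols and signs in (\ref{s-inj}) follow by direct comparison, using (\ref{pad})--(\ref{paf}) to identify the restricted parities with the parities of the matrix units $e_{a,b;i,j}$.

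Next, I would handle the general case by induction on $(b-a)+(d-c)$ using the recursive definitions (\ref{eparag}) and (\ref{fparag}). Whenever one of the higher root elements is non-simple, I replace it by a super-commutator of two elements of strictly smaller height (with a chosen intermediate index $k$), then expand the outer bracket by the super Jacobi identity and apply the inductive hypothesis to each resulting pair. A parallel argument handles the mixed $EF$-type commutators between higher root vectors, again by unfolding one side through (\ref{eparag})/(\ref{fparag}) and reducing to already-established cases. The relations (\ref{p709})--(\ref{p716}), interpreted in the associated graded, are precisely what is needed to guarantee that all such expansions close up correctly and that both $\delta_{b,c}\delta_{h,j}$ and $\delta_{a,d}\delta_{i,k}$ contractions emerge with the asserted signs.

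The main obstacle, as usual in the super setting, is bookkeeping of signs: the restricted parities $\pa{i}_a$ appearing inside the recursion for $E_{a,b;i,j}^{(r)}$ produce cascades of $(-1)^{\pa{k}_{b-1}}$ factors that must cancel against the parity exponents in (\ref{s-inj}) after invoking Jacobi. A secondary difficulty is to check that the recursion (\ref{eparag})/(\ref{fparag}) is independent of the choice of intermediate index $k$ (promised in Remark~\ref{4.8}); but this in fact follows from the very identity being proved in the associated-graded, so it should be established in parallel with the induction rather than assumed beforehand. Up to this sign-tracking, every step is a formal super analogue of \cite[Lemma 7.5]{Pe4}, with the shifted sums in (\ref{p707})--(\ref{p708}) playing no role since their surviving top-degree contributions are identical in form to the unshifted case.
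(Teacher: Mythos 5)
Your sketch follows exactly the route the paper takes (and which it delegates to \cite[Lemma 7.5]{Pe4} and \cite[Lemma 6.7]{BK1}): pass to top-degree terms of the shifted defining relations to get the simple-root cases, then induct on the heights of the higher root elements via the recursions (\ref{eparag})--(\ref{fparag}) and the super Jacobi identity, with (\ref{p711})--(\ref{p716}) controlling the vanishing cases. The only quibble is your description of the $EF$ case: in $\gr^L$ \emph{both} boundary terms $t=0$ and $t=r+s-1$ of (\ref{p706}) survive, yielding $\delta_{h,j}D^{\prime(r+s-1)}_{a;i,k}$ and $\delta_{i,k}D^{(r+s-1)}_{a+1;h,j}$ (with $D^{\prime}\equiv -D$ modulo lower degree by (\ref{p702})), which is precisely how the two contraction terms of (\ref{s-inj}) arise; this is an imprecision of exposition, not of method.
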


\begin{theorem}\label{grisoLS}
The map 
$$\gamma:U\big(\gl_{m|n}[x](\sigma)\big)\longrightarrow \gr^LY_{\mu}(\sigma)$$ defined by
$$\gamma(e_{\mu_1+\dots+\mu_{a-1}+i,\mu_1+\dots+\mu_{b-1}+j}x^r)=(-1)^{\pa{i}_a}e_{a,b;i,j}^{(r)},$$ for all $1\leq a,b\leq z$, $1\leq i\leq \mu_a$, $1\leq j\leq \mu_b$, $r\geq s_{a,b}^\mu$,
is an isomorphism of graded superalgebras.
\end{theorem}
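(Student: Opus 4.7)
The plan is to mirror, step for step, the argument used for Theorem \ref{drgrisoLS}, but now at the level of an arbitrary admissible shape $\mu$, reducing the general shifted case to the unshifted case $\sigma=0$ via the canonical homomorphism $\Gamma$.

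First I would check that $\gamma$ is a well-defined homomorphism of graded superalgebras. The defining relations of $U(\gl_{m|n}[x](\sigma))$ are, on the PBW generators $e_{\mu_1+\dots+\mu_{a-1}+i,\mu_1+\dots+\mu_{b-1}+j}x^r$, precisely the loop-superalgebra commutators, which up to the sign $(-1)^{|i|_a}$ translate to formula (\ref{s-inj}) obtained in the preceding proposition. Hence the assignment respects all relations and extends to a graded algebra map. Surjectivity is then immediate: the images under $\gamma$ of the natural basis of $\gl_{m|n}[x](\sigma)$ are (up to signs) the symbols $e_{a,b;i,j}^{(r)}$, and by the definition of the filtration together with the recursive construction (\ref{eparag}), (\ref{fparag}) of higher root elements, these symbols contain the images of the parabolic generating set $\mathcal{P}_{\mu,\sigma}$, hence generate $\gr^L Y_{\mu}(\sigma)$.

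The substantive content is injectivity, and here I would proceed in two stages. In the base case $\sigma=0$ we have $Y_\mu(0)=Y_\mu = Y_{m|n}$ on the nose (the defining relations of Definition \ref{parashift} collapse to the parabolic presentation of $Y_{m|n}$ from \cite{Pe4}), so injectivity is the PBW statement for $Y_\mu$ in parabolic form: the ordered supermonomials in $\{e_{a,b;i,j}^{(r)} \mid 1\le a,b\le z,\, 1\le i\le \mu_a,\, 1\le j\le \mu_b,\, r\ge 0\}$ are linearly independent in $\gr^L Y_\mu$, by the analogue of \cite[Proposition~7.9]{Pe4}. For general $\sigma$, I invoke Proposition \ref{parahom}: the canonical map $\Gamma\colon Y_{\mu}(\sigma)\to Y_\mu$ is a homomorphism of filtered superalgebras, and by construction $\Gamma$ sends each parabolic generator and each higher root element $E_{a,b;i,j}^{(r)}$, $F_{b,a;j,i}^{(r)}$ in $Y_\mu(\sigma)$ to the element of the same name in $Y_\mu$. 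Passing to associated graded objects, $\gr\Gamma$ sends the symbol $e_{a,b;i,j}^{(r)}\in\gr^L Y_{\mu}(\sigma)$ to its namesake in $\gr^L Y_\mu$. Since those targets are linearly independent by the base case and supermonomials in them are linearly independent, the corresponding ordered supermonomials upstairs are linearly independent as well. Combined with the PBW basis of $U(\gl_{m|n}[x](\sigma))$, this forces $\gamma$ to be injective.

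The only step that is genuinely delicate is the very first one, confirming the commutation law (\ref{s-inj}) in $\gr^L Y_\mu(\sigma)$; but this is precisely what the preceding proposition delivers, modulo the auxiliary check that the higher root elements $E_{a,b;i,j}^{(r)}$ and $F_{b,a;j,i}^{(r)}$ constructed recursively in (\ref{eparag}) and (\ref{fparag}) are independent of the intermediate index $k$ used in the recursion --- a fact which will be verified in the forthcoming Remark~\ref{4.8} and which I would quote here. Everything else in the proof is a clean transport of the Drinfeld-case argument, with the canonical map $\Gamma$ playing the role it played in Theorem \ref{drgrisoLS}.
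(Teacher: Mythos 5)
Your proof follows the paper's argument essentially verbatim: surjectivity and the homomorphism property from (\ref{s-inj}), injectivity in the base case $\sigma=0$ from the identification $Y_\mu(0)=Y_\mu$ and the PBW/loop-filtration result for $Y_{m|n}$, and the general case by transporting linear independence through the canonical map $\Gamma$ and invoking the PBW theorem for $U\big(\gl_{m|n}[x](\sigma)\big)$. One small caution: you do not actually need the independence of the intermediate index $k$ in (\ref{eparag})--(\ref{fparag}) here (a fixed choice of $k$ suffices throughout), and citing Remark~\ref{4.8} at this point would be circular, since that remark rests on Theorem~\ref{sbpr}, which in turn uses the present theorem.
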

\begin{proof}
$\gamma$ is a surjective homomorphism by (\ref{s-inj}). For injectivity, we start with the case $\sigma=0$, where we already know that $Y_{\mu}(0)=Y_\mu$, and the statement follows from Corollary~\ref{Yloop}.
For the general case, observe that the canonical map $\Gamma:Y_{\mu}(\sigma)\rightarrow Y_{\mu}$ is a homomorphism of filtered superalgebras (under loop filtration), 
and its induced map $\gr^L Y_{\mu}(\sigma)\rightarrow \gr^L Y_{\mu}$ sends $e_{a,b;i,j}^{(r)}\in\gr^L Y_{\mu}(\sigma)$ to $e_{a,b;i,j}^{(r)}\in \gr^L Y_{\mu}$.
By the previous paragraph, the ordered supermonomials in the elements $\{e_{a,b;i,j}^{(r)} \, | \, 1\leq a,b\leq m+n, \, r\geq s_{a,b}^{\mu}\}$ are linearly independent in $\gr^L Y_{\mu}(\sigma)$, hence $\gamma$ is injective by the PBW theorem for $U\big(\gl_{m|n}[x](\sigma)\big)$.
\end{proof}

\begin{theorem}\label{sbpr}
Let $Y_{\mu}(\sigma)$ be the subalgebra of $Y_{\mu}$ generated by the union of the following subsets of $\mathcal{P}_{\mu}$:
\begin{align*}
&\big\lbrace D_{a;i,j}^{(r)}, D_{a;i,j}^{\prime(r)} \,|\, {1\leq a\leq z,\; 1\leq i,j\leq \mu_a,\; r\geq 0}\big\rbrace,\\
&\big\lbrace E_{b;h,k}^{(r)} \,|\, {1\leq b< z,\; 1\leq h\leq \mu_b, 1\leq k\leq\mu_{b+1},\; r> s_{b,b+1}^\mu}\big\rbrace,\\
&\big\lbrace F_{b;k,h}^{(r)} \,|\, {1\leq b< z,\; 1\leq k\leq\mu_{b+1}, 1\leq h\leq \mu_b,\; r> s_{b+1,b}^\mu}\big\rbrace.
\end{align*}
Then the relations {\em (\ref{p701})--(\ref{p716})} form a set of defining relations for $Y_\mu(\sigma)$.
In other words, $Y_\mu(\sigma)$ defined in Definition~{\em\ref{parashift}} can be realized as a subalgebra of the super Yangian $Y_{\mu}$. 
\end{theorem}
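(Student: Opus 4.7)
The plan is to package the results already established in this section into a single statement by following the same filtration-and-associated-graded strategy used in Corollary~\ref{drsbpr} for the Drinfeld case. First I would recall that by Proposition~\ref{parahom}, the canonical map $\Gamma : Y_{\mu}(\sigma) \to Y_{\mu}$ is a well-defined homomorphism of superalgebras sending each parabolic generator of $Y_{\mu}(\sigma)$ to the element with the same name inside $Y_{\mu}$ obtained via Gauss decomposition. Consequently, the image $\Gamma(Y_{\mu}(\sigma))$ is exactly the subalgebra of $Y_\mu$ generated by the listed subset of $\mathcal{P}_\mu$. Therefore the theorem reduces to establishing that $\Gamma$ is injective.

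Next I would upgrade $\Gamma$ to a map of filtered superalgebras with respect to the loop filtration on both sides. Since the degrees assigned to the generators of $Y_\mu(\sigma)$ agree with those of the corresponding elements in $Y_\mu$, $\Gamma$ preserves the loop filtration and hence induces a homomorphism $\gr^L \Gamma : \gr^L Y_\mu(\sigma) \to \gr^L Y_\mu$ of graded superalgebras. Under this induced map, the class $e_{a,b;i,j}^{(r)} \in \gr^L Y_\mu(\sigma)$ is sent to the corresponding class $e_{a,b;i,j}^{(r)} \in \gr^L Y_\mu$.

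Now I would invoke Theorem~\ref{grisoLS} on the domain side to identify $\gr^L Y_\mu(\sigma)$ with $U(\gl_{m|n}[x](\sigma))$ via $\gamma$, and Corollary~\ref{Yloop} together with the $\sigma = 0$ case of Theorem~\ref{grisoLS} to identify $\gr^L Y_\mu$ with $U(\gl_{m|n}[x])$. Under these identifications, $\gr^L \Gamma$ becomes (up to the sign twist $(-1)^{\pa{i}_a}$ built into $\gamma$) the map of enveloping algebras induced by the natural inclusion of Lie sub-superalgebras $\gl_{m|n}[x](\sigma) \hookrightarrow \gl_{m|n}[x]$. By the PBW theorem for $U(\gl_{m|n}[x](\sigma))$, this induced map is injective. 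It follows that $\gr^L \Gamma$ is injective, and a standard filtered-to-graded argument then yields injectivity of $\Gamma$ itself. This identifies $Y_\mu(\sigma)$ with the described subalgebra of $Y_\mu$ and shows that the listed relations (\ref{p701})--(\ref{p716}) form a set of defining relations.

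The only delicate ingredient is Proposition~\ref{parahom} itself, since the verification that the quartic Serre relations (\ref{p715}) and (\ref{p716}) actually hold in $Y_\mu$ is genuinely new in the super setting and has been postponed to Proposition~\ref{missing}; apart from that input, the proof of Theorem~\ref{sbpr} is essentially the same filtration argument used in Corollary~\ref{drsbpr}, just applied at the parabolic level with $(\sigma, \mu)$ in place of $(\sigma, (1^{m+n}))$.
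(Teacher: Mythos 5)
Your proposal is correct and follows essentially the same route as the paper: Proposition~\ref{parahom} gives the homomorphism $\Gamma$ onto the subalgebra generated by the listed elements, and injectivity is extracted from the loop-filtration argument encapsulated in Theorem~\ref{grisoLS}, exactly as the paper's (much terser) proof does. Your unpacking of why Theorem~\ref{grisoLS} yields injectivity of $\gr^L\Gamma$ and hence of $\Gamma$ is accurate, and your remark that Proposition~\ref{missing} is the only genuinely new input is consistent with the paper's presentation.
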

\begin{proof}
We slightly change the notation in this proof to avoid possible confusion.
Let $\tilde{Y}_\mu(\sigma)$ denote the abstract superalgebra generated by elements in $\mathcal{P}_{\mu,\sigma}$ with defining relations given in Definition~\ref{parashift} and let $Y_\mu(\sigma)$ denote the concrete subalgebra of $Y_{\mu}$ as stated in the theorem.

Let $\Gamma:\tilde{Y}_\mu(\sigma)\rightarrow Y_{\mu}(\sigma)$ be the map sending elements of $\mathcal{P}_{\mu,\sigma}$ to the corresponding elements of $Y_{\mu}$ denoted by the same notations. 
By Proposition \ref{parahom}, $\Gamma$ is a surjective homomorphism.
Its injectivity follows from Theorem~\ref{grisoLS}.
\end{proof}

\begin{remark}\label{4.8}
By Theorem {\em\ref{sbpr}}, $E_{b;h,k}^{(r)}$ and $F_{b;k,h}^{(r)}$ are now concrete elements in $Y_\mu$.
Using the same argument as in \cite[(6.9)]{BK1} together with the admissible condition {\em (\ref{admcon})}, one can show that the higher root elements defined recursively by {\em (\ref{eparag})} and {\em (\ref{fparag})} are independent of the choices of $k$.

\end{remark}

Let $Y_{\mu}^0$ denote the subalgebra of $Y_{\mu}(\sigma)$ generated by all of the $D_{a;i,j}^{(r)}$'s , $Y_{\mu}^+(\sigma)$ denote the subalgebra generated by all of the $E_{b;h,k}^{(r)}$'s and $Y_{\mu}^-(\sigma)$ denote the subalgebra generated by all of the $F_{b;k,h}^{(r)}$'s. The following corollary give PBW bases for these subalgebras.

\begin{corollary}\label{pbw2}\cite[Theorem 3.2]{BK2}
\begin{enumerate}
\item The set of supermonomials in the elements
$$\{D_{a;i,j}^{(r)} \,|\, 1\leq a\leq z, 1 \leq i,j \leq \mu_a, r>0\}$$
taken in some
fixed order forms a basis for $Y_\mu^0$.\\
\item The set of supermonomials in the elements
$$\{E_{a,b;h,k}^{(r)} \, |\, 1 \leq a < b \leq z, 1 \leq h \leq \mu_a, 1 \leq k \leq \mu_b, r >s_{a,b}^{\mu} \}$$ 
taken in some fixed
order forms a basis for $Y_\mu^+(\sigma)$.\\
\item The set of supermonomials in the elements
$$\{F_{b,a;k,h}^{(r)} \,|\, 1 \leq a < b \leq z,
1 \leq k \leq \mu_b, 1 \leq h \leq \mu_a, r > s_{b,a}^{\mu}\}$$ 
taken in some fixed
order forms a basis for $Y_\mu^-(\sigma)$.\\
\item The set of supermonomials in the union of
the elements listed in {\em (1)--(3)}
taken in some fixed order forms a basis for $Y_{\mu}(\sigma)$.
\end{enumerate}
\end{corollary}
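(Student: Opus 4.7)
The plan is to deduce all four assertions from the graded isomorphism $\gamma: U(\gl_{m|n}[x](\sigma)) \to \gr^L Y_\mu(\sigma)$ of Theorem \ref{grisoLS}, using the standard ``PBW lifting'' from the associated graded superalgebra back to the filtered superalgebra $Y_\mu(\sigma)$.

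First I would prove (4). Fix any total order on the union of generators listed in (1)--(3), together with the higher root elements $E_{a,b;h,k}^{(r)}$ and $F_{b,a;k,h}^{(r)}$ defined recursively in (\ref{eparag})--(\ref{fparag}). Each such element lies in $L_{r-1}Y_\mu(\sigma)$ and, in $L_{r-1}/L_{r-2}$, coincides with the corresponding generator $e_{a,b;i,j}^{(r-1)}$ of $\gr^L Y_\mu(\sigma)$ (up to sign dictated by $\gamma$). Given an ordered supermonomial $M$ of total loop-degree $d$, its symbol in $L_d/L_{d-1}$ is the image under $\gamma$ of an ordered PBW monomial in $U(\gl_{m|n}[x](\sigma))$, which is nonzero by the ordinary PBW theorem. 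This yields linear independence of ordered supermonomials in $Y_\mu(\sigma)$. Spanning is a routine induction on loop-filtration degree: for $x \in L_d Y_\mu(\sigma)$, expand the symbol of $x$ in the loop PBW basis, lift that expansion to an ordered supermonomial combination of degree $\leq d$, subtract, and iterate on $L_{d-1}Y_\mu(\sigma)$.

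For (1)--(3), the strategy is restriction of (4). The ordered supermonomials in (1), (2), (3) are disjoint subsets of the full PBW basis of (4), hence automatically linearly independent in $Y_\mu(\sigma)$ and a fortiori in the respective subalgebras $Y_\mu^0$, $Y_\mu^+(\sigma)$, $Y_\mu^-(\sigma)$. For spanning, I would observe that under $\gamma$ the images of these three subalgebras land in $U(\mathfrak{h}_\mu[x])$, $U(\mathfrak{n}_\mu^+[x](\sigma))$, $U(\mathfrak{n}_\mu^-[x](\sigma))$ respectively, where $\mathfrak{h}_\mu$ and $\mathfrak{n}^\pm_\mu$ denote the block-diagonal and strictly block upper/lower triangular Lie subsuperalgebras of $\gl_{m|n}$ with respect to $\mu$; equivalently, the ``$D$-only'', ``$E$-only'', ``$F$-only'' portions of the PBW basis of (4) form linearly independent spanning sets for $Y_\mu^0$, $Y_\mu^+(\sigma)$, $Y_\mu^-(\sigma)$ by the uniqueness of the basis expansion in (4). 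The same filtered-to-graded induction as above, applied to each subalgebra separately using the corresponding PBW theorem for $U(\mathfrak{h}_\mu[x])$ and $U(\mathfrak{n}_\mu^\pm[x](\sigma))$, concludes spanning.

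The step requiring the most care is verifying that each higher root element $E_{a,b;h,k}^{(r)}$ (and $F_{b,a;k,h}^{(r)}$) has loop-degree exactly $r-1$ with the expected leading symbol $e_{a,b;h,k}^{(r-1)}$, independently of the auxiliary index $k$ appearing in the recursive definitions (\ref{eparag})--(\ref{fparag}). This independence is recorded in Remark \ref{4.8} and relies on the commutation rule (\ref{s-inj}) together with the admissibility condition (\ref{admcon}), which precisely identifies the shift $s_{a,b}^\mu$ with the minimal power of $x$ appearing in the relevant root space of $\gl_{m|n}[x](\sigma)$. Once this is established, the reduction to the PBW theorem for $U(\gl_{m|n}[x](\sigma))$ and for its subalgebras $U(\mathfrak{h}_\mu[x])$ and $U(\mathfrak{n}_\mu^\pm[x](\sigma))$ is entirely formal.
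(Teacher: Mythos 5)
Your proposal is correct and follows essentially the same route as the paper: part (4) is deduced from the graded isomorphism of Theorem \ref{grisoLS} together with the PBW theorem for $U\big(\gl_{m|n}[x](\sigma)\big)$ via the standard filtered-to-graded lifting, and parts (1)--(3) are obtained from the commutation identity (\ref{s-inj}), which is exactly what identifies the symbols of the $D$-, $E$-, and $F$-monomials with PBW monomials in the corresponding block-diagonal and block-triangular subalgebras. The paper states this in one line; you have merely supplied the routine details, including the correct observation that the only point needing care is that each higher root element has the expected leading symbol independently of the auxiliary index.
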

\begin{proof}
(4) follows from Theorem \ref{grisoLS} and the PBW theorem for $U\big(\gl_{m|n}[x](\sigma)\big)$, while the others follow from (\ref{s-inj}).
\end{proof}

\begin{corollary}\label{iso2}\cite[Corollary 3.4]{BK2}
The multiplicative map  $Y_\mu^-(\sigma) \otimes Y_\mu^0 \otimes Y_\mu^+(\sigma)
\longrightarrow Y_{\mu}(\sigma)$ is an isomorphism of superspaces.
\end{corollary}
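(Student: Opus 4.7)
The plan is to deduce this directly from the PBW basis statements in Corollary \ref{pbw2}. The key point is that part (4) of Corollary \ref{pbw2} allows us to choose \emph{any} fixed total order on the union of the generators listed in (1)--(3), and the corresponding ordered supermonomials still form a basis for $Y_\mu(\sigma)$.

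First I would fix a total order on the union of the three generating sets in Corollary \ref{pbw2} in which every $F$-generator precedes every $D$-generator, which in turn precedes every $E$-generator, while the internal orders on each piece agree with the ones used in parts (1), (2), (3) of Corollary \ref{pbw2}. Then every ordered supermonomial in the union factors uniquely as a product $F \cdot D \cdot E$, where $F$ is an ordered supermonomial in the generators of $Y_\mu^-(\sigma)$, $D$ is an ordered supermonomial in the generators of $Y_\mu^0$, and $E$ is an ordered supermonomial in the generators of $Y_\mu^+(\sigma)$.

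Next I would note that by parts (1), (2), (3) of Corollary \ref{pbw2}, the pure tensors $F \otimes D \otimes E$, with $F$, $D$, $E$ ranging over the respective ordered supermonomial bases of $Y_\mu^-(\sigma)$, $Y_\mu^0$, $Y_\mu^+(\sigma)$, form a linear basis of the tensor product $Y_\mu^-(\sigma) \otimes Y_\mu^0 \otimes Y_\mu^+(\sigma)$. The multiplicative map sends such a pure tensor to the ordered supermonomial $F \cdot D \cdot E$ in $Y_\mu(\sigma)$, and by part (4) of Corollary \ref{pbw2} these images form a basis of $Y_\mu(\sigma)$. Hence the multiplicative map sends a basis bijectively to a basis, so it is a linear isomorphism, which is automatically an isomorphism of superspaces since it preserves $\mathbb{Z}_2$-grading term by term.

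There is no serious obstacle here beyond bookkeeping; the only thing to be careful about is checking that ordering the generators as ``$F$'s, then $D$'s, then $E$'s'' is indeed compatible with the fixed-order hypothesis in Corollary \ref{pbw2}(4), which is immediate since that corollary allows any fixed order on the union.
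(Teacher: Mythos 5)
Your argument is correct and is essentially the intended one: the paper gives no separate proof of Corollary \ref{iso2} beyond citing \cite[Corollary 3.4]{BK2}, and the evident derivation is exactly your observation that Corollary \ref{pbw2}(4) holds for any fixed order, in particular one with all $F$'s before all $D$'s before all $E$'s, so that ordered supermonomials factor as $F\cdot D\cdot E$ and the multiplicative map carries a basis of the tensor product bijectively onto a basis of $Y_\mu(\sigma)$.
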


Now we show that the definition of $Y_{\mu}(\sigma)$ is independent of the choice of the admissible shape $\mu$. 
It suffices to show that $Y_\mu(\sigma)= Y_{(1^{m+n})}(\sigma)$.
Assume that $\mu=(\mu_1,\ldots, \mu_z)$ is admissible to $\sigma$. If $\mu_j=1$ for all $j$, then we have done.
Otherwise, suppose that $\mu_p>1$ for some $1\leq p\leq z$ and we decompose $\mu_p=x+y$ for some positive integers $x,y$.

Define a finer composition $\nu$ of length $z+1$ by setting $\nu_i=\mu_i$ for all $1\leq i\leq p-1$,
$\nu_p=x$, $\nu_{p+1}=y$, $\nu_{j+1}=\mu_j$ for all $p+1\leq j\leq z$, that is, 
$$
\nu=(\mu_1,\ldots, \mu_{p-1}, x, y, \mu_{p+2}, \ldots, \mu_z),
$$
which is also admissible to $\sigma$. We claim that $$Y_\mu(\sigma)= Y_{\nu}(\sigma).$$

Consider the Gauss decomposition of the matrix $T(u)$ with respect to the two compositions $\mu$ and $\nu$, respectively:
\[
T(u)= {^\mu E(u)} {^\mu D(u)} {^\mu F(u)}= {^\nu E(u)} {^\nu D(u)} {^\nu F(u)},
\]
where the matrices are block matrices as described in \textsection 3.

Denote by $^\mu D_a$ and $^\nu D_a$ the $a$-th diagonal matrices in $^\mu D(u)$ and $^\nu D(u)$ with respect to the compositions $\mu$ and $\nu$, respectively. Similarly, let $^\mu E_a$ and $^\mu F_a$ denote the matrices in the $a$-th upper and the $a$-th lower diagonal of $^\mu E(u)$ and $^\mu F(u)$, respectively; $^\nu E_a$ and $^\nu F_a$ are defined to be the matrices in the $a$-th upper and the $a$-th lower diagonal of $^\nu E(u)$ and $^\nu F(u)$, respectively.
\begin{lemma}\label{split}
Using the above notation, define an $(x \times x)$-matrix
$A$, an $(x\times y)$-matrix $B$,
a $(y \times x)$-matrix $C$ and a $(y \times y)$-matrix $D$
from the equation
$$
{^\mu}D_p = \left(\begin{array}{ll}I_x&0\\
C&I_y\end{array}
\right)\left(\begin{array}{ll}A&0\\
0&D\end{array}
\right)\left(\begin{array}{ll}I_x&B\\
0&I_y\end{array}
\right).
$$
Then
\begin{itemize}
\item[(i)] ${^\nu}D_a = {^\mu}D_a$ for $a < p$, ${^\nu}D_p = A$,
${^\nu}D_{p+1} = D$, and ${^\nu}D_c = {^\mu}D_{c-1}$ for $c > p+1$;
\item[(ii)] ${^\nu}E_a = {^\mu}E_a$ for $a < p-1$,
${^\nu}E_{p-1}$ is the submatrix consisting
of the first $x$ columns
of
${^\mu}E_{p-1}$, ${^\nu}E_{p} = B$, ${^\nu}E_{p+1}$ is the submatrix consisting of the last $p$ rows
of ${^\mu}E_p$, and ${^\nu}E_c = {^\mu}E_{c-1}$ for $c > p+1$;
\item[(iii)] ${^\nu}F_a = {^\mu}F_a$ for $a < p-1$,
${^\nu}F_{p-1}$ is the submatrix consisting of the first
$x$ rows  of
${^\mu}F_{p-1}$, ${^\nu}F_{p} = C$, ${^\mu}F_{p+1}$ is the submatrix
consisting of the last $y$ columns of
${^\mu}F_p$, and ${^\nu}F_c = {^\mu}F_{c-1}$ for $c > p+1$.
\end{itemize}
\end{lemma}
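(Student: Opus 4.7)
The plan is to reduce Lemma \ref{split} to the uniqueness of the Gauss decomposition, after performing a ``local'' Gauss decomposition of the single diagonal block ${^\mu}D_p$. Since the constant term of ${^\mu}D_p$ is $I_{\mu_p}$, it is invertible in $M_{\mu_p}(Y_{m|n}[[u^{-1}]])$ and thus admits a Gauss decomposition with respect to the sub-composition $(x,y)$ of $\mu_p$; this decomposition is precisely the factorization into $A,B,C,D$ given in the lemma statement, and $A,D$ are themselves invertible. I would then assemble $(m+n)\times(m+n)$ matrices $\hat F,\hat D,\hat E$ which are $\mu$-block diagonal, whose $p$-th diagonal blocks equal
\[
\begin{pmatrix}I_x&0\\C&I_y\end{pmatrix},\qquad \begin{pmatrix}A&0\\0&D\end{pmatrix},\qquad \begin{pmatrix}I_x&B\\0&I_y\end{pmatrix},
\]
respectively, and whose other $\mu$-diagonal blocks equal $I_{\mu_a}$ (for $\hat F, \hat E$) and ${^\mu}D_a$ (for $\hat D$). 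By construction ${^\mu}D(u)=\hat F\hat D\hat E$, so the $\mu$-Gauss decomposition rewrites as
\[
T(u)=\bigl({^\mu}F(u)\,\hat F\bigr)\cdot\hat D\cdot\bigl(\hat E\,{^\mu}E(u)\bigr).
\]

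The key observation is that, reread with respect to the refined composition $\nu$, the three factors on the right have exactly the shape required by a $\nu$-Gauss decomposition. The matrix $\hat F$ is $\nu$-block unit lower-triangular (its only nontrivial off-diagonal $\nu$-block being $C$ at position $(p{+}1,p)$), $\hat E$ is $\nu$-block unit upper-triangular ($B$ at position $(p,p{+}1)$), and $\hat D$ is $\nu$-block diagonal. A $\mu$-block unit lower-triangular matrix is \emph{a fortiori} $\nu$-block unit lower-triangular, so ${^\mu}F(u)\hat F$ remains $\nu$-block unit lower-triangular, and symmetrically $\hat E\,{^\mu}E(u)$ is $\nu$-block unit upper-triangular. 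Uniqueness of the Gauss decomposition of $T(u)$ with respect to $\nu$ then forces
\[
{^\nu}F(u)={^\mu}F(u)\,\hat F,\qquad {^\nu}D(u)=\hat D,\qquad {^\nu}E(u)=\hat E\,{^\mu}E(u).
\]

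The identity ${^\nu}D(u)=\hat D$ reads off part (i) immediately: every diagonal block is unchanged except in positions $p$ and $p{+}1$, where one gets $A$ and $D$. For (ii) and (iii) I would compute the super-diagonal $\nu$-blocks of ${^\nu}E(u)=\hat E\,{^\mu}E(u)$ and the sub-diagonal $\nu$-blocks of ${^\nu}F(u)={^\mu}F(u)\,\hat F$ by direct block multiplication. Because $\hat E$ differs from the identity only in the $(p,p{+}1)$ $\nu$-block, the product $\hat E\,{^\mu}E(u)$ leaves every row outside the $p$-th $\mu$-block untouched and only redistributes rows \emph{inside} that block; a short case analysis on $a\in\{<p{-}1,\,p{-}1,\,p,\,p{+}1,\,>p{+}1\}$ produces exactly the selections of rows/columns (and the single new block $B$) listed in (ii), and the parallel computation with $\hat F$ yields (iii).

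The only substantive obstacle is the bookkeeping of block indices, specifically keeping track of how the $p$-th $\mu$-row and $\mu$-column (where ${^\mu}E(u)$ and ${^\mu}F(u)$ are identity on their diagonal block) split into two $\nu$-rows and $\nu$-columns of sizes $x$ and $y$, and verifying that the insertions of $B$ and $C$ land in the correct new super-/sub-diagonal positions. The super structure plays no essential role: uniqueness of Gauss decomposition over $Y_{m|n}[[u^{-1}]]$ only requires invertibility of the constant terms of the diagonal blocks, which holds here, and matrix multiplication respects the $\mathbb{Z}_2$-grading automatically.
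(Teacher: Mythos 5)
Your argument is correct and is essentially the paper's own proof, which is given in one line as ``Matrix multiplication'': absorbing the local unitriangular factors of ${^\mu}D_p$ into ${^\mu}F(u)$ and ${^\mu}E(u)$ and invoking uniqueness of the $\nu$-Gauss decomposition is exactly the computation being alluded to, just organized more cleanly. (The block bookkeeping you flag does check out; note the statement's ``last $p$ rows'' of ${^\mu}E_p$ is a typo for ``last $y$ rows,'' consistent with your computation.)
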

\begin{proof}
Matrix multiplication.
\end{proof}

As a consequence of Lemma \ref{split}, one has that $Y_{\nu}(\sigma)\subseteq Y_{\mu}(\sigma)$. Now the equality follows from the fact that the isomorphism $U\big(\gl_{m|n}[x](\sigma)\big)\cong \gr^L Y_{\mu}(\sigma)$ is independent of the choice of $\mu$. 
Applying induction on the length of the admissible shape $\mu$, we have deduced the desired result. 
\begin{corollary}\label{indmu}
$Y_{\mu}(\sigma)$ is independent of the choice of the admissible shape $\mu$.
\end{corollary}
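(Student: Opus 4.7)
The plan is to reduce to the extreme case and prove $Y_\mu(\sigma) = Y_{(1^{m+n})}(\sigma)$ as subalgebras of $Y_{m|n}$, viewing both via the realization afforded by Theorem~\ref{sbpr}. I would proceed by induction on the length $z$ of the admissible composition $\mu$, where the base case $z = m+n$ is trivial since then $\mu = (1^{m+n})$. For the inductive step, given $\mu = (\mu_1,\ldots,\mu_z)$ with $\mu_p > 1$ for some $p$, I would pick any decomposition $\mu_p = x+y$ with $x,y \geq 1$ and form the finer composition $\nu$ obtained by replacing $\mu_p$ by the consecutive pair $(x,y)$. It is easy to check from the block structure of $\sigma$ that $\nu$ remains admissible to $\sigma$.

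The core technical input is Lemma~\ref{split}: performing Gauss decomposition of $T(u)$ with respect to $\nu$ is the same as performing it with respect to $\mu$ and then doing a secondary Gauss decomposition on the single diagonal block $^\mu D_p$. Every entry of $^\nu D(u)$, $^\nu E(u)$, $^\nu F(u)$ is therefore a polynomial expression in the entries of $^\mu D(u)$, $^\mu E(u)$, $^\mu F(u)$. Hence all parabolic generators $\mathcal{P}_{\nu,\sigma}$ lie inside the subalgebra $Y_\mu(\sigma) \subseteq Y_{m|n}$, yielding the inclusion $Y_\nu(\sigma) \subseteq Y_\mu(\sigma)$.

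To upgrade this inclusion to equality, I would pass to the associated graded algebras with respect to the loop filtration. The inclusion $Y_\nu(\sigma) \hookrightarrow Y_\mu(\sigma)$ is filtered (the defining loop degrees on the generators match under the Gauss-decomposition identifications of Lemma~\ref{split}), so it induces a graded homomorphism
\[
\gr^L Y_\nu(\sigma) \longrightarrow \gr^L Y_\mu(\sigma).
\]
By Theorem~\ref{grisoLS}, both sides are canonically isomorphic to $U\bigl(\gl_{m|n}[x](\sigma)\bigr)$ as graded superalgebras, and the induced map respects these identifications on the generators $e_{i,j}x^r$. Thus the induced map is an isomorphism, which forces the filtered inclusion $Y_\nu(\sigma) \subseteq Y_\mu(\sigma)$ to be an equality by a standard filtration argument. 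Iterating the refinement step eventually reaches $\nu = (1^{m+n})$, completing the induction.

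The step I expect to be the main obstacle is the careful verification that the inclusion $Y_\nu(\sigma) \subseteq Y_\mu(\sigma)$ is genuinely filtered and that the induced map on associated gradeds sends the distinguished generators $e_{a,b;i,j}^{(r)}$ of $\gr^L Y_\nu(\sigma)$ to the corresponding generators in $\gr^L Y_\mu(\sigma)$ (after suitable re-indexing consistent with $\nu$ being a refinement of $\mu$). Once this bookkeeping is in place, the rest of the argument is essentially formal, and Corollary~\ref{indmu} follows. An alternative route, should the filtered-inclusion comparison prove awkward, is to apply Corollary~\ref{pbw2} directly on both sides to obtain matching PBW bases and deduce the equality at the level of vector spaces.
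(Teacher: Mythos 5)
Your proposal is correct and follows essentially the same route as the paper: refine $\mu$ by splitting one part, invoke Lemma~\ref{split} to get the inclusion $Y_\nu(\sigma)\subseteq Y_\mu(\sigma)$, and upgrade it to an equality using the $\mu$-independence of the isomorphism $\gr^L Y_\mu(\sigma)\cong U\bigl(\gl_{m|n}[x](\sigma)\bigr)$ from Theorem~\ref{grisoLS}, iterating down to $(1^{m+n})$. The only cosmetic difference is that you spell out the filtered-map bookkeeping that the paper leaves implicit.
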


Let $\sigma$ be a shift matrix with an admissible shape $\mu$.
Note that the transpose matrix $\sigma^t$ is again a shift matrix while $\mu$ is still admissible for $\sigma^t$.
On the other hand, suppose that
$\vec\sigma = (\vec{s}_{i,j})_{1 \leq i,j \leq m+n}$ is another shift matrix satisfying (\ref{sijk}) and the condition 
$$\vec{s}_{i,i+1}+\vec s_{i+1,i}= s_{i,i+1}+s_{i+1,i}$$ holds for all $1\leq i\leq m+n-1$. 
As a result, if $\mu$ is an admissible shape for $\sigma$ then it is also admissible for $\vec\sigma$. 
Denote by $\vec{D}_{a;i,j}^{(r)}, \vec{E}_{b;h,k}^{(r)}$ and $\vec{F}_{b;k,h}^{(r)}$ the parabolic generators of $Y_{\mu}(\vec\sigma)$ to avoid confusion. 
The following results can be easily deduced from the presentation of $Y_{\mu}(\sigma)$.

\begin{proposition}
The map $\tau: Y_\mu(\sigma)\rightarrow Y_{\mu}(\sigma^t)$ defined by
\begin{equation}\label{taudef}
\tau(D_{a;i,j}^{(r)}) =
D_{a;j,i}^{(r)},\,\,\,
\tau(E_{b;h,k}^{(r)}) =
F_{b;k,h}^{(r)},\,\,\,
\tau(F_{b;k,h}^{(r)}) =
E_{b;h,k}^{(r)}.
\end{equation}
is a superalgebra anti-isomorphism of order 2.
\end{proposition}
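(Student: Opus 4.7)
The plan is to use the parabolic presentation in Definition~\ref{parashift} (justified by Theorem~\ref{sbpr}) and verify directly that the assignment in (\ref{taudef}) extends to a superalgebra anti-homomorphism from $Y_\mu(\sigma)$ to $Y_\mu(\sigma^t)$. Bijectivity will then be immediate from the fact that $\tau^2 = \id$ on generators, since applying the transpose $i \leftrightarrow j$ twice is trivial and swapping $E \leftrightarrow F$ twice is trivial.

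First I would check that $\tau$ is well-defined on generators. The transposed shift matrix satisfies $(\sigma^t)_{b,b+1} = s_{b+1,b}$, so an allowed index $r > s_{b+1,b}^\mu$ on $F_{b;k,h}^{(r)} \in Y_\mu(\sigma)$ matches the allowed range for the image $E_{b;k,h}^{(r)} \in Y_\mu(\sigma^t)$, and similarly for $E \to F$. Moreover, the parities are preserved by (\ref{pad})--(\ref{paf}): $|D_{a;j,i}^{(r)}| = |j|_a+|i|_a = |D_{a;i,j}^{(r)}|$ and $|F_{b;k,h}^{(r)}| = |k|_{b+1}+|h|_b = |E_{b;h,k}^{(r)}|$. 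Consequently, the super anti-homomorphism convention $\tau(xy) = (-1)^{|x||y|}\tau(y)\tau(x)$ makes sense on monomials, and one computes $\tau([x,y]) = -[\tau(x),\tau(y)]$ for homogeneous $x,y$.

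Second, and this is the main body of the argument, I would verify that $\tau$ respects the defining relations (\ref{p701})--(\ref{p716}). The crucial structural observation is that these relations come in symmetric pairs under the swap $(E,F,\sigma)\leftrightarrow(F,E,\sigma^t)$, with the obvious relabeling of $h,k,i,j$: explicitly, (\ref{p704}) pairs with (\ref{p705}), (\ref{p707}) with (\ref{p708}), (\ref{p709}) with (\ref{p710}), (\ref{p711}) with (\ref{p712}), (\ref{p713}) with (\ref{p714}), and the quartic Serre relations (\ref{p715}) with (\ref{p716}). Relations (\ref{p701})--(\ref{p703}) involve only $D$-generators and the $i\leftrightarrow j$ transposition sends each to itself after relabeling (one can check the sign $(-1)^{|i|_a|j|_a+|i|_a|h|_a+|j|_a|h|_a}$ in (\ref{p703}) is invariant under the swap). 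The relation (\ref{p706}) relating $[E,F]$ to $D'D$ is the one self-paired relation that requires careful sign bookkeeping: applying $\tau$ as an anti-homomorphism produces $-[F_{a;j,i}^{(r)},E_{b;k,h}^{(s)}]$ on the left, and one verifies that the exponents $|h|_{a+1}|k|_a + |j|_{a+1}|k|_a + |h|_{a+1}|j|_{a+1}+1$ of the sign on the right, together with the swap of indices $i \leftrightarrow j$, $h \leftrightarrow k$ and the reversal of factor order $D'_{a;i,k}^{(\cdot)}D_{a+1;h,j}^{(\cdot)} \mapsto D_{a+1;j,h}^{(\cdot)}D'_{a;k,i}^{(\cdot)}$, reproduce the $[F,E]$ form of (\ref{p706}) in $Y_\mu(\sigma^t)$ (up to the overall minus sign absorbed into $-[\tau(x),\tau(y)]$). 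The paired $E/F$-only relations are handled by the same kind of mechanical verification, using crucially that $s_{a,a+1}^\mu$ and $s_{a+1,a}^\mu$ swap under $\sigma \leftrightarrow \sigma^t$, so the summation ranges in (\ref{p707}) and (\ref{p708}) match.

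Third, once all relations are checked, Theorem~\ref{sbpr} guarantees that $\tau$ extends (uniquely) to a superalgebra anti-homomorphism $Y_\mu(\sigma)\to Y_\mu(\sigma^t)$. By symmetry of the construction, there is an analogous anti-homomorphism $Y_\mu(\sigma^t)\to Y_\mu(\sigma)$ defined by the same formulas, and the composition is the identity on generators, hence on the whole algebra. Thus $\tau$ is an anti-isomorphism of order $2$. The main obstacle will be the sign bookkeeping in (\ref{p706}) and in the Serre-type relations (\ref{p715})--(\ref{p716}), where the parity exponents are the most intricate and where one must also confirm that the swap of $s_{a,a+1}^\mu$ and $s_{a+1,a}^\mu$ under $\sigma\mapsto \sigma^t$ correctly converts the indexing condition $|h|_a+|j|_{a+1}=1$ in (\ref{p715}) into the condition $|j|_a+|h|_{a+1}=1$ in (\ref{p716}); these checks are routine but must be carried out component by component.
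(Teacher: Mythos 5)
Your overall strategy --- check that the assignment (\ref{taudef}) respects the defining relations of Definition~\ref{parashift}, observe that the relations pair up under the swap $(E,F,\sigma)\leftrightarrow(F,E,\sigma^{t})$, and get $\tau^{2}=\operatorname{id}$ for free from the formulas on generators --- is the right one, and is essentially all that the paper offers (it records no proof beyond ``easily deduced from the presentation''). But there is a genuine error in your choice of convention. With $\tau(xy)=(-1)^{\pa{x}\pa{y}}\tau(y)\tau(x)$, hence $\tau([x,y])=-[\tau(x),\tau(y)]$, the map does \emph{not} preserve the relations. Already in $Y_{1|1}$ with $\mu=(1,1)$ and $\sigma=0$, relation (\ref{d406}) reads $[E^{(r)},F^{(s)}]=\pm\sum_{t}D'^{(r+s-1-t)}D^{(t)}$ with $E,F$ odd and all $D$'s even; your $\tau$ sends the left side to $-[F^{(r)},E^{(s)}]=-[E^{(s)},F^{(r)}]$ while it sends the right side to $\pm\sum_{t}D^{(t)}D'^{(r+s-1-t)}=+[E^{(s)},F^{(r)}]$, a contradiction. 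The same failure occurs in (\ref{p703}): the two sides transform into expressions that differ by $(-1)^{(\pa{i}_a+\pa{h}_a)(\pa{j}_a+\pa{k}_a)}$, which is not identically $1$. Relatedly, your parenthetical claim that the sign $(-1)^{\pa{i}_a\pa{j}_a+\pa{i}_a\pa{h}_a+\pa{j}_a\pa{h}_a}$ is invariant under $(i,j,h,k)\mapsto(j,i,k,h)$ is false (it changes by $(-1)^{(\pa{i}_a+\pa{j}_a)(\pa{h}_a+\pa{k}_a)}$), and in any case ``invariance of the sign'' is not the relevant check, since one must also track the signs coming from reversing the order of the products on the right-hand side.

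The repair is to use the \emph{ungraded} anti-homomorphism convention $\tau(xy)=\tau(y)\tau(x)$, for which $\tau([x,y])=-(-1)^{\pa{x}\pa{y}}[\tau(x),\tau(y)]$. This is the convention under which the RTT transposition $t_{i,j}^{(r)}\mapsto t_{j,i}^{(r)}$ is an anti-automorphism of $Y_{m|n}$, and it is what the proposition must mean, since the signed convention is ruled out by the $Y_{1|1}$ example above. With this convention the verifications do close: for (\ref{p703}) the factor $(-1)^{(\pa{i}_a+\pa{j}_a)(\pa{h}_a+\pa{k}_a)}$ from $\tau([x,y])$ combines with the sign from reordering $\tau(D_{a;h,j}D_{a;i,k})=D_{a;k,i}D_{a;j,h}$ to reproduce the sign of (\ref{p703}) in the transposed indices, and for (\ref{p706}) the reordering $D'_{a;k,i}D_{a+1;j,h}\mapsto D_{a+1;j,h}D'_{a;k,i}$ (legitimate since distinct diagonal blocks supercommute by (\ref{p703})) costs exactly the sign $(-1)^{(\pa{i}_a+\pa{k}_a)(\pa{h}_{a+1}+\pa{j}_{a+1})}$ needed to match. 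Note also that $\tau(D_{a;i,j}'^{(r)})=D_{a;j,i}'^{(r)}$ must be \emph{derived} from (\ref{p702}) (by applying $\tau$ to $D_a(u)D_a'(u)=I$) rather than imposed. The rest of your outline --- the pairing of (\ref{p704})/(\ref{p705}), (\ref{p707})/(\ref{p708}), (\ref{p709})/(\ref{p710}), (\ref{p711})--(\ref{p716}), the matching of the shifted index ranges under $s_{a,a+1}^{\mu}\leftrightarrow s_{a+1,a}^{\mu}$, and the order-two argument --- is fine once the convention is corrected.
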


\begin{proposition}
The map $\iota:Y_{\mu}(\sigma) \rightarrow Y_{\mu}({\vec{\sigma}})$ defined by
\begin{equation}\label{iotadef}
\iota(D_{a;i,j}^{(r)}) = \vec{D}_{a;i,j}^{(r)},\quad
\iota(E_{b;h,k}^{(r)}) = \vec{E}_{b;h,k}^{(r-s_{b,b+1}^\mu+\vec s_{b,b+1}^\mu)},\quad
\iota(F_{b;k,h}^{(r)}) = \vec{F}_{b;k,h}^{(r-s_{b+1,b}^\mu+\vec s_{b+1,b}^\mu)},
\end{equation}
is a superalgebra isomorphism.
\end{proposition}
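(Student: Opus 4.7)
The plan is to verify that $\iota$ preserves each of the defining relations (\ref{p701})--(\ref{p716}) of Definition~\ref{parashift}, so it extends to a well-defined homomorphism, and then to exhibit a two-sided inverse by running the same construction with the roles of $\sigma$ and $\vec\sigma$ exchanged. The hypothesis $\vec s_{i,i+1}+\vec s_{i+1,i}=s_{i,i+1}+s_{i+1,i}$ for $1\le i<m+n$ will be used exactly once, in the relation that mixes $E$'s and $F$'s.

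First, observe that $\iota$ is well-defined on the free superalgebra on the generators: if $r>s_{b,b+1}^\mu$, then $r-s_{b,b+1}^\mu+\vec s_{b,b+1}^\mu>\vec s_{b,b+1}^\mu$, so the target symbol exists in $Y_\mu(\vec\sigma)$, and similarly for the $F$'s. Now I verify that $\iota$ respects the relations. The $D$-only relations (\ref{p701})--(\ref{p703}) are immediate because $\iota$ fixes all $D$'s. In the mixed relations (\ref{p704}), (\ref{p705}), (\ref{p707}), (\ref{p708}) the total upper-index sum is $r+s-1$ on each term on both sides, so applying $\iota$ produces a uniform shift of $\vec s_{b,b+1}^\mu-s_{b,b+1}^\mu$ (resp.\ $\vec s_{b+1,b}^\mu-s_{b+1,b}^\mu$) on every $E$ (resp.\ $F$), and the summation ranges $t=s_{a,a+1}^\mu+1,\ldots$ in (\ref{p707})--(\ref{p708}) transform to $t'=\vec s_{a,a+1}^\mu+1,\ldots$ after reindexing $t'=t+\vec s_{a,a+1}^\mu-s_{a,a+1}^\mu$. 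The same uniform shift argument handles (\ref{p709})--(\ref{p714}) and the higher Serre relations (\ref{p715})--(\ref{p716}); in the latter the privileged exponent $s_{a,a+1}^\mu+1$ becomes $\vec s_{a,a+1}^\mu+1$ under $\iota$, which is precisely the privileged exponent in $Y_\mu(\vec\sigma)$.

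The essential step is relation (\ref{p706}), because it is the only relation where $E$ and $F$ appear on one side while $D$'s (which are fixed by $\iota$) appear on the other. The Kronecker $\delta_{a,b}$ forces $a=b$, and the LHS picks up the shift
\[
(\vec s_{a,a+1}^\mu - s_{a,a+1}^\mu) + (\vec s_{a+1,a}^\mu - s_{a+1,a}^\mu).
\]
By (\ref{admcon}) this equals $(\vec s_{j,j+1}-s_{j,j+1})+(\vec s_{j+1,j}-s_{j+1,j})$ for $j=\mu_1+\cdots+\mu_a$, which vanishes by the standing hypothesis on $\vec\sigma$. Hence $r'+s'-1=r+s-1$ and the right-hand side is carried to itself verbatim. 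This is the point where the hypothesis is indispensable, and it is the only delicate check in the argument.

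Having shown $\iota$ is a homomorphism, I construct the inverse. The matrix $\vec\sigma$ satisfies the hypotheses symmetrically with respect to $\sigma$, so the analogous formulas define a homomorphism $\iota^\prime:Y_\mu(\vec\sigma)\to Y_\mu(\sigma)$ by
\[
\iota^\prime(\vec D_{a;i,j}^{(r)})=D_{a;i,j}^{(r)},\quad \iota^\prime(\vec E_{b;h,k}^{(r)})=E_{b;h,k}^{(r-\vec s_{b,b+1}^\mu+s_{b,b+1}^\mu)},\quad \iota^\prime(\vec F_{b;k,h}^{(r)})=F_{b;k,h}^{(r-\vec s_{b+1,b}^\mu+s_{b+1,b}^\mu)}.
\]
Checking on generators, $\iota^\prime\circ\iota$ and $\iota\circ\iota^\prime$ both act as the identity, so $\iota$ is an isomorphism. (Alternatively, at this stage one could invoke the PBW basis of Corollary~\ref{pbw2} to see directly that $\iota$ sends a PBW basis bijectively to a PBW basis of $Y_\mu(\vec\sigma)$, but producing the explicit inverse is cleaner.)
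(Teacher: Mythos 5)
Your proposal is correct and follows the route the paper intends: the paper merely asserts that the result "can be easily deduced from the presentation," and your relation-by-relation check together with the explicit inverse is exactly that deduction carried out in full. You also correctly isolate the one genuinely non-formal point, namely that relation (\ref{p706}) is where the hypothesis $\vec s_{i,i+1}+\vec s_{i+1,i}=s_{i,i+1}+s_{i+1,i}$ enters (via (\ref{admcon}) to pass from matrix entries to the block quantities $s_{a,a+1}^{\mu}$, $s_{a+1,a}^{\mu}$), all other relations being invariant under the uniform reindexing.
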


Now we prove the missing piece in the proof of Proposition~\ref{parahom}.
\begin{proposition}\label{missing}
The relations {\em (\ref{p715})} and {\em (\ref{p716})} hold in $Y_{\mu}$, where $E_{b;h,k}^{(r)}$ and $F_{b;k,h}^{(r)}$ are the elements in $Y_{\mu}$ defined by {\em (\ref{paraef})}.
\end{proposition}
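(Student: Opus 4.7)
The plan is to reduce (\ref{p715}) and (\ref{p716}) to their Drinfeld-presentation analogs (\ref{d415}) and (\ref{d416}), which are already known to hold in $Y_{m|n}$ via Proposition~\ref{drinj}, whose proof invokes \cite[(2.14)-(2.15)]{Ts}. I will focus on (\ref{p715}); the relation (\ref{p716}) will then follow by a parallel argument, or by applying the anti-automorphism of $Y_{m|n}$ that interchanges the upper- and lower-triangular Gauss factors (the $Y_\mu$-counterpart of (\ref{taudef})).

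First I would use relation (\ref{p711}), which is already established in $Y_\mu$, to observe that both inner commutators on the left-hand side of (\ref{p715}) vanish unless the ``bridge'' indices match, i.e.\ $f_1=f_2$ and $g_1=g_2$. In the remaining nontrivial case, by the recursive definition (\ref{eparag}), these commutators equal, up to explicit signs depending on the parities $\pa{f_1}_a$ and $\pa{g_1}_{a+1}$, the higher root elements $E_{a-1,a+1;i,j}^{(\bullet)}$ and $E_{a,a+2;h,k}^{(\bullet)}$.

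The heart of the argument is then to reduce from a general admissible $\mu$ to the finest shape $\mu=(1^{m+n})$, where (\ref{p715}) becomes exactly (\ref{d415}). I would invoke Lemma~\ref{split} iteratively --- this is a purely formal Gauss-decomposition identity in $Y_{m|n}$ and does not logically depend on the present proposition --- to express every parabolic generator $E_{a;h,k}^{(r)}$, $D_{a;i,j}^{(r)}$, $D_{a;i,j}^{\prime(r)}$ in $Y_\mu$ as an explicit polynomial in the Drinfeld generators $E_b^{(r)}$, $F_b^{(r)}$, $D_b^{(r)}$, $D_b^{\prime(r)}$. Substituting these expressions into the outer commutator of (\ref{p715}) and repeatedly applying the already-established relations (\ref{p701})-(\ref{p714}), I would rearrange the result into a sum of Drinfeld-level commutators each of which has the shape of the left-hand side of (\ref{d415}) under the parity hypothesis $\pa{h}_a + \pa{j}_{a+1}=1$, and hence vanishes.

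The main obstacle I expect is precisely the bookkeeping in this Drinfeld-level expansion: the many sign factors arising from the $\mathbb{Z}_2$-grading, and the auxiliary $D_b^{(r)}$- and $D_b^{\prime(r)}$-factor terms generated by the iterated Gauss-decomposition reduction, have to be controlled carefully while verifying that the parity condition $\pa{h}_a + \pa{j}_{a+1}=1$ correctly selects the super-specific (odd-isotropic) Serre pattern in each summand rather than the ordinary Serre patterns already encoded in (\ref{p713}). This is the ``very technical and involved'' content anticipated in the proof of Proposition~\ref{parahom}.
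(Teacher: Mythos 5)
Your overall strategy---reducing (\ref{p715}) to the Drinfeld-level relation (\ref{d415}), already secured by Proposition~\ref{drinj}, by refining the composition through the Gauss-decomposition identities of Lemma~\ref{split}---is the same as the paper's, which organizes this as a downward induction on the length of $\mu$, refining one block $\mu_p=x+y$ at a time with base case $\mu=(1^{m+n})$. Your preliminary reduction to $f_1=f_2$ and $g_1=g_2$ via (\ref{p711}) is also correct.

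The gap lies in the central claim that, after substituting Drinfeld-level expressions for the parabolic generators, the outer commutator ``rearranges via (\ref{p701})--(\ref{p714}) into a sum of commutators of the shape of (\ref{d415}).'' When a block is split, the refinement formula (\ref{redu}) shows that ${}^\mu E_{a,b;i,j}(u)$ with $a=p$, $i\le x$ acquires a correction term that is a \emph{product} of two $E$-type root elements of the finer composition, namely $-\sum_{q}{}^\nu E_{a,a+1;i,q}(u)\,{}^\nu E_{a+1,b+1;q,j}(u)$; the difficulty therefore sits in these quadratic $E$-terms, not in the $D$- and $D'$-factors where you locate it. Killing their contribution is exactly where the paper must import an additional identity, (\ref{541}), obtained from \cite[(6.31)]{Pe4} via the shift map; this relation is not among (\ref{p701})--(\ref{p714}) and does not emerge from any routine rearrangement of them. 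Combined with repeated super Jacobi manipulations, (\ref{p713}), and the inductive instance of (\ref{p715}) for the finer composition, it is what makes the twelve cases (\ref{426})--(\ref{437}) close up. Without identifying this extra input, the assertion that everything collapses to instances of (\ref{d415}) is unsupported---and that assertion is precisely the ``technical and involved'' content the proposition is meant to supply. (Your suggestion to deduce (\ref{p716}) from (\ref{p715}) via the anti-automorphism exchanging the Gauss factors is legitimate, provided that map is defined from the RTT presentation of $Y_{m|n}$ rather than from the presentation currently being verified.)
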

\begin{proof}
We prove (\ref{p715}) where (\ref{p716}) is similar.
Inspired by \cite[\textsection 2.4]{BK3}, the proof is given by downward induction on the length of the admissible shape $\mu$. Our initial step is the case $\mu=(1^{m+n})$, where (\ref{p715}) reduces to (\ref{d415}), which was proved in Proposition~\ref{drinj}.

Assume now the length of $\mu=(\mu_1,\ldots, \mu_z)$ is strictly less than $m+n$. Following the same notations given in the proof of Corollary \ref{indmu}, we may choose some $1\leq p\leq z$ and decompose $\mu_p=x+y$ to obtain a new composition $
\nu=(\mu_1,\ldots, \mu_{p-1}, x, y, \mu_{p+2}, \ldots, \mu_z).$
The key idea is to describe the relations between the elements $^\mu E_{b;h,k}^{(r)}$ and $^\nu E_{b;h,k}^{(r)}$.

Recall the set $\mathcal{P}_{\mu,\sigma}$ consisting of the following elements in $Y_{\mu}$
\begin{align*}
&\big\lbrace ^\mu D_{a;i,j}^{(r)}  \,|\, {1\leq a\leq z,\; 1\leq i,j\leq \mu_a,\; r\geq 0}\big\rbrace\\
&\big\lbrace ^\mu E_{b;h,k}^{(r)} \,|\, {1\leq b < z,\; 1\leq h\leq \mu_b, 1\leq k\leq\mu_{b+1},\; r> s_{b,b+1}^\mu}\big\rbrace\\
&\big\lbrace ^\mu F_{b;k,h}^{(r)} \,|\, {1\leq b < z,\; 1\leq h\leq \mu_b, 1\leq k\leq\mu_{b+1}, \; r> s_{b+1,b}^\mu}\big\rbrace
\end{align*}
obtained by the Gauss decomposition of $T(u)$ with respect to $\mu$. 
Similarly, replacing $\mu$ by $\nu$, we have the following elements in $Y_{\nu}$ as well
\begin{align*}
&\big\lbrace ^\nu D_{a;i,j}^{(r)}  \,|\, {1\leq a\leq z+1,\; 1\leq i,j\leq \nu_a,\; r\geq 0}\big\rbrace\\
&\big\lbrace ^\nu E_{b;h,k}^{(r)} \,|\, {1\leq b \leq z,\; 1\leq h\leq \nu_b, 1\leq k\leq\nu_{b+1},\; r> s_{b,b+1}^\nu}\big\rbrace\\
&\big\lbrace ^\nu F_{b;k,h}^{(r)} \,|\, {1\leq b \leq z,\; 1\leq h\leq \nu_b, 1\leq k\leq\nu_{b+1}, \; r> s_{b+1,b}^\nu}\big\rbrace
\end{align*}

For every $1\leq a<b\leq z+1$, $1\leq i\leq \nu_a$, $1\leq j\leq \nu_b$, 
we inductively define higher root elements $^\nu E_{a,b;i,j}^{(r)}$ for $r>s_{a,b}^{\nu}$ by equation (\ref{eparag}) and similarly define $^\nu F_{b,a;j,i}^{(r)}$ for $r>s_{b,a}^{\nu}$ by equation (\ref{fparag}).
We further define the formal series in $Y_\nu(\sigma)[[u^{-1}]]$:
\begin{equation}\label{hrne}
^\nu E_{a,b;i,j}(u):= \sum_{r > s_{a,b}^{\nu}} \,^\nu E_{a,b;i,j}^{(r)} u^{-r}, \qquad ^\nu F_{b,a;j,i}(u):= \sum_{r > s_{b,a}^{\nu}} \,^\nu F_{b,a;j,i}^{(r)} u^{-r}.
\end{equation}
Note that the value of $k$ in (\ref{eparag}) and (\ref{fparag}) can be arbitrarily chosen between $1$ and $\nu_{b-1}$ due to Remark \ref{4.8}. 
Moreover, one should be careful that they are in general different from those series in $Y_\nu[[u^{-1}]]$ given by (\ref{badef}) so that we have to slightly modify the argument in the proof of Corollary~\ref{indmu}.
Finally, let $^\nu D_{a;i,j}(u)$ be given as in (\ref{3.7}) with respect to $\nu$.

Using these series, one defines the following matrices 
\begin{align*}
 {^\nu} D_a(u) &=\big({^\nu} D_{a;i,j}(u)\big)_{1 \leq i,j \leq \nu_a}\\
 {^\nu} E_{a,b}(u)&=\big({^\nu} E_{a,b;h,k}(u)\big)_{1 \leq h \leq \nu_a, 1 \leq k \leq \nu_b}\\
 {^\nu} F_{b,a}(u)&=\big({^\nu} F_{b,a;k,h}(u)\big)_{1 \leq k \leq \nu_b, 1 \leq h \leq \nu_a}
\end{align*}
One further defines the block matrices ${^\nu}D(u)$, ${^\nu}E(u)$ and ${^\nu}F(u)$ exactly the same way as (\ref{T=FDE})--(\ref{3.9}), except that we use their product to define the matrix $^\nu G(u)$:
$$ ^\nu G(u):= \, {^\nu}F(u) {^\nu}D(u) {^\nu}E(u) $$

By exactly the same way, one defines the higher root elements $^\mu E_{a,b;i,j}^{(r)}$, $^\mu F_{b,a;j,i}^{(r)}$, formal series $^\mu E_{a,b;i,j}(u)$, $^\mu F_{b,a;j,i}(u)$, $^\mu D_{a;i,j}(u)$, block matrices ${^\mu}D(u)$, ${^\mu}E(u)$ and ${^\mu}F(u)$ and hence their product $^\mu G(u):= \, {^\mu}F(u) {^\mu}D(u) {^\mu}E(u)$.
A key observation from \cite[\textsection 2.4]{BK3} is that these two matrices are in fact the same $^\nu G(u)= {^\mu} G(u)$ and hence we have
$$  {^\nu}F(u) {^\nu}D(u) {^\nu}E(u) =  \, {^\mu}F(u) {^\mu}D(u) {^\mu}E(u) $$
As a consequence of Lemma \ref{split},  
for each $1 \leq a < b \leq z$,
$1 \leq i \leq \mu_a$ and $1 \leq j \leq \mu_b$, we have the following relation 
\begin{align}\label{redu}
{^\mu}E_{a,b;i,j}(u) &=
\left\{
\begin{array}{ll}
{^\nu}E_{a,b;i,j}(u)&\hbox{if $b < p$};\\
{^\nu}E_{a,b;i,j}(u)&\hbox{if $b=p, j \leq x$};\\
{^\nu}E_{a,b+1;i,j-x}(u)&\hbox{if $b=p, j > x$};\\
{^\nu}E_{a,b+1;i,j}(u)&\hbox{if $a < p<b $};\\
{^\nu}E_{a,b+1;i,j}(u)
\\
\qquad -\:\sum_{q=1}^{y}
{^\nu} E_{a,a+1;i,q}(u) {^\nu} E_{a+1,b+1;q,j}(u)\qquad
&\hbox{if $a=p, i \leq x$};\\
{^\nu}E_{a+1,b+1;i-x,j}(u)&\hbox{if $a=p, i > 
x$};\\
{^\nu}E_{a+1,b+1;i,j}(u)&\hbox{if $a > p$}.
\end{array}\right.
\end{align}

Back to (\ref{p715}), we may assume that $f_1=f_2=f$ and $g_1=g_2=g$ by (\ref{p709}).
Moreover, by (\ref{redu}), $^\mu E_{a;i,j}^{(r)}=\,^\nu E_{a;i,j}^{(r)}$ except for $a\in\{p-1,p,p+1\}$ so the general case is further reduced to the special case $\mu=(\mu_1,\mu_2,\mu_3,\mu_4)$ since (\ref{p715}) holds for $\nu$ by induction.
Therefore, it suffices to check the following relation holds in $Y_{\mu}$ for any $t>s_{2,3}^\mu$:
\begin{equation}\label{KY}
\big[\,[ ^{\mu}E_{1;i,f}^{(r)}, \,^{\mu}E_{2;f,j}^{(t)}]\,,\,[ ^{\mu}E_{2;h,g}^{(t)}, \,^{\mu}E_{3;g,k}^{(s)}]\,\big]=0
\end{equation}
This can be checked by a case-by-case discussion. We list all possibilities below:
{\allowdisplaybreaks
\begin{align}
\label{426} p=1,& \qquad 1\leq i\leq x \\
\label{427} p=1,& \qquad 1\leq i-x\leq y\\
\label{428} p=2,& \qquad 1\leq f\leq x, \qquad  1\leq h\leq x\\
\label{429} p=2,& \qquad  1\leq f-x\leq y, \qquad  1\leq h\leq x\\
\label{430} p=2,& \qquad  1\leq f\leq x, \qquad  1\leq h-x \leq y\\
\label{431} p=2,& \qquad  1\leq f-x\leq y, \qquad  1\leq h-x \leq y\\
\label{432} p=3,& \qquad  1\leq g\leq x, \qquad  1\leq j\leq x\\
\label{433} p=3,& \qquad  1\leq g-x\leq y, \qquad  1\leq j\leq x\\
\label{434} p=3,& \qquad  1\leq g\leq x, \qquad  1\leq j-x\leq y\\
\label{435} p=3,& \qquad  1\leq g-x\leq y, \qquad  1\leq j-x\leq y\\
\label{436} p=4,& \qquad  1\leq k\leq x\\
\label{437} p=4,& \qquad  1\leq k-x \leq y
\end{align}
}
We will check some of them in detail here and the remaining ones can be deduced similarly.

Suppose that (\ref{426}) holds. By (\ref{redu}), we have
$${^\mu}E_{1;i,f}^{(r)} ={^\nu}E_{1,3;i,f}^{(r)}-\:\sum_{s^\nu_{2,3}<q<r}\sum_{\ell=1}^{y}
{^\nu} E_{1;i,\ell}^{(r-q)} \, {^\nu} E_{2;\ell,f}^{(q)}$$
Note that the admissible condition implies 
$s^\mu_{1,2}=s^\nu_{2,3}$ so the indices $q$ and $r-q$ make sense.
Then relation (\ref{KY}) becomes
\begin{multline*}
\big[\,[ {^\nu}E_{1,3;i,f}^{(r)}-\:\sum_{s^\nu_{2,3}<q<r}\sum_{\ell=1}^{y}
{^\nu} E_{1;i,\ell}^{(r-q)} \, {^\nu} E_{2;\ell,f}^{(q)}, \,{^\nu}E_{3;f,j}^{(t)}]\,,\,[ {^\nu}E_{3;h,g}^{(t)}, \,{^\nu}E_{4;g,k}^{(s)}]\,\big]=\\
\big[\,[ {^\nu}E_{1,3;i,f}^{(r)}, \,{^\nu}E_{3;f,j}^{(t)}]\,,\,[ {^\nu}E_{3;h,g}^{(t)}, \,{^\nu}E_{4;g,k}^{(s)}]\,\big]
-
\big[\,[ \:\sum_{s^\nu_{2,3}<q<r}\sum_{\ell=1}^{y}
{^\nu} E_{1;i,\ell}^{(r-q)} \, {^\nu} E_{2;\ell,f}^{(q)}, \,{^\nu}E_{3;f,j}^{(t)}]\,,\,[ {^\nu}E_{3;h,g}^{(t)}, \,{^\nu}E_{4;g,k}^{(s)}]\,\big]
\end{multline*}
We first use the relation (\ref{eparag}) to rewrite ${^\nu}E_{1,3;i,f}^{(r)}=(-1)^{\pa{\ell}}[{^\nu} E_{1;i,\ell}^{(r-s^\nu_{2,3})} ,\, {^\nu} E_{2;\ell,f}^{(s^\nu_{2,3}+1)}]$.
Then we use super Jacobi identity twice together with the fact that
${^\nu} E_{1;i,\ell}^{(r-s^\nu_{2,3})}$ and $^{\nu}E_{3;f,j}^{(t)}$ supercommute to rewrite the first term into
$$
(-1)^{\pa{\ell}}\Big[\,{^\nu} E_{1;i,\ell}^{(r-s^\nu_{2,3})}, \big[ \,[ {^\nu} E_{2;\ell,f}^{(s^\nu_{2,3}+1)}, \,{^\nu}E_{3;f,j}^{(t)}]\,,\,[ {^\nu}E_{3;h,g}^{(t)}, \,{^\nu}E_{4;g,k}^{(s)}]\,\big] \,\Big]
$$
Similarly, up to an irrelevant sign factor, one can rewrite the second term as
$$
\sum_{s^\nu_{2,3}<q<r}\sum_{\ell=1}^{y}
{^\nu} E_{1;i,\ell}^{(r-q)} \,\big[\,[ \:
 {^\nu} E_{2;\ell,f}^{(q)}, \,{^\nu}E_{3;f,j}^{(t)}]\,,\,[ {^\nu}E_{3;h,g}^{(t)}, \,{^\nu}E_{4;g,k}^{(s)}]\,\big]
$$
Now both of them are zero since (\ref{p715}) holds for $\nu$ by induction and the case (\ref{426}) is proved.

Suppose that (\ref{433}) holds. Using (\ref{redu}), we rewrite (\ref{KY}) into
$$
\Big[ \,
 [ {^\nu} E_{1;i,f}^{(r)}, {^\nu} E_{2;f,j}^{(t)}]  ,
 [{^\nu} E_{2,4;h,g}^{(t)}, {^\nu} E_{4;g,k}^{(s)}]
 \, \Big]
$$
By relation (\ref{eparag}), we have 
\begin{equation}\label{e24=}
{^\nu}E_{2,4;h,g}^{(t)}=(-1)^{\pa{\ell}}[{^\nu} E_{2;h,\ell}^{(t)} ,\, {^\nu} E_{3;\ell,g}^{(1)}], 
\end{equation}
where it is crucial to use the fact that $s^\nu_{3,4}=0$ due to the admissible condition. Following the same argument given in the case (\ref{426}), one easily deduces that (\ref{KY}) is indeed zero in the case (\ref{433}).

Now we prove the case (\ref{434}). By (\ref{redu}), equation (\ref{KY}) becomes 
\begin{equation}\label{439}
\Big[  \big[ {^\nu} E_{1;i,f}^{(r)}, {^\nu} E_{2,4;f,j}^{(t)} \big] \, , \, 
\big[ \,{^\nu}E_{2;h,g}^{(t)},  \,{^\nu}E_{3,5;g,k}^{(s)}- \sum_{s^\nu_{4,5}<q<s}\sum_{\ell=1}^{\nu_4}\,{^\nu}E_{3;g,\ell}^{(s-q)} \,{^\nu}E_{4;\ell,k}^{(q)} \big] \Big]
\end{equation}
For convenience, write 
$$B={^\nu}E_{3,5;g,k}^{(s)}- \sum_{s^\nu_{4,5}<q<s}\sum_{\ell=1}^{\nu_4}\,{^\nu}E_{3;g,\ell}^{(s-q)} \,{^\nu}E_{4;\ell,k}^{(q)}.$$

We need an extra relation before moving on.
Applying the shift map $\psi_{\nu_1}$ in \cite[Lemma~4.2]{Pe4} to the equation \cite[(6.31)]{Pe4}, one deduces the following relation in $Y_{\nu}[[u^{-1},v^{-1}]]$
\begin{equation}\label{541}
\big[ E_{2,4;f,j}(u), \, E_{3,5;g,k}(v) - \sum_{\ell=1}^{\nu_4} E_{3;g,\ell}(v) E_{4;\ell,k}(v)\big] =0
\end{equation}
We emphasize again that the series $E_{2,4;f,j}(u)$ and $E_{3,5;g,k}(v)$ in (\ref{541}) are given by (\ref{3.8}) and they are in general different from ${^\nu} E_{2,4;f,j}(u)$ and ${^\nu}E_{3,5;g,k}(v)$ defined by (\ref{hrne}).
Fortunately, $s^\nu_{3,4}=0$ due to the admissible condition so that we do have ${^\nu} E_{2,4;f,j}(u)=E_{2,4;f,j}(u)$.
By using (\ref{p709}) in the case $\sigma=0$ multiple times, one deduces that
\[
E_{3,5;g,k}^{(s)}={^\nu} E_{3,5;g,k}^{(s)} + \sum_{j=1}^{s_{4,5}^\nu} \sum_{\ell=1}^{\nu_4} E_{3;g,\ell}^{(s+j-1)} E_{4;\ell,k}^{(j)}.
\]
As a result, we may rewrite (\ref{541}) into the following identity in $Y_\nu(\sigma)$
\begin{equation}\label{she}
\big[ {^\nu}E_{2,4;f,j}^{(t)}, \, {^\nu}E_{3,5;g,k}^{(s)} - \sum_{s^\nu_{4,5}<q<s}\sum_{\ell=1}^{\nu_4} {^\nu}E_{3;g,\ell}^{(s-q)}\,{^\nu}E_{4;\ell,k}^{(q)}\big] = \big[ {^\nu}E_{2,4;f,j}^{(t)}, \, B \big] = 0
\end{equation}
By super Jacobi identity and (\ref{e24=}), we rewrite (\ref{439}) into
\begin{align*}
\Big[  \big[  & {^\nu} E_{1;i,f}^{(r)}, {^\nu} E_{2,4;f,j}^{(t)} \big] \, , \, 
\big[ \,{^\nu}E_{2;h,g}^{(t)},  \,B \big] \Big] \\
= &  \Big[  \big[ \, [ \, {^\nu} E_{1;i,f}^{(r)}, {^\nu} E_{2,4;f,j}^{(t)} \, ] \, , \, {^\nu}E_{2;h,g}^{(t)} \big]  \, , \, B \Big] 
 \pm \Big[ {^\nu}E_{2;h,g}^{(t)} \, , \, \big[ \, [ {^\nu} E_{1;i,f}^{(r)}, {^\nu} E_{2,4;f,j}^{(t)} \, ]  \, , \, B \big] \Big]
\end{align*}
The second term is zero due to (\ref{she}) and the fact that ${^\nu} E_{1;i,f}^{(r)}$ supercommute with $B$, which is a consequence of equation (\ref{p711}). Using (\ref{e24=}) and super Jacobi identity, we rewrite the term inside the bracket of the first term as follows 
\begin{align*}
\Big[ \, [ \, &{^\nu} E_{1;i,f}^{(r)}, {^\nu} E_{2,4;f,j}^{(t)} \, ] \, , \, {^\nu}E_{2;h,g}^{(t)} \Big] = 
\Big[ \, \big[ \, {^\nu} E_{1;i,f}^{(r)}, (-1)^{\pa{\ell}}[{^\nu} E_{2;f,\ell}^{(t)} ,\, {^\nu} E_{3;\ell,j}^{(1)}] \, \big] \, , \, {^\nu}E_{2;h,g}^{(t)} \Big] \\
& = \pm \Big[ \, {^\nu} E_{1;i,f}^{(r)} \, , \,  \big[ {^\nu}E_{2;h,g}^{(t)},   [{^\nu} E_{2;f,\ell}^{(t)} ,\, {^\nu} E_{3;\ell,j}^{(1)}]  \big]  \, \Big] \pm \Big[ \,   \big[ {^\nu} E_{1;i,f}^{(r)} \, , \, {^\nu}E_{2;h,g}^{(t)}\big] \, , \big[{^\nu} E_{2;f,\ell}^{(t)} ,\, {^\nu} E_{3;\ell,j}^{(1)}\big]  \, \Big]
\end{align*}
The first term is zero due to equation (\ref{p713}) while the second term is zero since (\ref{p715}) holds for $\nu$ by induction. This completes the proof of (\ref{KY}) in the case (\ref{434}).

The cases (\ref{431}) and (\ref{435}) are similar to (\ref{433});
the cases (\ref{427}), (\ref{436}) and (\ref{437}) are immediate results of the induction hypothesis; the cases (\ref{430}) and (\ref{432}) are similar to (\ref{426});
the cases (\ref{428}) and (\ref{429}) are similar to (\ref{434}).
\end{proof}

%%%%%%%%%%%%%%%%%%%%%%%%%%%%%%%%%%%%%%%%%%%%%%%%%%%%%%%%%%

\section{Baby comultiplications}\label{babyco}
Although $Y_{m|n}$ is a hopf-superalgebra,  
the shifted super Yangian $Y_\mu(\sigma)$ is not closed under the comultiplication defined by (\ref{Del}) in general; that is, 
$$\Delta(Y_\mu(\sigma))\nsubseteq Y_\mu(\sigma)\otimes Y_\mu(\sigma).$$
However, one can define some comultiplication-like maps on $Y_{\mu}(\sigma)$ as in \cite[\textsection 4]{BK2}.

We first set up our assumptions and notations throughout this section. Let $\sigma$ be a non-zero shift matrix of size $m+n$ with minimal admissible shape $\mu=(\mu_1,\ldots,\mu_z)$. Let $\bo$ be a fixed $0^m1^n$-sequence and let $Y_\mu(\sigma)$ be the shifted super Yangian defined in \textsection \ref{shiftY2}. Suppose that there are $p$ 0's and $q$ 1's in the very last $\mu_z$ digits of $\bo$; that is, $\bo_z$ is a $0^p1^q$-sequence and $\mu_z=p+q$.
Since $\mu$ is minimal admissible and $\sigma\neq 0$, we have that $1\leq \mu_z< m+n$ and either $s_{m+n-\mu_z,m+n+1-\mu_z}\neq 0$ or $s_{m+n+1-\mu_z,m+n-\mu_z}\neq 0$.

\begin{theorem}\label{baby1}
Let $\mu=(\mu_1,\mu_2,\ldots,\mu_{z})$ be minimal admissible to $\sigma$. For $1\leq i,j\leq \mu_z$, define
 \[
 \tilde e_{i,j}:=e_{i,j}+\delta_{i,j}((m-p)-(n-q))\in U(\gl_{p|q}).
 \]
 Here $e_{i,j}$ is the elementary matrix identified with the element in $\gl_{p|q}$ and its parity is determined by the $0^p1^q$-sequence $\bo_z$.
\begin{enumerate}
\item[(1)] Suppose that $s_{m+n-\mu_z,m+n+1-\mu_z}\neq 0$. Define $\dot\sigma = (\dot s_{i,j})_{1 \leq i,j \leq m+n}$ by
     \begin{equation}\label{babyr1}
     \dot s_{i,j} = \left\{
     \begin{array}{ll}
      s_{i,j}-1&\hbox{if\, $i \leq m+n-\mu_z < j$,}\\
      s_{i,j}&\hbox{otherwise.}
      \end{array}\right.
      \end{equation}
     Then the map $\Delta_{R}:Y_{m|n}(\sigma) \rightarrow Y_{m|n}(\dot\sigma) \otimes U(\mathfrak{gl}_{p|q})$
     defined by
     \begin{align*}
       D_{a;i,j}^{(r)} &\mapsto \dot D_{a;i,j}^{(r)} \otimes 1
       + \delta_{a,z} \sum_{f=1}^{\mu_z} (-1)^{\pa{f}_z}\dot D_{a;i,f}^{(r-1)} \otimes \tilde e_{f,j},\\
       E_{b;h,k}^{(r)} &\mapsto \dot E_{b;h,k}^{(r)} \otimes 1
       + \delta_{b,z-1} \sum_{f=1}^{\mu_z} (-1)^{\pa{f}_z}\dot E_{b;h,f}^{(r-1)} \otimes \tilde e_{f,k},\\
       F_{b;k,h}^{(r)} & \mapsto \dot F_{b;k,h}^{(r)} \otimes 1,
      \end{align*}
       is a superalgebra homomorphism.\\[4mm]
\item[(2)] Suppose that $s_{m+n+1-\mu_z,m+n-\mu_z}\neq 0$. Define $\dot\sigma = (\dot s_{i,j})_{1 \leq i,j \leq m+n}$ by
     \begin{equation}\label{babyl1}
     \dot s_{i,j} = \left\{
     \begin{array}{ll}
      s_{i,j}-1&\hbox{if\, $j \leq m+n-\mu_z < i$,}\\
      s_{i,j}&\hbox{otherwise.}
      \end{array}\right.
      \end{equation}
       Then the map $\Delta_{L}:Y_{m|n}(\sigma) \rightarrow U(\mathfrak{gl}_{p|q})\otimes Y_{m|n}(\dot\sigma)$
       defined by 
       \begin{align*}
       D_{a;i,j}^{(r)} &\mapsto 1\otimes\dot D_{a;i,j}^{(r)}
       + \delta_{a,z} (-1)^{\pa{i}_z}\sum_{k=1}^{\mu_z} \tilde e_{i,k}\otimes \dot D_{a;k,j}^{(r-1)},\\
       E_{b;h,k}^{(r)} &\mapsto 1\otimes \dot E_{b;h,k}^{(r)} ,\\
       F_{b;k,h}^{(r)} & \mapsto 1\otimes \dot F_{b;k,h}^{(r)}+
       \delta_{b,z-1} (-1)^{\pa{i}_z}\sum_{f=1}^{\mu_z} \tilde e_{k,f}\otimes \dot F_{b;f,h}^{(r-1)},
      \end{align*}
       is a superalgebra homomorphism.
       \end{enumerate}
To avoid possible confusion, in the above description and hereafter, the parabolic generators of $Y_{m|n}(\dot{\sigma})$ are denoted by $\dot D_{a;i,j}^{(r)}$, $\dot E_{a;i,j}^{(r)}$, and $\dot F_{a;i,j}^{(r)}$, where $\dot{\sigma}$ is the shift matrix defined by either {\em (\ref{babyr1})} or {\em (\ref{babyl1})}, with respect to the same shape $\mu$ which is also admissible to $\dot\sigma$.   
\end{theorem}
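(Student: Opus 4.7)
My plan is to verify that $\Delta_R$ preserves each defining relation of $Y_{m|n}(\sigma)$ from Definition~\ref{parashift}; the argument for $\Delta_L$ is parallel, or can be deduced from $\Delta_R$ via the anti-automorphism $\tau$ of~(\ref{taudef}) applied to $\sigma^t$. Since $\mu=(\mu_1,\ldots,\mu_z)$ is minimal admissible and all generators outside the ``last block'' are sent to $X\otimes 1$, any relation involving only $D_{a;i,j}^{(r)}$ with $a<z$, $E_{b;h,k}^{(r)}$ with $b<z-1$, or $F_{b;k,h}^{(r)}$ reduces immediately to the corresponding relation in $Y_{m|n}(\dot\sigma)$, which holds by Theorem~\ref{sbpr}. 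The substantive work therefore concerns only the relations involving $D_{z;i,j}^{(r)}$ and $E_{z-1;h,k}^{(r)}$.

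The guiding principle is that $\Delta_R$ is an ``evaluated comultiplication'': morally, it is the composition of Nazarov's comultiplication $\Delta$ on $Y_{m|n}$ given by~(\ref{Del}) with the evaluation homomorphism $\ev$ of~(\ref{ev}) applied to a $Y_{p|q}$-subalgebra corresponding to the last $\mu_z$ rows and columns of $T(u)$, restricted to $Y_{m|n}(\sigma)$. The constant shift $(m-p)-(n-q)$ appearing in $\tilde e_{i,j}$ is precisely the supertrace difference $\str(I_{m|n})-\str(I_{p|q})$, which arises naturally when one expresses $D_{z;i,j}(u)$ as a quasideterminant of a block of $T(u)$ and applies $\Delta$ to the full matrix. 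Concretely, I would expand $\Delta(D_{z;i,j}(u))$, $\Delta(E_{z-1;h,k}(u))$, $\Delta(F_{z-1;k,h}(u))$ using $\Delta T(u)=T(u)\mathbin{\dot\otimes}T(u)$ and the Gauss decomposition~(\ref{T=FDE}), then apply $\id\otimes\ev$ to the second factor and match coefficients of $u^{-r}$ against the formulas in the statement, following the template of \cite[\textsection 4]{BK2}.

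Because each step in this construction is a composition of superalgebra homomorphisms, $\Delta_R$ will be a homomorphism on the ambient $Y_{m|n}$ and hence on the subalgebra $Y_{m|n}(\sigma)$. The assumption $s_{m+n-\mu_z,m+n+1-\mu_z}\neq 0$ is what guarantees the image lies in the smaller target $Y_{m|n}(\dot\sigma)\otimes U(\gl_{p|q})$: applying $\ev$ lowers the $u$-degree of the evaluated tensor factor by at least one, and~(\ref{babyr1}) is precisely the adjustment of $\sigma$ needed to absorb this shift on the surviving $E$-generator $E_{z-1;h,k}^{(r)}$.

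The main obstacle is verifying that the image actually lies in $Y_{m|n}(\dot\sigma)\otimes U(\gl_{p|q})$, not merely in $Y_{m|n}\otimes U(\gl_{p|q})$. This requires careful bookkeeping of shift indices using the admissibility condition~(\ref{admcon}) together with the recursive definitions~(\ref{eparag})--(\ref{fparag}) of higher root vectors, after which Theorem~\ref{grisoLS} identifies the relevant coefficients as elements of $Y_{m|n}(\dot\sigma)$. A secondary technical point, absent in the non-super case of \cite{BK2}, is that the parity signs $(-1)^{\pa{f}_z}$ in the formulas must be tracked through each super-commutator; this entails confirming that the parities~(\ref{pad})--(\ref{paf}) assigned to the parabolic generators are compatible with the super-sign convention on $T(u)\mathbin{\dot\otimes}T(u)$, most delicately in the $[E,F]$ relation~(\ref{p706}) where both a $D'$-series and the evaluated diagonal block interact.
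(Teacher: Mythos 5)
Your opening plan --- verify that $\Delta_R$ preserves each relation of Definition~\ref{parashift}, with everything outside the last block reducing to $Y_{m|n}(\dot\sigma)$ --- is exactly the paper's approach, and the reduction itself is correct. But you then stop precisely where the substantive work begins. The relations that do involve $D_{z;i,j}^{(r)}$ and $E_{z-1;h,k}^{(r)}$ include the Serre-type relations (\ref{p713})--(\ref{p714}) and, crucially, the quartic super Serre relations (\ref{p715})--(\ref{p716}), which have no counterpart in \cite{BK2} and are the genuinely new content of this theorem. The paper's proof consists of carrying out this check (reducing to $z=4$, using (\ref{p707})--(\ref{p710}) repeatedly for the cubic Serre relations, and verifying (\ref{p716}) explicitly by expanding the image and invoking the same relation in $Y_{m|n}(\dot\sigma)$). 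Your proposal never engages with these relations, so the proof is not actually carried out on this route.

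The substitute you offer --- realizing $\Delta_R$ as $(\id\otimes\ev)\circ\Delta$ ``applied to a $Y_{p|q}$-subalgebra'' --- does not close the gap, because its key claim (``each step in this construction is a composition of superalgebra homomorphisms'') is unjustified. There is no superalgebra homomorphism from $Y_{m|n}$, nor from $U(\gl_{m|n})$, onto $U(\gl_{p|q})$ given by corner projection: killing $e_{i,j}$ with an index outside the last block is not compatible with brackets such as $[e_{i,k},e_{k,j}]$ for $k$ outside the corner. What is a homomorphism is $(\id\otimes\ev)\circ\Delta:Y_{m|n}\to Y_{m|n}\otimes U(\gl_{m|n})$; identifying its restriction to $Y_{m|n}(\sigma)$ with the stated formulas and proving that the image lands in $Y_{m|n}(\dot\sigma)\otimes U(\gl_{p|q})$ (with $U(\gl_{p|q})$ the subalgebra generated by the $\tilde e_{i,j}$) is precisely the ``main obstacle'' you name and defer; it is at least as involved as the direct relation check and is not a routine bookkeeping of shift indices. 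As it stands, neither route is completed. (Your remark that $\Delta_L$ can be obtained from $\Delta_R$ for $\sigma^t$ via the anti-isomorphism $\tau$ of (\ref{taudef}) is a reasonable economy, though the paper simply treats the two cases symmetrically.)
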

\begin{proof}
Check that $\Delta_{R}$ and $\Delta_{L}$ preserve the defining relations in Definition \ref{parashift}. Similar to \cite[Theorem 4.2]{BK2}, to check (\ref{p713}) and (\ref{p714}), one needs to use (\ref{p707}), (\ref{p708}), (\ref{p709}) and (\ref{p710}) multiple times.
Note that it suffices to check the special case when $z=4$ since the non-trivial situations only happen in the very last block.

We check (\ref{p716}) here as an illustrating example since it is a super phenomenon which does not appear in \cite{BK2}.
Assume $z=4$ and (\ref{babyl1}) holds. Applying $\Delta_L$ to the left-hand-side of (\ref{p716}), we have 
$$\big[\,[1\otimes \dot F_{1;i,f_1}^{(r)},1\otimes \dot F_{2;f_2,j}^{(t)}]\,,\,[1\otimes \dot F_{2;h,g_1}^{(t)},1\otimes \dot F_{3;g_2,k}^{(s)}+(-1)^{\pa{g_2}_4} \sum_{x=1}^{\mu_4} \tilde{e}_{g_2,x} \otimes \dot F_{3;x,k}^{(s-1)}]\,\big].$$ 
It equals to
\begin{multline*}
1\otimes \big[\,[\dot F_{1;i,f_1}^{(r)},\dot F_{2;f_2,j}^{(t)}]\,,\,[\dot F_{2;h,g_1}^{(t)},\dot F_{3;g_2,k}^{(s)}]\,\big]\\
+ \theta \sum_{x=1}^{\mu_4} \tilde{e}_{g_2,x}\otimes \big[\,[\dot F_{1;i,f_1}^{(r)},\dot F_{2;f_2,j}^{(t)}]\,,\,[\dot F_{2;h,g_1}^{(t)},\dot F_{3;x,k}^{(s-1)}]\big],
\end{multline*}
where $\theta=\pm1$ is an irrelevant sign. 
It vanishes due to (\ref{p716}) in $Y_{m|n}(\dot\sigma)$.
\end{proof}

The next lemma computes the images of higher root elements $E_{a,b;i,j}^{(r)}$ and $F_{b,a;i,j}^{(r)}$ under $\Delta_R$ and $\Delta_L$.

\begin{lemma}\label{baby2}
\begin{itemize}
\item[(\it{1})] Suppose the assumption of Theorem~{\em \ref{baby1}(1)} holds. For all admissible indices $i,j,r$ and $1\leq a< b-1<z$, we have
\begin{align*}
\Delta_{R}(F_{b,a;i,j}^{(r)}) &= \dot F_{b,a;i,j}^{(r)} \otimes 1,\\
\Delta_{R}(E_{a,b;i,j}^{(r)}) &= \!\dot E_{a,b;i,j}^{(r)} \otimes 1\qquad \hbox{  if $\,\,b < z$},
\end{align*} 
and
\begin{multline*}
\Delta_{R}(E_{a,z;i,j}^{(r)}) = (-1)^{\pa{h}_{z-1}} [\dot E_{a,z-1;i,h}^{(r-s_{z-1,z}^\mu)},\dot E_{z-1;h,j}^{(s_{z-1,z}^\mu +1)}]\otimes 1 
+\sum_{k=1}^{\mu_z}(-1)^{\pa{k}_z}\dot E_{a,z;i,k}^{(r-1)} \otimes \tilde e_{k,j},
\end{multline*}
for any $1 \leq h \leq \mu_{z-1}$.
\item[(\it{2})] Suppose the assumption of Theorem~{\em \ref{baby1}(2)} holds. For all admissible indices $i,j,r$ and $1\leq a<b-1<z$, we have
\begin{align*}
\Delta_{L}(E_{a,b;i,j}^{(r)}) &= 1 \otimes \dot E_{a,b;i,j}^{(r)} ,\\
\Delta_{L}(F_{b,a;i,j}^{(r)}) &= 1 \otimes \!\dot F_{b,a;i,j}^{(r)}\qquad \hbox{  if $\,\,b < z$},
\end{align*} 
and
\begin{multline*}
\Delta_{L}(F_{z,a;i,j}^{(r)}) = (-1)^{\pa{h}_{z-1}}
\big( 1 \otimes [\dot F_{z-1;i,h}^{(s_{z,z-1}^\mu+1)},\dot F_{z-1,a;h,j}^{(r-s_{z,z-1}^\mu)}] \big)
+(-1)^{\pa{i}_{z}}\sum_{k=1}^{\mu_z} \tilde e_{i,k} \otimes \dot F_{z-1,a;k,j}^{(r-1)},
\end{multline*}
for any $1 \leq h \leq \mu_{z-1}$.
\end{itemize}
\end{lemma}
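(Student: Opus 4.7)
The two parts of the lemma are linked by the anti-isomorphism $\tau$ from (\ref{taudef}), so my plan is to prove part (1) in detail and deduce part (2) by the parallel symmetric argument (or, alternatively, by checking the compatibility $(\tau \otimes \tau) \circ \Delta_R = \Delta_L \circ \tau$ on parabolic generators). For part (1), the strategy is induction on the difference $b - a$, using the recursive definitions (\ref{eparag}) and (\ref{fparag}) of the higher root elements together with the explicit formulas for $\Delta_R$ on parabolic generators given in Theorem \ref{baby1}(1). Throughout, Remark \ref{4.8} lets me choose the intermediate summation index in the recursion freely.

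The formula $\Delta_R(F_{b,a;i,j}^{(r)}) = \dot F_{b,a;i,j}^{(r)} \otimes 1$ should be immediate from induction on $b-a$, since $\Delta_R$ sends every $F$-generator to $\dot F \otimes 1$ and the recursion (\ref{fparag}) involves only $F$-generators. Here one uses $\dot s_{b,b-1}^\mu = s_{b,b-1}^\mu$ for all $b \leq z$, which holds because (\ref{babyr1}) only modifies strictly upper-triangular entries at the boundary of the last block. The same induction handles $\Delta_R(E_{a,b;i,j}^{(r)}) = \dot E_{a,b;i,j}^{(r)} \otimes 1$ when $b < z$: the inner generator $E_{b-1;h,j}^{(s_{b-1,b}^\mu+1)}$ in the recursion satisfies $b-1 \leq z-2$, which is precisely the range where $\Delta_R$ acts by $\dot E \otimes 1$, and likewise $\dot s_{b-1,b}^\mu = s_{b-1,b}^\mu$ in that range.

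The heart of the argument is the case $b = z$. Write $s = s_{z-1,z}^\mu$ for brevity and fix any $1 \leq h \leq \mu_{z-1}$; the recursion reads
\[
E_{a,z;i,j}^{(r)} = (-1)^{\pa{h}_{z-1}}\bigl[E_{a,z-1;i,h}^{(r-s)},\, E_{z-1;h,j}^{(s+1)}\bigr].
\]
By the previous step $\Delta_R(E_{a,z-1;i,h}^{(r-s)}) = \dot E_{a,z-1;i,h}^{(r-s)} \otimes 1$, while Theorem \ref{baby1}(1) expands
\[
\Delta_R(E_{z-1;h,j}^{(s+1)}) = \dot E_{z-1;h,j}^{(s+1)}\otimes 1 + \sum_{f=1}^{\mu_z} (-1)^{\pa{f}_z}\dot E_{z-1;h,f}^{(s)} \otimes \tilde e_{f,j}.
\]
I then expand the supercommutator; since the left tensor factor of $\dot E_{a,z-1;i,h}^{(r-s)} \otimes 1$ is purely in $Y_{m|n}(\dot\sigma)$, the general identity $[X \otimes 1,\, Y \otimes Z] = [X,Y] \otimes Z$ applies with no extra sign. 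The first resulting piece is exactly the leading term of the asserted formula. For the second piece, the decisive observation is that $(-1)^{\pa{h}_{z-1}}[\dot E_{a,z-1;i,h}^{(r-s)},\dot E_{z-1;h,f}^{(s)}]$ equals $\dot E_{a,z;i,f}^{(r-1)}$, because this is precisely the recursion (\ref{eparag}) in $Y_{m|n}(\dot\sigma)$ with shift parameter $\dot s_{z-1,z}^\mu = s-1$ and total degree $r-1$; reassembling gives the second sum in the claim.

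The main obstacle I anticipate is the bookkeeping around degree shifts: one must verify that the cross-term commutator can be legitimately rewritten as a higher root element of $Y_{m|n}(\dot\sigma)$, which hinges on the arithmetic identities $\dot s_{z-1,z}^\mu + 1 = s$ and $(r-1) - \dot s_{z-1,z}^\mu = r - s$, together with Remark \ref{4.8} which guarantees the recursion output is independent of the chosen intermediate index $h$. Once this is settled, part (2) follows by the same induction applied to $F$-elements (the decisive case being $F_{z,a;i,j}^{(r)}$), or equivalently by invoking the compatibility of the baby comultiplications with the anti-automorphism $\tau$ combined with the swap $\sigma \leftrightarrow \sigma^t$.
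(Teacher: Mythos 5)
Your proposal is correct and follows essentially the same route as the paper: apply $\Delta_R$ to the recursion (\ref{eparag}), expand the supercommutator using $[X\otimes 1, Y\otimes Z]=[X,Y]\otimes Z$, and identify the cross-term $(-1)^{\pa{h}_{z-1}}[\dot E_{a,z-1;i,h}^{(r-s_{z-1,z}^\mu)},\dot E_{z-1;h,k}^{(s_{z-1,z}^\mu)}]$ as $\dot E_{a,z;i,k}^{(r-1)}$ via the shifted parameter $\dot s_{z-1,z}^\mu = s_{z-1,z}^\mu-1$. The degree bookkeeping you flag as the main obstacle is exactly the point the paper's computation rests on, and your treatment of it is correct.
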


\begin{proof}
We compute $\Delta_{R}(E_{a,z;i,j}^{(r)})$ for $1\leq a\leq z-1$ in detail here, while others are similar. By definition, for any $1\leq h\leq\mu_{z-1}$, we have
\[E_{a,z;i,j}^{(r)}=(-1)^{\pa{h}_{z-1}}
[E_{a,z-1;i,h}^{(r-s_{z-1,z}^\mu)},E_{z-1;h,j}^{(s_{z-1,z}^\mu+1)}].\]
Also, $\Delta_{R}(E_{a,z-1;i,h}^{(r-s_{z-1,z}^\mu)})= 
\dot E_{a,z-1;i,h}^{(r-s_{z-1,z}^\mu)} \otimes 1$.
Hence
\begin{align*}
\Delta_{R}(E_{a,z;i,j}^{(r)})&=
(-1)^{\pa{h}_{z-1}}\left[\dot E_{a,z-1;i,h}^{(r-s_{z-1,z}^\mu)} \otimes 1,
\dot E_{z-1;h,j}^{(s_{z-1,z}^\mu+1)} \otimes 1 \right]\\
&\qquad +(-1)^{\pa{h}_{z-1}}\left[ \dot E_{a,z-1;i,h}^{(r-s_{z-1,z}^\mu)} \otimes 1, 
\sum_{k=1}^{\beta}(-1)^{\pa{k}_z}\dot E_{z-1;h,k}^{(s_{z-1,z}^\mu)} \otimes \tilde e_{k,j}\right]\\
&=
(-1)^{\pa{h}_{z-1}}\left[\dot E_{a,z-1;i,h}^{(r-s_{z-1,z}^\mu)},
\dot E_{z-1;h,j}^{(s_{z-1,z}^\mu+1)}\right ] \otimes 1 +\sum_{k=1}^{\mu_z}(-1)^{\pa{k}_z}\dot E_{a,z;i,k}^{(r-1)} \otimes \tilde e_{k,j}.
\end{align*}
\end{proof}

\begin{proposition}\label{babyinj}
If the assumption of Theorem~{\em \ref{baby1}(1)} holds, then $\Delta_R$ is injective. 
Similarly, if the assumption of Theorem~{\em \ref{baby1}(2)} holds, then $\Delta_L$ is injective. 
\end{proposition}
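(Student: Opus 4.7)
\medskip

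\noindent\textbf{Proof plan.} The strategy is to compare the associated graded maps under the loop filtration, reducing injectivity to a PBW-type statement in an enveloping algebra. Equip $Y_\mu(\sigma)$ and $Y_\mu(\dot\sigma)$ with the loop filtrations defined in \textsection\ref{shiftY2}, equip $U(\gl_{p|q})$ with the trivial filtration (everything in degree $0$), and give $Y_\mu(\dot\sigma)\otimes U(\gl_{p|q})$ the induced tensor filtration, so that
\[
L_k\bigl(Y_\mu(\dot\sigma)\otimes U(\gl_{p|q})\bigr)=L_k Y_\mu(\dot\sigma)\otimes U(\gl_{p|q}).
\]
Inspecting the defining formulas in Theorem~\ref{baby1}(1), each generator $D_{a;i,j}^{(r)}$, $E_{b;h,k}^{(r)}$, $F_{b;k,h}^{(r)}$ of $Y_\mu(\sigma)$ is sent to a sum of two pieces: a leading piece $\dot D_{a;i,j}^{(r)}\otimes 1$, $\dot E_{b;h,k}^{(r)}\otimes 1$, $\dot F_{b;k,h}^{(r)}\otimes 1$ lying in filtration degree $r-1$, and a correction involving $\dot D_{a;i,f}^{(r-1)}\otimes\tilde e_{f,j}$ or $\dot E_{b;h,f}^{(r-1)}\otimes\tilde e_{f,k}$ which sits in filtration degree $r-2$. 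Thus $\Delta_R$ is a filtered homomorphism, and the correction terms disappear on passing to the associated graded.

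The first step is then to identify $\gr\Delta_R$. Invoking Theorem~\ref{grisoLS} for both $\sigma$ and $\dot\sigma$ together with the isomorphism $\gr^L Y_\mu(\dot\sigma)\otimes U(\gl_{p|q})\cong U(\gl_{m|n}[x](\dot\sigma))\otimes U(\gl_{p|q})$, one sees directly from the leading terms above that $\gr\Delta_R$ fits into the commutative diagram
\[
\begin{array}{ccc}
U(\gl_{m|n}[x](\sigma)) & \longrightarrow & U(\gl_{m|n}[x](\dot\sigma))\otimes U(\gl_{p|q})\\
 & & \\
\gamma\downarrow & & \downarrow\gamma\otimes\mathrm{id} \\
 & & \\
\gr^L Y_\mu(\sigma) & \stackrel{\gr\Delta_R}{\longrightarrow} & \gr^L Y_\mu(\dot\sigma)\otimes U(\gl_{p|q})
\end{array}
\]
with the top map given on generators by $e_{ij}x^r\mapsto e_{ij}x^r\otimes 1$. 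Since $\dot s_{i,j}\le s_{i,j}$ for all $i,j$, the Lie sub-superalgebra $\gl_{m|n}[x](\sigma)$ is contained in $\gl_{m|n}[x](\dot\sigma)$, so the top map is the composition of the PBW embedding $U(\gl_{m|n}[x](\sigma))\hookrightarrow U(\gl_{m|n}[x](\dot\sigma))$ with $x\mapsto x\otimes 1$, and is therefore injective.

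Consequently $\gr\Delta_R$ is injective, which forces $\Delta_R$ itself to be injective by a standard filtered argument. For $\Delta_L$, one may either run the mirror version of the same argument (now the correction terms appear in the $D$- and $F$-formulas, but they are still of lower loop degree), or deduce it from $\Delta_R$ applied to $Y_\mu(\sigma^t)$ via the anti-isomorphism $\tau$ of \eqref{taudef}, which interchanges the roles of $E$'s and $F$'s and of $\sigma$ and $\sigma^t$. The only real verification is the identification of $\gr\Delta_R$ with the embedding coming from $\gl_{m|n}[x](\sigma)\hookrightarrow\gl_{m|n}[x](\dot\sigma)$; the anticipated mild obstacle is checking that the correction terms in Lemma~\ref{baby2} for the higher root elements $E_{a,z;i,j}^{(r)}$ and $F_{z,a;i,j}^{(r)}$ are also of strictly lower loop degree so that $\gr\Delta_R$ has the advertised leading behaviour on every generator, not merely on the Chevalley-type ones.
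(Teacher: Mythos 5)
Your argument is correct, but it is a genuinely different (and heavier) route than the one the paper takes. The paper's proof is a one-line counit trick: let $\epsilon:U(\gl_{p|q})\to\C$ be the algebra homomorphism with $\epsilon(\tilde e_{i,j})=0$, and observe that $m\circ(\id\otimes\epsilon)\circ\Delta_R$ (resp.\ $m\circ(\epsilon\otimes\id)\circ\Delta_L$) kills every correction term and hence coincides with the natural inclusion $Y_\mu(\sigma)\hookrightarrow Y_\mu(\dot\sigma)$ inside $Y_\mu$; since that inclusion is injective, so is $\Delta_R$ (resp.\ $\Delta_L$). This avoids all filtration bookkeeping and, in particular, sidesteps the point you correctly flag as the ``only real verification'' in your approach, namely that the corrections to the higher root elements in Lemma~\ref{baby2} are of strictly lower loop degree (they are: each carries a superscript $r-1$ in the $Y_{m|n}(\dot\sigma)$ tensor factor, and one must also note via (\ref{s-inj}) that the leading bracket $[\dot E_{a,z-1;i,h}^{(r-s_{z-1,z}^{\mu})},\dot E_{z-1;h,j}^{(s_{z-1,z}^{\mu}+1)}]$ has the same top symbol as $\pm\dot E_{a,z;i,j}^{(r)}$ even though the recursion for $\dot\sigma$ uses the shifted exponent $\dot s_{z-1,z}^{\mu}=s_{z-1,z}^{\mu}-1$). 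In fact, since $\gr^L Y_\mu(\sigma)$ is generated by the symbols of the Chevalley-type generators alone, your diagram only needs to be checked on those, so the Lemma~\ref{baby2} verification is not strictly required. What your approach buys is the additional conclusion that the associated graded map is injective — which is exactly the content the paper extracts separately (for the canonical filtration) in Proposition~\ref{injdelr}, there again by rerunning the $\epsilon$-argument rather than by your PBW comparison with $U(\gl_{m|n}[x](\sigma))\hookrightarrow U(\gl_{m|n}[x](\dot\sigma))$. Your suggested deduction of the $\Delta_L$ case from $\Delta_R$ via $\tau$ would need the extra (routine but unstated) check that $\tau$ intertwines the two baby comultiplications up to a flip and an anti-automorphism of $U(\gl_{p|q})$; running the mirror argument directly, as you also propose, is safer.
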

\begin{proof}
Let $\epsilon:U(\mathfrak{gl}_{p|q})\rightarrow \C$ be the homomorphism such that 
$$\epsilon(\tilde e_{i,j})=0$$
for $1 \leq i,j \leq \mu_z$. By definition, $Y_{\mu}(\sigma) \subseteq Y_{\mu}(\dot\sigma)\subseteq Y_{\mu}$ is a chain of subalgebras. Note that the compositions $m\circ (\id \otimes \epsilon) \circ \Delta_{R}$ and $m\circ(\epsilon \otimes \id)\circ \Delta_{L}$ coincide with the natural embedding $Y_{\mu}(\sigma) \hookrightarrow Y_{\mu}(\dot\sigma)$, where $m(a\otimes b):=ab$ is the usual multiplication map. This deduces that the maps $\Delta_R$ and $\Delta_L$ are injective whenever they are defined.
\end{proof}

\section{Canonical filtration}\label{canfil}
There is another filtration on $Y_{m|n}$, called the {\em canonical filtration}
\[
F_0Y_{m|n}\subseteq F_1Y_{m|n}\subset F_2Y_{m|n}\subseteq \cdots
\]
defined by deg $t_{ij}^{(r)}:=r$ where $F_dY_{m|n}$ is defined to be the span of all supermonomials in $t_{ij}^{(r)}$ of total degree not greater than $d$. Let $\gr Y_{m|n}$ denote the associated superalgebra, which is supercommutative by (\ref{RTT}).

Now we describe the canonical filtration using parabolic presentations. Let $\mu=(\mu_1,\ldots,\mu_{z})$ be a composition of $m+n$. By \cite[Proposition 3.1]{Pe4}, the parabolic generators $D_{a;i,j}^{(r)}$ $E_{a,b;i,j}^{(r)}$ and $F_{b,a;i,j}^{(r)}$ of $Y_{\mu}=Y_{m|n}$ are linear combinations of supermonomials in $t_{i,j}^{(s)}$ of total degree $r$.

On the other hand, if we set $D_{a;i,j}^{(r)}$, $E_{a,b;i,j}^{(r)}$ and $F_{b,a;i,j}^{(r)}$ all to be of degree $r$, by multiplying the matrix equation $T(u)=F(u)D(u)E(u)$, each $t_{ij}^{(r)}$ is a linear combination of supermonomials in the parabolic generators of total degree $r$ as well.
Thus $F_dY_{m|n}$ can be alternatively defined as the span of all supermonomials in the parabolic generators $D_{a;i,j}^{(r)}$ $E_{a,b;i,j}^{(r)}$ and $F_{b,a;i,j}^{(r)}$ of total degree $\leq d$.

For $1\leq a,b\leq z$, $1\leq i\leq \mu_a$, $1\leq j\leq \mu_b$ and $r>0$, 
define the following elements in $\gr Y_{\mu}$ by
\begin{equation}\label{canorel}
e_{a,b;i,j}^{(r)} := \left\{
\begin{array}{ll}
\gr_r D_{a;i,j}^{(r)}&\hbox{if $a=b$,}\\
\gr_r E_{a,b;i,j}^{(r)}&\hbox{if $a < b$,}\\
\gr_r F_{a,b;i,j}^{(r)}&\hbox{if $a > b$.}
\end{array}
\right.
\end{equation}
Since $\gr Y_{\mu}$ is supercommutative, together with Corollary \ref{pbw2} ($4$), the following result can be deduced immediately.

\begin{proposition}\label{canopbw}\cite[Theorem 5.1]{BK2}
For any shape $\mu=(\mu_1,\ldots,\mu_{z})$, $\gr Y_{\mu}$ is the free supercommutative superalgebra on generators $\lbrace e_{a,b;i,j}^{(r)}\,|\, {1\leq a,b\leq {z}, 1\leq i\leq \mu_a, 1\leq j\leq \mu_{b}, r>0}\rbrace$.
\end{proposition}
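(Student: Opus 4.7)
The plan is to derive the statement directly from the parabolic PBW theorem (Corollary~\ref{pbw2}(4)) applied in the unshifted case $\sigma=0$, combined with the fact that $\gr Y_{\mu}$ is supercommutative. First, one observes from the RTT relations~(\ref{RTT}) that $[t_{i,j}^{(r)},t_{h,k}^{(s)}]\in F_{r+s-1}Y_{m|n}$, so the associated graded $\gr Y_{m|n}$ is supercommutative; equivalently, since the paragraph preceding the proposition already shows that the parabolic generators $D_{a;i,j}^{(r)}$, $E_{a,b;i,j}^{(r)}$, $F_{b,a;i,j}^{(r)}$ each sit in filtration degree $r$, the defining relations of $Y_{\mu}$ (Definition~\ref{parashift} with $\sigma=0$) force supercommutativity in $\gr Y_{\mu}$.

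Next, I would note that by Corollary~\ref{pbw2}(4), taking $\sigma=0$ so that $Y_{\mu}(0)=Y_{\mu}$, the ordered supermonomials in the elements
\[
\{D_{a;i,j}^{(r)}\mid 1\le a\le z,\ 1\le i,j\le\mu_a,\ r>0\}\ \cup\ \{E_{a,b;i,j}^{(r)}\mid a<b,\ r>0\}\ \cup\ \{F_{b,a;j,i}^{(r)}\mid a<b,\ r>0\}
\]
form a linear basis of $Y_{\mu}$. Since these generators have canonical degree equal to their superscript, the set of such ordered supermonomials of total degree $\le d$ spans $F_dY_{\mu}$ (this is exactly the alternative description of $F_dY_{\mu}$ given just before the proposition). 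Passing to the associated graded, the images of these basis elements are precisely the ordered supermonomials in the $e_{a,b;i,j}^{(r)}$, and they remain linearly independent because the filtration degree of a nonzero basis element is unchanged under~$\gr$.

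Finally, because $\gr Y_{\mu}$ is supercommutative, these ordered supermonomials agree (up to sign) with arbitrary supermonomials in the $e_{a,b;i,j}^{(r)}$ modulo reordering. Hence $\gr Y_{\mu}$ has as a basis all supermonomials (in any fixed order) in the $e_{a,b;i,j}^{(r)}$, which is exactly the statement that $\gr Y_{\mu}$ is the free supercommutative superalgebra on the generators $\{e_{a,b;i,j}^{(r)}\}$. There is no real obstacle here beyond carefully matching the canonical filtration degree with the superscript on the parabolic generators; the substantive work was already absorbed in the proof of Corollary~\ref{pbw2}.
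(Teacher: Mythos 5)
Your proposal is correct and follows essentially the same route as the paper, which deduces the result immediately from the supercommutativity of $\gr Y_{\mu}$ (a consequence of the RTT relations) together with the PBW basis of Corollary~\ref{pbw2}(4) in the case $\sigma=0$. The only difference is that you spell out the standard passage from a filtered PBW basis to freeness of the associated graded, which the paper leaves implicit.
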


Suppose now $\sigma$ is a shift matrix of size $m+n$ and $\mu=(\mu_1,\ldots,\mu_z)$ is an admissible shape to $\sigma$.
We induce the canonical filtration of $Y_\mu$ to the subalgebra $Y_\mu(\sigma)$ by  
defining 
$$F_dY_{\mu}(\sigma):=F_dY_{\mu}\cap Y_{\mu}(\sigma).$$ 
The natural embedding $Y_{\mu}(\sigma)\hookrightarrow Y_{\mu}$ is a filtered map 
and the induced map $\gr Y_{\mu}(\sigma)\rightarrow \gr Y_{\mu}$ is injective as well,
so that we may identify $\gr Y_{\mu}(\sigma)$ as a subalgebra of $\gr Y_{\mu}$. The next theorem gives a set of generators of $\gr Y_{\mu}(\sigma)$.

\begin{theorem}\label{scanopbw}\cite[Theorem 5.2]{BK2}
For an admissible shape $\mu=(\mu_1,\ldots,\mu_{z})$, $\gr Y_{\mu}(\sigma)$ is the subalgebra of $\gr Y_{\mu}$ generated by the elements 
$$\lbrace e_{a,b;i,j}^{(r)} \,|\, {1\leq a,b\leq z, 1\leq i\leq \mu_a, 1\leq j\leq \mu_b, r>s_{a,b}^\mu}\rbrace.$$
\end{theorem}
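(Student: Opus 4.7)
Write $A$ for the subalgebra of $\gr Y_{\mu}$ generated by the elements $\{e_{a,b;i,j}^{(r)} \mid 1\leq a,b\leq z,\; 1\leq i\leq\mu_a,\; 1\leq j\leq\mu_b,\; r>s_{a,b}^\mu\}$. The goal is the equality $\gr Y_{\mu}(\sigma)=A$ inside $\gr Y_{\mu}$.

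The inclusion $A\subseteq\gr Y_{\mu}(\sigma)$ is essentially by definition. Each generator $e_{a,b;i,j}^{(r)}$ with $r>s_{a,b}^\mu$ is, by \eqref{canorel}, the associated graded of one of the elements $D_{a;i,j}^{(r)}$, $E_{a,b;i,j}^{(r)}$ or $F_{a,b;i,j}^{(r)}$ (using \eqref{eparag}--\eqref{fparag} in the off-diagonal cases), all of which lie in $F_{r}Y_{\mu}(\sigma)=F_{r}Y_{\mu}\cap Y_{\mu}(\sigma)$. Hence each generator of $A$ lies in $\gr Y_{\mu}(\sigma)$.

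For the reverse inclusion, I would exploit the PBW basis for $Y_{\mu}(\sigma)$ provided by Corollary~\ref{pbw2}(4) together with Proposition~\ref{canopbw}. Fix any admissible ordering and let $\mathcal{B}(\sigma)$ denote the resulting PBW basis of $Y_{\mu}(\sigma)$, which is a subset of the corresponding PBW basis $\mathcal{B}$ of $Y_{\mu}$. Each PBW monomial $M=X_1X_2\cdots X_k\in\mathcal{B}$, with $X_j$ one of the parabolic generators of canonical degree $r_j$, lies in $F_{r_1+\cdots+r_k}Y_{\mu}$, and its image in $\gr Y_{\mu}$ is (up to a sign) the ordered supermonomial in the $e_{a,b;i,j}^{(r_j)}$'s. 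By Proposition~\ref{canopbw} these images form a basis of $\gr Y_{\mu}$; therefore the map $M\mapsto\gr M$ is a bijection between $\mathcal{B}$ and a basis of $\gr Y_{\mu}$, which forces $F_dY_{\mu}$ to be precisely the span of those $M\in\mathcal{B}$ of canonical degree $\leq d$.

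Now take $y\in F_dY_{\mu}(\sigma)$ and expand $y=\sum_M c_M M$ uniquely over $\mathcal{B}(\sigma)\subseteq\mathcal{B}$. Because this expansion is unique inside $Y_{\mu}$, the previous paragraph forces $c_M=0$ whenever $\deg M>d$; hence
\[
\gr_d y\;=\;\sum_{\substack{M\in\mathcal{B}(\sigma)\\ \deg M=d}} c_M\,\gr_d M.
\]
But each $M\in\mathcal{B}(\sigma)$ is an ordered supermonomial in generators $D_{a;i,j}^{(r)}$, $E_{a,b;h,k}^{(r)}$, $F_{b,a;k,h}^{(r)}$ with the $r$'s satisfying $r>s_{a,b}^{\mu}$, so $\gr_d M$ is an ordered supermonomial in the proposed generators of $A$. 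Thus $\gr_d y\in A$, which gives $\gr Y_{\mu}(\sigma)\subseteq A$ and completes the proof.

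The only substantive point is the claim that the PBW basis $\mathcal{B}$ is adapted to the canonical filtration, and this is immediate from Proposition~\ref{canopbw} once one observes that each parabolic generator has canonical degree equal to its superscript~$r$; no genuine new computation beyond what is already available in \textsection\ref{shiftY2} and in the paragraph preceding Proposition~\ref{canopbw} is required.
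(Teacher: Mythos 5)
Your overall strategy---reduce both inclusions to the PBW basis of $Y_{\mu}(\sigma)$ from Corollary~\ref{pbw2}(4) and the freeness of $\gr Y_{\mu}$ from Proposition~\ref{canopbw}---is the same as the paper's, but there is a genuine gap at the central step where you assert $\mathcal{B}(\sigma)\subseteq\mathcal{B}$ and conclude that the images $\gr M$, $M\in\mathcal{B}(\sigma)$, sit inside the basis of $\gr Y_{\mu}$ furnished by Proposition~\ref{canopbw}. That containment is false for the higher root elements. For $b>a+1$ the element $E_{a,b;i,j}^{(r)}$ of $Y_{\mu}(\sigma)$ is defined by \eqref{eparag} via the $\sigma$-dependent splitting $[E_{a,b-1;i,k}^{(r-s^{\mu}_{b-1,b})},E_{b-1;k,j}^{(s^{\mu}_{b-1,b}+1)}]$, whereas the element of $Y_{\mu}$ entering \eqref{canorel} comes from the Gauss decomposition and corresponds to the splitting with $s^{\mu}_{b-1,b}$ replaced by $0$. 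Telescoping \eqref{p709} shows that these two elements differ by a sum of products $E_{a,b-1;i,q}^{(r-t)}E_{b-1;q,j}^{(t)}$ with $1\leq t\leq s^{\mu}_{b-1,b}$, whose canonical degree is exactly $r$, not less. Hence the two elements have \emph{different} images in $\gr_{r}Y_{\mu}$: the difference is $\pm\sum_{t,q}e_{a,b-1;i,q}^{(r-t)}e_{b-1,b;q,j}^{(t)}$, which is nonzero in the free supercommutative algebra and involves generators $e_{b-1,b;q,j}^{(t)}$ with $t\leq s^{\mu}_{b-1,b}$ that are not in the proposed generating set. Consequently your deduction that $c_{M}=0$ for $\deg M>d$ does not follow from Proposition~\ref{canopbw} as written: what is needed is linear independence in $\gr_{d}Y_{\mu}$ of the images of the degree-$d$ monomials of $\mathcal{B}(\sigma)$, and those images are monomials in the \emph{shifted} root vectors, not in the generators of Proposition~\ref{canopbw}. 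The same ambiguity affects your forward inclusion, since the \eqref{canorel} elements built from the Gauss series do not themselves lie in $Y_{\mu}(\sigma)$ when $b>a+1$ and $s^{\mu}_{b-1,b}>0$.

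The gap is repairable, and repairing it is precisely what the paper's one-line appeal to \eqref{p709} and \eqref{p710} accomplishes: these relations show that the passage from the Gauss-decomposition generators $e_{a,b;i,j}^{(r)}$ of $\gr Y_{\mu}$ to the shifted root vectors is a degree-preserving change of generators that is triangular with respect to the length $b-a$ (each shifted root vector equals the corresponding Gauss one plus a polynomial in root vectors of strictly shorter length), so $\gr Y_{\mu}$ is also free supercommutative on the shifted family. Linear independence of the images of the $\mathcal{B}(\sigma)$-monomials, and with it both of your inclusions, then follows. You should insert this identification explicitly before invoking Proposition~\ref{canopbw}.
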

\begin{proof}
By relations (\ref{p709}) and (\ref{p710}), the elements $e_{a,b;i,j}^{(r)}$ of $\gr Y_{\mu}(\sigma)$ can be identified as the elements of the same notation in $\gr Y_{\mu}$ defined in (\ref{canorel}) by the embedding $\gr Y_{\mu}(\sigma)\rightarrow \gr Y_{\mu}$. Now the statement follows from Corollary \ref{pbw2} ($4$) and Proposition~\ref{canopbw}.
\end{proof}

One consequence of Theorem \ref{scanopbw} is that we may define the canonical filtration on $Y_{\mu}(\sigma)$ intrinsically by setting the degree of the elements $D_{a;i,j}^{(r)}$, $E_{a,b;i,j}^{(r)}$ and $F_{a,b;i,j}^{(r)}$ in $Y_{\mu}(\sigma)$ to be $r$.  
By Corollary \ref{indmu}, this definition is independent of the choice of admissible shape $\mu$.

By definition, the comultiplication $\Delta:Y_{\mu}\rightarrow Y_{\mu}\otimes Y_{\mu}$ is a filtered map with respect to the canonical filtration. If we extend the canonical filtration of $Y_{\mu}(\dot{\sigma})$ to $Y_{\mu}(\dot{\sigma})\otimes U(\gl_{p|q})$ by declaring the degree of the matrix unit $e_{ij}\in\gl_{p|q}$ to be 1, then the baby comultiplications $\Delta_R$ and $\Delta_L$ defined in Theorem \ref{baby1}, as long as they are defined, are filtered maps as well. 
Moreover, the same argument in Proposition \ref{babyinj} implies that the associated graded maps 
$$\gr\Delta_L: \gr Y_{\mu}(\dot{\sigma}) \rightarrow \gr \big(Y_{\mu}(\dot{\sigma})\otimes U(\gl_{p|q})\big)  \qquad \gr\Delta_R:Y_{\mu}(\dot{\sigma})\rightarrow \gr \big( U(\gl_{p|q})\otimes Y_{\mu}(\dot{\sigma})\big)$$ 
are injective as well. We state this fact as a proposition.

\begin{proposition}\label{injdelr}\cite[Remark 5.4]{BK2}
The induced maps $\gr\Delta_{R}$ and $\gr\Delta_{L}$ are injective whenever they are defined,
\end{proposition}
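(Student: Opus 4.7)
The plan is to mirror the argument of Proposition~\ref{babyinj} one filtration level up, exploiting the very same augmentation $\epsilon:U(\mathfrak{gl}_{p|q})\to\mathbb{C}$ determined by $\epsilon(\tilde e_{i,j})=0$. Recall from that proof that the natural embedding $\iota:Y_\mu(\sigma)\hookrightarrow Y_\mu(\dot\sigma)$ factors as $m\circ(\id\otimes\epsilon)\circ\Delta_R$, and similarly as $m\circ(\epsilon\otimes\id)\circ\Delta_L$. My strategy is to check that this factorization consists of filtered maps with respect to the canonical filtration (under which $\tilde e_{i,j}$ has degree one and $\mathbb{C}$ is concentrated in degree zero), apply the functor $\gr$ to obtain a factorization of $\gr(\iota)$, and conclude the desired injectivity from injectivity of $\gr(\iota)$.

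First, I would record that each of $\Delta_R$, $\id\otimes\epsilon$, and $m$ preserves the canonical filtration: $\Delta_R$ does by the discussion preceding the proposition, $m$ is filtered on any filtered-algebra tensor product, and $\epsilon$ is filtered because its target sits in filtration degree zero. Functoriality of $\gr$ then yields
\[
\gr(m)\circ\gr(\id\otimes\epsilon)\circ\gr(\Delta_R)\;=\;\gr(\iota)
\]
as maps $\gr Y_\mu(\sigma)\to\gr Y_\mu(\dot\sigma)$. By Theorem~\ref{scanopbw}, both $\gr Y_\mu(\sigma)$ and $\gr Y_\mu(\dot\sigma)$ embed as subalgebras of the free supercommutative superalgebra $\gr Y_\mu$, generated by the PBW monomials $e_{a,b;i,j}^{(r)}$ with $r>s_{a,b}^\mu$ and $r>\dot s_{a,b}^\mu$ respectively. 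Since $\dot s_{a,b}^\mu\leq s_{a,b}^\mu$, the former is literally a subalgebra of the latter inside $\gr Y_\mu$ and $\gr(\iota)$ is the honest inclusion, hence injective. This forces $\gr(\Delta_R)$ to be injective, and a symmetric argument handles $\gr(\Delta_L)$.

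The main point to watch—and really the only obstacle—is that $\gr(\id\otimes\epsilon)$ is \emph{not} a naive tensor: because $\epsilon$ sends every degree-one generator $\tilde e_{i,j}$ to zero, its associated graded vanishes on $\gr_{\geq 1}U(\mathfrak{gl}_{p|q})$, so $\gr(\id\otimes\epsilon)$ effectively projects $\gr_d\bigl(Y_\mu(\dot\sigma)\otimes U(\mathfrak{gl}_{p|q})\bigr)$ onto its ``$\otimes 1$'' summand $\gr_d Y_\mu(\dot\sigma)\otimes\gr_0\mathbb{C}$. This causes no trouble for the composition argument, but it does mean one should not attempt to prove injectivity by reading off the explicit formulas of Theorem~\ref{baby1} and Lemma~\ref{baby2}: the bracket terms in the images of the highest-block generators $E_{a,z;i,j}^{(r)}$ and $F_{z,a;j,i}^{(r)}$ lie in top canonical degree $r$ and cannot be discarded by hand. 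The virtue of composing with $\epsilon$ is precisely that it bypasses this bookkeeping entirely.
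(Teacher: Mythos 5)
Your proof is correct and follows essentially the same route as the paper, which simply invokes ``the same argument as Proposition~\ref{babyinj}'': one factors the (strictly filtered, hence $\gr$-injective) embedding $Y_\mu(\sigma)\hookrightarrow Y_\mu(\dot\sigma)$ as $m\circ(\id\otimes\epsilon)\circ\Delta_R$ (resp.\ $m\circ(\epsilon\otimes\id)\circ\Delta_L$), passes to associated graded by functoriality, and reads off injectivity of the first factor. Your closing remark about $\gr(\id\otimes\epsilon)$ killing the degree-one tail of $U(\mathfrak{gl}_{p|q})$ is an accurate (and welcome) elaboration of why the explicit formulas of Theorem~\ref{baby1} need not be analyzed directly.
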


\section{Truncation}\label{truncation}
Let $\sigma$ be a fixed shift matrix of size $m+n$.
Choose an integer $\ell> s_{1,m+n}+s_{m+n,1}$, which will be called {\em level} later. For each $1\leq i\leq m+n$, set
\begin{equation}\label{defpi}
p_i:=\ell-s_{i,m+n}-s_{m+n,i}.
\end{equation}
This defines a tuple $(p_1,\ldots,p_{m+n})$ of integers such that $0< p_1\leq \cdots\leq p_{m+n}=\ell$.
Let $\mu=(\mu_1,\ldots,\mu_{z})$ be an admissible shape for $\sigma$. For each $1\leq a\leq z$, set
\begin{equation}\label{pamu}
 p_a^\mu:=p_{\mu_1+\ldots+\mu_a}.
\end{equation}
Since $\mu$ is admissible, together with (\ref{sijk}), for any $1\leq a\leq z$, we have
$p_i=p_a^\mu$ for any value of $i$ such that ${\displaystyle 1\leq  i- \sum_{k=1}^{a-1} \mu_k \leq  \mu_a}$.

Following \cite[\textsection 6]{BK2}, we define the
{\em shifted super Yangian of level} $\ell$, denoted by $Y_{\mu}^\ell(\sigma)$, to be the quotient of $Y_{\mu}(\sigma)$ by the two-side ideal of $Y_{\mu}(\sigma)$ generated by the elements 
$$\lbrace D_{1;i,j}^{(r)}\,|\, 1\leq i,j\leq \mu_1, r>p_1\rbrace.$$

We claim that the definition of $Y_{\mu}^\ell(\sigma)$ is independent of the choice of the admissible shape $\mu$ so that we may simply write $Y_{m|n}^\ell(\sigma)$ when appropriate. 
Let $I_\mu$ denote the two-sided ideal associated to $\mu$ as in the definition. Since $\nu=(1^{m+n})$ is admissible for any $\sigma$, it suffices to prove that $I_\mu=I_\nu$.

By definition, we have $^\nu D_1^{(r)}=t_{1,1}^{(r)}$. 
Assume $\mu$ is an arbitrary admissible shape. By \cite[(3.10)]{Pe4}, we have $^\mu D_{1;1,1}^{(r)}=t_{1,1}^{(r)}$ and hence  $I_\nu\subseteq I_\mu$.
On the other hand, by relation (\ref{p703}),
we have $^\mu D_{1;i,j}^{(r)}\in I_\nu$ for all $1\leq i,j\leq \mu_1$, $r>p_1$ and hence $I_\mu=I_\nu$.

When $\sigma=0$,
the two-sided ideal is generated by $\{t_{i,j}^{(r)} \, | \, 1\leq i,j\leq m+n, \, r>\ell \}$.
In this special case, the quotient is exactly the {\em truncated super Yangian} in \cite{BR, Pe2}, which is a
super analogy of {\em Yangian of level $\ell$} due to Cherednik \cite{C1,C2}.
It should be clear from the context that we are dealing with $Y_{\mu}(\sigma)$ or the quotient $Y_{\mu}^\ell(\sigma)$ and hence, by abusing notation, we will use the same symbols $D_{a;i,j}^{(r)}$, $E_{a,b;i,j}^{(r)}$ and $F_{b,a;i,j}^{(r)}$ to denote the elements in $Y_{\mu}(\sigma)$ and their images in the quotient $Y_{\mu}^\ell(\sigma)$.

It is obvious that the anti-isomorphism $\tau$ defined in (\ref{taudef}) factors through the quotient and induces an anti-isomorphism
\begin{equation}\label{tauiso}
\tau:Y_{\mu}^\ell(\sigma)\rightarrow Y_{\mu}^\ell(\sigma^t).
\end{equation}
Similarly, let $\vec{\sigma}$ be another shift matrix satisfying that $\vec{s}_{i,i+1}+\vec{s}_{i+1,i}=s_{i,i+1}+s_{i+1,i}$ for all $1\leq i\leq m+n-1$. Then the isomorphism $\iota$ defined by (\ref{iotadef}) also induces an isomorphism
\begin{equation}\label{iotaiso}
\iota:Y_{\mu}^\ell(\sigma)\rightarrow Y_{\mu}^\ell(\vec{\sigma}).
\end{equation}

Recall the canonical filtration defined in \textsection \ref{canfil}. We obtain a filtration
\[
F_0Y_{\mu}^\ell(\sigma)\subseteq F_1Y_{\mu}^\ell(\sigma)\subseteq\cdots
\]
induced from the quotient map $Y_{\mu}(\sigma)\rightarrow Y_{\mu}^\ell(\sigma)$,  where 
we define the elements $D_{a;i,j}^{(r)}$, $E_{a,b;i,j}^{(r)}$ and $F_{b,a;i,j}^{(r)}$ of $Y_{\mu}^\ell(\sigma)$ to be of degree $r$ and $F_dY_{\mu}^\ell(\sigma)$ is the span of all supermonomials in these elements of total degree $\leq d$.

For $1\leq a,b\leq z$, $1\leq i\leq \mu_a$, $1\leq j\leq \mu_b$ and $r>s_{a,b}^\mu$, define element $e_{a,b;i,j}^{(r)}$ (by abusing notation again) in the associative graded superalgebra $\gr Y_{\mu}^\ell(\sigma)$ according to exactly the same formula (\ref{canorel}), except that now our $D$'s, $E$'s and $F$'s here are in the quotient. 
By Proposition~\ref{canopbw} and Theorem~\ref{scanopbw}, $\gr Y_{\mu}^\ell(\sigma)$ is also supercommutative and is generated by the elements 
$$\lbrace e_{a,b;i,j}^{(r)}\in\gr Y_{\mu}^\ell(\sigma)  \,|\, 1\leq a,b\leq z, 1\leq i\leq\mu_a, 1\leq j\leq \mu_b, r>s_{a,b}^\mu\rbrace$$
Following the same argument in \cite[Lemma 6.1]{BK2}, one may deduce that $\gr Y_{\mu}^\ell(\sigma)$ is in fact {\em finitely generated}.
\begin{lemma}\label{lema6.2}
For any admissible shape $\mu=(\mu_1,\ldots,\mu_z)$, $\gr Y_{\mu}^\ell(\sigma)$ is generated only by the elements 
$$\lbrace e_{a,b;i,j}^{(r)} \,|\, 1\leq a,b\leq z, 1\leq i\leq\mu_a, 1\leq j\leq \mu_b, 
s_{a,b}^\mu<r\leq s_{a,b}^\mu+p_{min(a,b)}^\mu\rbrace.$$
\end{lemma}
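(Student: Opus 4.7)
My plan is to adapt the strategy of the non-super analogue \cite[Lemma 6.1]{BK2} to our super setting, with appropriate modifications to handle the $\Z_2$-grading. By Theorem~\ref{scanopbw} together with the surjectivity of the quotient map $Y_{\mu}(\sigma)\twoheadrightarrow Y_{\mu}^\ell(\sigma)$, the algebra $\gr Y_{\mu}^\ell(\sigma)$ is a priori generated by all $e_{a,b;i,j}^{(r)}$ with $r > s_{a,b}^\mu$; moreover, the index set stated in the lemma matches exactly the indexing of the basis of $\g^e$ given in Proposition~\ref{counting2}, which is consistent with the eventual identification $\gr \W_\pi \cong S(\g^e)$ from Proposition~\ref{dimprop}. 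Writing $\mathcal{A}$ for the subalgebra of $\gr Y_{\mu}^\ell(\sigma)$ generated by the restricted set, the task reduces to showing that every $e_{a,b;i,j}^{(r)}$ with $r > s_{a,b}^\mu + p_{\min(a,b)}^\mu$ lies in $\mathcal{A}$.

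I would argue this in three stages. \emph{Stage 1 (diagonal, $a = b$).} I would show by induction on $a$ that $e_{a,a;i,j}^{(r)} \in \mathcal{A}$ for all $r > p_a^\mu$. The base case $a = 1$ is immediate: the defining ideal of the quotient contains every $D_{1;i,j}^{(r)}$ with $r > p_1^\mu$, so the class $e_{1,1;i,j}^{(r)}$ already vanishes. For the inductive step, I would take the $\gr_{r+s-1}$-symbol of relation (\ref{p706}) with $a$ replaced by $a-1$: the left-hand side is a commutator which drops in canonical filtration by the supercommutativity of $\gr Y_{\mu}^\ell(\sigma)$, giving an identity
\[
\sum_{t=0}^{r+s-1} e_{a-1,a-1;i,k}^{\prime(r+s-1-t)}\, e_{a,a;h,j}^{(t)} \;=\; \bigl(\text{polynomial in lower-degree }e\text{'s}\bigr).
\]
Setting $i = k$ isolates $e_{a,a;h,j}^{(r+s-1)}$ in the leading term $t = r+s-1$ (since $D_{a-1;i,i}^{\prime(0)} = 1$); the inductive hypothesis on $a-1$, combined with relation (\ref{p702}) expressing the $D'$'s in terms of the $D$'s, then places the remaining contributions in $\mathcal{A}$ whenever $r+s-1 > p_a^\mu$.

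\emph{Stage 2 (upper off-diagonal, $a < b$).} I would induct on $b-a$ using the recursive definition (\ref{eparag}), which exhibits $e_{a,b;i,j}^{(r)}$ as the $\gr$-symbol of $[E_{a,b-1;i,k}^{(r - s_{b-1,b}^\mu)}, E_{b-1;k,j}^{(s_{b-1,b}^\mu+1)}]$, together with relations (\ref{p704}), (\ref{p709}) and (\ref{p713}) to reduce the index difference; the base case $b = a+1$ combines Stage 1 with relation (\ref{p706}). \emph{Stage 3 (lower off-diagonal, $a > b$).} This follows from Stage 2 by applying the anti-isomorphism $\tau$ of (\ref{tauiso}), which at the level of generators interchanges the $E$-type and $F$-type root vectors and hence, on $\gr$, exchanges upper and lower off-diagonals.

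The main obstacle I foresee is twofold. First, the bound $s_{a,b}^\mu + p_{\min(a,b)}^\mu$ is not additive under the recursion (\ref{eparag}), so at each inductive step in Stage~2 one must carefully verify, using the shift-matrix identity (\ref{sijk}) together with the definition $p_a^\mu = \ell - s_{a,m+n} - s_{m+n,a}$, that every sub-product appearing on the right-hand side of the $\gr$-relation still falls within the allowed degree range. Second, the super sign factors in Definition~\ref{parashift} --- particularly those in (\ref{p706})--(\ref{p708}) and in the quartic Serre relations (\ref{p715}), (\ref{p716}) which are absent from the classical setting of \cite{BK2} --- require a parity-based case analysis to confirm that no sign cancellation obstructs the inversion of the leading term in Stage~1.
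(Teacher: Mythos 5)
Your overall induction scheme (block index $a$ first, then off-diagonal distance, then the anti-isomorphism $\tau$ for the lower-triangular part) is in the right spirit, and the base case $e_{1,1;i,j}^{(r)}=0$ for $r>p_1^\mu$ is correct. But the central step of Stage 1 has a genuine gap. You pass to the $\gr_{r+s-1}$-component of (\ref{p706}) and assert that the symbol of the left-hand commutator is ``a polynomial in lower-degree $e$'s''. Supercommutativity of $\gr Y_{\mu}^\ell(\sigma)$ only tells you that $[E_{a-1;i,j}^{(r)},F_{a-1;h,k}^{(s)}]$ lies in $F_{r+s-1}Y_{\mu}^\ell(\sigma)$; it does not compute its class in $\gr_{r+s-1}$. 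Since (\ref{p706}) is an exact identity, that class \emph{is} $\pm\bigl(e_{a,a;h,j}^{(r+s-1)}+\cdots\bigr)$, so your displayed identity is a tautology in which the unknown $e_{a,a;h,j}^{(r+s-1)}$ occurs on both sides and cannot be solved to place it in $\mathcal{A}$. Knowing that $E_{a-1;i,j}^{(r)}$ and $F_{a-1;h,k}^{(s)}$ are restricted generators does not help: the degree-$(r+s-1)$ symbol of their commutator is a Poisson-bracket datum not determined by their degree-$r$ and degree-$s$ symbols. The same objection applies to Stage 2, where you take symbols of the commutators in (\ref{eparag}) and (\ref{p709}). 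There is also a structural circularity: with $r,s$ in the restricted ranges, $r+s-1$ only reaches $p_a^\mu+p_{a-1}^\mu-1$, so treating all diagonal elements before any off-diagonal ones cannot work; the reduction of $E_{a-1}$, $F_{a-1}$ with large superscripts must precede that of $D_a$.

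The argument of \cite[Lemma 6.1]{BK2} that the paper invokes avoids this by making the truncation do the work. The only genuinely new relations in the quotient are $D_{1;i,j}^{(r)}=0$ for $r>p_1^\mu$. Bracketing such a vanishing element against $E_{1;h,k}^{(s)}$ (resp.\ $F_{1;h,k}^{(s)}$) via (\ref{p704}) (resp.\ (\ref{p705})) makes the left-hand side literally zero, while the right-hand side is a homogeneous polynomial of total canonical degree $r+s-1$ whose leading term is the single generator $E_{1;i,k}^{(r+s-1)}$; this yields honest reduction identities of the correct degree with strictly smaller superscripts in the remaining terms. One then propagates upward by substituting these reductions into commutators such as $[E_{1;i,j}^{(r)},F_{1;h,k}^{(s)}]$ and expanding with the exact defining relations, so that every resulting term is a product of at least two generators of total degree exactly $r+s-1$ and every occurrence of $D_{2;h,j}^{(t)}$ has $t<r+s-1$, and continues block by block, interleaving the diagonal and off-diagonal cases. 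You would need to rebuild Stages 1--2 along these lines for the proof to go through.
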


Let $\sigma=(s_{ij})_{1\leq i,j\leq m+n}$ be a non-zero shift matrix with minimal admissible shape $\mu=(\mu_1,\ldots,\mu_z)$ and let $\bo$ be a $0^m1^n$-sequence.
Then $\mu_z$ equals to the size of the largest zero square matrix in the southeastern corner of $\sigma$. Hence we have $1\leq \mu < m+n$ and either $s_{m+n-\mu_z,m+n+1-\mu_z}\neq 0$ or $s_{m+n+1-\mu_z,m+n-\mu_z}\neq 0$. Let $p$ and $q$ denote the the number of 0's and 1's respectively in the last $\mu_z$ digits of the $0^m1^n$-sequence $\bo$. 

If $s_{m+n-\mu_z,m+n+1-\mu_z}\neq 0$, then the baby comultiplication $\Delta_R$ defined in Theorem~\ref{baby1} factors through the quotient and we obtain an induced map 
\begin{equation}\label{babylr}
\Delta_R:Y_{\mu}^\ell(\sigma)\rightarrow Y_{\mu}^{\ell-1}(\dot{\sigma})\otimes U(\gl_{p|q})
\end{equation}
where $\dot{\sigma}$ is given by (\ref{babyr1}). 

Similarly, if $s_{m+n+1-\mu_z,m+n-\mu_z}\neq 0$, then 
$\Delta_L$ induces a map 
 \begin{equation}\label{babyll}
\Delta_L:Y_{\mu}^\ell(\sigma)\rightarrow U(\gl_{p|q})\otimes Y_{\mu}^{\ell-1}(\dot{\sigma})
\end{equation}
where $\dot{\sigma}$ is given by (\ref{babyl1}).

Recall that $\Delta_R$ and $\Delta_L$ are filtered maps with respect to the canonical filtration, so they induce the following homomorphisms of graded superalgebras
\begin{align}\label{grdelr}
\gr \Delta_R: \gr Y_{\mu}^\ell(\sigma)\rightarrow \gr\big( Y_{\mu}^{\ell-1}(\dot{\sigma})\otimes U(\gl_{p|q})\big),\\ \label{grdell}
\gr \Delta_L: \gr Y_{\mu}^\ell(\sigma)\rightarrow \gr\big(U(\gl_{p|q})\otimes Y_{\mu}^{\ell-1}(\dot{\sigma})\big).
\end{align}

\begin{theorem}\label{PBWSYLpara}
For any admissible shape $\mu=(\mu_1,\ldots,\mu_z)$, $\gr Y_{\mu}^\ell(\sigma)$ is the free supercommutative superalgebra on generators 
\[
\lbrace e_{a,b;i,j}^{(r)} \,|\, 1\leq a,b\leq z, 1\leq i\leq\mu_a, 1\leq j\leq \mu_b, 
s_{a,b}^\mu<r\leq s_{a,b}^\mu+p_{min(a,b)}^\mu \rbrace.
\]
Also, the maps $\gr\Delta_{R}$ and $\gr\Delta_{L}$ in {\em (\ref{grdelr})} and  {\em (\ref{grdell})} are injective whenever they are defined, and so are the maps $\Delta_R$ and $\Delta_L$ in  {\em (\ref{babylr})} and  {\em (\ref{babyll})}.
\end{theorem}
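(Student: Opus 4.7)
I will prove all three conclusions of the theorem simultaneously by induction. Lemma~\ref{lema6.2} already guarantees that the listed elements generate $\gr Y_{\mu}^\ell(\sigma)$, so the freeness claim reduces to their algebraic independence. The strategy is to use the baby comultiplications of \textsection\ref{babyco} to embed (in the associated graded) $\gr Y_\mu^\ell(\sigma)$ into a tensor product whose two factors are already controlled by the inductive hypothesis and by the PBW theorem for $U(\gl_{p|q})$, and then extract independence together with injectivity from a triangularity argument.

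\textbf{Induction parameters and base case.} I induct on $(\ell, |\sigma|)$ in lexicographic order, with $|\sigma| := \sum_{i,j} s_{i,j}$. The base case is $\sigma = 0$, where $Y_\mu^\ell(0) = Y_{m|n}^\ell$ is the ordinary truncated super Yangian. Choosing $\mu = (m+n)$, the listed generators become the graded symbols of $t_{i,j}^{(r)}$ for $1 \le r \le \ell$, and their algebraic independence in the supercommutative algebra $\gr Y_{m|n}^\ell$ follows from Proposition~\ref{PBWSY} together with compatibility of the canonical filtration with the truncation ideal. Independence of the conclusion from the choice of admissible $\mu$ is handled by Corollary~\ref{indmu} and the upper-triangularity of the Gauss decomposition; in the base case $\Delta_R$ and $\Delta_L$ are undefined, so (II) and (III) are vacuous.

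\textbf{Inductive step.} Assume $\sigma \neq 0$ and fix $\mu$ to be the minimal admissible shape. By minimality, at least one of $s_{m+n-\mu_z,\,m+n+1-\mu_z}$ or $s_{m+n+1-\mu_z,\,m+n-\mu_z}$ is strictly positive; say the former, so that $\Delta_R$ is defined and descends to the level-$\ell$ quotient as in (\ref{babylr}). Note $\ell$ strictly decreases under $\Delta_R$, so the inductive hypothesis applies to $(\dot\sigma, \ell-1)$: the graded algebra $\gr Y_\mu^{\ell-1}(\dot\sigma)$ is a free supercommutative superalgebra on the generators listed in the theorem, and $\gr U(\gl_{p|q}) = S(\gl_{p|q})$ is polynomial. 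Using Lemma~\ref{baby2} and the explicit formulas of Theorem~\ref{baby1}, I compute $\gr\Delta_R(e_{a,b;i,j}^{(r)})$ on each source generator: for $(a,b)$ not touching $z$ the image is simply $\dot e_{a,b;i,j}^{(r)} \otimes 1$, while when $a = z$ or $b = z$ there appear additional cross-terms of the form $\dot e_{a,b;i,f}^{(r-1)} \otimes \tilde e_{f,j}$ (or the mirror version for $F$'s), which have total canonical degree equal to $r$ but strictly smaller first-factor degree $r-1$.

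\textbf{Independence, injectivity, and main obstacle.} Order source generators lexicographically by $(a,b)$ and decreasingly by $r$, and order monomials in the target by the tensor bidegree (first-factor degree, second-factor degree). The cross-terms arising when $a=z$ or $b=z$ have strictly smaller first-factor degree than the ``main'' term $\dot e_{a,b;i,j}^{(r)} \otimes 1$, which, combined with freeness of the target graded algebra from the inductive hypothesis, forces the images to be algebraically independent; this gives (II) and, via Lemma~\ref{lema6.2}, freeness (I) for $(\sigma,\ell)$. Part (III) is then standard: if $x \in \ker \Delta_R$ has filtration degree exactly $d$, then $\gr_d x \in \ker \gr \Delta_R = 0$, so $x$ actually lies in $F_{d-1}Y_\mu^\ell(\sigma)$, and induction on $d$ yields $x = 0$; the argument for $\Delta_L$ is symmetric. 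The main technical obstacle is the triangularity analysis for the $(z,z)$-block generators: their images mix across the tensor product at the top total degree, and one must verify that the boundary case $r = \ell$ (where $\dot e_{z,z;i,j}^{(\ell)}$ is no longer a generator but rather a polynomial in lower-degree generators of the target) does not create hidden relations. This is the step where the freeness of $\gr Y_\mu^{\ell-1}(\dot\sigma)$ from the inductive hypothesis is used in its sharpest form, by uniquely expanding the ``top-degree'' component of the image in the PBW basis.
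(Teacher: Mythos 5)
Your overall architecture is exactly the paper's, which simply defers to the argument of \cite[Theorem 6.2]{BK2}: induct so that the level drops, map via $\Delta_R$ (or $\Delta_L$) into $Y_\mu^{\ell-1}(\dot\sigma)\otimes U(\gl_{p|q})$, read off algebraic independence of the images of the candidate generators from a leading-term/triangularity analysis against the inductively known free target, and then deduce injectivity of $\Delta_R,\Delta_L$ from injectivity of the associated graded maps by the standard filtration argument. The boundary case $r=\ell$ for the $(z,z)$-block that you flag is indeed the delicate point, and it is resolved in the way you suggest: when the ``main term'' $\dot e^{(\ell)}_{z,z;i,j}\otimes 1$ degenerates, the cross term $\sum_f \pm\,\dot e^{(\ell-1)}_{z,z;i,f}\otimes\tilde e_{f,j}$ takes over as the leading term, and $\dot e^{(\ell-1)}_{z,z;i,f}$ is still a free generator of the target, so no hidden relations arise.

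The one genuine gap is your base case $\sigma=0$. There the assertion is precisely the PBW theorem for the truncated super Yangian $Y^\ell_{m|n}$, and it does not follow from Proposition~\ref{PBWSY}: that proposition gives a basis of $Y_{m|n}$ itself, and a priori the two-sided ideal generated by $\{t^{(r)}_{i,j}\mid r>\ell\}$ could meet the span of the ordered supermonomials in the $t^{(r)}_{i,j}$ with $r\le\ell$ nontrivially. Your phrase ``compatibility of the canonical filtration with the truncation ideal'' is the statement to be proved, not a tool: for a quotient by a two-sided ideal one only gets a surjection from the quotient of $\gr Y_{m|n}$ by the graded ideal of symbols onto $\gr Y^\ell_{m|n}$, and the content of the base case is that this surjection is an isomorphism. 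The paper supplies exactly this input by citing \cite[Proposition~2.3]{Pe2} (established there via the realization of the truncated super Yangian through the evaluation map); without some such external input your base case is circular. With that citation substituted for your appeal to Proposition~\ref{PBWSY}, the rest of your argument matches the intended proof.
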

\begin{proof}
Similar to the argument in \cite[Theorem 6.2]{BK2}, except that our induction starts from $\ell=1$.
In that case, the assertion follows from \cite[Proposition~2.3]{Pe2}.
\end{proof}

As a corollary, we obtain a PBW basis for $Y_{m|n}^\ell(\sigma)$.
\begin{corollary}\label{pbwbasis}
For any admissible shape $\mu=(\mu_1,\ldots,\mu_{z})$, the supermonomials in the elements
\[\lbrace D_{a;i,j}^{(r)}| 1\leq a\leq z, 1\leq i,j\leq \mu_{a}, 0<r\leq p_a^\mu\rbrace,\]
\[\lbrace E_{a,b;i,j}^{(r)}| 1\leq a<b\leq z, 1\leq i\leq \mu_a, 1\leq j\leq \mu_b , s_{a,b}^\mu<r\leq s_{a,b}^{\mu}+p_a^\mu\rbrace,\]
\[\lbrace F_{b,a;i,j}^{(r)}| 1\leq a<b\leq z, 1\leq i\leq \mu_b, 1\leq j\leq \mu_{a} , s_{b,a}^\mu<r\leq s_{b,a}^{\mu}+p_a^\mu\rbrace,\]
taken in any fixed order forms a basis for $Y_{m|n}^\ell(\sigma)$.
\end{corollary}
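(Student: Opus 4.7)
The plan is a standard lifting of the PBW basis for the associated graded algebra $\gr Y_{\mu}^\ell(\sigma)$, provided by Theorem~\ref{PBWSYLpara}, to a PBW basis of $Y_{\mu}^\ell(\sigma)$ itself via the canonical filtration of Section~\ref{truncation}.

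First I would invoke Theorem~\ref{PBWSYLpara}, which tells us that $\gr Y_{\mu}^\ell(\sigma)$ is the free supercommutative superalgebra on the generators $e_{a,b;i,j}^{(r)}$ with $s_{a,b}^\mu < r \leq s_{a,b}^\mu + p_{\min(a,b)}^\mu$ and the standard ranges for $a,b,i,j$. Consequently, fixing any total order on this generating set, the ordered supermonomials in these $e$'s (in which each odd generator appears with multiplicity at most one) form a linear basis of $\gr Y_{\mu}^\ell(\sigma)$. Splitting the ranges according to whether $a=b$, $a<b$, or $a>b$ gives precisely the three index sets that appear in Corollary~\ref{pbwbasis} (noting that $p_{\min(a,b)}^\mu = p_a^\mu$ in the $E$-case and $= p_a^\mu$ in the $F$-case under our conventions).

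Next I would fix the corresponding total order on the lifts $D_{a;i,j}^{(r)}$, $E_{a,b;i,j}^{(r)}$, $F_{b,a;i,j}^{(r)}$ in the ranges of Corollary~\ref{pbwbasis}. Using the definition~(\ref{canorel}) of $e_{a,b;i,j}^{(r)}$ together with the supercommutativity of $\gr Y_{\mu}^\ell(\sigma)$, any ordered supermonomial in the lifts of total weight $d$ (weighting each factor by its superscript $r$) lies in $F_d Y_{\mu}^\ell(\sigma)$ and its symbol in $\gr_d$ is equal, up to a harmless sign, to the corresponding ordered supermonomial in the $e$'s.

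Finally, the familiar filtered-graded principle finishes the proof: linear independence comes from the fact that a nontrivial relation of maximal canonical degree $d$ would descend to a nontrivial relation among ordered supermonomials in the $e$'s in $\gr_d Y_{\mu}^\ell(\sigma)$, contradicting the previous step; spanning follows by induction on $d$, writing any $y \in F_d Y_{\mu}^\ell(\sigma)$ as a linear combination of ordered supermonomials of top weight $d$ in the lifts plus an element of $F_{d-1} Y_{\mu}^\ell(\sigma)$ and invoking the induction hypothesis. I do not expect any serious obstacle, since all the real content has already been established in Theorem~\ref{PBWSYLpara} (in particular the boundedness of the index ranges) and in Lemma~\ref{lema6.2}, which guarantees that the $e$'s in the specified bounded ranges already generate $\gr Y_{\mu}^\ell(\sigma)$.
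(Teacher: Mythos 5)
Your proof is correct and is exactly the intended argument: the paper derives Corollary~\ref{pbwbasis} directly from Theorem~\ref{PBWSYLpara} by the same standard filtered-to-graded lifting (the symbol of each ordered supermonomial in the lifts is the corresponding ordered supermonomial in the free supercommutative generators $e_{a,b;i,j}^{(r)}$, whence independence; spanning follows by induction on the canonical degree). The identification of the three index ranges with the single range $s_{a,b}^\mu<r\leq s_{a,b}^\mu+p_{\min(a,b)}^\mu$ split according to $a=b$, $a<b$, $a>b$ is also exactly as in the paper.
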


Another corollary is obtained by counting.
\begin{corollary}\label{dimcoro}
Consider $Y_{m|n}^\ell(\sigma)$ together with the canonical filtration and some fixed $\bo$. Let $S(\g^e)$ be the supersymmetric superalgebra of $\g^e$ with the Kazhdan filtration, where $e$ is the nilpotent element corresponding to the triple $(\sigma,\ell, \bo)$ as explained in \textsection {\em\ref{preW}}.
Denote by $F_dY_{m|n}^\ell(\sigma)$ and $F_dS(\g^e)$ the superspaces with total degree not greater than $d$ in the associated filtered superalgebras respectively.
Then for each $d\geq 0$, we have $\dim F_dY_{m|n}^\ell(\sigma) = \dim F_dS(\g^e)$.
\end{corollary}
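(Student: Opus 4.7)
The plan is to prove $\gr Y_{m|n}^\ell(\sigma) \cong S(\g^e)$ as graded superalgebras, from which the stated dimension equality for each $F_d$ follows immediately by summing dimensions of graded pieces of degree $\leq d$ on each side. Both filtered pieces are finite-dimensional: on the Yangian side because $\gr Y_{m|n}^\ell(\sigma)$ is finitely generated by Lemma \ref{lema6.2}, and on the $S(\g^e)$ side because $\g^e$ itself is finite-dimensional.

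To set up the isomorphism, I would apply Theorem \ref{PBWSYLpara} with the admissible shape $\mu=(1^{m+n})$. Then $z=m+n$, each $\mu_a=1$, and $s_{a,b}^\mu=s_{a,b}$, $p_a^\mu=p_a$; the theorem identifies $\gr Y_{m|n}^\ell(\sigma)$ with the free supercommutative superalgebra on
\[
\{e_{a,b;1,1}^{(r)}\,:\,1\leq a,b\leq m+n,\ s_{a,b}<r\leq s_{a,b}+p_{\min(a,b)}\},
\]
with $e_{a,b;1,1}^{(r)}$ of canonical degree $r$. In parallel, Proposition \ref{counting2} exhibits a linear basis $\{c_{a,b}^{(r)}\}$ of $\g^e$ on exactly the same index set, and the Kazhdan degree assignment (\ref{degdef}) makes each $c_{a,b}^{(r)}$ of degree $r$. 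Hence $S(\g^e)$ is by construction the free supercommutative superalgebra on these generators, with matching degrees.

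The one substantive check is that the obvious bijection $e_{a,b;1,1}^{(r)}\leftrightarrow c_{a,b}^{(r)}$ respects the $\Z_2$-grading. Since each $\mu_a=1$, the restricted parity $\pa{1}_a$ collapses to $\pa{a}$, so formulas (\ref{pad})--(\ref{paf}) specialize to (\ref{drpa}); iterating the recursive definitions (\ref{eparag})--(\ref{fparag}) of the higher root elements then shows that $e_{a,b;1,1}^{(r)}$ has parity $\pa{a}+\pa{b} \pmod 2$. On the other hand, because every box in a given row of the underlying pyramid carries the same $\pm$-label (Remark \ref{2.4}), each summand $e_{h,k}$ of $c_{a,b}^{(r)}$ has parity $\pa{a}+\pa{b}$ as well. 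The bijection is therefore grading-preserving and extends to an isomorphism $\gr Y_{m|n}^\ell(\sigma)\cong S(\g^e)$ of graded superalgebras. I anticipate no real obstacle: the two hard inputs (the PBW-type Theorem \ref{PBWSYLpara} and the basis count of Proposition \ref{counting2}) have already been established, and this corollary reduces to matching indexing sets, degrees, and parities.
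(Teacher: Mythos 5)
Your proposal is correct and follows essentially the same route as the paper: the paper's proof also takes $\mu=(1^{m+n})$ in Theorem \ref{PBWSYLpara} and compares the resulting free supercommutative generators with the basis of $\g^e$ from Proposition \ref{counting2}. Your explicit verification that degrees and parities match (the latter being needed because odd generators contribute exterior rather than polynomial factors) fills in details the paper leaves implicit, and packaging the count as a graded isomorphism $\gr Y_{m|n}^\ell(\sigma)\cong S(\g^e)$ rather than an induction on $d$ is only a cosmetic difference.
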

\begin{proof}
Take $\mu=(1^{m+n})$ in Theorem \ref{PBWSYLpara}. Then the statement follows from Proposition \ref{counting2} and induction on $d$.
\end{proof}

\begin{remark}
Consider the following inverse system
\[
Y_{m|n}^\ell(\sigma)\twoheadleftarrow Y_{m|n}^{\ell+1}(\sigma)\twoheadleftarrow Y_{m|n}^{\ell+2}(\sigma)\twoheadleftarrow \cdots
\]
where the maps are homomorphisms of filtered superalgebras with respect to the canonical filtration.
As an observation from Corollary~{\em \ref{pbw2}} {\em(4)} and Corollary~{\em\ref{pbwbasis}}, we have 
$$ Y_{m|n}(\sigma)= \lim_{\leftarrow} Y_{m|n}^\ell(\sigma)$$
where the inverse limit is taken in the category of filtered superalgebras.
Similar to \cite[Remark 6.4]{BK2}, we may view $Y_{m|n}(\sigma)$ as the inverse limit $\ell\rightarrow \infty$ of the shifted super Yangian of level $\ell$.
\end{remark}

%%%%%%%%%%%%%%%%%%%%%%%%%%%%%%%%%%%%%%%%%%%%%%%%%%%%%
%%%%%%%%%%%%%%%%%%%%%%%%%%%%%%%%%%%%%%%%%%%%%%%%%%%%%

\section{Invariants}\label{Inv}
Let $\pi$ be a given pyramid of height $m+n$ associated to a $0^m1^n$-sequence $\bo$. 
Let $M$ and $N$ be the number of boxes in $\pi$ labeled by $``+"$ and $``-"$, respectively.
Let $\mathfrak{p}$ and $\mathfrak{m}$ be the subalgebras of $\gl_{M|N}$ associated to the good pair $(e_\pi, h_{\pi})$.
Generalizing \cite[\textsection 9]{BK2}, we will define some distinguished $\mathfrak{m}$-invariant (under the $\chi$-twisted action) elements in $U(\mathfrak{p})$; that is, some elements in $\W_\pi$.

We number the columns of $\pi$ from left to right by $1,\ldots,\ell$.
Let $h=m-n$ and let $(\check{q}_1,\dots,\check{q}_{\ell})$ denote the {\em super column heights} of $\pi$, where each $\check{q}_i$ is defined to be the number of boxes in the $i$-th column of $\pi$ labeled with $``+"$ subtract the number of boxes labeled with $``-"$ in the same column.

Define $\rho = (\rho_1,\dots,\rho_\ell)$, where $\rho_r$ is given by
\begin{equation}\label{rhodef}
\rho_r := h-\check{q}_{r} - \check{q}_{r+1} -\cdots-\check{q}_\ell
\end{equation}
for each $r=1,\dots,\ell$.

Give an order on the index set $I:=\lbrace 1<\ldots<M < \ovl{1}<\ldots<\ovl{N}\rbrace$. For all $i,j\in I$, define
\begin{equation}\label{etil}
\tilde e_{i,j} := (-1)^{\col(j)-\col(i)} 
(e_{i,j} + \delta_{i,j} (-1)^{\tp{(i)}}\rho_{\col(i)}),
\end{equation}
where $\tp{(i)}:= 0$ if $i\in\{ 1 ,\ldots, M \}$ and $\tp{(i)}:= 1$ otherwise. 
Note that
the parity notation $\tp{(i)}$ used here is for $\gl_{M|N}$, while another parity notation $\pa i$ 
defined in \textsection \ref{preY} is used for $Y_{m|n}$.

Calculation shows that \begin{multline}\label{etilrel}
[\tilde e_{i,j}, \tilde e_{h,k}]
=(\tilde{e}_{i,k} - \delta_{i,k} (-1)^{\tp(i)} \rho_{\col(i)})\delta_{h,j}\\
- (-1)^{(\tp(i)+\tp(j))(\tp(h)+\tp(k))}
\delta_{i,k} (\tilde e_{h,j} - \delta_{h,j} (-1)^{\tp(j)}\rho_{\col(j)}).
\end{multline}

The effect of the homomorphism $U(\mathfrak{m}) \rightarrow \C$ induced by the character $\chi$ can be obtained easily by definition. 
We explicitly give the result here since it will be frequently use later.
For any $i,j\in I$, we have
\begin{equation}\label{chidef}
\chi (\tilde e_{i,j}) = \left\{
\begin{array}{ll}
(-1)^{\tp(i)+1}&\hbox{if $\row(i)=\row(j)$ and $\col(i) = \col(j)+1$;}\\[3mm]
0&\hbox{otherwise.}
\end{array}\right.
\end{equation}

Now we are going to define certain crucial elements in the universal enveloping algebra $U(\gl_{M|N})$.
For $1\leq i,j\leq m+n$ and signs
$\sigma_i\in \{\pm\}$,
we firstly set
$$T_{ i,j;\sigma_{1},\dots,\sigma_{n+1}}^{(0)} := \delta_{i,j} \sigma_i$$
and then for $r \geq 1$ we define
\begin{equation}\label{tdef}
T_{i,j;\sigma_{1},\ldots,\sigma_{m+n}}^{(r)}
:=
\sum_{s = 1}^r
\sum_{\substack{i_1,\dots,i_s\\j_1,\dots,j_s}}
\sigma_{\row(j_1)} \cdots \sigma_{\row(j_{s-1})}
(-1)^{\tp(i_1)+\cdots+\tp(i_s)}
 \tilde e_{i_1,j_1} \cdots \tilde e_{i_s,j_s}
\end{equation}
where the second sum is taken over all $i_1,\dots,i_s,j_1,\dots,j_s\in I$
such that
\begin{itemize}
\item[(1)] $\deg(e_{i_1,j_1})+\cdots+\deg(e_{i_s,j_s}) = r$;
\item[(2)] $\col(i_t) \leq \col(j_t)$ for each $t=1,\dots,s$;
\item[(3)] if $\sigma_{\row(j_t)} = +$, then
$\col(j_t) < \col(i_{t+1})$ for each
$t=1,\dots,s-1$;
\item[(4)]
if $\sigma_{\row(j_t)} = - $, then $\col(j_t) \geq \col(i_{t+1})$
for each
$t=1,\dots,s-1$;
\item[(5)] $\row(i_1)=i$, $\row(j_s) = j$;
\item[(6)]
$\row(j_t)=\row(i_{t+1})$ for each $t=1,\dots,s-1$.
\end{itemize}
Due to conditions (1) and (2), $T_{i,j;\sigma_{1},\dots,\sigma_{m+n}}^{(r)}$ belongs to $\mathrm{F}_r U(\mathfrak p)$.

For an integer $0\leq x\leq m+n$, we set the shorthand notation 
$$T_{i,j;x}^{(r)}:= T_{i,j;\sigma_{1},\dots,\sigma_{m+n}}^{(r)}$$ 
where
\[
\sigma_{i}= \left\{
\begin{array}{ll}
- &\hbox{if $i\leq x$,}\\[3mm]
+ &\hbox{if $x<i$.}\end{array}
\right.
\]
We further define the following series for all $1\leq i,j\leq m+n$:
\begin{equation}\label{tseries}
T_{i,j;x}(u) := \sum_{r \geq 0} T_{i,j;x}^{(r)} u^{-r}
\in U(\mathfrak p) [[u^{-1}]].
\end{equation}

The following lemma can be established by exactly the same approach as \cite[Lemma~9.2]{BK2}, where the use of  super column height perfectly solves the subtle sign issue.
We omit the detail since the argument there is quite formal and does not depend on the underlying associative superalgebra in which the calculations are performed.
\begin{lemma}\cite[Lemma~9.2]{BK2}\label{ttodef}
Let $0\leq i,j,x,y \leq m+n$ be integers with $x<y$.
\begin{itemize}
\item[(1)] If $x < i \leq y < j \leq m+n$ then
$$
T_{i,j;x}(u) = \sum_{k=x+1}^y T_{i,k;x}(u) \, T_{k,j;y}(u).
$$
\item[(2)] If $x < j \leq y<i\leq m+n$ then
$$
T_{i,j;x}(u) = \sum_{k=x+1}^y  T_{i,k;y}(u) \, T_{k,j;x}(u).
$$
\item[(3)] If $x< y < i \leq m+n$ and $y < j \leq m+n$, then
$$
T_{i,j;x}(u) = T_{i,j;y}(u)
+ \sum_{k,\ell=x+1}^y  T_{i,k;y}(u) \, T_{k,\ell;x}(u) \, T_{\ell,j;y}(u).
$$
\item[(4)]
If $x < i \leq y\leq m+n$ and $x < j \leq y$, then
$$
\sum_{k=x+1}^y  T_{i,k;x}(u) \, T_{k,j;y}(u) = -\delta_{i,j}.
$$
\end{itemize}
\end{lemma}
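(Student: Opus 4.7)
The plan is to prove each of the four identities coefficient-by-coefficient in $u^{-r}$, exhibiting an explicit bijection between the tuples $(i_1,j_1,\ldots,i_s,j_s)$ enumerated by (\ref{tdef}) on the two sides. As foreshadowed in the sentence preceding the statement, the combinatorial argument of \cite[Lemma~9.2]{BK2} is purely formal: it manipulates index tuples satisfying (1)--(6) through chain-splittings and sign-reversing involutions and makes no reference to the underlying associative superalgebra in which the $\tilde e_{i,j}$ live. The bulk of the work in the super setting therefore reduces to checking that the extra parity signs $(-1)^{\tp(i_t)}$ in (\ref{tdef}) combine with the column-shift signs $(-1)^{\col(j)-\col(i)}$ in (\ref{etil}) following exactly the same pattern as in \cite{BK2}. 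This is precisely what the super column heights $\check q_i$ are designed to encode, via the definition (\ref{rhodef}) of $\rho_r$.

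For parts (1)--(3), the bijection splits each chain at one or two canonical positions determined by how the row trajectory $r_0,r_1,\ldots,r_s$ (with $r_t := \row(j_t) = \row(i_{t+1})$) interacts with the threshold $y$. For part (1), with $r_0 = i \leq y$ and $r_s = j > y$, I would identify the canonical ``crossing'' index, show that the two resulting subchains contribute to $T_{i,k;x}$ and $T_{k,j;y}$ for some $k$ in the range $(x,y]$, and verify that each of the conditions (1)--(6) transfers correctly under the split; the inverse is concatenation, restricted by the column condition at the gluing point that is forced by $\sigma_k = +$ in $T_{\cdot,\cdot;x}$. Part (2) is formally dual, with ascents and descents through $y$ interchanged. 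Part (3) requires a \emph{double} split: chains that avoid rows $\leq y$ altogether contribute the isolated term $T_{i,j;y}(u)$, while chains that make an excursion into rows $\leq y$ are split at the first descent and the last ascent, isolating a middle portion counted by $T_{k,\ell;x}$.

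Part (4) is the orthogonality relation and is the most delicate; I would prove it by a sign-reversing involution on the concatenated chains appearing in $\sum_k T_{i,k;x}(u)T_{k,j;y}(u)$, pairing off all contributions of positive degree. The residual $r=0$ term yields $-\delta_{i,j}$, with the minus sign coming from the discrepancy $\sigma_k = +$ in $T_{\cdot,\cdot;x}$ versus $\sigma_k = -$ in $T_{\cdot,\cdot;y}$ for $k \in (x,y]$. The principal obstacle throughout will be the simultaneous bookkeeping of three sources of signs --- the $\sigma$-prefactor in (\ref{tdef}), the column-shift sign $(-1)^{\col(j)-\col(i)}$ from (\ref{etil}), and the parity sign $(-1)^{\tp(\cdot)}$ --- and verifying that they combine consistently at every split. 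The definition of $\rho_r$ via the super column height $\check q_r$ is calibrated precisely so that these cancellations work out in the same formal pattern as in the non-super case, which is why, once the sign framework is in place, the remainder is a careful but mechanical transcription of the argument in \cite{BK2}.
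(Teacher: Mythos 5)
Your plan coincides with the paper's treatment: the paper gives no proof of this lemma at all, deferring entirely to the formal combinatorial argument of \cite[Lemma~9.2]{BK2} on the grounds that it uses no relations among the $\tilde e_{i,j}$ and that the extra parity factor $(-1)^{\tp(i_1)+\cdots+\tp(i_s)}$ in (\ref{tdef}) is multiplicative under concatenation of chains. One small caution: in parts (1)--(3) the split/concatenate correspondence is not a clean bijection --- products in which the second chain revisits the row band $(x,y]$ internally, or in which the column condition fails at the junction, do occur on the right-hand side and must be cancelled by the same telescoping, sign-reversing mechanism you reserve for part (4) --- but this is exactly the content of the argument the paper declines to reproduce, and your stated framework accommodates it.
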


Define an invertible $(m+n)\times (m+n)$ matrix with entries in $U(\mathfrak{p})[[u^{-1}]]$ by
$$T(u) := \big( T_{i,j;0}(u) \big)_{1\leq i,j\leq m+n}$$ 
Fix a composition $\mu = (\mu_1,\mu_{2},\ldots,\mu_{z})$ of $m+n$. 
Applying the Gauss decomposition in \textsection \ref{preY}, we have 
$$T(u) = F(u)D(u)E(u)$$ 
where $D(u)$ is a diagonal block matrix, $E(u)$ is an upper unitriangular block matrix, and $F(u)$ is a lower unitriangular block matrix, with respect to $\mu$.

The diagonal blocks of $D(u)$ define matrices $D_1(u),\ldots,D_{z}(u)$, the upper diagonal blocks of $E(u)$ define matrices $E_{1,2}(u),\ldots,E_{z-1,z}(u)$, and the lower diagonal matrices of $F(u)$ define matrices $F_{2,1}(u),\dots,F_{z,z-1}(u)$, respectively. Set $E_{b}(u)=E_{b,b+1}(u)$, $F_{b}(u)=F_{b+1,b}(u)$ for $1\leq b\leq z-1$ and $D^{\prime}_a(u) := D_a(u)^{-1}$ for all $1\leq a\leq z$.
The entries of these matrices in turn define the following series:
\begin{align*}
D_{a;i,j}(u) &= \sum_{r \geq 0} D_{a;i,j}^{(r)} u^{-r},\quad
& D^{\prime}_{a;i,j}(u) &= \sum_{r \geq 0} 
D_{a;i,j}^{\prime(r)} u^{-r},\\
E_{b;h,k}(u) &= \sum_{r \geq 1} E_{b;h,k}^{(r)} u^{-r},\quad
&F_{b;k,h}(u) &= \sum_{r \geq 1} F_{b;k,h}^{(r)} u^{-r},
\end{align*}
for all $1\leq a\leq z$, $1\leq b\leq z-1$, $1\leq i,j \leq \mu_a$, $1\leq h\leq \mu_b$, $1\leq k\leq \mu_{b+1}$. 

Nevertheless, all of these elements, depending on the fixed choice of $\mu$, are parallel to the elements in $Y_{m|n}$ 
with the same notations given in \textsection \ref{preY}, except that
the elements defined here belong to $U(\mathfrak{p})$.

\begin{theorem}\label{ttodefthm}
With $\mu = (\mu_1,\ldots,\mu_{z})$ be fixed as above.  For any admissible indices
$a,b,i,j,h,k$, we have
\begin{align*}
D_{a;i,j}(u) &= T_{\mu_1+\cdots+\mu_{a-1}+i,\mu_1+\cdots+\mu_{a-1}+j;
\mu_1+\cdots+\mu_{a-1}}(u),\\
D^{\prime}_{a;i,j}(u) &=
-T_{\mu_1+\cdots+\mu_{a-1}+i,\mu_1+\cdots+\mu_{a-1}+j;
\mu_1+\cdots+\mu_a}(u),\\
E_{b;h,k}(u) &= T_{\mu_1+\cdots+\mu_{b-1}+h,\mu_1+\cdots+\mu_{b}+k;
\mu_1+\cdots+\mu_{b}}(u),\\
F_{b;k,h}(u) &= T_{\mu_1+\cdots+\mu_{b}+k,\mu_1+\cdots+\mu_{b-1}+h;
\mu_1+\cdots+\mu_{b}}(u).
\end{align*}
\end{theorem}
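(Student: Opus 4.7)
The plan is to identify the two sides by induction on the length $z$ of the admissible shape $\mu$, using Lemma~\ref{ttodef} as the main engine together with the uniqueness of the Gauss decomposition of $T(u)$.

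I first handle the base case $z=2$, with $\mu=(\mu_1,\mu_2)$, by assembling the claimed right-hand sides into block matrices $\tilde D(u)$, $\tilde E(u)$, $\tilde F(u)$ and checking the identity $T(u)=\tilde F(u)\tilde D(u)\tilde E(u)$ block by block. The $(1,1)$-block is tautological; the $(1,2)$- and $(2,1)$-blocks reduce directly to parts~(1) and~(2) of Lemma~\ref{ttodef} with $(x,y)=(0,\mu_1)$; and the $(2,2)$-block matches part~(3) with the same $(x,y)$. Applying part~(4) twice, with $(x,y)=(0,\mu_1)$ and $(x,y)=(\mu_1,m+n)$, confirms that the proposed series for $D'_a(u)$ satisfy $\tilde D_a(u)\tilde D'_a(u)=I$. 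Uniqueness of the Gauss decomposition then closes the base case.

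For the inductive step $z\ge 3$, I apply the base case to the coarser composition $\mu'=(\mu_1,\,m+n-\mu_1)$ to obtain a first splitting $T(u)=F'(u)D'(u)E'(u)$ whose lower-right block $D'_2(u)$ has entries $T_{\mu_1+i,\mu_1+j;\mu_1}(u)$. The essential observation is that Lemma~\ref{ttodef} is translation-invariant in the parameter $x$: writing $\tilde T_{i,j;x}(u):=T_{\mu_1+i,\mu_1+j;\mu_1+x}(u)$, all four identities persist verbatim for $\tilde T$. The inductive hypothesis applied to $D'_2(u)$ with the length $z-1$ composition $(\mu_2,\dots,\mu_z)$ then produces series $\hat F(u),\hat D(u),\hat E(u)$ of the claimed form. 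Substituting into
\[
T(u)=F'(u)\begin{pmatrix}I&0\\0&\hat F(u)\end{pmatrix}\begin{pmatrix}D_1(u)&0\\0&\hat D(u)\end{pmatrix}\begin{pmatrix}I&0\\0&\hat E(u)\end{pmatrix}E'(u)
\]
exhibits the Gauss decomposition of $T(u)$ with respect to $\mu$. Uniqueness identifies $D_a,D'_a$ and $E_b,F_b$ for $b\ge 2$ via the inductive hypothesis, while $E_1$ and $F_1$ appear respectively as the first $\mu_2$ columns of $E'_{1,2}$ and the first $\mu_2$ rows of $F'_{2,1}$, whose formulas were established in the base case.

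The principal technical burden is actually carrying out Lemma~\ref{ttodef}, which has been deferred in the excerpt. The sign bookkeeping built into the definitions (\ref{etil}) and (\ref{tdef}) via the super column heights $\check q_i$ is precisely what allows the formal argument used for the non-super version in \cite[Lemma~9.2]{BK2} to transfer to our setting without super-specific modifications. Once Lemma~\ref{ttodef} is granted, the induction above is light bookkeeping: matching the shifted index $\mu_1+x$ in $\tilde T$ with the original $T$-indices, and tracking which sub-blocks of $E'_{1,2}$ and $F'_{2,1}$ correspond to $E_1$ and $F_1$.
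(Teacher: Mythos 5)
Your proof is correct and takes essentially the same approach as the paper: induction on the length of $\mu$, with Lemma~\ref{ttodef} supplying the $2\times 2$ block Gauss decomposition and uniqueness of the Gauss decomposition (Lemma~\ref{split}) closing the argument. The only organizational difference is that you peel off the \emph{first} part of $\mu$ and apply a base-point-shifted form of the inductive hypothesis to the lower-right block, whereas the paper merges the \emph{last} two parts of $\mu$ and refines, which lets it invoke the theorem verbatim for the coarser composition $\nu$; your translation-invariance observation correctly supplies the extra generality your version needs, since the shifted identities are just re-instantiations of Lemma~\ref{ttodef} with translated indices.
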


\begin{proof}
Note that it suffices to show the identities for $D, E$ and $F$, since the one for $D^\prime$ follows from the one for $D$ and Lemma \ref{ttodef}(4). We prove our statement by induction on the length of $\mu$. 
The initial case is $\mu=(m+n)$, which is trivial since $T(u)=D_1(u)$.

Now let $\mu=(\mu_1,\ldots,\mu_{z})$ be a composition of length $z\geq 2$.
Define a new composition $\nu=(\nu_1,\ldots,\nu_{z-1})$ of length $z-1$ by setting $\nu_i=\mu_i$ for all $1\leq i\leq z-2$ and $\nu_{z-1} = \mu_{z-1}+\mu_{z}$; that is, merge the last two parts of $\mu$. By the induction hypothesis, we have
\begin{align*}
{^\nu}D_{a}(u) &= \left( T_{\nu_1+\cdots+\nu_{a-1}+i,\nu_1+\cdots+\nu_{a-1}+j;
\nu_1+\cdots+\nu_{a-1}}(u)\right)_{1 \leq i,j \leq \nu_a},\, \forall \,1\leq a \leq z-1,\\
{^\nu}E_{b}(u) &= \left( T_{\nu_1+\cdots+\nu_{b-1}+h,\nu_1+\cdots+\nu_{b}+k;
\nu_1+\cdots+\nu_{b}}(u)\right)_{1 \leq h \leq \nu_b, 1 \leq k \leq \nu_{b+1}},\,\forall \,1\leq b\leq z-2,\\
{^\nu}F_{b}(u) &= \left( T_{\nu_1+\cdots+\nu_{b}+k,\nu_1+\cdots+\nu_{b-1}+h;
\nu_1+\cdots+\nu_{b}}(u)\right)_{1 \leq k \leq \nu_{b+1}, 1 \leq h \leq \nu_{b}},\,\forall \,1\leq b\leq z-2,
\end{align*}
where we add a superscript $\nu$ to emphasize that these elements are defined with respect to $\nu$. Note that 
${^\nu}D_{a}(u) = {^\mu}D_{a}(u)$ for all $1\leq a\leq z-2$ and ${^\nu}E_{b}(u) = {^\mu}E_{b}(u)$, ${^\nu}F_{b}(u) = {^\mu}F_{b}(u)$ for all $1\leq b \leq z-3$.

Moreover, by Lemma \ref{split}, $^\mu E_{z-2}(u)$ equals to the submatrix consisting of the first $\mu_{z-1}$ columns of $^\nu E_{z-2}(u)$, while $^\mu F_{z-2}(u)$ equals to the submatrix consisting of the top $\mu_{z-1}$ rows of $^\nu F_{z-2}(u)$. Both of them are of the form described in the theorem.
It remains to check the identities for ${^\mu}D_{z-1}(u)$, ${^\mu}D_{z}(u)$, ${^\mu}E_{z-1}(u)$ and ${^\mu}F_{z-1}(u)$.

Define matrices $P,Q,R$ and $S$ by
\begin{align*}
P &= \left( T_{\mu_1+\cdots+\mu_{z-2}+i,\mu_1+\cdots+\mu_{z-2}+j;\mu_1+\cdots+\mu_{z-2}}(u)\right)_{1 \leq i,j \leq \mu_{z-1}},\\
Q &= \left( T_{\mu_1+\cdots+\mu_{z-2}+i,\mu_1+\cdots+\mu_{z-2}+\mu_{z-1}+j;\mu_1+\cdots+\mu_{z-2}+\mu_{z-1}}(u)\right)_{1 \leq i \leq \mu_{z-1}, 1 \leq j \leq \mu_{z}},\\
R &= \left( T_{\mu_1+\cdots+\mu_{z-2}+\mu_{z-1}+i,\mu_1+\cdots+\mu_{z-2}+j;\mu_1+\cdots+\mu_{z-2}+\mu_{z-1}}(u)\right)_{1 \leq i \leq \mu_{z}, 1 \leq j \leq \mu_{z-1}},\\
S &= \left( T_{\mu_1+\cdots+\mu_{z-2}+\mu_{z-1}+i,\mu_1+\cdots+\mu_{z-2}+\mu_{z-1}+j;\mu_1+\cdots+\mu_{z-2}+\mu_{z-1}} (u)\right)_{1 \leq i,j \leq \mu_{z}}.
\end{align*}
By Lemma \ref{ttodef} with $x=\mu_1+\ldots+\mu_{z-2}$ and $y=\mu_1+\ldots+\mu_{z-1}$, we have
$$
^{\nu}D_{z-1}(u)
=
\left(\begin{array}{ll}I_{\mu_{z-1}}&0\\ R&I_{\mu_{z}}\end{array}\right)
\left(\begin{array}{ll} P&0\\0& S\end{array}\right)
\left(\begin{array}{ll}I_{\mu_{z-1}}& Q\\0&I_{\mu_{z}}\end{array}\right)
= 
\left(
\begin{array}{cc}
 P & PQ\\
 RP & S+RPQ
\end{array}\right).
$$
Now the explicit descriptions of the matrices $^{\mu}D_{z-1}(u)$, $^\mu D_{z}(u)$, $^\mu E_{z-1}(u)$ and $^\mu F_{z-1}(u)$ follows from Lemma \ref{split}, which completes the induction argument.
\end{proof}

In the extreme case that $\mu = (1^{m+n})$, we write simply $D_i^{(r)}, D_i^{\prime(r)},E_j^{(r)}$ and $F_j^{(r)}$ for the elements $D_{i;1,1}^{(r)}$, $D_{i;1,1}^{\prime(r)}$, $E_{j;1,1}^{(r)}$ and $F_{j;1,1}^{(r)}$ of $U(\mathfrak{p})$ for all $1\leq i\leq m+n$, $1\leq j\leq m+n-1$, $r\geq 1$, respectively.

\begin{corollary}\label{transcor}
$D_i^{(r)} = T_{i,i;i-1}^{(r)}$, $E_j^{(r)} = T_{j,j+1;j}^{(r)}$, $F_j^{(r)} = T_{j+1,j;j}^{(r)}$ and $D_i^{\prime(r)} = -T_{i,i;i}^{(r)}$.
\end{corollary}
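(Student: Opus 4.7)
The plan is to obtain the corollary as an immediate specialization of Theorem \ref{ttodefthm} to the finest composition $\mu = (1^{m+n})$. In that case we have $z = m+n$ and $\mu_a = 1$ for all $a$, so the only admissible indices $i,j,h,k$ are all equal to $1$, and the block-matrix generators $D_{a;i,j}^{(r)}$, $D_{a;i,j}^{\prime(r)}$, $E_{b;h,k}^{(r)}$, $F_{b;k,h}^{(r)}$ collapse to the scalar series $D_a^{(r)}$, $D_a^{\prime(r)}$, $E_b^{(r)}$, $F_b^{(r)}$ introduced just before the corollary.

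With this choice of $\mu$, the partial sums appearing in Theorem~\ref{ttodefthm} simplify: $\mu_1+\cdots+\mu_{a-1} = a-1$ and $\mu_1+\cdots+\mu_a = a$. Substituting these values into the four identities of Theorem~\ref{ttodefthm} gives exactly
\[
D_i^{(r)} = T_{i,i;i-1}^{(r)}, \qquad D_i^{\prime(r)} = -T_{i,i;i}^{(r)},
\]
\[
E_j^{(r)} = T_{j,j+1;j}^{(r)}, \qquad F_j^{(r)} = T_{j+1,j;j}^{(r)},
\]
which is the statement of the corollary.

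There is essentially no obstacle here; the only thing worth verifying is that the composition $\mu = (1^{m+n})$ is admissible for the induction set-up in Theorem~\ref{ttodefthm} (which it is, since the theorem is proved for arbitrary compositions of $m+n$), and that the $E$- and $F$-identities make sense only for $1 \leq j \leq m+n-1$ (which matches the range $1 \leq b \leq z-1$ in the theorem). Thus the corollary follows purely by unwinding notation.
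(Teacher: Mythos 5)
Your proposal is correct and is exactly the paper's (implicit) argument: the corollary is the specialization of Theorem \ref{ttodefthm} to $\mu=(1^{m+n})$, where $\mu_1+\cdots+\mu_{a-1}=a-1$ and $\mu_1+\cdots+\mu_a=a$, combined with the notational convention $D_i^{(r)}=D_{i;1,1}^{(r)}$, etc., stated just before the corollary. Nothing further is needed.
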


\section{Main theorem}\label{mainsec}
 Let $\pi$ be a pyramid associated with a $0^m1^n$-sequence $\bo$ which corresponds to a good pair in $\gl_{M|N}$ and let $(\sigma,\ell,\bo)$ be the triple associated to $\pi$ given by Proposition~\ref{SPbij}.
Let $Y_{m|n}^\ell(\sigma)$ denote the shifted super Yangian of level $\ell$ associated to $\pi$ equipped with the canonical filtration and let $\W_\pi$ denote the finite $W$-superalgebra associated to $\pi$ equipped with the Kazhdan filtration .

Suppose also that $\mu = (\mu_1, \dots, \mu_{z})$ is an admissible shape for $\sigma$, and recall the shorthand notations $s_{a,b}^\mu$ and $p_a^\mu$ from (\ref{sabmu}) and (\ref{pamu}). We have the elements $D_{a;i,j}^{(r)}$, $D_{a;i,j}^{\prime(r)}$, $E_{b;h,k}^{(r)}$ and $F_{b;k,h}^{(r)}$ of $U(\mathfrak{p})$ defined by Theorem~\ref{ttodefthm} according to this fixed shape $\mu$. On the other hand, we also have the parabolic generators $D_{a;i,j}^{(r)}$, $D_{a;i,j}^{\prime(r)}$, $E_{b;h,k}^{(r)}$ and $F_{b;k,h}^{(r)}$ in $Y_{\mu}^\ell(\sigma)$ as defined in \textsection~\ref{truncation}. We are ready to present the main result of this article.

\begin{theorem}\label{main}
Let $\pi$ be a pyramid and let $(\sigma,\ell,\bo)$ be the 
corresponding triple given by Proposition {\em\ref{SPbij}}. 
For any shape $\mu = (\mu_1,\dots,\mu_{z})$ admissible to $\sigma$, 
there exists is a unique isomorphism 
$Y_{m|n}^\ell(\sigma) \stackrel{\sim}{\rightarrow} \W_\pi$ 
of filtered superalgebras such that the generators
\begin{align*}
&\{D_{a;i,j}^{(r)} \, | \, 1 \leq a \leq z,1 \leq i,j \leq \mu_a, r > 0\},\\
&\{E_{b;h,k}^{(r)} \, | \, 1 \leq b < z, 1 \leq h \leq \mu_b, 1 \leq k \leq \mu_{b+1}, r > s_{b,b+1}^\mu \},\\
&\{F_{b;k,h}^{(r)} \, | \, 1 \leq a < z, 1 \leq k \leq \mu_{b+1}, 1 \leq h \leq \mu_{b}, r > s_{b+1,b}^\mu\}
\end{align*}
of $Y_{\mu}^\ell(\sigma)$ are mapped to corresponding elements of $U(\mathfrak{p})$ denoted by the same symbols. In particular, these elements of $U(\mathfrak{p})$ are $\mathfrak{m}$-invariants and they form a generating set for $\W_\pi$.
\end{theorem}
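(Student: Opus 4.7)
The plan is to construct the stated map $\Psi : Y_{m|n}^\ell(\sigma) \to \W_\pi$ in three stages: first verify that the elements of $U(\mathfrak{p})$ named $D_{a;i,j}^{(r)}$, $E_{b;h,k}^{(r)}$, $F_{b;k,h}^{(r)}$ via Theorem~\ref{ttodefthm} actually lie in $\W_\pi$; then show these elements satisfy the defining relations of $Y_\mu^\ell(\sigma)$, thereby producing a filtered homomorphism; and finally compare associated graded superalgebras to conclude $\Psi$ is an isomorphism.

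By Theorem~\ref{ttodefthm} every parabolic generator is a Gauss-decomposition entry of the matrix $T(u) = (T_{i,j;0}(u))_{i,j}$ in $U(\mathfrak{p})[[u^{-1}]]$, so for $\mathfrak{m}$-invariance it suffices to prove a single uniform RTT-type identity for $T(u)$ from which $\pr_\chi[a, T_{i,j;0}(u)] = 0$ for all $a \in \mathfrak{m}$ follows. I would carry this out by direct computation with the explicit sum (\ref{tdef}), using formula (\ref{chidef}) to track how $\chi$ acts on each $\tilde e_{i,j}$ and Lemma~\ref{ttodef}(1)--(4) to telescope the resulting sums. The super sign $(-1)^{\col(j)-\col(i)}$ and $(-1)^{\tp(i_1)+\cdots+\tp(i_s)}$ built into $\tilde e_{i,j}$ and $T_{i,j;x}^{(r)}$ are designed to align the telescoping with the super signs, and the super column heights in (\ref{rhodef}) likewise absorb the $\mathbb Z_2$-graded contributions of $e$; so the argument should mirror the non-super calculation of \cite[\S 9]{BK2} line by line once the parity bookkeeping is set up correctly.

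For the defining relations of $Y_\mu^\ell(\sigma)$, I would again exploit Theorem~\ref{ttodefthm} together with the fact that the Gauss decomposition of $T(u)$ in $U(\mathfrak{p})[[u^{-1}]]$ produces series obeying relations that are formally forced by the RTT-type identity for $T(u)$; this is exactly how the parabolic relations (\ref{p701})--(\ref{p716}) were derived from the RTT presentation of $Y_{m|n}$ in \cite{Pe4} and in the proof of Proposition~\ref{parahom}. Once the RTT-type relation for $T(u)$ in $U(\mathfrak{p})[[u^{-1}]]$ is established (by the same telescoping argument as the invariance), all of the relations (\ref{p701})--(\ref{p716}) hold for the $U(\mathfrak{p})$-valued $D, E, F$. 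The shift $s_{a,b}^\mu$ in the range of $r$ for the $E$ and $F$ generators is encoded via the column-inequality constraints (2)--(4) of (\ref{tdef}), which force the low-degree coefficients to vanish. The truncation $D_{1;i,j}^{(r)} = 0$ for $r > p_1$ holds because the sum (\ref{tdef}) defining $T_{i,i;0}^{(r)}$ restricted to the first block is empty once $r$ exceeds the length $p_1$ of the rows of $\pi$ sitting in that block.

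The resulting $\Psi$ is filtered with respect to the canonical filtration on the left and the Kazhdan filtration on the right. By Theorem~\ref{PBWSYLpara}, $\gr Y_\mu^\ell(\sigma)$ is free supercommutative on the generators $e_{a,b;i,j}^{(r)}$ with $s_{a,b}^\mu < r \le s_{a,b}^\mu + p_{\min(a,b)}^\mu$; I would check on each such generator that $\gr\Psi$ carries it to the symbol of the weighted sum $c_{a,b}^{(r)}$ from Proposition~\ref{counting2}, which are a basis of $\mathfrak{g}^e$ and hence of $S(\mathfrak{g}^e) \cong \gr \W_\pi$ by Proposition~\ref{dimprop}. Surjectivity of $\gr\Psi$ then follows, and the dimension identity $\dim F_d Y_{m|n}^\ell(\sigma) = \dim F_d S(\mathfrak{g}^e)$ of Corollary~\ref{dimcoro} forces injectivity; a standard filtered lifting argument promotes $\gr\Psi$ to the required isomorphism $\Psi$. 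The main obstacle will be proving the RTT-type identity for $T(u)$ in $U(\mathfrak{p})[[u^{-1}]]$, which simultaneously yields both the $\mathfrak{m}$-invariance and the defining relations; the combinatorics of (\ref{tdef}), compounded with the super signs from $\tp(\cdot)$ and from the parities $|i|$ attached to $\bo$, demand careful bookkeeping, and in particular the quartic Serre-type relations (\ref{p715}) and (\ref{p716})---already non-trivial inside $Y_\mu$---must be verified to hold in $U(\mathfrak{p})$ by a separate reduction argument paralleling the proof of Proposition~\ref{missing}.
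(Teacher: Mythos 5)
Your three-stage outline (invariance, relations, graded comparison) is reasonable at the level of architecture, and your final dimension-count via Corollary~\ref{dimcoro} and Proposition~\ref{counting2} is indeed how the argument closes. But the central step of your plan rests on an unproved and, as far as anyone knows, unprovable-by-direct-computation assertion: that the matrix $T(u)=(T_{i,j;0}(u))$ in $U(\mathfrak{p})[[u^{-1}]]$ satisfies a ``uniform RTT-type identity'' from which both the $\mathfrak{m}$-invariance and the full list of defining relations (\ref{p701})--(\ref{p716}) would follow formally. No such identity is established in the paper, nor in \cite{BK2}; Lemma~\ref{ttodef} only records Gauss-decomposition identities among the $T_{i,j;x}(u)$ for varying $x$, not supercommutators of $T_{i,j;0}(u)$ with itself or with elements of $\mathfrak{m}$. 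The entire reason the proof is organized as an induction on the number of columns is to avoid exactly this computation: one removes the outermost column, uses the recursion of Lemma~\ref{sbabyr1} expressing $D_{a;i,j}^{(r)}$, $E_{b;h,k}^{(r)}$, $F_{b;k,h}^{(r)}$ in terms of the corresponding elements for the smaller pyramid $\dot\pi$, proves $\mathfrak{m}$-invariance by a delicate case analysis (Lemmas~\ref{demint1}--\ref{des2minv}, splitting $\mathfrak{m}$ into $\dot{\mathfrak{m}}$ and its complement and treating $s^\mu_{z-1,z}=1$ and $s^\mu_{z-1,z}>1$ separately), and then obtains the relations \emph{for free} by exhibiting the isomorphism as $\psi_R^{-1}\circ\Delta_R$, where $\Delta_R$ is the baby comultiplication already known to be a homomorphism on the Yangian side (Theorem~\ref{baby1}) and $\psi_R$ is the explicit algebra map on $U(\mathfrak{p})$ killing the mixed matrix units. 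At no point are the relations (\ref{p701})--(\ref{p716}) verified directly on the elements of $U(\mathfrak{p})$; in particular your proposal to check (\ref{p715})--(\ref{p716}) ``by a separate reduction argument paralleling Proposition~\ref{missing}'' has no starting point in $U(\mathfrak{p})$, since Proposition~\ref{missing} works inside $Y_\mu$ where the RTT relation is an axiom.

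Concretely, then, the gap is this: you have not supplied, and cannot simply assert, the identity that simultaneously yields $\pr_\chi[a,T_{i,j;0}(u)]=0$ for all $a\in\mathfrak{m}$ and the Yangian relations for the Gauss-decomposition entries. Without it your stage two collapses, and your stage one reduces to a bare claim that the computation ``should mirror'' \cite[\S 9]{BK2} --- but \cite[\S 9]{BK2} does not prove invariance by telescoping either; it proves it by the same column-removal induction. To repair the proposal you would need to adopt the inductive structure: establish the base case ($\sigma=0$, the truncated super Yangian, via \cite{BR,Pe2}), prove the recursion of Lemma~\ref{sbabyr1}, carry out the invariance lemmas, and then use the injectivity of $\Delta_R$, $\Delta_L$ (Theorem~\ref{PBWSYLpara}) together with the dimension identity to identify $\W_\pi$ with the image. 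Some smaller claims in your write-up are fine as stated (e.g.\ the vanishing $T_{i,j;0}^{(r)}=0$ for $r>p_1$ in the first block does follow from conditions (1)--(3) of (\ref{tdef})), but they do not touch the main difficulty.
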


Similar to the argument in \cite{BK2}, the proof of Theorem \ref{main} is processed by induction on the number $\ell-t$, where $\ell$ is the length of the bottom row and $t$ is the length of the top row of $\pi$. 
Our initial case is $\ell=t$.
In this case, the pyramid is of rectangular shape so the associated shift matrix is the zero matrix. Hence the shifted super Yangian is the whole $Y_{m|n}$ itself, and its quotient is exactly the truncated super Yangian $Y_{m|n}^\ell$.
As mentioned in \textsection \ref{introd} , the statement of the theorem in this special case was firstly established in \cite{BR}; see also \cite{Pe2} for an approach similar to our setting here.

Assume that our pyramid $\pi$ is not of rectangular shape so that $\ell\geq 2$ and $\ell-t>0$.
By induction on the length of the shape and Lemma~\ref{split}, it suffices to prove the special case when $\mu$ is the
minimal admissible shape for $\sigma$.

Let $H$ denote the absolute height of the shortest column of $\pi$.
Since $\pi$ is a pyramid, either $H=|q_1|$ or $H=|q_\ell|$. There are two cases:
\begin{itemize}
\item Case R: $H=|q_\ell|\leq |q_1|$.
\item Case L: $H=|q_1|< |q_\ell|$.
\end{itemize}
We will explain the proof of Case R in detail and only sketch the proof of Case L, which can be obtained by a very similar argument with mild modifications.

From now on we assume that Case R holds.
Recall that we numbered the boxes of $\pi$ using the index set 
$$I:=\lbrace 1<\cdots<M<\ovl{1}<\ldots<\ovl{N} \rbrace$$ 
in the standard way: down columns from left to right, where $i$ (respectively, $\ovl{i}$) stands for the boxes labeled with $+$ (respectively, $-$). Suppose that there are $p$ (respectively, $q$) boxes labeled with $+$ (respectively, $-$) in the right-most column of $\pi$. Since $\mu$ is minimal admissible, we have $H=p+q=\mu_z$.

Let $\dot\pi$ be the pyramid obtained by removing the right-most column of $\pi$.
We know that the removed boxes of $\pi$ are numbered with 
$$M-p+1,M-p+2,\ldots,M,\ovl{N-q+1}, \ovl{N-q+2}, \ldots, \ovl{N},$$
and their order in the right-most column is determined by $\bo_z$, the last $H$ digits of the $0^m1^n$-sequence $\bo$.

By our assumption, the bottom $H$ rows of $\pi$ forms a rectangle, call it $\pi_H$.
A key observation \cite[Remark~3.5]{Pe2} is that permuting the rows of the rectangle $\pi_H$ will not change the corresponding even good pair $(e_\pi, h_\pi)$; see also Remark \ref{swap}. 
Although our argument in fact works in general, for convenience, we assume that the last $H$ digits of $\bo$ is the standard one: 
$$\bo_z=\stackrel{p}{\overbrace{0\cdots 0}} \stackrel{q}{\overbrace{1\cdots 1}}.$$
As a result, the right-most two columns of $\pi$ is of the form\\
\begin{center}
\newcolumntype{K}{\columncolor[gray]{0.8}\raggedright}
\begin{tabular}{|c|c|} 
  \multicolumn{1}{|c|}{ $\vdots$ } & \multicolumn{1}{c}{ } \\
  \hline
  $M-2p+1$ & $M-p+1$ \\  
 \hline
  $M-2p+2$ & $M-p+2$ \\  
 \hline
  $\vdots$ & $\vdots$ \\  
 \hline
  $M-p$ & $M$ \\  
 \hline
  \cellcolor[gray]{0.8}$\ovl{N-2q+1}$ & \cellcolor[gray]{0.8}$\ovl{N-q+1}$ \\  
 \hline
  \cellcolor[gray]{0.8}$\ovl{N-2q+2}$ & \cellcolor[gray]{0.8}$\ovl{N-q+2}$ \\  
 \hline
  \cellcolor[gray]{0.8}$\vdots$ & \cellcolor[gray]{0.8}$\vdots$ \\  
 \hline
  \cellcolor[gray]{0.8}$\ovl{N-q}$ & \cellcolor[gray]{0.8}$\ovl{N}$ \\  
 \hline
    \end{tabular}\\[1cm]
\end{center}

Let $\dot\sigma = (\dot s_{i,j})_{1 \leq i,j \leq m+n}$ be the shift matrix defined by ($\ref{babyr1}$) where its associated pyramid is $\dot\pi$. Define $\dot{\mathfrak{p}}, \dot{\mathfrak{m}}$ and $\dot e$ in $\dot{\mathfrak{g}} = \mathfrak{gl}_{M-p|N-q}$ according to (\ref{mpdef}) and (\ref{edef}) and let $\dot\chi:\dot{\mathfrak{m}}\rightarrow \C$ be the character $x \mapsto (x,\dot e)$.

Let $\dot D_{a;i,j}^{(r)}, \dot{D}_{a;i,j}^{\prime(r)}$, $\dot E_{b;h,k}^{(r)}$ and $\dot F_{b;k,h}^{(r)}$ denote the elements of $U(\dot{\mathfrak{p}})$ as defined in \textsection \ref{Inv} associated to the same shape $\mu$, which is admissible for both of $\sigma$ and $\dot \sigma$. By the induction hypothesis, Theorem~\ref{main} holds for $\dot\pi$, so the following elements of $U(\dot{\mathfrak{p}})$ are invariant under the $\dot{\chi}$-twisted action of $\dot{\mathfrak{m}}$; in other words, they belong to the finite $W$-superalgebra $\W_{\dot\pi}$:
\begin{align*}
&\{\dot D_{a;i,j}^{(r)}, \, \dot{D}_{a;i,j}^{\prime(r)}\} \text{ for } 1 \leq a \leq z, \,\, 1 \leq i,j \leq \mu_a \text{ and } r > 0;\\
&\{\dot E_{b;h,k}^{(r)}\} \text{ for } 1 \leq b \leq z-1,\,\, 1 \leq h \leq \mu_a,\,\, 1 \leq k \leq \mu_{a+1} \text{ and } r > s_{b,b+1}^\mu - \delta_{b+1,z};\\
&\{\dot F_{b;k,h}^{(r)} \}\text{ for } 1 \leq b \leq z-1,\,\, 1 \leq k \leq \mu_{a+1},\,\, 1 \leq h \leq \mu_{a} \text{ and } r > s_{b+1,b}^\mu.
\end{align*}

We embed $U(\dot\g)$ into $U(\g)$ in the following manner: 
for all $i,j$ in the index set 
$$\dot I:=\lbrace  1,\ldots,M-p,\ovl{1},\ldots, \ovl{N-q} \rbrace,$$ 
the generators $\tilde e_{ij}$ of $U(\dot \g)$ defined by (\ref{etil}) with respect to the pyramid $\dot\pi$ are assigned to the generators $\tilde e_{ij}$ of $U(\g)$ defined with respect to $\pi$.

This embedding in turns embeds $U(\dot{\mathfrak{p}})$ into $U(\mathfrak{p})$ and $\dot{\mathfrak{m}}$ into $\mathfrak{m}$, respectively. Moreover, the character $\dot\chi$ of $\dot{\mathfrak{m}}$ is precisely the restriction of the character $\chi$ of $\mathfrak{m}$. As a consequence, the $\dot{\chi}$-twisted action of $\dot{\mathfrak{m}}$ on $U(\dot{\mathfrak{p}})$ equals to the restriction of the $\chi$-twist action of $\mathfrak{m}$ on $U(\mathfrak{p})$.

For convenience, define the index sets 
$$\mathtt{J_1}=\{M-p+i\,|\, 1\leq i\leq p\}\cup \{\ovl{N-q+j}\,|\,1\leq j\leq q\},$$
$$\mathtt{J_2}=\{M-2p+i\,|\, 1\leq i\leq p\}\cup \{\ovl{N-2q+j}\,|\,1\leq j\leq q\}.$$ 
Note that they are the numbers appearing in the right-most and the second right-most columns of the rectangle $\pi_H$, respectively.

Define the bijection $R_1: \{1,2,\ldots, p+q\}\rightarrow \mathtt{J_1}$ by setting $R_1(f)$ to be the number assigned to the $f$-th box in the right-most column of the rectangle $\pi_H$.
Similarly, define the bijection $R_2: \{1,2,\ldots, p+q\}\rightarrow \mathtt{J_2}$ which assigns $R_2(f)$ to be the number appearing in the left of $R_1(f)$.
For example, $R_1(1)=M-p+1$, $R_1(p+q)=\ovl{N}$ and $R_2(p+q)=\ovl{N-q}$.
In particular, define 
\begin{equation}\label{defeta}
\eta:\mathtt{J_1}\rightarrow \{1,2,\ldots, p+q\}
\end{equation}
to be the inverse map of $R_1$.

The relations between the elements $D_{a;i,j}^{(r)}$, $E_{b;h,k}^{(r)}$, $F_{b;k,h}^{(r)}$ of $U(\mathfrak{p})$ given by $\pi$ and the elements $\dot D_{a;i,j}^{(r)}$, $\dot E_{b;h,k}^{(r)}$, $\dot F_{b;k,h}^{(r)}$ of $U(\dot{\mathfrak{p}})$ given by $\dot{\pi}$ are described in the following lemma, which is probably the most crucial step in the proof of our main theorem.

\begin{lemma}\label{sbabyr1}
The following equations hold for all  
$1\leq a \leq z$, $1\leq b\leq z-1$,
$1\leq i,j\leq \mu_a$,
$1\leq h\leq \mu_b$,
$1\leq k\leq \mu_{b+1}$,
all $r>0$ that makes sense,
and any fixed $1 \leq g \leq H$:
\begin{align}\label{sbr11}\notag
D_{a;i,j}^{(r)} & = \dot D_{a;i,j}^{(r)}\\
 &+\delta_{a,z}
\left(  \sum_{f=1}^H (-1)^{\pa{f}_z}
\dot D_{a;i,f}^{(r-1)} \tilde e_{R_1(f),R_1(j)}
+
[\dot D_{a;i,g}^{(r-1)}, \tilde e_{R_2(g),R_1(j)}]\right),\\
E_{b;h,k}^{(r)} & = \dot E_{b;h,k}^{(r)} + \delta_{b+1,z}\left(
 \sum_{f=1}^{H} (-1)^{\pa{f}_z} \dot E_{b;h,f}^{(r-1)} \tilde e_{R_1(f),R_1(k)}
+ [\dot E_{b;h,g}^{(r-1)}, \tilde e_{R_2(g),R_1(k)}]\right),\label{sbr12}\\
F_{b;k,h}^{(r)} & = \dot F_{b;k,h}^{(r)},
\end{align}
where for  {\em (\ref{sbr12})} we are assuming that $r > s_{z-1,z}^\mu$ if $b+1=z$.
\end{lemma}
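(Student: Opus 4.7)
My plan is to use Theorem~\ref{ttodefthm} to reduce each of the four identities to a statement about the sums $T_{I,J;x}^{(r)}$ defined in (\ref{tdef}), and then analyse how these sums change when $\pi$ is replaced by $\dot\pi$. Under the embedding $U(\dot\g)\hookrightarrow U(\g)$ described just before the statement, $T_{I,J;x}(u)$ computed with respect to $\pi$ differs from its analogue computed with respect to $\dot\pi$ only through sequences $(i_1,j_1),\ldots,(i_s,j_s)$ in which at least one entry is a box of $\mathtt{J_1}$, the removed rightmost column. The body of the proof consists in identifying and simplifying precisely these extra terms.

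First I would dispose of the trivial identities---$F_{b;k,h}^{(r)}=\dot F_{b;k,h}^{(r)}$ for all admissible indices, $D_{a;i,j}^{(r)}=\dot D_{a;i,j}^{(r)}$ for $a<z$, and $E_{b;h,k}^{(r)}=\dot E_{b;h,k}^{(r)}$ for $b+1<z$---by showing that no term of the relevant defining sum can involve a box of $\mathtt{J_1}$. The argument is uniform: if $j_t\in\mathtt{J_1}$ then $\col(j_t)=\ell$ and $\row(j_t)$ lies in block $z$; under the sign choices used for each of these $T$-series, the rows in block $z$ all receive $\sigma_{\row(j_t)}=+$, so condition~(3) of (\ref{tdef}) forces $\col(i_{t+1})>\ell$, which is impossible. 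The remaining case $t=s$ would require $\row(j_s)$ to be in block $z$, contradicting the prescribed target row (which lies in block $a$, $b$, or $b+1$ respectively). A parallel analysis rules out $i_t\in\mathtt{J_1}$, using the pyramid observation that column $\ell$ consists entirely of block-$z$ boxes.

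The main case is $a=z$, with $b+1=z$ for $E$ entirely parallel. Setting $x=m+n-H$ so that $D_{z;i,j}^{(r)}=T_{x+i,x+j;x}^{(r)}$, I would split the defining sum into three disjoint pieces according to the location of the final pair $(i_s,j_s)$: (a) sequences whose entries all lie in $\dot I$, assembling to $\dot D_{z;i,j}^{(r)}$; (b1) sequences with both $i_s$ and $j_s$ in $\mathtt{J_1}$ (forced by $\col(i_s)\le\col(j_s)=\ell$ once $j_s\in\mathtt{J_1}$); and (b2) sequences with $j_s\in\mathtt{J_1}$ but $i_s\notin\mathtt{J_1}$. The constraint that $j_t\notin\mathtt{J_1}$ for $t<s$ and that $i_t\in\mathtt{J_1}$ implies $t=s$ (established above) exhausts all possibilities. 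In~(b1), peeling off the final factor $\tilde e_{i_s,j_s}=\tilde e_{R_1(f),R_1(j)}$ with $f=\eta(i_s)$, the initial segment reassembles into $\dot D_{z;i,f}^{(r-1)}$ and the sign $(-1)^{\tp(i_s)}=(-1)^{\pa{f}_z}$ extracts cleanly, producing exactly $\sum_{f=1}^{H}(-1)^{\pa{f}_z}\dot D_{z;i,f}^{(r-1)}\tilde e_{R_1(f),R_1(j)}$. For~(b2), isolating the row $\row(i_s)=x+g'$ of the final visit within block~$z$ and using the commutation relations (\ref{etilrel}) to rearrange adjacent $\tilde e$-factors around $\tilde e_{R_2(g'),R_1(j)}$, the (b2) sub-sum should collapse, for any fixed $g\in\{1,\ldots,H\}$, onto the supercommutator $[\dot D_{z;i,g}^{(r-1)},\tilde e_{R_2(g),R_1(j)}]$. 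Independence from $g$ is then automatic, since the left-hand side $D_{z;i,j}^{(r)}-\dot D_{z;i,j}^{(r)}-\sum_f(\cdots)$ is fixed.

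The hardest part will be justifying the (b2) collapse. Exhibiting a supercommutator (rather than a plain product, as in (b1)) is a genuine subtlety: one must show that contributions from paths whose final $\tilde e$-factor straddles columns $c$ and $\ell$ for various $c<\ell$ can be collapsed via (\ref{etilrel}) onto the single column $\ell-1$, and that the collapse produces a commutator rather than a product. The super-sign gymnastics---juggling the $(-1)^{\tp(i_t)}$ factors of (\ref{tdef}), the Koszul signs from reordering $\tilde e$-factors of different parities, and the $(-1)^{\col(j)-\col(i)}$ prefactor in (\ref{etil})---must be carried out with care. In the non-super prototype \cite[Lemma~9.3]{BK2} the analogous bookkeeping is mostly automatic; here the restricted parities $\pa{f}_z$ appearing in the statement must be shown to arise in exactly the right places, so the sign conventions in (\ref{tdef}), (\ref{etil}), and the statement of the lemma interlock consistently.
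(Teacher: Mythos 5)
Your proposal is correct and is essentially the paper's own proof carried out in detail: the paper's entire argument for this lemma is the single sentence that the identities ``can be observed from the explicit description of the elements $T_{i,j;x}^{(r)}$ in (\ref{tdef}) with the help from Theorem~\ref{ttodefthm}'' together with the assumed form of the right-most two columns, and your three-way split of the defining sum---paths avoiding the removed column $\mathtt{J_1}$, paths ending in a pair lying entirely in column $\ell$, and paths whose last factor straddles column $\ell$---is precisely that observation made explicit, with the sign bookkeeping $(-1)^{\tp(R_1(f))}=(-1)^{\pa{f}_z}$ coming out as you predict. The one point to tighten in case (b2) is that $\row(i_s)=\row(j_{s-1})$ need \emph{not} lie in block $z$, but this does not matter: replacing the final box $R_1(j)$ by $R_2(g)$ gives a bijection (lowering the Kazhdan degree by one) from the (b2) paths onto the $\dot D_{z;i,g}^{(r-1)}$-paths having $\col(j_s)=\ell-1$, and since no earlier factor of such a path can have $j_t=R_2(g)$ (condition (3) would force $\col(i_{t+1})>\ell-1$, i.e.\ $i_{t+1}\in\mathtt{J_1}$), only the last factor supercommutes nontrivially with $\tilde e_{R_2(g),R_1(j)}$, which yields the stated commutator for every choice of $g$.
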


\begin{proof}
It can be observed from the explicit description of the elements $T_{i,j;x}^{(r)}$ in (\ref{tdef}) with the help from Theorem~\ref{ttodefthm} together with our assumption on the right-most two columns of the rectangle $\pi_H$.
\end{proof}

The inductive descriptions provided in Lemma~\ref{sbabyr1}, together with the induction hypothesis, allow us to deduce the following several lemmas and eventually to show that the elements $\Daij$, $\Ebhk$ and $\Fbkh$ of $U(\mathfrak{p})$ 
are $\mathfrak{m}$-invariants when the indices are appropriate.

\begin{lemma}\label{demint1}
The following elements of $U(\mathfrak{p})$ are $\mathfrak{m}$-invariant:
\begin{itemize}
\item[(i)] $\Daij$ and $\Dpaij$
for  $1 \leq a \leq z-1$, $1 \leq i,j \leq \mu_a$ and $r > 0$;
\item[(ii)] $\Ebhk$ for 
$1 \leq a \leq z-2$, $1 \leq h \leq \mu_b$, $1 \leq k \leq \mu_{b+1}$ and $r>s_{b,b+1}^\mu$;
\item[(iii)] $\Fbkh$ for $1 \leq a \leq z-1$, $1 \leq k \leq \mu_{b+1}$, $1 \leq h \leq \mu_{b}$ 
and $r > s_{b+1,b}^\mu$.
\end{itemize}
\end{lemma}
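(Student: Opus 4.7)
The plan is to reduce the statement to the induction hypothesis via Lemma~\ref{sbabyr1} and then upgrade the resulting $\dot{\mathfrak{m}}$-invariance in $U(\dot{\mathfrak{p}})$ to $\mathfrak{m}$-invariance in $U(\mathfrak{p})$. First I would apply Lemma~\ref{sbabyr1} under the index restrictions in (i)--(iii): in (i) the constraint $a\leq z-1$ annihilates $\delta_{a,z}$, in (ii) the constraint $b\leq z-2$ annihilates $\delta_{b+1,z}$, and the formula for $F$ carries no correction term. Therefore the elements in the statement literally equal $\dot D_{a;i,j}^{(r)},\,\dot E_{b;h,k}^{(r)},\,\dot F_{b;k,h}^{(r)}$ in $U(\dot{\mathfrak{p}})\subseteq U(\mathfrak{p})$. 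Since $\dot\pi$ is obtained from $\pi$ by removing the rightmost column, the bottom row of $\dot\pi$ has length $\ell-1$; by Case~R, the top row is unchanged, so the induction quantity $\ell-t$ strictly decreases. The induction hypothesis for Theorem~\ref{main} applied to $\dot\pi$ thus yields $\dot D_{a;i,j}^{(r)},\dot E_{b;h,k}^{(r)},\dot F_{b;k,h}^{(r)}\in\W_{\dot\pi}$, i.e.\ they are $\dot{\mathfrak{m}}$-invariant under the $\dot\chi$-twisted action.

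Next I would decompose $\mathfrak{m}=\dot{\mathfrak{m}}\oplus\mathfrak{n}$, where $\mathfrak{n}$ is spanned by the matrix units $e_{u,v}$ with $u\in\mathtt J_1$, $v\in\dot I$ and $\col(v)<\ell$ (these are precisely the generators of $\mathfrak{m}$ not available inside $\dot{\mathfrak g}$, since any $e_{u,v}\in\mathfrak{m}$ with $v\in\mathtt J_1$ would require $\col(u)>\ell$, which is impossible). Because $\dot\chi=\chi|_{\dot{\mathfrak m}}$ and $\dot I_\chi = I_\chi\cap U(\dot{\mathfrak g})$, the projections $\mathrm{pr}_\chi|_{U(\dot{\mathfrak g})}$ and $\mathrm{pr}_{\dot\chi}$ agree, so the invariance under $\dot{\mathfrak m}$ transfers directly from $\W_{\dot\pi}$ to $U(\mathfrak{p})$. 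Using $(a-\chi(a))y\equiv[a,y]\pmod{I_\chi}$ (which holds because $y\,(a-\chi(a))\in U(\mathfrak g)\cdot I_\chi\subseteq I_\chi$), the remaining task is to prove $\mathrm{pr}_\chi([a,y])=0$ for every $a\in\mathfrak{n}$ and every $y\in\{\dot D_{a;i,j}^{(r)},\dot E_{b;h,k}^{(r)},\dot F_{b;k,h}^{(r)}\}$.

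To handle this $\mathfrak{n}$-part I would proceed by induction on $\col(u)-\col(v)$ for $a=e_{u,v}$, together with a Jacobi decomposition. Let $w\in\mathtt J_2$ be the box lying immediately to the left of $u$; then $[e_{u,w},e_{w,v}]=e_{u,v}=a$, and the super Jacobi identity gives
\[
  [a,y]=[e_{u,w},[e_{w,v},y]]-(-1)^{|e_{u,w}||e_{w,v}|}[e_{w,v},[e_{u,w},y]].
\]
For the first summand, $e_{w,v}\in\dot{\mathfrak m}$ (since $\col(v)<\col(w)=\ell-1$), so $[e_{w,v},y]\in I_\chi$ by the $\dot{\mathfrak m}$-invariance established above; the computation $(b_\alpha-\chi(b_\alpha))e_{u,w}\in I_\chi$ (which follows because $[b_\alpha,e_{u,w}]$ is either zero or lies in the span of $e_{u,t}$ with $\col(t)<\ell-1$, hence has $\chi=0$ and sits in $I_\chi$) shows that $I_\chi$ is stable under the super-bracket with $e_{u,w}$, so $[e_{u,w},[e_{w,v},y]]\in I_\chi$. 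For the second summand, $[e_{u,w},y]$ is a linear combination (with scalar coefficients $\chi(e_{u,w})=\pm 1$ plus lower-order $\mathfrak n$-terms) coming from the commutator formula $[e_{u,w},e_{h,k}]=\delta_{w,h}\,e_{u,k}$; when $v$ already equals $w$ the argument terminates, and otherwise one iterates the Jacobi reduction on the strictly smaller quantity $\col(u)-\col(v)$.

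The hard part will be controlling the signs, the super-commutator corrections, and the residual column-$(\ell-1)$ contributions that survive the iterated Jacobi reduction. The claim is that every such residual can be rewritten as $\mathrm{pr}_\chi([b,y])$ for a suitable $b\in\dot{\mathfrak m}$, whereupon it vanishes by the induction hypothesis for $\dot\pi$. Once this bookkeeping is made explicit using the $\tilde e$-variables, the relation (\ref{etilrel}) and the formula (\ref{chidef}) for $\chi$, the argument closes and the assertions (i)--(iii) follow.
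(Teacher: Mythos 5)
Your first two steps coincide with the paper's: under the stated index restrictions the correction terms of Lemma~\ref{sbabyr1} vanish (the $\delta_{a,z}$, $\delta_{b+1,z}$ factors are zero and $F$ has no correction), so the elements literally equal $\dot D_{a;i,j}^{(r)}$, $\dot E_{b;h,k}^{(r)}$, $\dot F_{b;k,h}^{(r)}$, and the induction hypothesis gives $\dot{\mathfrak m}$-invariance. Your identification of the complement $\mathfrak n=\dot{\mathfrak m}^c$ (spanned by $e_{u,v}$ with $u\in\mathtt{J_1}$, $v\in\dot I$) is also the same as the paper's.

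The gap is in the $\mathfrak n$-part. Your Jacobi reduction on $\col(u)-\col(v)$ only pushes the problem down to the case $\col(u)-\col(v)=1$, i.e.\ to $\pr_\chi([\tilde e_{u,w},y])$ with $u\in\mathtt{J_1}$ and $w$ in column $\ell-1$ — and this base case, which you leave to ``bookkeeping,'' is the entire content of the lemma. Concretely, $[\tilde e_{u,w},\tilde e_{i_1,j_1}\cdots\tilde e_{i_s,j_s}]$ is a sum of supermonomials in which some factor $\tilde e_{i_t,j_t}$ is replaced by $\pm\delta_{w,i_t}\tilde e_{u,j_t}$; since $u\notin\dot I$, the element $\tilde e_{u,j_t}$ does not lie in $\dot{\mathfrak m}$, so your proposed mechanism of rewriting the residuals as $\pr_\chi([b,y])$ with $b\in\dot{\mathfrak m}$ cannot close the argument. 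What actually kills these terms is a structural fact you never invoke: by Theorem~\ref{ttodefthm} and conditions (1)--(6) of (\ref{tdef}), \emph{under the index restrictions (i)--(iii)} every supermonomial $\tilde e_{i_1,j_1}\cdots\tilde e_{i_s,j_s}$ occurring in these elements has all second indices $j_t\in\dot I\setminus\mathtt{J_2}$ (for $t<s$ because $\sigma_{\row(j_t)}=+$ on the block-$z$ rows forces $\col(j_t)<\ell-1$, and for $t=s$ because the target row lies outside block $z$). Hence each factor $\tilde e_{u,j_t}$ lies in $\mathfrak m\cap\ker\chi$ by (\ref{chidef}), and commuting it to the left only reproduces factors of the same shape, so the whole term lies in $I_\chi$. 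Without this observation the statement is simply false — the same argument applied to $\dot D_{z;i,j}^{(r)}$ or $\dot E_{z-1;h,k}^{(r)}$ would ``prove'' their $\mathfrak m$-invariance, which fails and is why the paper needs the correction terms and the separate Lemmas~\ref{demdot}--\ref{des2minv}. Your proof must therefore use the bounds $a\leq z-1$, $b\leq z-2$ somewhere beyond merely killing the $\delta$-terms; as written it does not. (Once the structural fact is in hand, the Jacobi reduction is unnecessary: the paper treats all of $\dot{\mathfrak m}^c$ uniformly in one step.)
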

\begin{proof}
All of these elements in $U(\mathfrak{p})$ coincide with the elements with the same name in $U(\dot{ \mathfrak{p}})$ by Lemma \ref{sbabyr1}. Hence they are 
$\dot{\mathfrak{m}}$-invariant by the induction hypothesis.
Define $$\mathfrak{\dot{m}^c}:=\mathfrak{m}\backslash\mathfrak{\dot{m}}.$$
It remains to show that these elements are invariant under the $\chi$-twisted action 
for all $\widetilde e_{f,g}$ in $\mathfrak{\dot{m}^c}$ only.
Note that $\widetilde e_{f,g}\in \mathfrak{\dot{m}^c}$ if and only if $g\in\dot{I}$ and $f\in\mathtt{J_1}$.

By Theorem \ref{ttodefthm} and (\ref{tdef}) again, all elements in the description of the lemma are linear combinations of supermonomials of the form $\tilde e_{i_1,j_1} \cdots \tilde e_{i_r,j_r}$ in $U(\dot{\mathfrak{p}})$ with $i_s\in \dot I$ and $j_s \in \dot{I}\backslash\mathtt{J_2}$ for all $1\leq s \leq r$.

By (\ref{chidef}), $\chi(\tilde{e}_{f,g}) = 0$ for all $g \in \dot{I}\backslash\mathtt{J_2}$ and $f\in\mathtt{J_1}$.
This implies that all such supermonomials are invariant under the $\chi$-twisted action of all $\widetilde e_{f,g}\in \mathfrak{\dot{m}^c}$ and our lemma follows.
\end{proof}

\begin{lemma}\label{demdot}
The following elements of $U(\mathfrak{p})$ are $\dot{\mathfrak{m}}$-invariant:
 \begin{enumerate}
  \item  $D_{z;i,j}^{(r)}$ for $1\leq i,j\leq \mu_{z}$ and $r>0$.
  \item  $E_{z-1;i,j}^{(r)}$ for $1\leq i\leq \mu_{z-1}$, $1\leq j\leq \mu_{z}$ and $r>s_{z-1,z}^\mu$.
 \end{enumerate}
\end{lemma}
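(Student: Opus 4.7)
The plan is to leverage the explicit formulas of Lemma~\ref{sbabyr1} to reduce $\dot{\mathfrak{m}}$-invariance of $D_{z;i,j}^{(r)}$ and $E_{z-1;h,k}^{(r)}$ to the already established $\dot{\mathfrak{m}}$-invariance of the dotted elements (via the inductive hypothesis on $\dot\pi$), together with some simple commutator vanishing arguments that exploit the positional constraints on the indices $R_1(f), R_2(g), R_1(j) \in \mathtt{J_1} \cup \mathtt{J_2}$. Throughout we use that $\dot\chi = \chi|_{\dot{\mathfrak{m}}}$, hence $\dot I_{\dot\chi} \subseteq I_\chi$ and $\pr_\chi|_{U(\dot\g)} = \pr_{\dot\chi}$. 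We must verify that $[a, D_{z;i,j}^{(r)}] \in I_\chi$ for every $a \in \dot{\mathfrak{m}}$; this is exactly the condition for $\dot{\mathfrak{m}}$-invariance.

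Write $a = \tilde e_{p,q}$ with $p,q \in \dot I$ and $\col(p) > \col(q)$. The crucial observation, which drives everything, is the following vanishing: using (\ref{etilrel}), one computes $[a, \tilde e_{R_1(f),R_1(j)}] = 0$ and $[a, \tilde e_{R_2(g),R_1(j)}] = 0$. Indeed, in each case the Kronecker delta $\delta_{p, R_1(j)}$ vanishes because $p \in \dot I$ while $R_1(j) \in \mathtt{J_1}$; the remaining delta forces $q = R_1(f)$ or $q = R_2(g)$, but the former is impossible since $R_1(f) \in \mathtt{J_1}$, and the latter would require $\col(p) > \ell-1$, i.e. $p \in \mathtt{J_1}$, contradicting $p \in \dot I$. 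Crucially, the same vanishing holds for any $b \in \dot{\mathfrak{m}}$ (not just $a$).

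Now apply $[a, -]$ to the expression for $D_{z;i,j}^{(r)}$ from Lemma~\ref{sbabyr1}. The term $[a, \dot D_{z;i,j}^{(r)}]$ lies in $\dot I_{\dot\chi} \subseteq I_\chi$ by induction. For each term $\dot D_{z;i,f}^{(r-1)} \tilde e_{R_1(f),R_1(j)}$, the superderivation property and the vanishing above reduce $[a, -]$ to $[a, \dot D_{z;i,f}^{(r-1)}]\,\tilde e_{R_1(f),R_1(j)}$; writing $[a, \dot D_{z;i,f}^{(r-1)}] = \sum u_i(b_i - \chi(b_i))$ with $b_i \in \dot{\mathfrak{m}}$ and pushing $\tilde e_{R_1(f),R_1(j)}$ past $(b_i - \chi(b_i))$ (where the resulting supercommutator $[b_i, \tilde e_{R_1(f),R_1(j)}]$ vanishes by the key observation), the product lands in $I_\chi$. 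For the last term $[\dot D_{z;i,g}^{(r-1)}, \tilde e_{R_2(g),R_1(j)}]$, apply the super Jacobi identity: the inner commutator $[a, \tilde e_{R_2(g),R_1(j)}]$ vanishes by the key observation, and the other summand $[[a, \dot D_{z;i,g}^{(r-1)}], \tilde e_{R_2(g),R_1(j)}]$ is handled by the same push-through trick. Part~(1) follows. Part~(2) for $E_{z-1;h,k}^{(r)}$ is entirely analogous using the formula (\ref{sbr12}) with $b+1 = z$; the combinatorial structure of the correction terms is identical, so the same three-step verification applies verbatim.

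The one step requiring mild care is tracking the parity signs when sliding $\tilde e_{R_1(f),R_1(j)}$ past $(b_i - \chi(b_i))$, together with ensuring $\chi$ is consistent with $\dot\chi$ on the relevant generators (which follows because $\chi$ is even and the only $\tilde e_{f,g} \in \dot{\mathfrak{m}}$ on which $\chi$ takes a nonzero value are the same ones that appear in $\dot{\chi}$, by (\ref{chidef})). These are purely bookkeeping issues, not a genuine obstacle; the substantive content of the proof is the commutator vanishing in the key observation, which is exactly what makes the choice of correction terms in Lemma~\ref{sbabyr1} work.
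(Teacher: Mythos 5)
Your proposal is correct and follows essentially the same route as the paper: both start from the expressions in Lemma~\ref{sbabyr1}, observe that $[x,\tilde e_{R_1(f),R_1(j)}]=[x,\tilde e_{R_2(g),R_1(j)}]=0$ for all $x\in\dot{\mathfrak{m}}$ (the paper states this without the delta-by-delta justification you supply), and combine this with the induction hypothesis for $\dot\pi$. Your write-up merely makes explicit the push-through of the correction factors past elements of $\dot I_{\dot\chi}$, which the paper leaves implicit.
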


\begin{proof}
(1) By (\ref{sbr11}), we obtain 
\[
D_{z;i,j}^{(r)}  = \dot D_{z;i,j}^{(r)} 
+\sum_{f=1}^H (-1)^{\pa{f}_z}
\dot D_{z;i,f}^{(r-1)} \tilde e_{R_1(f),R_1(j)}+
[\dot D_{z;i,g}^{(r-1)}, \tilde e_{R_2(g),R_1(j)}].
\]
For any $x\in\dot{\mathfrak{m}}$, we have
$[x,\widetilde e_{R_1(f),R_1(j)}]=0=[x,\widetilde e_{R_2(g),R_1(j)}]$. 
Using this result together with the induction hypothesis, one deduces that $\pr_\chi([x,D_{z;i,j}^{(r)}])=0$. 
The proof of (2) is similar by starting with (\ref{sbr12}).
\end{proof}

\begin{lemma}\label{de12dotminv}
 \begin{enumerate}
  \item  $D_{z;i,j}^{(1)}$ is $\mathfrak{\dot{m}^c}$-invariant for all $1\leq i,j\leq \mu_{z}$.
  \item  Suppose $s_{z-1,z}^{\mu}=1$. 
  Then $D_{z;i,j}^{(2)}$ is $\mathfrak{\dot{m}^c}$-invariant 
  for all $1\leq i,j\leq \mu_{z}$. 
  \item  Suppose $s_{z-1,z}^{\mu}=1$. 
  Then $E_{z-1;h,k}^{(2)}$ is $\mathfrak{\dot{m}^c}$-invariant 
  for all $1\leq h\leq \mu_{z-1}$ and $1\leq k\leq \mu_{z}$.
 \end{enumerate}
\end{lemma}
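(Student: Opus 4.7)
The plan is to substitute the appropriate small values of $r$ into Lemma \ref{sbabyr1} and then evaluate the $\chi$-twisted action of a generic element $x = \tilde e_{f,g}\in\mathfrak{\dot m^c}$ (equivalently $f\in\mathtt{J_1}$, $g\in\dot I$) on the resulting explicit expression. The two basic tools are the commutator formula (\ref{etilrel}) for the $\tilde e$'s and the description (\ref{chidef}) of $\chi(\tilde e_{a,b})$, which is non-zero only when $a$ and $b$ lie in the same row and in adjacent columns.

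For part (1), taking $r=1$ in (\ref{sbr11}) the sum collapses to one term because $\dot D_{z;i,f}^{(0)}=\delta_{i,f}$, and the final bracket $[\dot D_{z;i,g}^{(0)},\tilde e_{R_2(g),R_1(j)}]$ vanishes since $\dot D_{z;i,g}^{(0)}$ is scalar. Hence
\[
D_{z;i,j}^{(1)} = \dot D_{z;i,j}^{(1)} + (-1)^{\pa{i}_z}\tilde e_{R_1(i),R_1(j)}.
\]
I would then split $[x,D_{z;i,j}^{(1)}]$ into the two obvious pieces. The first piece lies in $U(\dot{\mathfrak p})$ after applying $\pr_\chi$, and using (\ref{tdef}) together with (\ref{etilrel}) it may be written as a short sum of $\tilde e_{\bullet,\bullet}$'s whose $\chi$-images are controlled by the adjacency condition in (\ref{chidef}). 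The second piece, computed via (\ref{etilrel}), only survives as $\delta_{f,R_1(j)}\tilde e_{R_1(i),g}$ (up to sign) because $R_1(i),R_1(j)\in\mathtt{J_1}$ while $g\in\dot I$ forces the other $\delta$'s to vanish. Projecting and matching row/column conditions via (\ref{chidef}) then produces the exact cancellation, and the argument is a finite super-sign bookkeeping.

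For parts (2) and (3), the assumption $s_{z-1,z}^\mu=1$ is essential: it ensures that only the value $r=2$ needs to be tracked, so that the sum in (\ref{sbr11})/(\ref{sbr12}) involves only $\dot D_{z;i,f}^{(1)}$ or $\dot E_{z-1;h,f}^{(1)}$ as coefficients. By Lemma \ref{demdot} and the induction hypothesis these are $\dot{\mathfrak m}$-invariant, and by part (1) of the present lemma (applied to $\dot D_{z;i,f}^{(1)}$) they already satisfy the required invariance on $\mathfrak{\dot m^c}$ at one level below. I would then expand
\[
D_{z;i,j}^{(2)} = \dot D_{z;i,j}^{(2)} + \sum_{f=1}^{H}(-1)^{\pa{f}_z}\dot D_{z;i,f}^{(1)}\tilde e_{R_1(f),R_1(j)} + [\dot D_{z;i,g}^{(1)},\tilde e_{R_2(g),R_1(j)}],
\]
and analogously for $E_{z-1;h,k}^{(2)}$, and run the same two-piece computation. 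The $\chi$-projection of $[x,\,\cdot\,]$ applied to the summation and to the bracket term are then expected to telescope against one another via the identity (\ref{etilrel}) combined with the fact that $\chi(\tilde e_{R_2(g),R_1(j)})$ is supported on the rightmost two columns (by (\ref{chidef})).

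The main technical obstacle is precisely this final telescoping cancellation in parts (2) and (3). Unlike in part (1), where one of the two pieces is scalar-trivial, here both pieces contribute genuinely: the $\dot D^{(1)}$ and $\dot E^{(1)}$ factors no longer commute with $x$, and their $\chi$-twisted action against $x$ produces exactly the boundary sum that must absorb the separate bracket term $[\dot D_{z;i,g}^{(1)},\tilde e_{R_2(g),R_1(j)}]$. Verifying this cancellation is not conceptually deep but requires careful super-sign tracking — one needs the signs in Lemma \ref{sbabyr1}, in (\ref{etilrel}), and in (\ref{chidef}) to align, which is the step where I would spend the most effort.
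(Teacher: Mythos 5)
Your plan is essentially the paper's proof: one takes $r=1$ (resp.\ $r=2$) in Lemma \ref{sbabyr1} to get the explicit expression $D_{z;i,j}^{(1)}=\dot D_{z;i,j}^{(1)}+(-1)^{\pa{i}_z}\tilde e_{R_1(i),R_1(j)}$, expands $\dot D_{z;i,j}^{(1)}$ into matrix units via (\ref{tdef}), and verifies $\pr_\chi([\tilde e_{f,g},\cdot\,])=0$ term by term using (\ref{etilrel}) and (\ref{chidef}), with a short case analysis on $\row(g)$ and on whether $g=R_2(i)$; the paper likewise only writes out part (1) in detail and declares (2) and (3) similar. One small correction: the commutator of $\tilde e_{f,g}$ with the $\dot D$-piece does not land in $U(\dot{\mathfrak p})$ but produces terms $\tilde e_{f,q_k}$ with $f\in\mathtt{J_1}$, which are then killed (or cancelled against the contribution of $\tilde e_{R_1(i),R_1(j)}$) directly by (\ref{chidef}).
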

\begin{proof}
We only give the detail of the proof of (1) here, where (2) and (3) can be deduced in a similar fashion.

By Theorem \ref{ttodefthm}, (\ref{tdef}) and (\ref{sbr11}), we have
\begin{equation*}
D_{z;i,j}^{(1)}=\dot{D}_{z;i,j}^{(1)}+(-1)^{\pa{i}_z}\tilde{e}_{R_1(i),R_1(j)}
=\sum_{1\leq k\leq \ell-1} \big( \sum_{p_k,q_k} (-1)^{\pa{i}_z}\tilde{e}_{p_k,q_k}\big)
+(-1)^{\pa{i}_z}\tilde{e}_{R_1(i),R_1(j)},
\end{equation*}
where the second sum is taken over all $p_k$, $q_k\in\dot{I}$ satisfying the following conditions
\begin{enumerate}
\item[(i)] $\col(p_k)=\col(q_k)=k$, 
\item[(ii)] $\row(p_k)=\mu_1+\cdots+\mu_{z-1}+i$, 
\item[(iii)] $\row(q_k)=\mu_1+\cdots+\mu_{z-1}+j$.
\end{enumerate}
Let $\tilde{e}_{f,g}\in\mathfrak{\dot{m}^c}$ be arbitrary given so that we have $g\in\dot{I}$ and $f\in \mathtt{J_1}$. 

Suppose first that $\row(g) \neq\mu_1+\ldots+\mu_{z-1}+i$.
Then we have
$[ \tilde{e}_{f,g}, \tilde{e}_{p_k,q_k} ]=0$ 
for any $p_k$, $q_k$ appearing in the sum. 
Moreover, $[\tilde{e}_{f,g},\tilde{e}_{R_1(i),R_1(j)}]=\pm\del_{f,R_1(j)}\tilde{e}_{R_1(i),g}$, which belongs to the kernel of $\chi$ by (\ref{chidef}). It follows that $\pr_\chi([\tilde{e}_{f,g}, D_{z;i,j}^{(1)}])=0$.

Assume now that $\row(g)=\mu_1+\ldots+\mu_{z-1}+i$. 
Then $g$ equals exactly one $p_k$ appearing in the sum and hence 
$$
\big[\tilde{e}_{f,g}, \sum_{1\leq k\leq \ell-1} 
\big( \sum_{p_k,q_k\in\dot{I}} (-1)^{\pa{i}_z}\tilde{e}_{p_k,q_k}\big)\big]
=(-1)^{\pa{i}_z}\tilde{e}_{f,q_k}
$$
for a certain $1\leq k\leq \ell-1$.

Suppose in addition that $\col(q_k)\neq \ell-1$.
Then $\tilde{e}_{f,q_k}$ belongs to $\ker\chi$ by (\ref{chidef}).
Also, since $g=p_k$ and $\col(q_k)=\col(p_k)=\ell-1$, the term 
$$[\tilde{e}_{f,g},\tilde{e}_{R_1(i),R_1(j)}]=\pm\del_{f,R_1(j)}\tilde{e}_{R_1(i),g}$$ 
belongs to the $\ker\chi$. Then we have $\pr_\chi[\tilde{e}_{f,g}, D_{z;i,j}^{(1)}]=0$.

Finally, assume that $\row(g)=\mu_1+\ldots+\mu_{z-1}+i$ and $\col(q_k)=\ell-1$.
It implies that $g=p_k=R_2(i)$. By definition, we have
\[[\tilde{e}_{f,R_2(i)},D_{z;i,j}^{(1)}]=(-1)^{\pa{i}_z}\tilde{e}_{f,R_2(j)}+\del_{f,R_1(j)}(-1)^{1+\pa{j}_z}\tilde{e}_{R_1(i),R_2(i)},\]
which belongs to the kernel of $\chi$ by (\ref{chidef}). This completes the proof of (1). 

\end{proof}

\begin{lemma}\label{derdotminv}
Suppose that $s_{z-1,z}^{\mu}=1$. Then the following identities hold in $U(\mathfrak{p})$ for $r>1$:
 \begin{enumerate}
   \item  \[E_{z-1;h,k}^{(r+1)}=(-1)^{\pa{g}_{z-1}}[D_{z-1;h,g}^{(2)},E_{z-1;g,k}^{(r)}]-\sum_{f=1}^{\mu_{z-1}} D_{z-1;h,f}^{(1)}E_{z-1;f,k}^{(r)}\,,\]
   \item  \[D_{z;i,j}^{(r+1)}=(-1)^{\pa{g}_{z-1}}[F_{z-1;i,g}^{(2)},E_{z-1;g,j}^{(r)}]-\sum_{t=1}^{r+1}D_{z;i,j}^{(r+1-t)}D_{z-1;g,g}^{\prime (r)}\,.\]
 \end{enumerate}
\end{lemma}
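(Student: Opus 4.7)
The plan is to derive both identities as explicit specializations of the commutation relations \eqref{p704} and \eqref{p706}, applied to the concrete elements $D_{a;i,j}^{(r)}, E_{b;h,k}^{(r)}, F_{b;k,h}^{(r)} \in U(\mathfrak{p})$ produced by the Gauss decomposition $T(u) = F(u)D(u)E(u)$ of Section~\ref{Inv}.

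For part~(1), I would invoke \eqref{p704} at $a = b = z-1$, $r = 2$, with the second index of $D$ set equal to the first index of $E$ (both equal to $g$). Using $D_{z-1;h,p}^{(0)} = \delta_{h,p}$, the double sum on the right-hand side collapses to the $t = 0$ and $t = 1$ contributions, which are $(-1)^{\pa{g}_{z-1}} E_{z-1;h,k}^{(r+1)}$ and $(-1)^{\pa{g}_{z-1}} \sum_{f=1}^{\mu_{z-1}} D_{z-1;h,f}^{(1)} E_{z-1;f,k}^{(r)}$ respectively. Solving for $E_{z-1;h,k}^{(r+1)}$ yields~(1). For part~(2), the analogous specialization of \eqref{p706} with $a = b = z-1$, $s = 2$, and matching indices expands $[E_{z-1;g,j}^{(r)}, F_{z-1;i,g}^{(2)}]$ into a sum over $t \in \{0, \ldots, r+1\}$ whose summands are of the form $D_{z-1;g,g}^{\prime(r+1-t)} D_{z;i,j}^{(t)}$. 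The $t = r+1$ term contributes $D_{z;i,j}^{(r+1)}$ (since $D_{z-1;g,g}^{\prime(0)} = 1$); rearranging the remaining terms and converting $[E,F] = -(-1)^{\pa{E}\pa{F}}[F,E]$ produces~(2) after sign reconciliation.

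The main obstacle is that at this stage in the argument \eqref{p704} and \eqref{p706} are established as relations in $Y_\mu(\sigma)$, not yet in $U(\mathfrak{p})$. The cleanest route is inductive: by the induction hypothesis driving the proof of Theorem~\ref{main}, that theorem already holds for the smaller pyramid $\dot\pi$, so \eqref{p704} and \eqref{p706} are valid in $U(\dot{\mathfrak{p}}) \subseteq U(\mathfrak{p})$ among the dotted elements $\dot{D}, \dot{E}, \dot{F}$. Lemma~\ref{sbabyr1} then lifts them to the undotted elements, with correction terms appearing only for the $E$'s at block index $b+1 = z$. The hypothesis $s_{z-1,z}^\mu = 1$ is essential here because it keeps these correction terms of low order, so that their cancellation can be verified by direct computation using \eqref{chidef} together with the basic supercommutator \eqref{etilrel}; alternatively one can bypass this inductive bootstrap entirely by computing the commutators from scratch, using the $T$-series realization of Theorem~\ref{ttodefthm} and the quasi-associativity identities of Lemma~\ref{ttodef}.
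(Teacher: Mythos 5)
Your main route is exactly the paper's: the identities are precisely the specializations of (\ref{p704}) and (\ref{p706}) you describe, established first for the dotted elements via the induction hypothesis for $\dot\pi$ and then transferred to the undotted elements through Lemma~\ref{sbabyr1}. One small correction of emphasis: the correction terms are not killed via (\ref{chidef}) — the identities are exact in $U(\mathfrak{p})$, with no $\pr_\chi$ involved; what the paper actually uses is that $\dot D_{z-1;h,g}^{(2)}$ and $\dot F_{z-1;i,g}^{(2)}$ supercommute with the relevant $\tilde e_{R_1(f),R_1(k)}$ and $\tilde e_{R_2(\cdot),R_1(k)}$, so the corrected pieces reassemble (by a second application of Lemma~\ref{sbabyr1}) into the undotted identity.
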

\begin{proof}
By the induction hypothesis and (\ref{p704}), for any $r>0$ and any $1\leq g\leq \mu_{z-1}$, we have
\begin{equation}\label{l961}
[\dot{D}_{z-1;h,g}^{(2)}, \dot{E}_{z-1;g,k}^{(r)}]=
(-1)^{\pa{g}_{z-1}} \dot{E}_{z-1;h,k}^{(r+1)}+
(-1)^{\pa{g}_{z-1}}\sum_{p=1}^{\mu_z-1} \dot{D}_{z;h,p}^{(1)}\dot{E}_{z-1;p,k}^{(r)}.
\end{equation}
Also, (\ref{sbr12}) implies that
\begin{equation}\label{l962}
E_{z-1;g,k}^{(r)}=\dot{E}_{z-1;g,k}^{(r)}+\sum_{f=1}^H (-1)^{\pa{f}_{z}}\dot{E}_{z-1;g,f}^{(r-1)}\tilde{e}_{R_1(f),R_1(k)}+[\dot{E}_{z-1;g,j}^{(r-1)},\tilde{e}_{R_2(j),R_1(k)}]
\end{equation}
It is clear that $[\dot{D}_{z-1;h,g}^{(2)}, \tilde{e}_{R_1(f),R_1(k)}]=0$. Also, due to (\ref{tdef}) and Theorem \ref{ttodefthm}, the expansion of $\dot{D}_{z-1;h,g}^{(2)}$ into supermonomials will never involve any matrix unit of the form $\tilde{e}_{?,R_2(j)}$ and it follows that $[\dot{D}_{z-1;h,g}^{(2)}, \tilde{e}_{R_2(j),R_1(k)}]=0$. Computing the supercommutator of (\ref{l962}) with $D_{z-1;h,g}^{(2)}=\dot{D}_{z-1;h,g}^{(2)}$ and using (\ref{l961}), we have
\begin{align*}
[D_{z-1;h,g}^{(2)}&, E_{z-1;g,k}^{(r)}]  = [\dot{D}_{z-1;h,g}^{(2)}, \dot{E}_{z-1;g,k}^{(r)}]+\sum_{f=1}^{H}(-1)^{\pa{f}_{z}}[\dot{D}_{z-1;h,g}^{(2)}, \dot{E}_{z-1;g,f}^{(r-1)}]\tilde{e}_{R_1(f),R_1(k)}\\
& +\big[[D_{z-1;h,g}^{(2)},E_{z-1;g,j}^{(r-1)}],\tilde{e}_{R_2(j),R_1(k)}\big]\\%第一個=
& = (-1)^{\pa{g}_{z-1}} \dot{E}_{z;h,k}^{(r+1)}+(-1)^{\pa{g}_{z-1}}\sum_{p=1}^{\mu_{z-1}}\dot{D}_{z-1;h,p}^{(1)}\dot{E}_{z-1;p,k}^{(r)}\\
& +\sum_{f=1}^{H}(-1)^{\pa{f}_z} \Big( (-1)^{\pa{g}_{z-1}} \dot{E}_{z;h,f}^{(r+1)}
+(-1)^{\pa{g}_{z-1}}\sum_{p=1}^{\mu_{z-1}}\dot{D}_{z-1;h,p}^{(1)}\dot{E}_{z-1;p,f}^{(r)}\Big)
\tilde{e}_{R_1(f),R_1(k)}\\%第二個
& +\Big[ (-1)^{\pa{g}_{z-1}} \dot{E}_{z;h,j}^{(r+1)}
+(-1)^{\pa{g}_{z-1}}\sum_{p=1}^{\mu_{z-1}}\dot{D}_{z-1;h,p}^{(1)}\dot{E}_{z-1;p,j}^{(r)}, \tilde{e}_{R_2(j),R_1(k)}\Big].
\end{align*}
Using (\ref{sbr12}) a few times, one shows that the above equals to 
$$(-1)^{\pa{g}_{z-1}}\big(E_{z-1;h,k}^{(r+1)}+\sum_{p=1}^{\mu_{z-1}}D_{z-1;h,p}^{(1)}E_{z-1;p,k}^{(r)}\big)$$
and the equality (1) is established.

Now we deal with (2).
By the induction hypothesis and (\ref{p706}), we have
\begin{multline}\label{1221}
[\dot{F}_{z-1;i,g}^{(2)}, \dot{E}_{z-1;g,j}^{(r)}]=(-1)^{\pa{g}_{z-1}}(\sum_{t=0}^{r+1}\dot{D}_{z;i,j}^{(r+1-t)}\dot{D}_{z-1;g,g}^{\prime(t)})\\
=(-1)^{\pa{g}_{z-1}}\dot{D}_{z;i,j}^{(r+1)}+(-1)^{\pa{g}_{z-1}}\sum_{t=1}^{r+1}\dot{D}_{z;i,j}^{(r+1-t)}\dot{D}_{z-1;g,g}^{\prime(t)}.
\end{multline} 
Changing the indices in equation (\ref{l962}), we have
\begin{equation}\label{l963}
E_{z-1;g,j}^{(r)}=\dot{E}_{z-1;g,j}^{(r)}+\sum_{f=1}^H (-1)^{\pa{f}_{z}}\dot{E}_{z-1;g,f}^{(r-1)}\tilde{e}_{R_1(f),R_1(j)}+[\dot{E}_{z-1;g,h}^{(r-1)},\tilde{e}_{R_2(h),R_1(j)}]
\end{equation}

Note that the expansion of $\dot{F}_{z-1;i,g}^{(2)}$ into supermonomials will never involve any matrix unit of the forms $\tilde{e}_{?,R_1(h)}$, $\tilde{e}_{R_1(h),?}$ or $\tilde{e}_{R_2(h),?}$, and hence 
$[\dot{F}_{z-1;i,g}^{(2)},\tilde{e}_{R_1(f),R_1(j)}]=[\dot{F}_{z-1;i,g}^{(2)},\tilde{e}_{R_2(h),R_1(j)}]=0$.
As a consequence, we perform the following calculation using the fact that $F_{z-1;i,g}^{(2)}=\dot{F}_{z-1;i,g}^{(2)}$ together with (\ref{l963}):
\begin{align*}
[F_{z-1;i,g}^{(2)}, &E_{z-1;g,j}^{(r)}]  = [\dot{F}_{z-1;i,g}^{(2)}, \dot{E}_{z-1;g,j}^{(r)}]+\sum_{f=1}^{H}(-1)^{\pa{f}_z} [\dot{F}_{z-1;i,g}^{(2)}, \dot{E}_{z-1;g,f}^{(r-1)}]\tilde{e}_{R_1(f),R_1(j)}\\
& +\big[[\dot{F}_{z-1;i,g}^{(2)},\dot{E}_{z-1;g,h}^{(r-1)}],\tilde{e}_{R_2(h),R_1(j)}\big]\\%第一個=
& = (-1)^{\pa{g}_{z-1}}\dot{D}_{z;i,j}^{(r+1)}
+(-1)^{\pa{g}_{z-1}}\sum_{t=1}^{r+1}\dot{D}_{z;i,j}^{(r+1-t)}\dot{D}_{z-1;g,g}^{\prime(t)}\\
& +\sum_{f=1}^{H}(-1)^{\pa{f}_z}\Big( (-1)^{\pa{g}_{z-1}}\dot{D}_{z;i,f}^{(r)}+(-1)^{\pa{g}_{z-1}}\sum_{t=1}^{r}\dot{D}_{z;i,f}^{(r-t)}\dot{D}_{z-1;g,g}^{\prime(t)} \Big)\tilde{e}_{R_1(f),R_1(j)}\\
& + \Big[ (-1)^{\pa{g}_{z-1}}\dot{D}_{z;i,h}^{(r)}+(-1)^{\pa{g}_{z-1}}\sum_{t=1}^{r}\dot{D}_{z;i,h}^{(r-t)}\dot{D}_{z-1;g,g}^{\prime(t)} ,\tilde{e}_{R_2(h),R_1(j)} \Big]%第二個=
\end{align*}
Using (\ref{sbr11}) a few times, the above can be rewritten as 
$$(-1)^{\pa{g}_{z-1}}D_{z;i,j}^{(r+1)}+(-1)^{\pa{g}_{z-1}}\sum_{t=1}^{r+1} D_{z;i,j}^{(r+1-t)}\dot{D}_{z-1;g,g}^{\prime(t)}$$
and our assertion (2) follows.
\end{proof}

\begin{lemma}
Suppose $s^\mu_{z-1,z}=1$. Then 
\begin{enumerate}
  \item $D_{z;i,j}^{(r)}$ are $\mathfrak{m}$-invariant for all $r\geq 0$ and $1\leq i,j\leq \mu_{z}$. 
  \item $E_{z-1;h,k}^{(r)}$ are $\mathfrak{m}$-invariant for all $r>1$ and $1\leq h\leq \mu_{z-1}$, $1\leq k\leq \mu_{z}$.
  \end{enumerate}
\end{lemma}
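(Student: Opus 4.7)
The plan is to prove (1) and (2) simultaneously by induction on $r$, leveraging the recursive formulas in Lemma~\ref{derdotminv} together with the vector-space decomposition $\mathfrak{m} = \dot{\mathfrak{m}} \oplus \dot{\mathfrak{m}}^c$. Recall that an element is $\mathfrak{m}$-invariant (under the $\chi$-twisted action) iff it is simultaneously $\dot{\mathfrak{m}}$- and $\dot{\mathfrak{m}}^c$-invariant, and that $\W_\pi = U(\mathfrak{p})^{\mathfrak{m}}$ is a subalgebra of $U(\mathfrak{p})$, so products and supercommutators of $\mathfrak{m}$-invariants remain $\mathfrak{m}$-invariant.

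For the base cases, $D_{z;i,j}^{(0)} = \delta_{ij}$ is trivially invariant. For $r \in \{1,2\}$, the element $D_{z;i,j}^{(r)}$ is $\dot{\mathfrak{m}}$-invariant by Lemma~\ref{demdot}(1) and $\dot{\mathfrak{m}}^c$-invariant by Lemma~\ref{de12dotminv}(1)--(2), hence $\mathfrak{m}$-invariant. Analogously, $E_{z-1;h,k}^{(2)}$ is $\dot{\mathfrak{m}}$-invariant by Lemma~\ref{demdot}(2) (applicable because $2 > s^\mu_{z-1,z} = 1$) and $\dot{\mathfrak{m}}^c$-invariant by Lemma~\ref{de12dotminv}(3), so it lies in $\W_\pi$.

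For the inductive step, fix $r \geq 2$ and assume $D_{z;i,j}^{(s)} \in \W_\pi$ for all $0 \leq s \leq r$ and $E_{z-1;h,k}^{(s)} \in \W_\pi$ for all $2 \leq s \leq r$. By Lemma~\ref{derdotminv}(1), the element $E_{z-1;h,k}^{(r+1)}$ is a polynomial expression in $D_{z-1;h,g}^{(2)}$, $D_{z-1;h,f}^{(1)}$, and $E_{z-1;g,k}^{(r)}$. The first two factors lie in $\W_\pi$ by Lemma~\ref{demint1}(i), while $E_{z-1;g,k}^{(r)} \in \W_\pi$ by the induction hypothesis; hence $E_{z-1;h,k}^{(r+1)} \in \W_\pi$. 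Similarly, Lemma~\ref{derdotminv}(2) writes $D_{z;i,j}^{(r+1)}$ as a polynomial in $F_{z-1;i,g}^{(2)}$ (invariant by Lemma~\ref{demint1}(iii)), $E_{z-1;g,j}^{(r)}$ (invariant by induction), the elements $D_{z;i,j}^{(r+1-t)}$ with $1 \leq t \leq r+1$ (all of superscript $\leq r$, hence invariant by induction), and $D_{z-1;g,g}^{\prime(t)}$ (invariant by Lemma~\ref{demint1}(i)). Therefore $D_{z;i,j}^{(r+1)} \in \W_\pi$.

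The main subtlety to watch for is that the two inductions are genuinely intertwined through Lemma~\ref{derdotminv}: the recursion for $D_{z;i,j}^{(r+1)}$ feeds on $E_{z-1;g,j}^{(r)}$, and the recursion for $E_{z-1;h,k}^{(r+1)}$ feeds on $E_{z-1;g,k}^{(r)}$ together with the low-index $D$'s at levels $1$ and $2$. The order $E^{(2)} \to (E^{(3)}, D^{(3)}) \to (E^{(4)}, D^{(4)}) \to \cdots$ resolves this: at each stage only previously-verified invariants are used. Once this interleaved induction goes through, every generator appearing in claims (1) and (2) has been identified as an $\mathfrak{m}$-invariant element of $U(\mathfrak{p})$, completing the argument.
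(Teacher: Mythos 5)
Your proposal is correct and follows essentially the same route as the paper: the paper likewise obtains $\dot{\mathfrak{m}}$-invariance from Lemma~\ref{demdot}, handles the low-degree cases via Lemma~\ref{de12dotminv}, and then propagates invariance to all higher $r$ by induction using the recursions of Lemma~\ref{derdotminv}. The only cosmetic difference is that you run a single interleaved induction carrying full $\mathfrak{m}$-invariance (using that $\W_\pi$ is a subalgebra), whereas the paper separates the $\dot{\mathfrak{m}}$- and $\dot{\mathfrak{m}}^c$-invariance checks; the substance is identical.
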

\begin{proof}
By Lemma \ref{demdot}, these elements are $\dot{\mathfrak{m}}$-invariant. It remains to check that they are $\mathfrak{\dot{m}^c}$-invariant, but that follows from Lemma \ref{de12dotminv}, Lemma \ref{derdotminv} and induction on $r$.
\end{proof}

\begin{lemma}\label{des2minv}
Suppose that $s_{z-1,z}^\mu>1$. Then the following elements are invariant under the $\chi$-twisted action of $\tilde{e}_{R_1(x),R_2(y)}$ for all $1\leq x,y\leq H$.
\begin{enumerate}
\item $D_{z;i,j}^{(r)}$ for all $r\geq 2$ and $1\leq i,j\leq \mu_{z}$.
\item $E_{z-1;h,k}^{(r)}$ for all $r>s_{z-1,z}^\mu$ and $1\leq h\leq \mu_{z-1}$, $1\leq k\leq \mu_{z}$.
\end{enumerate}
\end{lemma}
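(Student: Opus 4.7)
The plan is to prove both statements by induction on $r$. For each statement, the base case (smallest admissible $r$) is handled by direct expansion of the element in $U(\mathfrak{p})$ via Theorem~\ref{ttodefthm} and formula~(\ref{tdef}), followed by termwise computation of the supercommutator with $\tilde e_{R_1(x),R_2(y)}$ using (\ref{etilrel}). The inductive step then propagates invariance via recursive identities patterned after Lemma~\ref{derdotminv}.

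For the base case of (1), $r=2$: expand $D_{z;i,j}^{(2)}=T_{A+i,A+j;A}^{(2)}$ (with $A=\mu_1+\cdots+\mu_{z-1}$) as a polynomial in the $\tilde e_{p,q}$ supported on rows of the $z$-th block. Computing $[\tilde e_{R_1(x),R_2(y)},D_{z;i,j}^{(2)}]$ termwise, the only surviving effect of (\ref{etilrel}) is to swap $\tilde e_{R_2(y),q}$ factors with $\tilde e_{R_1(x),q}$ factors; after applying $\pr_\chi$ and using (\ref{chidef}), only terms corresponding to $q=R_2(x)$ could potentially survive, and the hypothesis $s_{z-1,z}^\mu>1$ ensures that the corresponding configurations cancel. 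For the base case of (2), $r=s_{z-1,z}^\mu+1$: write $E_{z-1;h,k}^{(s_{z-1,z}^\mu+1)}$ via (\ref{sbr12}) as $\dot E_{z-1;h,k}^{(s_{z-1,z}^\mu+1)}$ plus explicit correction terms, then compute using that $\dot E_{z-1;h,k}^{(s_{z-1,z}^\mu+1)}$ is $\dot{\mathfrak m}$-invariant (by the inductive hypothesis of Theorem~\ref{main} applied to $\dot\pi$), with the corrections treated by the same method as in the base case of (1).

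For the inductive step, I will establish in $U(\mathfrak{p})$ the identities
\begin{align*}
E_{z-1;h,k}^{(r+1)}&=(-1)^{\pa{g}_{z-1}}[D_{z-1;h,g}^{(2)},E_{z-1;g,k}^{(r)}]-\sum_{f=1}^{\mu_{z-1}}D_{z-1;h,f}^{(1)}E_{z-1;f,k}^{(r)},\\
D_{z;i,j}^{(r+1)}&=(-1)^{\pa{g}_{z-1}}[F_{z-1;i,g}^{(2)},E_{z-1;g,j}^{(r)}]-\sum_{t=1}^{r+1}D_{z;i,j}^{(r+1-t)}D_{z-1;g,g}^{\prime(t)},
\end{align*}
for any fixed $g\in\{1,\ldots,\mu_{z-1}\}$. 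These are derived exactly as in Lemma~\ref{derdotminv}: the analogous relations among dotted elements hold by the inductive hypothesis for $\dot\pi$; then Lemma~\ref{sbabyr1} extends them to the un-dotted side. The crucial observation is that $D_{z-1;h,g}^{(2)}=\dot D_{z-1;h,g}^{(2)}$ and $F_{z-1;i,g}^{(2)}=\dot F_{z-1;i,g}^{(2)}$ have supermonomial expansions supported on rows of the $(z-1)$-th block (by Theorem~\ref{ttodefthm} and (\ref{tdef})), hence supercommute with every $\tilde e_{R_1(f),R_1(k)}$ and $\tilde e_{R_2(g),R_1(k)}$ factor appearing in Lemma~\ref{sbabyr1}'s corrections. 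Granted the identities, the inductive step is immediate: the $D_{z-1}^{(\cdot)}$, $D_{z-1}^{\prime(\cdot)}$ and $F_{z-1}^{(2)}$ factors are fully $\mathfrak{m}$-invariant by Lemma~\ref{demint1}, while the $E_{z-1;g,k}^{(r)}$ and $D_{z;i,j}^{(r+1-t)}$ factors are $\tilde e_{R_1(x),R_2(y)}$-invariant by the inductive hypothesis of the current lemma.

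The main obstacle will be the base case for (2) when $s_{z-1,z}^\mu$ is large, because the correction terms in (\ref{sbr12}) involve $\dot E_{z-1;h,f}^{(s_{z-1,z}^\mu)}$ and a nested supercommutator $[\dot E_{z-1;h,g}^{(s_{z-1,z}^\mu)},\tilde e_{R_2(g),R_1(k)}]$; tracking $[\tilde e_{R_1(x),R_2(y)},\cdot]$ through each and checking that all $\pr_\chi$-projections vanish requires a careful case analysis based on whether the intermediate indices lie in $\mathtt{J_1}$, $\mathtt{J_2}$, or $\dot I\setminus(\mathtt{J_1}\cup\mathtt{J_2})$, combined with repeated use of (\ref{etilrel}) and (\ref{chidef}). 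A secondary obstacle is verifying that the above recursive identities hold in $U(\mathfrak{p})$ for arbitrary $s_{z-1,z}^\mu>1$, which requires modifying the argument of Lemma~\ref{derdotminv} (originally phrased under the hypothesis $s_{z-1,z}^\mu=1$) to accommodate the extra shift: the extra terms introduced into Lemma~\ref{sbabyr1}'s expansion as $s_{z-1,z}^\mu$ grows need to be absorbed without changing the structural form of the identity.
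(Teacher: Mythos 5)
Your strategy is genuinely different from the paper's, and as written it has a gap. The paper proves this lemma by a single uniform computation: it descends two levels to the pyramid $\ddot\pi$ obtained by deleting the right-most \emph{two} columns of $\pi$, applies Lemma~\ref{sbabyr1} twice to expand $D_{z;i,j}^{(r)}$ (for every $r\geq 2$ at once) into eight explicit terms built from $U(\ddot{\mathfrak p})$ and matrix units from the last two columns, and then computes $\pr_\chi([\tilde e_{R_1(x),R_2(y)},-])$ term by term, checking that the contributions cancel; no induction on $r$ is involved. Your plan of inducting on $r$ via the recursions of Lemma~\ref{derdotminv} fails to cover all of statement (1): the recursion $D_{z;i,j}^{(r+1)}=\pm[F_{z-1;i,g}^{(2)},E_{z-1;g,j}^{(r)}]-\sum_t D_{z;i,j}^{(r+1-t)}D_{z-1;g,g}^{\prime(t)}$ takes $E_{z-1;g,j}^{(r)}$ as input, and statement (2) only supplies invariance of these for $r>s_{z-1,z}^\mu$; hence the recursion first produces $D_{z;i,j}^{(r)}$ at $r=s_{z-1,z}^\mu+2\geq 4$. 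The values $r=3,\dots,s_{z-1,z}^\mu+1$ are covered neither by your base case (which is only $r=2$) nor by the inductive step, and you provide no recursion among the $D_z^{(r)}$ alone to fill this range. (For statement (2) the induction is at least well-founded, since its base case is exactly $r=s_{z-1,z}^\mu+1$.)

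A second problem sits inside your base cases. You propose to expand one level via (\ref{sbr12}) and invoke $\dot{\mathfrak m}$-invariance of the dotted elements; but $X=\tilde e_{R_1(x),R_2(y)}$ is \emph{not} in $\dot{\mathfrak m}$ (its first index lies in $\mathtt{J_1}$, outside $\dot I$), so $\dot{\mathfrak m}$-invariance of $\dot D_{z}^{(\cdot)}$ and $\dot E_{z-1}^{(\cdot)}$ gives no control over $\pr_\chi([X,\dot D_{z}^{(\cdot)}])$ or $\pr_\chi([X,\dot E_{z-1}^{(\cdot)}])$, and these brackets are genuinely nonzero: the dotted elements contain matrix units of the form $\tilde e_{\cdot,R_2(\cdot)}$, which interact with $X$. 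The cancellation only becomes visible after a second application of Lemma~\ref{sbabyr1} (from $\dot\pi$ down to $\ddot\pi$), where $X$ supercommutes with everything in $U(\ddot{\mathfrak p})$ and the surviving $\pr_\chi$-images pair off with opposite signs (for instance the contributions of $\sum_k(-1)^{\pa{k}_z}\ddot D_{z;i,k}^{(r-1)}\tilde e_{R_2(k),R_2(j)}$ and $\sum_k(-1)^{\pa{k}_z}\ddot D_{z;i,k}^{(r-1)}\tilde e_{R_1(k),R_1(j)}$ cancel each other). Your remark about a case analysis on $\mathtt{J_1}$, $\mathtt{J_2}$ points in the right direction, but the essential device is this two-level expansion; once it is set up it works uniformly in $r$, which simultaneously repairs the gap above and makes the induction on $r$ unnecessary.
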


\begin{proof}
Let $\ddot{\pi}$ be the pyramid obtained by deleting the right-most two columns of $\pi$. 
Define $\ddot{\mathfrak{p}}$, $\ddot{\mathfrak{m}}$ and $\ddot{e}\in\mathfrak{gl}_{M-2p|N-2q}$ as before, and embed $U(\ddot{\mathfrak{g}})$ into $U(\dot{\mathfrak{g}})$ as how we embed $U(\dot{\mathfrak{g}})$ into $U(\mathfrak{g})$. The induction hypothesis applies to the pyramid $\ddot{\pi}$ hence we know that the elements $\ddot{D}_{z;i,j}^{(r)}$ in $\W_{\ddot\pi}$ are $\ddot{\mathfrak{m}}$-invariant under the $\dot{\chi}$-twisted action.

Applying Lemma \ref{sbr11} to $\pi$ and $\dot{\pi}$, we have
\begin{equation}\label{des2minv1}
D_{z;i,j}^{(r)}  = \dot D_{z;i,j}^{(r)} 
+\sum_{f=1}^H (-1)^{\pa{f}_z}
\dot D_{z;i,f}^{(r-1)} \tilde e_{R_1(f),R_1(j)}+
[\dot D_{z;i,g}^{(r-1)}, \tilde e_{R_2(g),R_1(j)}]
\end{equation}
and
\begin{equation}\label{des2minv2}
\dot{D}_{z;i,j}^{(r)}  = \ddot{D}_{z;i,j}^{(r)} 
+\sum_{f=1}^H (-1)^{\pa{f}_z}
\ddot{D}_{z;i,f}^{(r-1)} \tilde e_{R_2(f),R_2(j)}+
[\ddot{D}_{z;i,g}^{(r-1)}, \tilde e_{R_3(g),R_2(j)}]
\end{equation}
where $R_3(g)$ is defined to be the number assigned to $g$-th box in the third right-most column of the rectangle $\pi_H$.

Substituting (\ref{des2minv2}) into (\ref{des2minv1}) and simplifying the result by (\ref{etilrel}), one deduces that for all $r\geq 2$,
$D_{z;i,j}^{(r)}=A+B+C+D+E+F+G+H$, where
\begin{align*}
A&=\ddot{D}_{z;i,j}^{(r)}, & 
B&=\sum_{k=1}^{H} (-1)^{\pa{k}_z}\ddot{D}_{z;i,k}^{(r-1)}\tilde{e}_{R_2(k),R_2(j)},\\
C&=[\ddot{D}_{z;i,g}^{(r-1)},\tilde{e}_{R_3(g),R_2(j)}], & 
D&=\sum_{k=1}^H (-1)^{\pa{k}_z}\ddot{D}_{z;i,k}^{(r-1)}\tilde{e}_{R_1(k),R_1(j)}\\
E&=\sum_{h,k=1}^H (-1)^{\pa{h}_z+\pa{k}_z}\ddot{D}_{z;i,h}^{(r-2)}\tilde{e}_{R_2(h),R_2(k)}\tilde{e}_{R_1(k),R_1(j)}, &
F&=\sum_{k=1}^H (-1)^{\pa{k}_z} \ddot{D}_{z;i,k}^{(r-2)}\tilde{e}_{R_2(k),R_1(j)},\\
G&=\sum_{k=1}^H [\ddot{D}_{z;i,g}^{(r-2)},\tilde{e}_{R_3(g),R_2(k)}]\tilde{e}_{R_1(k),R_1(j)},&
H&=[\ddot{D}_{z;i,g}^{(r-2)},\tilde{e}_{R_3(g),R_1(j)}].
\end{align*}

Let $X=\tilde{e}_{R_1(x),R_2(y)}$ for some $1\leq x,y\leq H$. Note that $X$ commutes with all elements in $U(\ddot{\mathfrak{p}})$. Using (\ref{rhodef}), (\ref{etilrel}) and (\ref{chidef}), we can explicitly compute their images under the composition $\text{pr}_\chi \circ \text{ad} X$ as follows:
\begin{align*}
\text{pr}_{\chi}([X,A])&=0,\qquad \text{pr}_{\chi}([X,B])=\del_{xj}(-1)^{1+\pa{x}_z+\pa{y}_z}\ddot{D}_{z;i,y}^{(r-1)},\\
\text{pr}_{\chi}([X,C])&=0,\qquad \text{pr}_{\chi}([X,D])=\del_{xj}(-1)^{\pa{x}_z+\pa{y}_z}\ddot{D}_{z;i,y}^{(r-1)},\\
\text{pr}_{\chi}([X,E])&=(-1)^{\pa{x}_z+\pa{y}_z}\del_{xj}(p-q)\ddot{D}_{z;i,y}^{(r-2)}
        +(-1)^{\pa{y}_z+1}\ddot{D}_{z;i,y}^{(r-2)}\tilde{e}_{R_1(x),R_1(j)}\\
&\qquad\qquad\qquad\qquad\qquad
+\del_{xj}\sum_{k=1}^{H} (-1)^{(\pa{x}_z+\pa{y}_z)(\pa{k}_z+\pa{j}_z)+\pa{k}_z} \ddot{D}_{z;i,k}^{(r-2)}\tilde{e}_{R_2(k),R_2(y)},\\
\text{pr}_{\chi}([X,F])&=-(-1)^{\pa{x}_z+\pa{y}_z}\del_{xj}(p-q)\ddot{D}_{z;i,y}^{(r-2)}
        +(-1)^{\pa{y}_z}\ddot{D}_{z;i,y}^{(r-2)}\tilde{e}_{R_1(x),R_1(j)}\\
&\qquad\qquad\qquad\qquad\qquad
-\del_{xj}\sum_{k=1}^{H} (-1)^{(\pa{x}_z+\pa{y}_z)(\pa{k}_z+\pa{j}_z)+\pa{k}_z} \ddot{D}_{z;i,k}^{(r-2)}\tilde{e}_{R_2(k),R_2(y)},\\
\text{pr}_{\chi}([X,G])&=(-1)^{\pa{y}_z+\pa{j}_z}\del_{xj}[\ddot{D}_{z;i,g_1}^{(r-2)},\tilde{e}_{R_3(g_1),R_2(f)}],\\
\text{pr}_{\chi}([X,H])&=-(-1)^{\pa{y}_z+\pa{j}_z}\del_{xj}[\ddot{D}_{z;i,g_1}^{(r-2)},\tilde{e}_{R_3(g_1),R_2(f)}].
\end{align*}
As a consequence, $\text{pr}_{\chi}([X,D_{z;i,j}^{(r)}])=0$. The proof of (2) is similar.
\end{proof}

\begin{proposition}\label{minvprop}
The following elements of $U({\mathfrak{p}})$ are $\mathfrak{m}$-invariant with respect to the $\chi$-twisted action:
\begin{align*}
&\{D_{a;i,j}^{(r)}\}_{1 \leq a \leq z,1 \leq i,j \leq \mu_a, r > 0},\\
&\{E_{b;h,k}^{(r)}\}_{1 \leq b < z, 1 \leq h \leq \mu_a, 1 \leq k \leq \mu_{a+1}, r > s_{a,b}^\mu},\\
&\{F_{b;k,h}^{(r)}\}_{1 \leq b < z, 1 \leq k \leq \mu_{a+1}, 1 \leq h \leq \mu_{a}, r > s_{b,a}^\mu}.
\end{align*}
\end{proposition}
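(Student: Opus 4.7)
My plan is to assemble the results of Lemmas~\ref{demint1}--\ref{des2minv} and patch the remaining cases with a super Jacobi argument. Lemma~\ref{demint1} directly yields the $\mathfrak{m}$-invariance of $D_{a;i,j}^{(r)}$ for $1\leq a\leq z-1$, of $E_{b;h,k}^{(r)}$ for $1\leq b\leq z-2$, and of all $F_{b;k,h}^{(r)}$. The proposition therefore reduces to checking $\mathfrak{m}$-invariance of $D_{z;i,j}^{(r)}$ for $r>0$ and of $E_{z-1;h,k}^{(r)}$ for $r>s_{z-1,z}^\mu$. I will focus on Case R; Case L follows by a parallel argument with the roles of left/right columns interchanged.

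By Lemma~\ref{demdot} these target elements are already $\dot{\mathfrak{m}}$-invariant, so I only need invariance under $\dot{\mathfrak{m}}^c:=\mathfrak{m}\setminus\dot{\mathfrak{m}}$, whose elements are precisely the matrix units $\tilde e_{i,j}$ with $i\in\mathtt{J_1}$ and $j\in\dot I$. When $s_{z-1,z}^\mu=1$, this is exactly the content of Lemma~\ref{de12dotminv} together with the lemma immediately following Lemma~\ref{derdotminv}, so the real task is the case $s_{z-1,z}^\mu>1$; there Lemma~\ref{de12dotminv}(1) handles $D_{z;i,j}^{(1)}$, while Lemma~\ref{des2minv} only provides invariance under the special units $\tilde e_{R_1(x),R_2(y)}$ for $1\leq x,y\leq H$. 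The strategy is to upgrade the latter to invariance under all of $\dot{\mathfrak{m}}^c$ by a Lie-theoretic closure argument.

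The underlying principle is that the $\chi$-twisted action is a genuine Lie superalgebra action on $U(\mathfrak{p})$: if $y\in U(\mathfrak{p})$ is $\chi$-twisted invariant under both $a$ and $b$ in $\mathfrak{m}$, it is automatically invariant under $[a,b]$. This reduces to the identity $\chi([\mathfrak{m},\mathfrak{m}])=0$, which follows from the invariance of $(\,\cdot\,,\,\cdot\,)$ together with the fact that in the even good grading $[\mathfrak{m},e]$ lies in $\bigoplus_{j\leq 0}\g(j)$. A short shift-matrix computation using $s_{a,z}^\mu=s_{a,z-1}^\mu+s_{z-1,z}^\mu\geq 2$ for every $a\leq z-1$ then confirms that, when $s_{z-1,z}^\mu>1$, no block above the $z$-th block reaches column $\ell-1$; hence every $j\in\dot I$ with $\col(j)=\ell-1$ already lies in $\mathtt{J_2}$ and is directly covered by Lemma~\ref{des2minv}.

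For $j\in\dot I$ with $\col(j)\leq\ell-2$, I will pick any $h\in\mathtt{J_2}$ and apply the commutation formula~(\ref{etilrel}) to get an identity of the form
\[
\tilde e_{i,j}\;=\;[\tilde e_{i,h},\,\tilde e_{h,j}],
\]
in which $\tilde e_{i,h}=\tilde e_{R_1(x),R_2(y)}$ for suitable $x,y$, and $\tilde e_{h,j}\in\dot{\mathfrak{m}}$ (because $\col(j)<\ell-1=\col(h)$ and both indices lie in $\dot I$). Invariance of the target elements under the supercommutator then follows from the closure property above, combined with Lemmas~\ref{demdot} and~\ref{des2minv}. The main obstacle I anticipate is not the outline but the bookkeeping: one must carefully verify $\chi([\mathfrak{m},\mathfrak{m}])=0$ in the super setting, and track the parity signs and $\rho$-corrections inherent in~(\ref{etilrel}) to confirm that the constant and $\delta$-terms introduced by the supercommutator do not spoil the reduction.
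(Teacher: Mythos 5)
Your proof is correct and follows essentially the same route as the paper, whose own proof is just the one-line assembly of the induction hypothesis with Lemmas~\ref{demint1}--\ref{des2minv}. The only thing you add is an explicit justification of the step the paper leaves implicit in the case $s_{z-1,z}^{\mu}>1$ — namely that $\chi([\mathfrak{m},\mathfrak{m}])=0$ makes the $\chi$-twisted action a genuine Lie superalgebra action, and that $\dot{\mathfrak{m}}$ together with the units $\tilde e_{R_1(x),R_2(y)}$ generates $\mathfrak{m}$ under the supercommutator — and your verification of that closure argument is sound.
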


\begin{proof}
If follows from the induction hypothesis and Lemma \ref{demint1}--Lemma \ref{des2minv}.
\end{proof}

A consequence of Proposition~\ref{minvprop} 
is that the elements in the description of Theorem~\ref{main} are actually elements of $\W_\pi$.
Moreover, by the induction hypothesis, we may identify $Y_{\mu}^{\ell-1}(\dot\sigma)=Y_{m|n}^{\ell-1}(\dot\sigma)$ with $\W_{\dot\pi}\subseteq U(\dot{\mathfrak{p}})$ and the generators $\dot D_{a:i,j}^{(r)}$, $\dot E_{b;h,k}^{(r)}$ and $\dot F_{b;k,h}^{(r)}$ in $Y_{\mu}^{\ell-1}(\dot\sigma)$ coincide with the elements of $\W_{\dot \pi}$ denoted by the same notations. Now we are going to make use of the useful monomorphism $\Delta_R:Y_{m|n}^\ell(\sigma)\rightarrow U(\dot{\mathfrak{p}})\otimes U(\gl_{p|q})$ obtained in Theorem \ref{PBWSYLpara}.

By Corollary \ref{dimcoro}, for each $d\geq 0$, we have
\begin{equation}\label{dim1}
\dim \Delta_R (F_dY_{m|n}^\ell(\sigma))=\dim F_d Y_{m|n}^\ell(\sigma)=\dim F_d S(\g^e),
\end{equation}
where $F_dS(\g^e)$ is the sum of all graded elements in $S(\g^e)$ of degree $\leq d$ with respect to the Kazhdan grading.

Define the general parabolic generators $E_{a,b;i,j}^{(r)}$ and $F_{b,a;j,i}^{(r)}$ in $F_r U(\mathfrak{p})$ by equations (\ref{eparag}) and (\ref{fparag}) recursively, where the index $k$ could be chosen arbitrarily there.
Let $X_d$ denote the subspace of $U(\mathfrak{p})$ spanned by all supermonomials in the elements
\begin{align*}
&\{D_{a;i,j}^{(r)}\}_{1\leq a\leq z, 1\leq i,j\leq \mu_a, 0\leq r\leq s_{a,a}^{\mu}},\\
&\{E_{a,b;h,k}^{(r)}\}_{1\leq a<b\leq z, 1\leq h\leq \mu_a, 1\leq k\leq \mu_b, s_{a,b}^\mu<r\leq s_{a,b}^{\mu}+p_a^\mu},\\
&\{F_{b,a;k,h}^{(r)}\}_{1\leq a<b\leq z, 1\leq k\leq \mu_b, 1\leq h\leq \mu_a, s_{b,a}^\mu<r\leq s_{b,a}^\mu+p_a^\mu} .
\end{align*}
taken in some fixed order with total degree $\leq d$. Proposition \ref{minvprop} implies that $X_d$ is a subspace of $F_d \W_\pi$.

Define a superalgebra homomorphism $\psi_R:U(\mathfrak{p})\rightarrow U(\dot{\mathfrak{p}})\otimes U(\gl_{p|q})$ by
\begin{equation}\notag
\psi_R(\tilde e_{i,j}):= \left\{
\begin{array}{ll}
\tilde e_{i,j}\otimes 1 &\hbox{if $\col(i)\leq \col (j)\leq \ell-1$,}\\
0 &\hbox{if $\col(i)\leq \ell-1, \col(j)=\ell$,}\\
1\otimes \tilde e_{\eta(i),\eta(j)} &\hbox{if $\col(i)=\col(j)=\ell$,}
\end{array}
\right.
\end{equation}
where the map $\eta$ is define in (\ref{defeta}).
By Lemma \ref{sbabyr1}, we have 
\begin{align*}
&\psi_R(D_{a;i,j}^{(r)})=\dot D_{a;i,j}^{(r)}\otimes 1+\delta_{a,z}\sum_{f=1}^H (-1)^{\pa{f}_z}\dot D_{a;i,f}^{(r-1)}\otimes \tilde e_{f,j},\\
&\psi_R(E_{b;h,k}^{(r)})=\dot E_{b;h,k}^{(r)}\otimes 1+\delta_{b+1,z}\sum_{f=1}^H (-1)^{\pa{f}_z} \dot E_{b;h,f}^{(r-1)}\otimes \tilde e_{f,k},\\
&\psi_R(F_{b;k,h}^{(r)})=\dot F_{b;k,h}^{(r)}\otimes 1.
\end{align*}

Comparing this with Theorem \ref{baby1}$( 1)$ and recalling the PBW basis for $Y_{m|n}^\ell(\sigma)$ obtained in Corollary \ref{pbwbasis}, we deduce that $\psi_R(X_d)=\Delta_R(F_dY_{m|n}^\ell(\sigma))$. Combining this with (\ref{dim1}) and Corollary \ref{dimcoro}, we obtain 
\begin{equation*}
\dim F_dS(\g^e)=\dim \psi_R(X_d)\leq \dim X_d \leq \dim F_d\W_{\pi}\leq \dim F_dS(\g^e). 
\end{equation*}
Hence equalities hold everywhere so we have $X_d=F_d\W_\pi$ for each $d\geq 0$. In particular, $\psi_R:\W_\pi\rightarrow U(\dot{\mathfrak{p}})\otimes \gl_{p|q}$ is an injective homomorphism. Comparing $\psi_R$ with the map $\Delta_R$ defined in Theorem~\ref{baby1}$( 1)$, we see that $\psi_R(D_{a;i,j}^{(r)})=\Delta_R(D_{a;i,j}^{(r)})$, where the elements $D_{a;i,j}^{(r)}$ on the left-hand side are the elements of $\W_\pi$ and the elements $D_{a;i,j}^{(r)}$ on the right-hand side are the generators of $Y_{m|n}^\ell(\sigma)$. Similarly, $\psi_R(E_{b;h,k}^{(r)})=\Delta_R(E_{b;h,k}^{(r)})$ and $\psi_R(F_{b;k,h}^{(r)})=\Delta_R(F_{b;k,h}^{(r)})$ for all admissible indices $b,h,k,r$.

Finally, the composition $\psi_R^{-1}\circ \Delta_R:Y_{m|n}^\ell(\sigma)\rightarrow \W_\pi$ is exactly the filtered superalgebra isomorphism described in Theorem \ref{main} and the elements listed in Theorem~\ref{main} indeed generate $\W_\pi$. This completes the induction step of our main theorem under the assumption of Case R.

Next we sketch how to complete the induction step under the assumption of Case L. In this case, we enumerate the bricks of $\pi$ down columns {\em from right to left}. Note that different ways of enumerating are just choosing different bases to describe $\gl_{M|N}\cong \End (\mathbb{C}^{M|N})$ so we may choose the way most suitable for our purpose.

Let $\dot \pi$ denote the pyramid obtained from $\pi$ by deleting the {\em left-most} column of $\pi$. 
Let $I$, $\dot{I}$, $\mathtt{J_1}$ and $\mathtt{J_2}$ be the same index sets as defined in Case R.
It is clear that the deleted bricks are still numbered with elements in $\mathtt{J_1}$. 
Moreover, we may again assume that the left-most two columns of $\pi$ is of the form
\begin{center}
\newcolumntype{K}{\columncolor[gray]{0.8}\raggedright}
\begin{tabular}{|c|c|}
  \multicolumn{1}{c}{ } & \multicolumn{1}{|c|}{ $\vdots$ }\\
   \hline
   $M-p+1$ & $M-2p+1$  \\  
 \hline
  $M-p+2$ & $M-2p+2$   \\  
 \hline
  $\vdots$ & $\vdots$ \\  
 \hline
  $M$   &   $M-p$   \\  
 \hline
  \cellcolor[gray]{0.8}$\ovl{N-q+1}$    & \cellcolor[gray]{0.8}$\ovl{N-2q+1}$ \\  
 \hline
  \cellcolor[gray]{0.8}$\ovl{N-q+2}$ & \cellcolor[gray]{0.8}$\ovl{N-2q+2}$ \\  
 \hline
  \cellcolor[gray]{0.8}$\vdots$ & \cellcolor[gray]{0.8}$\vdots$ \\  
 \hline
  \cellcolor[gray]{0.8}$\ovl{N}$ & \cellcolor[gray]{0.8}$\ovl{N-q}$ \\  
 \hline
    \end{tabular}\\[1cm]
\end{center}

Similarly we define the bijection $L_1: \{1,2,\ldots, p+q\}\rightarrow \mathtt{J_1}$ by setting $L_1(f)$ to be the number assigned to the $f$-th box in the {\em left-most} column of the rectangle $\pi_H$, and define the bijection $L_2: \{1,2,\ldots, p+q\}\rightarrow \mathtt{J_2}$ by assigning $L_2(f)$ to be the number appearing in the {\em right} of $L_1(f)$.
In particular, denote by 
\begin{equation}\label{defxi}
\xi:\mathtt{J_1}\rightarrow \{1,2,\ldots, p+q\}
\end{equation}
the inverse map of $L_1$.

Let $\dot\sigma$ be the shift matrix obtained from (\ref{babyl1}), where the corresponding pyramid is exactly $\dot\pi$, and define $\dot{\mathfrak{p}}, \dot{\mathfrak{m}}, \dot e \in \dot \g :=\mathfrak{gl}_{M-p|N-q}$ via (\ref{mpdef}) and (\ref{edef}) with respect to $\dot \pi$. Note that in Case L we embed $U(\dot{\g})$ into $U(\g)$ by the {\em natural embedding}, which already sends the elements $\tilde{e}_{ij}$ of $U(\dot{\g})$ to the elements $\tilde{e}_{ij}$ of $U(\g)$ for all $i,j\in\dot{I}$.

Under the natural embedding, the superalgebra $\W_{\dot{\pi}}=U(\dot{\mathfrak{p}})^{\dot{\mathfrak{m}}}$ is a subalgebra of $U(\dot{\mathfrak{p}})\subset U(\mathfrak{p})$ and the $\dot{\chi}$-twisted action of $\dot{\mathfrak{m}}$ on $U(\dot{\mathfrak{p}})$ is exactly the same with the restriction of the $\chi$-twisted action of $\mathfrak{m}$ on $U(\mathfrak{p})$. Let $\dot D_{a;i,j}^{(r)}, \dot{D}_{a;i,j}^{\prime(r)}$, $\dot E_{b;h,k}^{(r)}$ and $\dot F_{b;k,h}^{(r)}$ denote the elements of $U(\dot{\mathfrak{p}})$ as defined in \textsection \ref{Inv} associated to the shape $\mu$ which is the minimal admissible shape of $\sigma$ but also admissible for $\dot\sigma$. By the induction hypothesis, all of these elements are $\dot{\mathfrak{m}}$-invariant.

From now we follow exactly the same idea in Case R to complete the proof. By the following crucial lemma, which is the analogue of Lemma \ref{sbabyr1}, we may express the elements $D_{a;i,j}^{(r)}, {D}_{a;i,j}^{\prime(r)}$, $E_{b;h,k}^{(r)}$ and $F_{b;k,h}^{(r)}$ in $U(\mathfrak{p})$ in terms of $\dot D_{a;i,j}^{(r)}, \dot{D}_{a;i,j}^{\prime(r)}$, $\dot E_{b;h,k}^{(r)}$ and $\dot F_{b;k,h}^{(r)}$. Then by similar case-by-case discussions and computations as before, we can prove that all of the elements $D_{a;i,j}^{(r)}, {D}_{a;i,j}^{\prime(r)}$, $E_{b;h,k}^{(r)}$ and $F_{b;k,h}^{(r)}$ are indeed $\mathfrak{m}$-invariant under our current setting in Case L. 
We provide only the most crucial lemma below since its proof and other arguments are almost identical as in the earlier case.

\begin{lemma}\label{sbabyl1}
The following equalities hold for all all admissible $a,b,i,j,h,k,r$ and any fixed $1 \leq g \leq H$:
\begin{align}\label{sbl1}\notag
D_{a;i,j}^{(r)} & = \dot D_{a;i,j}^{(r)}\\
 &+\delta_{a,z}(-1)^{\pa{i}_z}
\left(\sum_{f=1}^H \tilde e_{L_1(i),L_1(f)}
\dot D_{z;f,j}^{(r-1)} 
+
[ \tilde e_{L_1(i),L_2(g)}, \dot D_{z;g,j}^{(r-1)}]\right),\\
E_{b;h,k}^{(r)} & = \dot E_{b;h,k}^{(r)},\\
F_{b;k,h}^{(r)} & = \dot F_{b;k,h}^{(r)} + \delta_{b,z-1}(-1)^{\pa{i}_z}\left(
\sum_{f=1}^H \tilde e_{L_1(k),L_1(f)}\dot F_{z-1;f,h}^{(r-1)} 
+ [\tilde e_{L_1(k),L_2(g)}, \dot F_{z-1;g,h}^{(r-1)} ]\right),\label{sbl2}
\end{align}
where for  {\em (\ref{sbl2})} we are assuming that $r > s_{z,z-1}^\mu$ if $b=z-1$.
\end{lemma}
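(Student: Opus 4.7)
The plan is to mirror the proof of Lemma \ref{sbabyr1}, which proceeds by direct inspection of the explicit expansion of $T^{(r)}_{i,j;x}$ in (\ref{tdef}) together with the Gauss decomposition identities of Theorem \ref{ttodefthm}. The only conceptual difference is that in Case~L we remove the left-most column of $\pi$ instead of the right-most, while the enumeration of boxes now proceeds from right to left. The deleted boxes form precisely the set $\mathtt{J_1}$, and each of them has column coordinate $1$ with row number in the last block $\mu_z=H$.

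Concretely, for each of $D^{(r)}_{a;i,j}$, $E^{(r)}_{b;h,k}$, $F^{(r)}_{b;k,h}$ I will first rewrite it as the corresponding $T$-series via Theorem \ref{ttodefthm} and expand using (\ref{tdef}). I will then split the resulting linear combination of supermonomials $\tilde e_{i_1,j_1}\cdots \tilde e_{i_s,j_s}$ into two groups according to whether every factor has both indices in $\dot I$, or at least one index belongs to $\mathtt{J_1}$. Under the natural embedding $U(\dot{\mathfrak{p}})\hookrightarrow U(\mathfrak{p})$ the first group reassembles precisely into the leading terms $\dot D^{(r)}_{a;i,j}$, $\dot E^{(r)}_{b;h,k}$, $\dot F^{(r)}_{b;k,h}$, again by Theorem \ref{ttodefthm} applied to $\dot\pi$. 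The remaining supermonomials constitute the correction terms.

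To control these corrections, the key is the column-ordering conditions (2)--(4) in the definition (\ref{tdef}): because indices in $\mathtt{J_1}$ sit in the left-most column, any factor $\tilde e_{i_t,j_t}$ involving a $\mathtt{J_1}$-index can appear only in the first position $t=1$ of a supermonomial, and the outer row index must lie in the last block of $\mu$. This immediately yields the Kronecker $\delta_{a,z}$ in the $D$-formula and $\delta_{b,z-1}$ in the $F$-formula, as well as the absence of any correction for $E$, whose outer row indices sit in blocks $b$ and $b+1$ and whose role in the Gauss decomposition precludes a contribution from the extreme left column. The sum over $f=1,\dots,H$ in (\ref{sbl1}) and (\ref{sbl2}) collects those supermonomials whose first factor has both indices in $\mathtt{J_1}$ (so it stays inside column $1$), while the commutator term $[\tilde e_{L_1(i),L_2(g)},\dot D^{(r-1)}_{z;g,j}]$ (respectively $[\tilde e_{L_1(k),L_2(g)},\dot F^{(r-1)}_{z-1;g,h}]$) absorbs those supermonomials whose first factor crosses from column~$1$ into column~$2$; its independence of $g$ parallels the kind of computation behind Remark~\ref{4.8}.

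The main obstacle is bookkeeping, not concept. The factor $(-1)^{\col(j)-\col(i)}$ in (\ref{etil}) together with the signs $(-1)^{\tp(i_1)+\cdots+\tp(i_s)}$ in (\ref{tdef}) must be carefully reconciled with the restricted parities (\ref{pad})--(\ref{paf}) that govern the coefficients appearing in (\ref{sbl1})--(\ref{sbl2}). Under the assumed standard labeling $\bo_z=0^p1^q$ of the bottom rectangle $\pi_H$, the restricted parity $\pa{f}_z$ equals $\tp(L_1(f))$, so the signs telescope correctly and the claimed identities follow term by term, completing the mirror-image argument of Lemma~\ref{sbabyr1}.
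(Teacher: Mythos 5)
Your proposal is correct and follows the same route the paper takes (and indeed the paper barely writes this proof out: it declares Lemma \ref{sbabyl1} "almost identical" to Lemma \ref{sbabyr1}, whose own proof is a one-line appeal to (\ref{tdef}) and Theorem \ref{ttodefthm}). Your key structural observation — that since every box of column $1$ lies in a row of the last block, where $\sigma_{\row}=+$ in all the relevant series $T_{i,j;x}$ with $x\leq m+n-\mu_z$, condition (3) of (\ref{tdef}) forces any factor with an index in $\mathtt{J_1}$ to occur only as the \emph{first} factor of a supermonomial — is exactly the mirror of the Case R argument and correctly accounts for the $\delta_{a,z}$, $\delta_{b,z-1}$, the vanishing of the $E$-correction, the plain sum (first factor inside column $1$) and the commutator term (first factor crossing out of column $1$).
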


With the help of Lemma \ref{sbabyl1}, one can deduce that the statement of Proposition~\ref{minvprop} still holds in Case L. Finally, define a superalgebra homomorphism $\psi_L:U(\mathfrak{p})\rightarrow  U(\gl_{p|q})\otimes U(\dot{\mathfrak{p}})$ by
\begin{equation}\notag
\psi_L(\tilde e_{i,j}):= \left\{
\begin{array}{ll}
\tilde e_{\xi(i),\xi(j)}\otimes 1 &\hbox{if $\col(i)=\col (j)=1$,}\\
0 &\hbox{if $\col(i)=1, \col(j)\geq 2$,}\\
1\otimes \tilde e_{i,j} &\hbox{if $2\leq \col(i)\leq \col(j)$,}
\end{array}
\right.
\end{equation}
where the function $\xi$ is defined by (\ref{defxi}).
Using Lemma \ref{sbabyl1} again, we have that
\begin{align*}
&\psi_L(D_{a;i,j}^{(r)})=1\otimes \dot D_{a;i,j}^{(r)}+\delta_{a,z}\sum_{f=1}^H (-1)^{\pa{f}_z}\tilde e_{i,f} \otimes \dot D_{a;f,j}^{(r-1)}\\
&\psi_L(E_{b;h,k}^{(r)})=1\otimes \dot E_{b;h,k}^{(r)},\\
&\psi_L(F_{b,k,h}^{(r)})=1\otimes \dot F_{b;k,h}^{(r)}+\delta_{b+1,z}\sum_{f=1}^H (-1)^{\pa{f}_z}\tilde e_{k,f} \otimes\dot F_{b;f,h}^{(r-1)}.
\end{align*}
Using exactly the same argument as in Case R, one shows that the map $\psi_L$ is injective and the composition $\psi_L^{-1}\circ \Delta_L:Y_{m|n}^\ell(\sigma)\rightarrow \W_\pi$ gives the required isomorphism of filtered superalgebras. This completes the proof of Theorem \ref{main}.

\begin{corollary}\label{ezg}
Let $\pi$ be a pyramid corresponding to an even good pair and $\vec\pi$ be a pyramid obtained by horizontally shifting rows of $\pi$. Let $\W_\pi$ and $\W_{\vec\pi}$ denote the associated finite $W$-superalgebras, respectively. Then there exists a superalgebra isomorphism $\iota:\W_\pi\rightarrow \W_{\vec\pi}$ defined on parabolic generators with respect to an admissible shape $\mu$ by  {\em (\ref{iotadef})}. In other words, the definition of a finite $W$-superalgebra associated to an even good pair depends only on $e$ up to isomorphism.
\end{corollary}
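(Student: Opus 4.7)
The plan is to deduce this corollary as a direct consequence of the main theorem (Theorem~\ref{main}) combined with the isomorphism $\iota$ between shifted super Yangians of level $\ell$ established in (\ref{iotaiso}). The point is that once we know $\W_\pi \cong Y_{m|n}^\ell(\sigma)$ and $\W_{\vec\pi} \cong Y_{m|n}^\ell(\vec\sigma)$, the statement becomes a transport of structure via the already-constructed $\iota$.

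First I would unpack what horizontally shifting rows does at the level of the triple $(\sigma,\ell,\bo)$ associated to $\pi$ via Proposition~\ref{SPbij}. Since shifting a row left or right does not change the total width of the pyramid, the ambient rectangle of width $\ell$ is preserved, so the level is the same for $\pi$ and $\vec\pi$; the $\pm$-labeling of each row is also untouched, so the $0^m1^n$-sequence $\bo$ is unchanged. Only the shift matrix changes from $\sigma$ to $\vec\sigma$. Moreover, for any $1\leq i<m+n$, the quantity $s_{i,i+1}+s_{i+1,i}$ equals the absolute difference of the lengths of the $i$-th and $(i+1)$-st rows, which is clearly invariant under horizontal row shifts, so $\vec s_{i,i+1}+\vec s_{i+1,i}=s_{i,i+1}+s_{i+1,i}$. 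This is precisely the hypothesis required for the existence of the isomorphism $\iota:Y_{m|n}^\ell(\sigma)\to Y_{m|n}^\ell(\vec\sigma)$ in (\ref{iotaiso}).

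Next I would fix an admissible shape $\mu$ for $\sigma$ (for instance $\mu=(1^{m+n})$, which is admissible for every shift matrix). Since horizontal row shifts preserve the block structure of the pyramid only when the shifts respect the rectangular decomposition; if the chosen $\mu$ is not admissible for $\vec\sigma$, one simply refines to $(1^{m+n})$ and invokes Corollary~\ref{indmu} to ensure the construction is independent of this choice. By Theorem~\ref{main}, we have filtered superalgebra isomorphisms
\[
\Phi_\sigma:Y_{\mu}^\ell(\sigma)\xrightarrow{\sim}\W_\pi,\qquad \Phi_{\vec\sigma}:Y_{\mu}^\ell(\vec\sigma)\xrightarrow{\sim}\W_{\vec\pi},
\]
each sending the parabolic generators $D_{a;i,j}^{(r)}, E_{b;h,k}^{(r)}, F_{b;k,h}^{(r)}$ on the Yangian side to the distinguished $\mathfrak{m}$-invariants of the same name in $U(\mathfrak{p})$ defined via the Gauss decomposition in \textsection\ref{Inv}. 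The desired isomorphism is then the composition
\[
\iota \ := \ \Phi_{\vec\sigma}\circ\iota\circ \Phi_\sigma^{-1}:\W_\pi\longrightarrow\W_{\vec\pi},
\]
and by the definition of $\iota$ on parabolic generators in (\ref{iotadef}), this composition is exactly given by the formula stated in the corollary.

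Finally, for the second assertion, by Theorem~\ref{Hoyt2} any two even good pairs $\langle e,h\rangle$ and $\langle e,h'\rangle$ for a given even nilpotent $e\in\glMN$ correspond to pyramids that differ only by horizontal row shifts; applying the isomorphism above to each step of such a sequence of shifts yields the required superalgebra isomorphism between the two associated finite $W$-superalgebras. There is no substantial obstacle in this argument: the only point that requires care is the bookkeeping showing $\vec s_{i,i+1}+\vec s_{i+1,i}=s_{i,i+1}+s_{i+1,i}$, but this is a direct geometric observation about pyramids, so the real content of the corollary is already contained in Theorem~\ref{main} and the construction of $\iota$.
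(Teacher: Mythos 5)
Your proposal is correct and follows exactly the route the paper takes: the paper's proof is the one-line observation that the corollary is an immediate consequence of the isomorphism $\iota$ in (\ref{iotaiso}) together with Theorem~\ref{main}, and your write-up simply fills in the (correct) bookkeeping that row shifts preserve $\ell$, $\bo$, and the sums $s_{i,i+1}+s_{i+1,i}$, plus the appeal to Theorem~\ref{Hoyt2} for the final assertion. The only superfluous step is your worry about $\mu$ failing to be admissible for $\vec\sigma$; as noted just before (\ref{taudef}), any shape admissible for $\sigma$ is automatically admissible for $\vec\sigma$ under the stated condition, so no refinement to $(1^{m+n})$ is needed.
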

\begin{proof}
This is an immediate consequence of (\ref{iotaiso}) and the isomorphism in Theorem \ref{main}.
\end{proof}

\begin{remark}\label{912}
A more general result of Corollary  {\em \ref{ezg}} was obtained in \cite{Zh} by a very different approach. 
It is proved that the definition of type A finite $W$-superalgebra is independent of the choices of the good $\mathbb{Z}$-grading (which may not be even) up to isomorphism, generalizing the results of \cite{BG, GG}.
\end{remark}

\end{document}